\documentclass[11pt]{article}
\usepackage[english]{babel}
\usepackage{titlesec}
\usepackage{geometry}
\usepackage{enumitem}
\usepackage[utf8]{inputenc}
\usepackage{amsmath}
\usepackage{amssymb}
\usepackage{amsthm}
\usepackage[mathcal]{euscript}
\usepackage{color}
\usepackage{tikz-cd}
\usepackage{mathabx}
\usepackage{physics}
\usepackage{MnSymbol}
\usepackage{csquotes}
\usepackage{url}
\usepackage{caption}
\usepackage{hyperref}
\allowdisplaybreaks

\usepackage{xparse}
\let\realItem\item 
\makeatletter
\NewDocumentCommand\myItem{ o }{%
   \IfNoValueTF{#1}%
      {\realItem}
      {\realItem[#1]\def\@currentlabel{#1}}
}
\makeatother

\usepackage{enumitem}    
\setlist[enumerate]{
    before=\let\item\myItem,       
    label=\textnormal{(\arabic*)}, 
}

\numberwithin{equation}{section}

\title{Bi-infinite systems of singularly interacting Brownian particles and the KPZ equation}

\author{Sayan Banerjee, Amarjit Budhiraja, Peter Rudzis}
\date{}

\newtheorem{theorem}{Theorem}[section]
\newtheorem{lemma}[theorem]{Lemma}
\newtheorem{corollary}[theorem]{Corollary}
\newtheorem{proposition}[theorem]{Proposition}
\newtheorem{remark}[theorem]{Remark}
\newtheorem{definition}[theorem]{Definition}
\newtheorem{assumption}{Assumption}[section]

\newtheorem{manualtheoreminner}{Claim}
\newenvironment{claim}[1]{%
  \renewcommand\themanualtheoreminner{#1}%
  \manualtheoreminner
}{\endmanualtheoreminner}

\newcommand{\hX}{\widehat{\mathit{X}}}
\newcommand{\cX}{\widecheck{\mathit{X}}}
\newcommand{\hL}{\widehat{\mathit{L}}}
\newcommand{\cL}{\widecheck{\mathit{L}}}
\newcommand{\hx}{\widehat{\mathit{x}}}
\newcommand{\cx}{\widecheck{\mathit{x}}}
\newcommand{\hB}{\widehat{\mathit{B}}}
\newcommand{\cB}{\widecheck{\mathit{B}}}
\newcommand{\hV}{\widehat{\mathit{V}}}
\newcommand{\cV}{\widecheck{\mathit{V}}}

\newcommand{\hKs}{\widehat{\mathit{K}}^*}
\newcommand{\cKs}{\widecheck{\mathit{K}}^*}

\newcommand{\clu}{\mathcal{U}}
\newcommand{\clc}{\mathcal{C}}
\newcommand{\cli}{\mathcal{I}}
\newcommand{\cle}{\mathcal{E}}
\newcommand{\clz}{\mathcal{Z}}
\newcommand{\clb}{\mathcal{B}}
\newcommand{\clW}{\mathcal{W}}
\newcommand{\clQ}{\mathcal{Q}}
\newcommand{\clD}{\mathcal{D}}
\newcommand{\vp}{\varphi}
\newcommand{\EE}{\mathbb{E}}
\newcommand{\eps}{\varepsilon}
\newcommand{\veps}{\varepsilon}
\newcommand{\clG}{\mathcal{G}}
\newcommand{\clM}{\mathcal{M}}
\newcommand{\clE}{\mathcal{E}}

\newcommand{\clL}{\mathcal{L}}

\newcommand{\clf}{\mathcal{F}}
\newcommand{\clg}{\mathcal{G}}
\newcommand{\PP}{\mathbb{P}}
\newcommand{\ZZ}{\mathbb{Z}}
\newcommand{\half}{[0,\infty)}
\newcommand{\clp}{\mathcal{P}}
\newcommand{\clv}{\mathcal{V}}

\newcommand{\RR}{\mathbb{R}}
\newcommand{\NN}{\mathbb{N}}
\newcommand{\Ksu}{\mathit{K}_\nearrow^*}
\newcommand{\Ksd}{\mathit{K}_\searrow^*}
\newcommand{\Ks}{\mathit{K}^*}
\newcommand{\clB}{\mathcal{B}}

\newcommand{\ohalf}{(0,\infty)}

\newcommand{\osc}{osc}

\newcommand{\Npe}{\mathcal{N}^+_{\varepsilon}}
\newcommand{\Nme}{\mathcal{N}^-_{\varepsilon}}
\newcommand{\tlE}{\tilde{\mathcal{E}}}

\newcommand{\sle}{\prec_{\mbox{\tiny{d}}}}
\newcommand{\clN}{\mathcal{N}}
\newcommand{\clH}{\mathcal{H}}
\newcommand{\init}{\gamma}

\definecolor{purp2}{rgb}{0.6,0.0,0.9}
\definecolor{ab}{rgb}{0.9,0.0,0.6}
\definecolor{sb}{rgb}{0.8,0.2,0.2}
\definecolor{pr}{rgb}{0.6,0.0,0.8}

\begin{document}

\maketitle

\begin{abstract}
We study a bi-infinite system of interacting Brownian particles on the real line with singular asymmetric interactions mediated by the collision local times. Particles perform Brownian motions, and when neighboring particles collide, the associated local time is split in proportions $p$ and $q=1-p$. We first develop well-posedness theory for the particle system, proving pathwise uniqueness and strong existence under natural growth assumptions on the initial configuration and local times.  We also identify a family of stationary distributions for the infinite-dimensional process of gaps between successive particles: for every $\lambda>0$, the product measure with i.i.d.\ Exp$(\lambda)$ gaps is invariant.

Our main result concerns the equilibrium fluctuations of the associated particle-count (height) function in a weakly asymmetric regime.  Taking $p=p_\varepsilon$ so that $p_\varepsilon^{-1}-1=\exp\{\sigma \varepsilon^{1/4}\}$, initializing the interparticle gaps with i.i.d. Exp$(1)$ random variables, and applying a microscopic Hopf--Cole transform to the diffusively rescaled count function, we prove convergence, as $\varepsilon \rightarrow 0$, to the multiplicative stochastic heat equation (SHE) with Brownian exponential initial data.  Equivalently, the logarithm of the limit is the Hopf--Cole solution of the KPZ equation with two-sided Brownian initial data.  The proof combines localization to finite particle subsystems via chains of collisions, Brownian last-passage percolation estimates, a key local time cancellation property, and a martingale problem for a scale-adapted mollification of the Hopf--Cole field, whose space-time regularity is tuned to match that of the limiting SHE. The resulting fluctuation theorem places these Brownian particle systems with asymmetric singular collision dynamics within the KPZ universality class.
\\

\noindent \textbf{AMS 2020 subject classifications:} 60K35, 60H15, 60J60.\newline

\noindent \textbf{Keywords:} Weak asymmetry, stationary fluctuations, infinite particle systems, rank-based diffusions, strong solutions, pathwise uniqueness, singular stochastic differential equations, Brownian last-passage percolation, Kardar--Parisi--Zhang universality class, stochastic heat equation, Hopf-Cole transform.
\end{abstract}
\section{Introduction}

The Kardar--Parisi--Zhang (KPZ) universality class, which has been conjectured to govern fluctuations and large-scale behavior of an extensive class of physical systems, has been an active subject of research in recent years. At the heart of it lies
the \emph{KPZ universality conjecture}, which asserts that
one-dimensional stochastic systems with local interactions, a conserved density or height-gradient field,
and a weak asymmetric perturbation of a reversible dynamics should have
large-scale fluctuations governed by the KPZ equation
\begin{equation}\label{eq:KPZdef}
\partial_t h = \frac{1}{2} \Delta h + \frac{1}{2}(\partial_x h)^2 - \sigma \dot W,
\end{equation}
where $W$ is the space-time white noise.
 As the KPZ equation by itself is ill-posed, a central
mechanism behind several rigorous instances of this principle is the
microscopic Hopf--Cole transform: after exponentiating a suitably centered and
scaled height function, one obtains convergence to the multiplicative
stochastic heat equation (SHE), whose logarithm defines the Hopf--Cole solution
of the KPZ equation.  This program is well understood for certain weakly asymmetric lattice
models \cite{BertiniGiacomin,DemboTsai2016,Labbe2018,CorwinShenTsai2018,Yang2023}. Other approaches to KPZ limits include energy-solution methods
\cite{goncalves2014,GJS, GubinelliPerkowski2017,DiehlGubinelliPerkowski2017},
which study the stochastic Burgers equation for the slope field and define
its singular nonlinear term through renormalization techniques. Separately, integrable
methods for exactly solvable models
\cite{SS,ACQ2011,corwin2012kardar,WFSbook}
exploit explicit kernels and Fredholm determinant formulas to derive detailed fluctuation results, including 
Tracy--Widom and Airy-process asymptotics. 
Two major relatively recent breakthroughs in the field have been the introduction of regularity structures by Hairer \cite{hairer2013solving} and of paracontrolled distributions by Gubinelli, Imkeller and Perkowski \cite{GIP}. Together these works have established a general framework for the well-posedness and renormalization of singular stochastic PDEs, including the KPZ equation.

In this article, we consider a bi-infinite system of Brownian particles on the real line,
indexed by $\ZZ$, with singular interactions which occur only at collision instances. The particle positions
$\{X_i(t), t\in \half, i\in \ZZ\}$ are ordered in $\RR$ so that $X_i(t) \leq X_{i+1}(t)$ for all $t \in \half, i \in \ZZ$ and satisfy the infinite system of SDEs
\begin{equation} \label{eq:asym_eq0}
dX_i(t)=dB_i(t)+pdL_{(i-1,i)}(t)-qdL_{(i,i+1)}(t),
\qquad t\in\half,\, i\in\ZZ,
\end{equation}
where $p=1-q \in [0,1]$, $\{B_i\}_{i\in\ZZ}$ are independent standard Brownian
motions, and $L_{(i,i+1)}$ is the local time at zero of the gap
$Z_i:=X_{i+1}-X_i$.  {The local time $L_{(i,i+1)}$ repulses the 
$i$-th and $(i+1)$-st particles at collision instances, preventing them from crossing.}  The `mass ratio' parameter $p$ describes
how this {local time-mediated repulsive interaction} at collisions is split between the two particles:
when $p=q=1/2$ the split is symmetric, whereas when $p\ne q$ the split is asymmetric (meaning adjacent particles have `equal masses' and `unequal masses', respectively).  The precise meaning of a
solution of \eqref{eq:asym_eq0} is given in Definition~\ref{def:biinf}. More broadly, see Section \ref{sec:modset} which handles well-posedness of the system \eqref{eq:asym_eq0}.

The main goal of this paper is to connect
the equilibrium fluctuations of an appropriate counting (height) function associated with
\eqref{eq:asym_eq0}, in a weakly asymmetric regime, to the KPZ equation.  Fix $\sigma>0$.  For
$\eps>0$, we choose $p=p_\eps$ so that
\begin{equation}\label{eq:weak-asym-intro}
        p_\eps^{-1}-1=e^{\sigma\eps^{1/4}}.
\end{equation}
Let $\{\tilde X_i^\eps, \tilde L_{(i,i+1)}^\eps\}$ denote the solution to \eqref{eq:asym_eq0} with this choice of $p$ and initial data $\{x_i = \tilde X_i^\eps(0)\}$ distributed according to some $\init \in \clp_0(\RR^\ZZ)$, where $\clp_0(\RR^\ZZ)$ is the space of probability measures on non-decreasing $\ZZ$-indexed sequences in $\RR$. (We will specify the properties that $\init$ needs to satisfy later.) Define the space-time diffusion-scaled processes, for $i \in \ZZ, t \in \half$,
\begin{equation} \label{eq:rescaled_sys}
B_i^\eps(t):=\eps^{1/2}B_i(\eps^{-1}t),\qquad
X_i^\eps(t):=\eps^{1/2}\tilde X_i^\eps(\eps^{-1}t),\qquad
L_{(i,i+1)}^\eps(t):=\eps^{1/2}\tilde L_{(i,i+1)}^\eps(\eps^{-1}t),
\end{equation}
and the rescaled counting function
\begin{equation} \label{eq:rescaled_count}
        N_\eps(t,x)=\max\{i\in\ZZ:X_i^\eps(t)\le x\},
        \qquad (t,x)\in\half\times\RR .
\end{equation}
The microscopic Hopf--Cole transform studied here is
\begin{equation} \label{eq:Gdef0}
\clG^\eps(t,x)=
\exp\left\{
\sigma\eps^{1/4}N_\eps(t,x-\sigma\eps^{-1/4}t)
-\sigma\eps^{-1/4}x+\frac12\sigma^2\eps^{-1/2}t
\right\}.
\end{equation}
We also introduce the spatially mollified field
\begin{equation} \label{eq:tEdef0}
        \tlE_\theta^\eps(t,x)
        =\int_\RR \clG^\eps(t,z)p_{\eps^{1/(2\theta)}}(x-z)\,dz,
        \qquad \theta\in(0,1),
\end{equation}
where $p_s(z)=(2\pi s)^{-1/2}\exp\{-z^2/(2s)\}$ is the Gaussian heat kernel.  

Our main result identifies the limit of \eqref{eq:Gdef0} as $\eps \to 0$ with the solution
of the multiplicative stochastic heat equation (SHE)
\begin{equation}\label{eq:she-intro}
\begin{cases}
&\partial_t u = \frac{1}{2} \Delta u - \sigma u \dot W, \\
&u_0(x) = \exp\{\sigma B(x)\}, \; x \in \RR,
\end{cases}
\end{equation}
where $W$ is space-time white noise and $B$ is a two-sided Brownian motion,
independent of $W$.  Equivalently, $h:= \log u$ is the Cole--Hopf solution of the
KPZ equation \eqref{eq:KPZdef} with
$h_0(x) = \sigma B(x)$.
More precisely, Theorem~\ref{thm:maincgce} proves that, when the initial distribution
$\init$ is chosen such that the gaps between adjacent particles are i.i.d. Exp$(1)$,
$\tlE_\theta^\eps$ converges in distribution to $u$ in
$C([0,\infty);\clc(\RR))$, and the difference between
$\tlE_\theta^\eps$ and $\clG^\eps$ vanishes locally uniformly, in time and space, in $L^r$ for any $r \ge 1$ (and hence in probability).
Consequently the original, non-mollified microscopic Hopf--Cole transform
$\clG^\eps$ converges to the same SHE limit, uniformly on compact subsets of
$\half\times\RR$ in distribution. 
A key feature of our construction is that the mollification $\tlE_\theta^\eps$ is matched to the fluctuation scale of the problem. As discussed in 
Remark \ref{rem:Ereg}, $\tlE_\theta^\eps$  has exactly the same H\"{o}lder regularity as the limiting stochastic heat equation for every $\eps \in (0,1]$, so that the regularization preserves the microscopic roughness expected in the limit.

The proof of this fluctuation theorem requires us to develop a well-posedness theory for the bi-infinite system of Brownian particles with asymmetric collision dynamics, allowing the fluctuation analysis to be carried out directly at the level of the infinite system.
We first prove pathwise uniqueness
and strong existence for the bi-infinite system \eqref{eq:asym_eq0} under
natural one-sided growth assumptions on the initial configuration and a mild
condition on the growth of local times at $-\infty$; see
Theorems~\ref{thm:unique} and~\ref{thm:exist}.  These results are obtained by
combining finite-particle approximations with control of \emph{chains of collisions}. The latter are related to Brownian last-passage percolation quantities whose control allows one to localize the trajectory of any fixed finite block of particles to a finite system over compact time intervals; see Lemma \ref{lem:finsys}. 

Another preliminary step, of interest in its own right, is to identify a family of stationary measures for the corresponding gap
process $Z_i(t)=X_{i+1}(t)-X_i(t), \, i \in \ZZ, t \ge 0$. We show that, for each $\lambda>0$, the
bi-infinite product measure under which the $Z_i$ are i.i.d. Exp$(\lambda)$
is stationary; see Theorem~\ref{thm:stationary}.  In the fluctuation theorem, 
for simplicity, we use
$\lambda=1$.  Under this equilibrium initialization, the spatial fluctuations
of the initial counting function are Brownian, which explains the initial
condition $u_0(x)=\exp\{\sigma B(x)\}$ in \eqref{eq:she-intro}.


\subsection{Background and comparison with related work}
\label{ssec:intro-background}

Systems of singularly interacting Brownian particles and related rank-based Brownian
particle systems have been studied extensively in connection with stochastic
portfolio theory, reflected Brownian motion, queueing models, and interacting
particle systems.  The Atlas model and its variants were introduced in the
context of equity markets in \cite{banner2005atlas,PP}; see also
\cite{fernholz2002stochastic,fernholz2009stochastic}.  Finite systems with
rank-dependent coefficients and asymmetric collisions were analyzed in
\cite{karatzas2016systems,sarantsev2017infinite,AS2,BB}.  At the level of gaps, these
models are semimartingale reflected Brownian motions in orthants, and the
existence of product-form exponential stationary distributions is closely
related to the skew-symmetry condition of  \cite{HR,harrison1987multidimensional,harrison1987brownian,williams1995semimartingale} which identifies a class of reflected Brownian motions with product form invariant distributions.

Infinite systems present additional difficulties; even well-posedness of such systems is unresolved in full generality. 
One-sided infinite systems (i.e. particles indexed by $\mathbb{N}_0$) of singularly interacting
Brownian particles were studied in \cite{PP,sarantsev2017infinite,sarantsev2017stationary,tsai_stat}. In particular, it was shown that the associated
gap process for such systems has a one-parameter family of stationary distributions. Domains of attraction and extremality of these invariant measures, in the symmetric case $p=q=1/2$, were further investigated in
\cite{DJO,banerjee2022domains,banerjee2024extremal}.  Two-sided infinite systems with $p=q=1/2$, arising from order statistics of rank-based diffusions, 
were considered in \cite{sarantsev2017two}, where one- and two- parameter families of stationary gap
distributions were constructed for models with general drift and diffusion coefficients.
For zero drift and unit variance, these coincide with the family of i.i.d. exponential gap distributions appearing in the present paper. The present work thus shows that this product-form structure persists in the asymmetric regime $p\neq q$.
{Solutions in the $p = 0$ (totally asymmetric) case were constructed in \cite[Chapter 2]{WFSbook}.}
For general $p > 0$, until very recently, existence of solutions was established only when $p \in [1/2,1)$, in the one-sided case, via so-called `approximative versions', {i.e.\ solutions obtained through approximation by finite-dimensional systems, exploiting certain monotonicity properties \cite{sarantsev2017infinite}. }
Existence was still open {when $p \in (0,1/2)$, while} uniqueness was unresolved for any $p \in [0,1)$. Recently, in \cite{banbudrud2025wp}, we resolved the problems of existence and uniqueness, for the one-sided system, for all $p \in [0,1)$, under mild assumptions on the initial data and local time growth. The key idea was to quantify the influence of far-away particles on the lowest few particles by analyzing `chains of collisions', combining comparison techniques for reflected systems with ideas from Brownian last-passage percolation and random matrix theory. Our well-posedness results for the two-sided system, given in Theorems~\ref{thm:unique} and~\ref{thm:exist}, build  on this prior work, although additional care needs to be taken for two-sided systems, where one needs to control chains of collisions in both down-right and up-right directions (see Section \ref{sec:wellp}).

The local time splitting in \eqref{eq:asym_eq0} is the 
singular Brownian 
analogue of asymmetric local interactions in one-dimensional growth models. In the world of  Brownian interacting particle systems, our results are closest in spirit to the works \cite{SS,WFSbook,DiehlGubinelliPerkowski2017}. 
In \cite{SS}, the one-sided version of our model was analyzed for a fixed value of $p$ and half-Poisson initial
data. Using exact calculations
based on Bethe ansatz, self-duality, and Fredholm determinant formulas, the
authors obtained a formula for the generating function of a particle-counting
observable and  connected its asymptotics to the KPZ universality class.
In comparison to the present article, their work exploits the integrable structure of the model; 
the focus is not weak-asymmetry
convergence to the stochastic heat equation or KPZ equation.  
The totally asymmetric cases $p \in \{0,1\}$ were analyzed in \cite{WFSbook} for the two-sided system, again using exact formulas via last-passage percolation and determinantal techniques, to obtain Tracy--Widom and Airy-process fluctuation limits. As in the earlier works on integrable systems, the emphasis in \cite{WFSbook} is on exact distributional asymptotics rather than on random-field convergence to the stochastic heat equation or KPZ equation.
The paper \cite{DiehlGubinelliPerkowski2017} establishes a stationary stochastic Burgers limit for stationary  weakly asymmetric
interacting Brownian motions with \emph{smooth} nearest-neighbor gradient
interactions, using the energy-solution theory \cite{goncalves2014,GJS, GubinelliPerkowski2017}.  Our
model is singular rather than smooth-gradient: the interaction occurs only
through weakly asymmetric local time reflection at collisions. Also, we do not use energy-solution methods.
A further distinction from \cite{DiehlGubinelliPerkowski2017} is the fluctuation observable. Their result concerns the fluctuation field of the local gradient (or gap) variables and yields the stochastic Burgers equation in the scaling limit. By contrast, our result is formulated at the level of the particle counting (height) function itself and, through a microscopic Hopf--Cole transform, yields convergence directly to the stochastic heat equation.

The convergence of microscopic Hopf--Cole transforms to the SHE goes back to the work of Bertini and Giacomin \cite{BertiniGiacomin} on weakly asymmetric exclusion processes (WASEP). This seminal result established convergence of a microscopic particle system to the stochastic heat equation and thereby connected weakly asymmetric particle dynamics with the KPZ universality class. It has since motivated KPZ/SHE scaling limits for a broad range of discrete and continuous models; see, for example, the survey \cite{corwin2012kardar}. Many such results
concern exclusion, zero-range, or related lattice systems, while considerably
fewer address Brownian particle systems or singular interactions.

Although the limiting object in our proof is obtained through a suitable
microscopic Hopf--Cole transform, the microscopic objects and estimates
involved here are substantially different from those in Bertini and
Giacomin~\cite{BertiniGiacomin}. In WASEP, the height function is
obtained by integrating the stationary occupation variables over physical
space; after fixing one reference height, it is completely determined by the
stationary microscopic field. The Hopf--Cole transform is therefore naturally
attached to a fixed spatial frame. In the present
continuum model, the corresponding object is the counting function
$N^\eps(t,x)$, whose distributional gradient is the random point measure of
particle locations. Consequently, the natural stationary description is instead given by the
labelled gap process. These gaps do not determine the counting function unless a
reference particle location is also specified. Thus the Hopf--Cole transform is
not itself built from a stationary field. The stationarity available to us lives in a ‘random moving
frame’, namely the frame viewed from the tagged particle initially placed at the origin; the
motion of this tagged particle is itself correlated with the surrounding
stationary gap process.
Before fluctuations become visible,
one must first remove both the characteristic drift and the background
particle density, leading to the moving-frame and renormalization corrections
appearing in \eqref{eq:Gdef0}.

Moreover, in models such as WASEP \cite{BertiniGiacomin} the weak asymmetry enters through a regular bias in the jump rates, and from the properties of the corresponding discrete generator it follows that the microscopic Hopf--Cole transform $\xi_t(x)$ satisfies an almost closed discrete stochastic heat
equation,
\[
  d\xi_t(x)
  =
  \frac12 e^{\gamma_\eps}\Delta \xi_t(x)\,dt
  + dM_t(x).
\]
Here, $\{M_t(x) : t \ge 0\}$ is a martingale for each fixed $x \in \ZZ$, having an explicit representation of the associated bracket process $\{\langle M(x), M(y)\rangle_t : t \ge 0\}$  for $x,y \in \ZZ$ in terms of $\xi_t$ and its discrete spatial derivatives. This provides a direct starting point for moment bounds, tightness, and
identification of the limit.
In contrast, the asymmetry in our
model is generated through local time accumulation at collision
times, a set of Lebesgue measure zero, rather than
through regular jump-rate biases. To handle this difficulty, our proofs proceed through the carefully constructed $\eps$-dependent mollified observables introduced in \eqref{eq:tEdef0} (in contrast with linearly interpolated fields appearing in \cite{BertiniGiacomin}), which satisfy an evolution equation (see \eqref{eq:prelim1}) that involves both the Hopf--Cole field $\clG^\eps$ and its mollified approximation $\tlE_\theta^\eps$. This introduces substantial challenges in obtaining the required moment and space-time continuity estimates.



The stationary regime considered here is also significant.  Under i.i.d.
exponential gaps, the initial counting function has Brownian fluctuations,
which lead to the Brownian exponential initial condition in \eqref{eq:she-intro}.
This is the equilibrium, or stationary, KPZ/SHE initial data.  The resulting
limit theorem is therefore
a stationary KPZ fluctuation theorem
for asymmetric singularly interacting Brownian particles. Although the results of this paper are established under equilibrium initial conditions, several components of the analysis do not use stationarity. We remark at appropriate places which parts of the proof continue to hold for more general initial conditions.

We briefly note that the totally asymmetric (i.e. $p=0$) reflected Brownian system has also been connected directly to the KPZ fixed point. Building on its integrable structure, \cite{NicaQuastelRemenik2020} obtained Fredholm determinant formulas for the multi-point distributions of the process and proved that its 1:2:3-scaled fluctuations converge to the KPZ fixed point. Thus the reflected Brownian model provides a continuous-space analogue of the exactly solvable lattice models whose large-scale behavior is governed by the KPZ fixed point. See also \cite{DauvergneOrtmannVirag2022} for the subsequent directed-landscape perspective.
Exact distributional formulas for the stationary KPZ equation with two-sided Brownian initial data were obtained in \cite{BorodinCorwinFerrariVeto2015}, including the identification of the stationary (Baik--Rains) fluctuation regime. More recently, \cite{DasDrillickParekh2024} established KPZ-type scaling
limits for systems of sticky Brownian motions, providing another example of
KPZ behavior arising from local time interactions. While local times
play a central role in both our work and in \cite{DasDrillickParekh2024}, the mechanism in the latter work is based on a
Girsanov representation of sticky Brownian flows, whereas our analysis
depends crucially on delicate cancellations among collision local times and
the resulting structure of the counting-function Hopf--Cole transform.


For the one-sided symmetric case, corresponding to
$p=q=1/2$, equilibrium fluctuations exhibit very different limiting behavior. For
homogeneous Poissonian initial profiles, the diffusively scaled fluctuations
were shown in \cite{dembo2017equilibrium} to converge to the stochastic heat
equation with additive white noise.  For inhomogeneous Poissonian initial
profiles, \cite{BBR2026AtlasFluctuations} showed that the limiting
fluctuations are governed by a linear stochastic partial differential equation (SPDE) driven by additive noise that is
white in time and inhomogeneous in space, and the linear operator governing the evolution is the infinitesimal generator of a geometric Brownian motion. 

\subsection{Proof of SHE limit: key steps}

{The proof of the convergence to SHE is carried out in Section \ref{sec:tight-conv}. The argument has three parts: establishing tightness of the collection $\{\tlE_\theta^\eps\}_{\eps > 0}$ in $C(\half \times \RR : \RR)$; showing that the microscopic Hopf--Cole field $\clG^\eps$ is approximated uniformly on compacts in probability by its mollification $\tlE_\theta^\eps$, as $\eps \to 0$; and showing that any limit point of $\{\tlE_\theta^\eps\}_{\eps > 0}$ satisfies a certain martingale characterization of solutions to SHE.}


Among the benefits of working with the prelimit field $\tlE_\theta^\eps$ is the observation
that this
functional admits a representation as a certain weighted infinite sum (see \eqref{eq:Edef}, \eqref{eq:te-def}, and Lemma~\ref{lem:Erep}).  The semimartingale decomposition of this sum has a \emph{local time cancellation
property}, which yields the martingale representation in
\eqref{eq:mart_rep}. The weighted sum in \eqref{eq:Edef} further contains a
tunable parameter $\delta$, which is chosen as suitable functions of $\eps$ in key
moment and tightness estimates in order to control the discontinuities coming from the collision local times. {This martingale representation and other fundamental properties of the mollified field are presented in Subsection \ref{ssec:weighted}. The moment bounds, continuity estimates, and tightness argument appear in Subsections \ref{ssec:Gmoments}-\ref{ssec:conttight}.} 

{In Subsection \ref{ssec:unifclose}, we establish that $\tlE_\theta$ approximates $\clG^\eps$ in an appropriate sense, ensuring that $\tlE_\theta^\eps$ and $\clG^\eps$ will exhibit like asymptotics.} 
The justification of this approximation
relies on a detailed analysis of the fluctuations of the rescaled counting
function \eqref{eq:rescaled_count}, carried out in Section
\ref{ssec:countfluc}, together with continuity estimates for $\clG^\eps$
obtained in Lemmas~\ref{lem:G} and~\ref{lem:RR}. Finally, in Subsection \ref{ssec:martprob}, using the weighted sum representation and local-time cancellation property, we identify the asymptotics of $\tlE_\theta^\eps$ {as given by SHE} through an associated martingale problem.

{Although our main result Theorem \ref{thm:maincgce} is stated for initial data such that the gaps between adjacent particles are i.i.d. Exp(1), in many of our lemmas this assumption can be dispensed with in place of weaker conditions on the initial data. We comment on possible extensions to more general initial data in Remarks \ref{rem:gentightness}, \ref{rem:gencont}, and \ref{rem:genconvE}.}


\subsection{Notation}
\label{sec:not}
The following notation will be used. {For real numbers $x,y$, $x \wedge y = \min\{x,y\}$, $x \vee y = \max\{x,y\}$, and $y^+ = \max\{0,y\}$.} We will denote the statement that a random variable $X$ has distribution $\pi$ as $X \sim \pi$. We denote by $\clc(\RR)$ (resp. $C([0,T) \times \RR)$) the space of continuous real functions on $\RR$ (resp. $[0,T) \times \RR$) equipped with the local uniform topology. $C^{0,1}([0,T] \times \RR)$ will denote the space of functions that are continuous in the first coordinate (time) and once continuously differentiable in the second coordinate (space).
For a Polish space $S$, $C([0,\infty); S)$ will denote the space of continuous functions from $[0,\infty)$ to $S$ equipped with the local uniform topology. The Borel $\sigma$-field on a topological space $S$ will be denoted as $\clb(S)$. For $p>0$, the $L^p$ norm of a random variable $X$ on a probability space $(\Omega, \clf, \PP)$ will be denoted as $\|X\|_p$, namely $\|X\|_p := \left(\EE(|X|^p)\right)^{1/p}$. For real valued random variables $X,Y$, we write $X\sle Y$ if $\PP(X \le z) \ge \PP(Y\le z)$ for all $z \in \RR$. Convolution of functions $f$ and $g$ on the real line will be denoted as $f*g$, i.e. $f*g(x) = \int_{\RR}f(x-y)g(y) dy$. We denote $p_t(x):= \frac{1}{(2\pi t)^{1/2}} e^{-x^2/2t}$ and let $p_t(x,y):= p_t(x-y)$ for $t>0$ and $x,y \in \RR$.

\section{Model and Main Results} \label{sec:modset}

\subsection{Bi-infinite systems of singularly interacting Brownian particles} \label{ssec:biinfinite}

Throughout this section, we fix $p = 1 - q \in (0,1/2)$. Let $(\Omega, \clf, \{\clf_t\}_{t \in \half}, \PP)$ be a filtered probability space, and let $\{B_i\}$ be a collection of i.i.d. standard $\{\clf_t\}$-Brownian motions. That is, $B_i$ are independent standard Brownian motions adapted to $\{\clf_t\}$ and satisfy the property that $\{B_i(t+\cdot)- B_i(t), i \in \ZZ\}$ is independent of $\clf_t$ for all $t\ge 0$. 
Let $x = \{x_i\}_{i \in \ZZ}$ be a non-decreasing sequence of $\clf_0$-measurable random variables.

\begin{definition}\normalfont
\label{def:biinf}
Let $X = \{X_i(t), t \in \half\}_{i \in \ZZ}$, $L = \{L_{(i,i+1)}(t), t \in \half\}_{i \in \ZZ}$ be collections of continuous, real-valued $\{\clf_t\}$-adapted processes having the properties 
\begin{enumerate}[label = \arabic*.]

\item \label{p:1} For all $t \in \half, i \in \ZZ$,
\begin{equation} \label{eq:asym_eq}
X_i(t) = x_i + B_i(t) + pL_{(i-1,i)}(t) - qL_{(i,i+1)}(t).
\end{equation} 

\item \label{p:2} For all $t \in \half$, the sequence $\{X_i(t)\}_{i \in \ZZ}$ is non-decreasing.

\item \label{p:3} For all $i \in \ZZ$, $L_{(i,i+1)}$ is a continuous, non-decreasing process such that $L_{(i,i+1)}(0) = 0$ and $L_{(i,i+1)}$ can only increase at times $t \in \half$ such that $X_{i+1}(t) = X_i(t)$, i.e. 
\[
\int_0^\infty (X_{i+1}(t) - X_i(t))dL_{(i,i+1)}(t) = 0.
\]
\end{enumerate}
We refer to $X = \{X_i\}$ as a \textit{bi-infinite system of singularly interacting Brownian particles with parameter $p$, initial condition $x$, and driving Brownian motions $\{B_i\}$.} The processes $\{L_{(i,i+1)}\}$ are called the \textit{local times} associated with the system $X$.
\end{definition}
Informally, by a \textit{solution to \eqref{eq:asym_eq}} we mean a process $X$ and associated local times $L$, given on some filtered probability space, and $x$, $\{B_i\}_{i \in \ZZ}$, as above, such that the properties in Definition \ref{def:biinf} are satisfied.

Let $\clp_0(\RR^{\ZZ})$ denote the space of probability measures on $\RR^{\ZZ}$, under which, for a.e. $z = \{z_i\} \in \RR^{\ZZ}$, 
$z_i \leq z_{i+1}$ for all $i \in \ZZ$. We say there \textit{exists a strong solution to \eqref{eq:asym_eq} for  initial distribution $\gamma \in \clp_0(\RR^{\ZZ})$} if for any choice of a filtered probability space, Brownian motions $\{B_i\}_{i \in \ZZ}$, and initial $x$ as above, with $x$ distributed as $\gamma$, there exist $X$ and $L$ as in Definition \ref{def:biinf}. We say that solutions to \eqref{eq:asym_eq} are \textit{pathwise unique for an initial distribution $\gamma$} if for any filtered probability space, and Brownian motions $\{B_i\}_{i \in \ZZ}$, and initial date $x$ distributed as $\gamma$, as above, if $(X,L)$ and $(X',L')$ are two solutions to \eqref{eq:asym_eq} such that $\{X_i(0)\} = \{X_i'(0)\} = x$, then almost surely $X = X'$ and $L = L'$. We will also talk about  \textit{pathwise uniqueness among solutions satisfying some property $(\clp)$}, by which we mean that if, whenever $(X,L)$ and $(X',L')$ are two  solutions to \eqref{eq:asym_eq} as above, satisfying property $(\clp)$,  then, almost surely, $(X,L) = (X',L')$.

In \cite{banbudrud2025wp} (see Theorems 2.3 and 2.7 therein), it is shown that the analogous one-sided (indexed by $\NN$) infinite system of singularly interacting Brownian particles with parameter $p \in [0,1]$ is well-posed (namely, strong existence and pathwise uniqueness holds) when the initial data and  the local times satisfy certain growth assumptions (the latter are needed only for the case $p \geq 1/2$). 

We now present a well-posedness result for the bi-infinite system that uses similar methods as in \cite{banbudrud2025wp}. 
{Many proof ideas are the same, and in such cases we only provided sketches of the proofs.}
These sketches of pathwise uniqueness and strong existence, are provided in Sections \ref{sec:pathuniq} and \ref{sec:strexi}, respectively.

\begin{theorem}[Pathwise uniqueness] \label{thm:unique}
Fix $\gamma \in \clp_0(\RR^{\ZZ})$, and let $x = \{x_i\}_{i \in \ZZ} \sim \gamma$. Assume that, $\gamma$-almost surely,  
\begin{equation} \label{eq:ainit}
\limsup_{M \to \infty} \frac{-x_{-M}}{\sqrt{M}} = \infty.
\end{equation}
Consider the following condition on the local times for some solution $(X,L)$ of \eqref{eq:asym_eq}:
\begin{equation} \label{eq:aloc}
\forall T \in \half, \quad \limsup_{M \to \infty} \left(\frac{q}{p}\right)^{-M} L_{(-M,-M+1)}(T) = 0 \text{ a.s.}
\end{equation}
Then, solutions to \eqref{eq:asym_eq} are pathwise unique for parameter $p$ and initial distribution $\gamma$, among all solutions whose associated local times satisfy \eqref{eq:aloc}.
\end{theorem}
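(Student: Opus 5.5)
The plan is to localize: show that on any compact time interval $[0,T]$, for any fixed finite block of indices $\{-K,\dots,K\}$, the trajectories of any solution satisfying \eqref{eq:aloc} agree with those of a \emph{finite} system driven by the same Brownian motions and initial data. Since finite systems of the form \eqref{eq:asym_eq} are pathwise unique (they are semimartingale reflected Brownian motions in an orthant, whose reflection matrix is completely-$\mathcal S$/Harrison--Reiman type for $p\in(0,1)$), two solutions $(X,L)$, $(X',L')$ will then coincide on the block, and letting $K\to\infty$, $T\to\infty$ gives $X=X'$, $L=L'$ a.s. This mirrors the one-sided argument of \cite{banbudrud2025wp}; the new point is that both ends of the chain must be controlled.

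Concretely, I will cut the system at indices $-M$ and $N$ with $-M<-K<K<N$, and show that with high probability neither boundary influences the block before time $T$. The upward direction (large indices, the end toward $+\infty$) is handled by the absence of collision chains: the block is affected from above only if there is a chain of collisions $N\to N-1\to\cdots\to K$ within $[0,T]$, i.e.\ successive collisions $X_{j}=X_{j+1}$ at increasing times. Because the particles $X_j$, $j\ge K$, are bounded below by their initial positions plus Brownian motion minus pushing from below, and the existence of such a chain forces a Brownian last-passage time over $N-K$ levels to be at least the total spread $x_N-x_K$, I will compare with Brownian LPP (whose value over $n$ levels and time $T$ is of order $2\sqrt{nT}$) to conclude the chain probability vanishes. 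Here I expect to need a growth condition on $x_N$ as $N\to\infty$; since none is assumed, instead I will use that the relevant influence from above acts only through $L_{(i,i+1)}$ for large $i$, and use the ordering/monotonicity comparison (a particle pushed down more only moves down) to show the block dynamics is insensitive in the needed direction, reducing the upper end to a comparison argument as in the one-sided paper, where no upper-end condition was needed. The downward direction uses \eqref{eq:ainit}: $-x_{-M}\gg\sqrt M$ along a subsequence, so an LPP bound shows that a chain of collisions from $-M$ up to $-K$ before $T$ has probability tending to $0$ along that subsequence.

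The main obstacle is the lower end, because with $p<1/2$ the push $qL$ transmitted downward is large and, more importantly, the push $pL$ propagating upward through a chain is dampened geometrically only by factors $p/q<1$ — which is exactly why \eqref{eq:aloc} enters. I will write the difference $D_i=X_i-X_i'$ and use the SDE to derive a recursive inequality bounding $\sup_{t\le T}|D_i|$ on the block by a weighted sum of boundary terms of the form $(p/q)^{M}\,(L_{(-M,-M+1)}(T)+L'_{(-M,-M+1)}(T))$, obtained by tracing how a discrepancy at the boundary must propagate through the gaps, each step multiplying by at most $p/q$ relative to the local time at the next level (the ratio is read off by solving the linear reflection equations of the finite subsystem between collisions). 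Then \eqref{eq:aloc} for both solutions sends this term to $0$ as $M\to\infty$, and combined with the vanishing of collision chains from \eqref{eq:ainit} yields $D\equiv0$ on the block, hence $L=L'$ as well by solving \eqref{eq:asym_eq} for the local times successively.
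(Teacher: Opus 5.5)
Your localization scheme (comparison with finite systems plus control of chains of collisions at both ends) is the same skeleton as the paper's Lemma~\ref{lem:finsys}. However, the step that carries the weight at the upper end is missing. You correctly note that the LPP argument ruling out long down-right chains into the block needs superdiffusive growth of the upper particles, which is not assumed. You then replace it by a monotonicity claim that the block is ``insensitive'' to influence from above, and that claim is false: influence from above is the undamped one. Apply the Lipschitz property of the one-dimensional Skorokhod map gap by gap, with $d_i=\sup_{t\le T}|L_{(i,i+1)}(t)-L'_{(i,i+1)}(t)|$ for two solutions. This gives $d_i\le p\,d_{i-1}+q\,d_{i+1}$, whose harmonic functions are $A+B(p/q)^i$. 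Condition \eqref{eq:aloc} excludes only the $(p/q)^i$ mode. The constant mode, i.e. discrepancies entering from $+\infty$ with weight $q>1/2$, is not damped at all, and a monotone comparison gives only one-sided bounds, not equality. The missing ingredient is Lemma~\ref{lem:finite-hatI}: for every solution, $\liminf_{M\to\infty}\inf_{s\le T}X_{i+M}(s)/\sqrt M=\infty$ a.s. The paper proves this without any growth hypothesis on $x_N$ as $N\to\infty$, adapting Lemma 6.2 of \cite{banbudrud2025wp}, with the bottom particle of the upper half driven by $B_1+pL_{(0,1)}$ and error $L_{(0,1)}(T)/\sqrt M\to0$. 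With it, the pathwise estimates on $\{\hKs(i,[0,T])=\infty\}$ contradict the Brownian LPP asymptotics of $\widehat{\clv}^-_M(i,T)/\sqrt M$.

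The lower end is also not right as stated, for two reasons.
\begin{itemize}
\item A plain LPP bound with \eqref{eq:ainit} does not make up-right chains from $-M$ unlikely. The particles between $-M$ and the block are pushed down with the strong coefficient $q$ at each collision with their upper neighbour, so local time, not just Brownian fluctuation, can carry them toward $x_{-M}$. The paper handles this through the geometrically weighted local-time identity. It yields \eqref{eq:init_ubd}, whose extra series $\sum_j (p/q)^j\{\widecheck{\clu}(j,M,T)-\widecheck{\clv}^-_{M+1}(j+i,T)\}$ is where the local-time condition \eqref{eq:aloc} is used. Then \eqref{eq:gaussianstuff} together with \eqref{eq:ainit} gives $\cKs(i,[0,T])<\infty$.
\item Your difference recursion cannot close the argument on its own. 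Bounding $\sup_{t\le T}|D_i|$ by $(p/q)^M\bigl(L_{(-M,-M+1)}(T)+L'_{(-M,-M+1)}(T)\bigr)$ is valid only after the system has been cut off from above. In that setting a discrete maximum principle for $d_i\le p\,d_{i-1}+q\,d_{i+1}$, with $d$ vanishing at the top and $d_{-M}=o((q/p)^M)$, does give the bound. That could be a genuine alternative to the paper's lower-end chain estimate. But the cut from above is exactly the unproved upper-end step. Carrying it out would also require existence and uniqueness for the downward semi-infinite truncated system, which the proposal does not address.
\end{itemize}
So the argument as proposed does not close.
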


\begin{theorem}[Strong existence] \label{thm:exist}
Fix  $\gamma \in \clp_0(\RR^{\ZZ})$. Assume that for some $\chi > 1/2$, 
\begin{equation} \label{eq:ainit2}
\sum_{k = 1}^\infty \gamma(-x_{-k} \leq k^\chi) < \infty.
\end{equation}
There exists a strong solution $(X,L)$ to \eqref{eq:asym_eq},  with parameter $p$ and initial distribution $\gamma$,
which satisfies \eqref{eq:aloc}.
\end{theorem}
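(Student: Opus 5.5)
The plan is to build the bi-infinite solution as a limit of finite systems and to localize with chains of collisions. Fix $p\in(0,1/2)$, so the larger share $q$ of each local time pushes the lower particle down. For $M,N\in\NN$ let $(X^{M,N},L^{M,N})$ be the finite system of particles indexed by $\{-M,\dots,N\}$, driven by the same $B_i$ and started from $x$. This system is a semimartingale reflected Brownian motion in an orthant. Its reflection matrix is completely-$\mathcal{S}$, and in fact an $M$-matrix, so strong existence and pathwise uniqueness hold by the classical theory (Harrison--Reiman), and $(X^{M,N},L^{M,N})$ is a continuous functional of $(x,B)$. The goal is to show that for each fixed $i$ and $T$, the pair $(X^{M,N}_i, L^{M,N}_{(i,i+1)})$ on $[0,T]$ is eventually constant in $M$ and $N$, almost surely. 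This stabilization gives a limit that is adapted (hence strong) and satisfies properties 1--3 of Definition~\ref{def:biinf}, because these properties hold for each finite system and persist under eventual equality.

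\textbf{Localization upward ($N\to\infty$).} A particle with large index $j$ can affect particle $i$ only through a chain of collisions $j\to j-1\to\dots\to i$ occurring at successive times in $[0,T]$. Following Lemma~\ref{lem:finsys} and the one-sided argument of \cite{banbudrud2025wp}, I will dominate the event that such a chain exists. Along such a chain, $X_j(s_j)-X_i(s_i)$ is bounded by a Brownian last-passage percolation value over $j-i$ levels. That value is of order $\sqrt{(j-i)T}$ up to Gaussian tails, via the Baryshnikov/GUE identification or a crude concatenation bound, and the local-time terms have a favourable sign that can be removed by a comparison argument. Thus the chain can exist only if $x_j-x_i\lesssim \sqrt{(j-i)T}$. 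In the upward direction the initial data give no such spacing, so I will instead use the monotonicity of the finite systems in $N$. Adding particles above can only push lower particles down, through the $-qL$ term. The resulting influence decays geometrically, like $(p/q)^{k}$ per level, because each collision transmits only a fraction $p$ upward versus $q$ downward. I will use this to show that the increments in $N$ form a summable Cauchy sequence. Alternatively, since the gaps above are unconstrained, I could combine the LPP chain bound with Borel--Cantelli.

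\textbf{Localization downward ($M\to\infty$).} Here condition \eqref{eq:ainit2} is the relevant input. It gives $-x_{-k}>k^\chi$ for all large $k$, almost surely, by Borel--Cantelli. The LPP bound shows that a down-right chain from $-k$ to $i$ within $[0,T]$ requires $x_i-x_{-k}\lesssim\sqrt{kT}+O(\sqrt{k\log k})$. Since $\chi>1/2$, this fails for all large $k$ with summable probability, so particles far below never influence particle $i$ before time $T$, and the finite systems agree eventually in $M$.

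\textbf{Local-time condition and main obstacle.} To verify \eqref{eq:aloc} for the limit, I will bound $L_{(-M,-M+1)}(T)$. Far below, by the chain argument above, particle $-M$ essentially never collides during $[0,T]$ with probability $1-O(\text{summable})$. Hence $L_{(-M,-M+1)}(T)=0$ for all large $M$ almost surely, which gives \eqref{eq:aloc} trivially. The main obstacle is the upward stabilization. The downward push transmitted from above is the dominant $q$-share, so chains of collisions from above are not excluded by the initial spacing. I expect to need the comparison and monotonicity arguments of \cite{banbudrud2025wp}, adapted to up-right chains, together with a geometric $(p/q)$ weighting, to show that the extra particles change $X_i$ by an amount that is eventually zero, or at least Cauchy. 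If the change is only Cauchy, I will pass to the limit using continuity of the Skorokhod-type map, and identify the limiting local times through \eqref{eq:asym_eq}.
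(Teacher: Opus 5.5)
Your plan (eventual stabilization of the finite systems via chains of collisions) is not the paper's route, and as written it has two genuine gaps.

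\textbf{The limit $N\to\infty$.} You flag this step yourself. The geometric decay you invoke runs in the wrong direction. The weights $r^{j-k}$, $r=p/q$, in $\sum_{k\le j}r^{j-k}X_k$ damp influence coming from \emph{below}. A downward push arriving from above is carried by the large share $q$ and is not damped. At $p=0$ it is passed on undiminished, and heuristically a push $q\Delta$ on the top particle of a cluster of stuck particles moves the whole cluster down by at least $(q-p)\Delta$, whatever the cluster size. Monotonicity in $N$ alone then gives only a monotone limit, which may be $-\infty$ and need not be continuous.

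The paper does not prove stabilization. Its key input is Lemma~\ref{lem:fincomp}, which holds uniformly in the truncation:
$$X^{[m,n]}_j(t)\ge X^0_j(t)+g_*t\ge x_j+\clB^-_{\ell-j+1}(j,t)-L^0_{(\ell,\ell+1)}(t)+g_*t.$$
This works because the totally asymmetric system $p=0$ is the worst case for downward pushes and is explicit through Brownian last-passage percolation. Inserting this into the exact identity $L^{[m,n]}_{(j,j+1)}(t)=q^{-1}\sum_{k=M_m}^{j}r^{j-k}\bigl(g^{[m,n]}_kt+B_k(t)+x_k-X^{[m,n]}_k(t)\bigr)$ gives \eqref{eq:loc_contbd}, a bound on local-time increments uniform in $m,n$. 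Equicontinuity and Arzel\`a--Ascoli follow. Theorem~\ref{thm:unique} then identifies the subsequential limits, so the whole sequence converges to a strong solution.

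Your remark that \eqref{eq:ainit2} gives no spacing above is to the point. By \eqref{eq:WFSsolution}, $X^0_j$ is finite only if $x_k$ grows faster than $\sqrt{k}$ as $k\to+\infty$. The paper relies on such growth implicitly through $X^0$; it holds for $\gamma_\lambda$. Some input of this kind seems unavoidable, and your proposal supplies no substitute for it.

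\textbf{The verification of \eqref{eq:aloc}.} The chain argument excludes long chains that reach a fixed particle. It says nothing about collisions between neighbours far below. Likewise, \eqref{eq:ainit2} controls only the overall spread $-x_{-k}>k^\chi$, not individual gaps. Under $\gamma_\lambda$, for example, translation invariance of the gap dynamics makes $\PP(L_{(-M,-M+1)}(T)>0)$ the same positive number for every $M$, so these local times do not vanish eventually.

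A quantitative bound is needed. In the paper it comes from passing \eqref{eq:loc_contbd} with $s=0$ to the limit, which gives \eqref{eq:limlocbd}. There, $r^{-j}L_{(j,j+1)}(T)$ is at most $(q/p)^{j}$ times fixed random quantities, plus the tail $q^{-1}\sum_{k\le j}r^{-k}\bigl(B_k(T)-\clB^-_{N_{n_0}-k}(k,[0,T])\bigr)$ of a convergent series. Both terms tend to $0$ as $j\to-\infty$.

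Your downward step is also incomplete as stated. A chain from $-k$ up to $i$ is up-right, and along it the pushes $p\,dL$ from below help the chain climb. So the local-time terms do not have a favourable sign, and a plain last-passage comparison does not apply. The paper treats such chains only inside the uniqueness proof, using geometrically weighted last-passage quantities. Its existence proof needs no chain estimates beyond invoking Theorem~\ref{thm:unique}.
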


Next we introduce stationary distributions associated with the gaps between the neighboring particles in the above infinite particle system.

Let $(X,L) = \{(X_j,L_{(j,j+1)})\}_{j \in \ZZ}$ be a solution to \eqref{eq:asym_eq}. Let $Z = \{Z_j\}_{j \in \ZZ}$ be the process taking values in the orthant $\half^{\ZZ}$, defined by 
\begin{equation}\label{eq:Zdefn}
Z_j(t) = X_{j+1}(t) - X_j(t), \quad t \in \half, \quad j \in \ZZ.
\end{equation}
We refer to $Z$ as the \textit{gap process} associated with $(X,L)$.

Fix $\lambda>0$, and let $z = \{z_i\}_{i \in \ZZ}$ be a bi-infinite sequence of i.i.d. Exp($\lambda$) random variables. Let $\pi_\lambda$ denote the distribution of $z$.   Let $\gamma_{\lambda} \in \clp_0(\RR^{\ZZ})$ be such that if $x = \{x_i\}_{i\in \ZZ} \sim \gamma_{\lambda}$, then 
\begin{equation} \label{eq:hominit}
  x_0 = 0, \quad \text{ and } \quad \{x_{i+1}-x_{i}\}_{i\in \ZZ}\sim \pi_\lambda.
\end{equation}
Note that $\gamma_{\lambda}$ satisfies \eqref{eq:ainit2} for any $1/2 < \chi <1$ and consequently $x$ satisfies \eqref{eq:ainit}. Let $X$ be the unique strong solution of \eqref{eq:asym_eq}
that satisfies \eqref{eq:aloc}, which is guaranteed by Theorems \ref{thm:unique} and
\ref{thm:exist}. Let $Z = \{Z_j\}_{j \in \ZZ}$ be defined by \eqref{eq:Zdefn}. 

\begin{theorem} \label{thm:stationary}
For each $\lambda>0$, the measure $\pi_{\lambda}$ is a stationary distribution for the process $Z$. That is, if $Z(0) \sim \pi_\lambda$, then  $Z(t) \sim \pi_{\lambda}$ for all $t \in \half$.
\end{theorem}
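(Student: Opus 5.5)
The plan is to approximate the bi-infinite gap process by finite-dimensional gap processes. For these, product-form exponential stationarity is classical (the skew-symmetry condition of Harrison--Williams). We then pass to the limit using the localization provided by chains of collisions.

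\textbf{Step 1 (finite systems with boundary).} For $n\in\NN$, consider the particles indexed by $\{-n,\dots,n\}$. Their gaps $(Z_{-n},\dots,Z_{n-1})$ should form a semimartingale reflected Brownian motion in $\half^{2n}$. To make this true, we replace the outside interactions by ``exogenous'' boundary terms. The natural choice is to let particle $-n$ be pushed by an independent stationary environment. Concretely, I would take the finite system in which particle $-n$ has no lower neighbor and particle $n$ has no upper neighbor. Its gap process $Z^{(n)}$ is an SRBM with covariance $A$, where $A_{jj}=2$ and $A_{j,j\pm1}=-1$, and reflection matrix $R=I-pS-qS^{T}$-type, with entries $R_{jj}=1$, $R_{j,j+1}=-p$, $R_{j+1,j}=-q$ (up to which index receives $p$ and which $q$). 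It has zero drift, which is a problem: there is no stationary law. I would therefore add drifts to the boundary particles, namely drift $+\mu$ for particle $-n$ relative to particle $n$, chosen so that the product of Exp$(\lambda)$ laws is invariant. By the Harrison--Williams criterion, the product-form density $\prod e^{-\lambda z_j}$ is invariant if and only if $2A=RD+DR^T$ with $D=\mathrm{diag}(R)^{-1}A_{\text{diag}}$ and the drift satisfies $\mu = -\tfrac12 \lambda\,\cdot$(appropriate vector). The skew-symmetry condition $2A = RD+DR^{T}$ with $D=2I$ reads $R_{j,j+1}+R_{j+1,j} = A_{j,j+1}$, i.e.\ $-p-q=-1$, which holds for every $p$. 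The basic adjoint relation then forces the drift to be $-\tfrac{\lambda}{2}(\text{row sums})$. Interior rows have zero net drift automatically, since $R$ has row sums $1-p-q=0$ there. Only the two boundary gaps need the drifts $\mp\lambda(\cdot)$, which I implement as constant drifts on particles $\pm n$. Thus, for each $n$, the finite system $X^{(n)}$ with these boundary drifts, started from $x\sim\gamma_\lambda$ restricted to $[-n,n]$, has gaps exactly distributed as Exp$(\lambda)^{\otimes 2n}$ at every $t$.

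\textbf{Step 2 (localization).} Fix a finite block $\{-k,\dots,k\}$ and $T>0$. I would invoke the chain-of-collisions localization (Lemma~\ref{lem:finsys}), together with the Brownian last-passage estimates behind Theorems~\ref{thm:unique} and~\ref{thm:exist}. These should show that, with probability tending to one as $n\to\infty$, the trajectories of $X_{-k},\dots,X_{k}$ on $[0,T]$ coincide for the finite system $X^{(n)}$ and for the unique strong solution $X$. The boundary drifts are of fixed size $\lambda$, and they influence the block only through a chain of collisions of length $n-k$. Such a chain has probability decaying in $n$ by the same last-passage bounds, since the initial gaps are i.i.d.\ Exp$(\lambda)$ and hence \eqref{eq:ainit2} holds. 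It follows that $(Z_{-k}(t),\dots,Z_{k-1}(t))$ is the limit in total variation of $(Z^{(n)}_{-k}(t),\dots)$, which is Exp$(\lambda)^{\otimes 2k}$. Since $k$ is arbitrary, $Z(t)\sim\pi_\lambda$.

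\textbf{Main obstacle.} The main obstacle is Step 2: Lemma~\ref{lem:finsys} is formulated for the original system, not one with added boundary drifts. I would need to rerun the comparison argument to show that modifying the dynamics of particles outside $[-n+m, n-m]$ only affects the block if a collision chain crosses $m$ indices. Controlling the up-right and down-right chains with drift via LPP tail bounds is where the effort lies. Some additional care is needed for the $(q/p)^{-M}$ local-time condition \eqref{eq:aloc}, which the limit must satisfy; this holds automatically since the limit is the strong solution of Theorem~\ref{thm:exist}.
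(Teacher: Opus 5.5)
Your Step 1 is the paper's finite approximation: drift $p\lambda$ on particle $-n$, drift $-q\lambda$ on particle $n$, zero drift in between, and the Harrison--Williams product-form criterion, which makes $\mathrm{Exp}(\lambda)^{\otimes 2n}$ invariant for the finite gap process. One small slip: your ``$-\tfrac{\lambda}{2}$(row sums)'' should read $-\lambda$(row sums), since $\eta=-R^{-1}\mu$ when $A=R+R^{T}$; this is cosmetic.

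The gap is in Step 2, and it is not where you locate it. The localization in Lemma~\ref{lem:finsys} is pathwise, and it applies verbatim to the drifted system $X^{(n)}$ once $M<n-k$, because the drifted particles $\pm n$ then lie outside the enlarged cluster $[-k-M,k+M]$. What it yields is
\[
\PP\bigl(\text{the blocks of } X \text{ and } X^{(n)} \text{ differ on } [0,T]\bigr)\le \PP\bigl(\Ks_X([-k,k],[0,T])>M\bigr)+\PP\bigl(\Ks_{X^{(n)}}([-k,k],[0,T])>M\bigr).
\]
So you need the second term to be small for large $M$ \emph{uniformly in $n$}, and nothing in the paper provides this. Proposition~\ref{prop:fin1210} gives only almost-sure finiteness of chain lengths for a single infinite solution, via qualitative $\limsup_{M\to\infty}$ arguments. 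It gives no tail bounds uniform over a sequence of different finite systems.

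Your sentence ``such a chain has probability decaying in $n$ by the same last-passage bounds'' is exactly this missing estimate. Proving it would mean redoing the arguments of \cite{banbudrud2025wp} with explicit rates for both chain directions. That includes the up-right direction, where the infinite-system argument relies on the local-time growth condition \eqref{eq:aloc}.

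No localization is needed at all. Theorem~\ref{thm:exist+} is stated with boundary drifts $g_\pm$ precisely for this application. Take $M_n=-n$, $N_n=n$, $g_-=p\lambda$, $g_+=-q\lambda$. It then gives almost-sure convergence, uniformly on compacts, of $X^{[n,n]}_j$ to the unique strong solution $X_j$ for every $j$.

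Its proof avoids chains of collisions entirely. It combines four ingredients, and the drifts enter only through the constants $g_*,g^*$:
\begin{enumerate}
\item the comparison $X_j^{[m,n]}\ge X_j^0+g_*t$ with the totally asymmetric system (Lemma~\ref{lem:fincomp});
\item the weighted-sum identity for local times, which yields the equicontinuity bound \eqref{eq:loc_contbd};
\item Arzel\`a--Ascoli;
\item identification of the limit through Theorem~\ref{thm:unique}.
\end{enumerate}

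Consequently each finite block of gaps of $X^{[n,n]}$ converges almost surely to the corresponding block of $Z$. The former is i.i.d.\ $\mathrm{Exp}(\lambda)$ at every fixed $t$, so the latter is too. Substituting this citation for your Step 2 completes the proof, and this is exactly the paper's argument.
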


Our  main goal is to study fluctuations of the infinite particle system, in a weakly asymmetric regime,  by considering the associated counting process as defined in \eqref{eq:rescaled_count}, when the initial gaps are given by the above stationary distribution.
For simplicity, we  consider the stationary distribution $\gamma_1$ (i.e. $\lambda=1$). The general case can be treated similarly.

By a weakly asymmetric regime we mean the following. Fix $\sigma> 0$. For a given $\eps>0$ we consider the infinite particle system in  Definition \ref{def:biinf} with asymmetry parameter $p= p_{\eps} \in (0,1)$  such that  $p^{-1}-1 = e^{\sigma\eps^{1/4}}$. Thus, we will consider a collection of infinite particle systems, parametrized by the scaling parameter $\eps$, where each system is initialized with the  distribution  $\gamma_1$ which is stationary for the associated gaps.

Our main result will give a limit theorem for the exponential transform of the suitably scaled and translated counting process associated with the infinite particle system, as defined in \eqref{eq:Gdef0}. The limit will be characterized by a solution of the {\em stochastic heat equation} (SHE). We now give a precise description of this limit object.

Let $W$ and $B$ be independent white noise measures on $\RR_+\times \RR$ and $\RR$, respectively.  
The stochastic heat equation that arises from the distributional limit of the field $\clg^{\eps}$ in \eqref{eq:Gdef0}, is as follows.
\begin{equation}\label{eq:mshe}
\begin{cases}
&\partial_t u = \frac{1}{2} \Delta u - \sigma u \dot W, \\
&u_0(x) = \exp\{\sigma B(x)\}, \; x \in \RR.
\end{cases}
\end{equation}
We note that $u_0$ satisfies the following integrability property: For each $p>0$, there is an $a>0$ such that
\begin{equation*}
\sup_{x\in \RR} e^{-a|x|} \EE|u_0(x)|^p <\infty.
\end{equation*}
Using this fact we see from \cite[Theorem 3.1]{BertiniGiacomin} that there is a unique $u \in C([0, \infty); \clc(\RR))$  which solves \eqref{eq:mshe}
in the mild sense, namely
\begin{equation*}
u_t(x) = (p_t*u_0)(x) - \sigma\int_{(0,t)\times \RR} p_{t-s}(y-x) u_s(y) W(ds\, dy), \; t>0, x \in \RR.
\end{equation*}
Furthermore, $u_t(x)>0$ for every $(t,x) \in \RR_+\times \RR$ and
for each $t\ge 0$, $u_t(\cdot)$ is 
$$\clf_t = \sigma\{B, W([0,s]\times (A\cap [-n,n])), 0\le s \le t, n \in \NN, A \in \clb(\RR)\}$$ measurable.
Denote the measure induced by $u$ on $C([0, \infty); \clc(\RR))$ by $\clQ$.

We now state the precise sense in which $\clg^{\eps}$ converges to $u$.

We will say a random field $\{\clz^{\veps}(t,x), (t,x) \in \RR_+\times \RR\}$ converges, uniformly on compact subsets of $\RR_+ \times \RR$, in distribution to a continuous random field
$\{\clz(t,x), (t,x) \in \RR_+ \times \RR\}$, if there exists a sequence $\{\tilde \clz^{\veps}(t,x), (t,x) \in \RR_+ \times \RR\}$ of continuous random fields such that $\tilde \clz^{\veps} \to \clz$ in distribution in $C([0, \infty); \clc(\RR))$, as $\veps \to 0$, where the latter space is equipped with the topology specified in Section \ref{sec:not}, and 
$$\sup_{0\le t \le T} \sup_{x \in K} |\tilde \clz^{\veps}(t,x) - \clz^{\veps}(t,x)| \to 0 \mbox{ in probability, as } \veps\to 0,$$
for every $T \in (0,\infty)$ and compact $K \subset \RR$.

In our case, the role of $\clz^{\veps}$ (resp. $\tilde \clz^{\veps}$, $\clz$) will be played by
$\clg^{\eps}$ (resp. $\tilde \cle^{\eps}_{\theta}$, $u$).  Namely we prove the following result. The proof appears in Section \ref{ssec:martprob}.
\begin{theorem}
\label{thm:maincgce}
As $\eps \to 0$, $\tilde \cle^{\eps}_{\theta}$ converges to $u$, in distribution, in $C([0, \infty); \clc(\RR))$.
Furthermore, for any $r \ge 1$, $T<\infty$ and compact $K\subset \RR$,
\begin{equation}\label{eq:uniapprox}
   \left\|\sup_{t \in [0,T], x \in K} \left|\tilde \clE_{\theta}^\eps(t,x) - \clG^\eps(t,x) \right| \right\|_r \to 0 \text{ as } \eps \to 0.
\end{equation}
Consequently, $\{\clg^{\veps}(t,x), (t,x) \in \RR_+\times \RR\}$ converges, uniformly on compact subsets of $\RR_+ \times \RR$, in distribution to the random field
$\{u(t,x), (t,x) \in \RR_+ \times \RR\}$.
\end{theorem}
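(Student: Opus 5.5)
The plan follows the three-part scheme outlined in the introduction: tightness of $\{\tlE_\theta^\eps\}$, uniform closeness of $\tlE_\theta^\eps$ to $\clG^\eps$, and identification of subsequential limits through a martingale problem for SHE.

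\emph{Step 1: semimartingale structure.} Write $\tlE_\theta^\eps(t,x)$ as a weighted infinite sum over particles. Since $N_\eps(t,\cdot)$ jumps by one at each particle location, integrating the exponential of $\sigma\eps^{1/4}N_\eps$ against the Gaussian kernel gives a sum over $i$ of terms $e^{\sigma\eps^{1/4} i}$ times increments of an explicit smooth function of $X^\eps_i(t)$. I would then apply It\^o's formula to each term and sum over $i$. The local time contributions from the pair $(i,i+1)$ appear in the terms for both particle $i$ and particle $i+1$. With weights $p$ and $-q$, and the relation $q/p=e^{\sigma\eps^{1/4}}$, these contributions should cancel exactly, because both particles sit at the same location at collision times. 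This cancellation is what fixes the choice of $p_\eps$ in \eqref{eq:weak-asym-intro}. After it, one should obtain an equation of the form $d\tlE = \tfrac12\Delta\tlE\,dt + dM$ plus error terms involving $\clG^\eps$, with a martingale $M$ whose bracket is $\sigma^2\eps^{1/2}\sum_i(\cdot)^2$. The bracket should approximate $\sigma^2\int (\clG^\eps)^2 p^2\,dz$.

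\emph{Step 2: moments and tightness.} Derive uniform moment bounds $\sup_{\eps}\sup_{t\le T}\EE[\clG^\eps(t,x)^r]\le Ce^{a|x|}$. Stationarity of the gaps (Theorem~\ref{thm:stationary}) reduces the spatial part to exponential moments of a centered Poisson-type count, which is Brownian at scale, so these are explicit. The time part reduces to fluctuations of the tagged particle at the origin, controlled through the collision-chain and last-passage estimates behind Lemma~\ref{lem:finsys}. Next, use the mild form of the equation from Step 1 together with BDG inequalities to get Kolmogorov-type increments $\EE|\tlE(t,x)-\tlE(s,y)|^{r}\le C(|t-s|^{r/4}+|x-y|^{r/2})$. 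Here the mollification scale $\eps^{1/(2\theta)}$ and the tuning parameter $\delta$ in the weighted sum must be chosen so that the discontinuities at the microscopic scale $\eps^{1/2}$ are absorbed. Tightness in $C([0,\infty);\clc(\RR))$ then follows.

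\emph{Step 3: closeness.} To prove \eqref{eq:uniapprox}, bound the difference $\tlE-\clG$ by the oscillation of $\clG^\eps$ over spatial windows of size $\eps^{1/(4\theta)}\gg\eps^{1/2}$. This oscillation is controlled by the count fluctuations of $N_\eps$ over short intervals, whose exponential moments are small. To pass from pointwise bounds to $\sup_{t\le T,x\in K}$, take a union over a polynomial grid and use continuity estimates between grid points.

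\emph{Step 4: identification.} For any limit point $u$, show that for test functions $\varphi$, both $\langle u_t,\varphi\rangle-\langle u_0,\varphi\rangle-\tfrac12\int_0^t\langle u_s,\varphi''\rangle ds$ and its square minus $\sigma^2\int_0^t\langle u_s^2,\varphi^2\rangle ds$ are martingales. The initial condition converges to $e^{\sigma B}$ by Donsker's theorem applied to i.i.d.\ Exp$(1)$ gaps; the shift $-\sigma\eps^{-1/4}x$ exactly centers the count. By Bertini--Giacomin uniqueness this martingale problem characterizes the law $\clQ$.

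\emph{Main obstacle.} I expect the hardest part to be the quadratic variation: showing that $\eps^{1/2}\sum_i(\cdot)^2$ converges to $\sigma^2\int u^2\varphi^2$. This is a replacement lemma requiring that local averages of products of neighboring gap functionals self-average. I would attack it using stationarity of the gaps in the tagged-particle frame, combined with the Step 3 closeness, so that $(\clG^\eps)^2$ can be replaced by $(\tlE)^2$ with an $L^1$-vanishing error.
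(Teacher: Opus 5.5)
\textbf{The gap is in Step 2.} Your continuity estimates, Step 3 and the identification all rest on the uniform-in-$\eps$ bound $\EE[\clG^\eps(t,x)^r]\le Ce^{a|x|}$, and that bound cannot be obtained the way you propose.

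Split $N_\eps(t,x-\sigma\eps^{-1/4}t)$ into a count relative to the tagged particle $X_0^\eps(t)$, which is Poisson by stationarity, plus the contribution of $X_0^\eps(t)$ itself. Neither piece is uniformly controlled.
\begin{itemize}
\item Under $\gamma_1$ the stationary local-time rate is $1$. So each particle drifts at microscopic speed $p_\eps-q_\eps=-\tanh(\sigma\eps^{1/4}/2)$, i.e.\ $X_0^\eps(t)\approx-\tfrac{\sigma}{2}\eps^{-1/4}t$. This is only half the speed of the Hopf--Cole frame $x-\sigma\eps^{-1/4}t$.
\item The Poisson count is therefore taken over a macroscopic interval of length $\approx\tfrac{\sigma}{2}\eps^{-1/4}t$. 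Its centered exponential moment at scale $r\sigma\eps^{1/4}$ is of order $\exp\{c\,r^2\eps^{-1/4}t\}$.
\item The tagged-particle factor $e^{-\sigma\eps^{-1/4}X_0^\eps(t)-\frac12\sigma^2\eps^{-1/2}t}$ must then diverge in the same way. Only the product is $O(1)$, through a cancellation that is exactly what has to be proved.
\item The collision-chain, last-passage and totally asymmetric comparisons ($X^0\le X\le X^1$) only localize $X_0^\eps(t)$ to a window of width $O(\eps^{-1/2})$. The exponent instead requires $X_0^\eps(t)+\tfrac{\sigma}{2}\eps^{-1/4}t$ to precision $O(\eps^{1/4})$.
\end{itemize}
This route gives only $\eps$-dependent bounds. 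That is all Lemma~\ref{lem:zerobdN} proves, and the paper uses it only to know that a supremum is finite.

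\textbf{The missing idea is to close the moment estimate on itself.}
\begin{itemize}
\item Apply BDG to the martingale representation \eqref{eq:mart_rep}.
\item Turn the bracket into $\int_0^t\!\int\clG^\eps(s,z)^2\bigl(1+\tfrac{\eps^{1/4}}{\sigma}\tfrac{z-x}{t-s+\delta}\bigr)p^2_{t-s+\delta}(x-z)\,dz\,ds$ using the summation-by-parts identity of Lemma~\ref{lem:SBPformula}.
\item Use the pointwise bound $\tlE^{t,x,\delta,\eps}(t)\ge\tfrac12e^{-\frac12\sigma^2\eps^{-1/2}\delta}\clG^\eps(t,x-\sigma\eps^{-1/4}\delta)$. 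Its constant is $\eps$-free exactly when $\delta=\eps^{1/2}$.
\end{itemize}
This yields $F(t)\le C_1+C_2\int_0^t(t-s)^{-1/2}F(s)\,ds$ for $F(t)=\sup_x e^{-2a|x|}\|\clG^\eps(t,x)\|_r^2$, which a singular Gronwall lemma closes. Stationarity is used only at time $0$, through explicit Poisson moments.

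The same identity also removes what you call the main obstacle. Between particles $\clG^\eps(s,\cdot)$ decays like $e^{-\sigma\eps^{-1/4}z}$, and it jumps by the factor $e^{\sigma\eps^{1/4}}$ at each particle. Hence $(1-e^{-2\sigma\eps^{1/4}})\sum_i\clG^\eps(s,Y_i^\eps)^2F(Y_i^\eps)=\int\clG^\eps(s,z)^2(2\sigma\eps^{-1/4}F-F')\,dz$ holds exactly, by telescoping. So the discrete bracket equals $\sigma^2\int\clG^\eps(s,z)^2(p_{\eps^{1/(2\theta)}}*\varphi)(z)^2\,dz$ up to error terms whose deterministic prefactors tend to $0$. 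No self-averaging of gaps is needed; Step 3 then only has to replace $\clG^\eps$ by $\tlE_\theta^\eps$.

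Likewise, Step 1 produces no error terms: the weak form is exactly $\tfrac12\langle\tlE_\theta^\eps,\varphi''\rangle\,dt$ plus a martingale.
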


\begin{remark}[Tightness and convergence of $\tlE_\theta^\eps$ with general initial data] \label{rem:gentightness} \normalfont
{
While our main result requires the initial data to be distributed as $\gamma_1$, this assumption enters  primarily in proving the uniform limit \eqref{eq:uniapprox}. 
Tightness of the collection $\{\tlE_\theta^\eps,\eps > 0\}$ in $C(\half \times \RR : \RR)$ can be seen to hold for  more general initial data. 
This is described in detail in Remark \ref{rem:gencont}.

In addition to proving tightness, it should be possible to establish convergence of $\tlE_\theta^\eps$ to the stochastic heat equation under more general assumptions on the initial data. However, this does not follow immediately from our arguments. What is needed additionally to establish such convergence is described in Remark \ref{rem:genconvE}. 

To establish convergence of the unmollified field $\clG^\eps$ to SHE for more general initial data, one would need to prove the convergence \eqref{eq:uniapprox} without appealing to properties of the distribution $\gamma_1$, a task which lies beyond the scope of the methods of this paper.
}
\end{remark}

\section{Well-posedness} \label{sec:wellp}
In this section we prove Theorems \ref{thm:unique} and \ref{thm:exist}.
The main ingredient in the proof of the first result on pathwise uniqueness is to understand certain events associated with chains of collisions and analyze their probabilities using Brownian last-passage percolation estimates.

\subsection{Chains of collisions and Brownian last-passage percolation}
The idea of controlling the lengths of chains of collisions to establish pathwise uniqueness of solutions was introduced in \cite{banbudrud2025wp}. Consider a strong solution $(X,L)$ of \eqref{eq:asym_eq}.  Define,
for $0 \leq u < v < \infty$ and $i \in \ZZ$, 
\[
\begin{split}
\Ksd(i,[u,v]) = \sup\{k \geq 0 & : \exists u \leq s_{i + k - 1} \leq \cdots \leq s_{i + 1} \leq s_i \leq v \\
& \text{ such that, for } i \leq j \leq i + k - 1, X_{j+1}(s_j) = X_j(s_j)\},
\end{split}
\]
\[
\begin{split}
\Ksu(i,[u,v]) = \sup\{k \geq 0 & : \exists u \leq s_{i - k + 1} \leq \cdots \leq s_{i - 1} \leq s_i \leq v \\
& \text{ such that, for } i-k+1 \leq j \leq i, X_{j-1}(s_j) = X_j(s_j)\},
\end{split}
\]
and for $i_1 \leq i_2$ in $\ZZ$, let
\[
\Ks([i_1,i_2],[u,v]) = \max\left\{\Ksd(i_2,[u,v]), \Ksu(i_1,[u,v])\right\}.
\]
The quantity $\Ksd(i,[u,v])$ (resp.\ $\Ksu(i,[u,v])$) represents the maximal number of collisions in the time interval $[u,v]$ in a down-right (resp.\ up-right) chain of collisions ending at the $i$-th particle.  {The significance of these quantities is seen from Lemma \ref{lem:finsys} which says that on the event $\{\Ks([i_1,i_2],[0,T]) \le M\}$, for any $j \in [i_1, i_2]$, 
$X_j$ agrees up to time $T$ with a particle in a finite system of singularly interacting Brownian particles.} 

For $i_1 \leq i_2$ in $\ZZ$, let $\{X_j^{[i_1,i_2]}(t), t \in \half\}_{i_1 \leq j \leq i_2}$ to be the finite system of  Brownian particles on the real line, given as the solution to the system of equations
\begin{equation} \label{eq:finfam}
X_j^{[i_1,i_2]}(t) = x_j + B_j(t) + p L_{(j-1,j)}^{[i_1,i_2]}(t) - q L_{(j,j+1)}^{[i_1,i_2]}(t), \quad t \in \half, \quad i_1 \leq j \leq i_2,
\end{equation}
where $L_{(i_1-1,i_1)}^{[i_1,i_2]} \equiv L_{(i_2,i_2+1)}^{[i_1,i_2]} \equiv 0$; and for $i_1 \leq j \leq i_2-1$, $L_{(j,j+1)}^{[i_1,i_2]}$ is continuous, non-decreasing, equal to zero at $t = 0$, {increasing only when $\{X_j^{[i_1,i_2]}(t) = X_{j+1}^{[i_1,i_2]}(t)\}$}, and such that $X_j^{[i_1,i_2]}(t) \le X_{j+1}^{[i_1,i_2]}(t)$ for all $t\ge 0$. Recall that solutions to finite systems of the type \eqref{eq:finfam} exist in the strong sense and are pathwise unique, so $\{X_j^{[i_1,i_2]}\}$ and $\{L_{(j,j+1)}^{[i_1,i_2]}\}$ are well-defined (for more details, see \cite{HR} and \cite[Section 2.1]{karatzas2016systems}). 

\begin{lemma}\label{lem:finsys}
Fix an initial distribution $\gamma \in \clp_0(\RR^{\ZZ})$, an initial condition $x = \{x_i\}_{i \in \ZZ} \sim \gamma$, and Brownian motions $\{B_i\}_{i \in \ZZ}$ as in Section \ref{ssec:biinfinite}. Let 
$X = \{X_i(t), t \in \half\}_{i \in \ZZ}$, $L = \{L_{(i,i+1)}(t), t \in \half\}_{i \in \ZZ}$ be as in Definition \ref{def:biinf}. 
\begin{enumerate}[label = (\roman*)]
\item For $i_1 \leq i_2$ in $\ZZ$ and $M \in \NN$, on the event that $\Ks([i_1,i_2],[0,T]) \leq M$, for all $i_1 \leq j \leq i_2$ and $t \in [0,T]$, 
$$
X_j(t) = X_j^{[i_1-M,i_2+M]}(t),
$$
where the system $\{X_j^{[i_1-M,i_2+M]}\}$ is defined as in \eqref{eq:finfam}. 

\item Suppose that for any solution to \eqref{eq:asym_eq} with some initial distribution $\gamma$ and 
satisfying some property $(\clp)$, for all $i \in \ZZ$ and $T \in (0,\infty)$, $\Ksu(i,[0,T])$ and $\Ksd(i,[0,T])$ are finite almost surely. Then solutions to \eqref{eq:asym_eq} with initial distribution $\gamma$ and satisfying property $(\clp)$ are pathwise unique.
\end{enumerate}
\end{lemma}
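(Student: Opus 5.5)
Part (i) is a localization statement. The strategy is to show that, on the event $\{\Ks([i_1,i_2],[0,T]) \le M\}$, the block of particles $i_1-M,\dots,i_2+M$ restricted to a suitable random space-time region behaves like a finite system. The idea is to use the chain-of-collision structure: whenever a boundary local time (e.g.\ $L_{(i_1-M-1,i_1-M)}$) is active, its influence can only reach the particles in $[i_1,i_2]$ through a chain of collisions.

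Concretely, for each $j$ define the last time $\tau_j \le T$ up to which particle $j$ is unaffected by collisions propagating from outside. Set $\tau_j = T$ for $j\in[i_1,i_2]$, and for $j$ below $i_1$ let $\tau_j$ be the latest time $s$ such that a down-right/up-right chain from $j$ reaches $[i_1,i_2]$ by time $T$. More simply, I would argue by comparison on nested time-dependent index windows. Let $\sigma_k$, for $k=0,1,\dots,M$, denote the supremum of times $s\le T$ at which $L_{(i_1-k-1,i_1-k)}$ increases after the relevant later collision. By definition of $\Ksu$, a chain of length $>M$ ending at $i_1$ is impossible, so the local time $L_{(i_1-M-1,i_1-M)}$ can charge only at times after which no chain connects particle $i_1-M$ to $i_1$ (and symmetrically at the top with $\Ksd(i_2,\cdot)$).

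The cleanest route is to prove that the infinite system and the finite system $X^{[i_1-M,i_2+M]}$ agree on the space-time set
\[
D=\{(j,t): i_1-M\le j\le i_2+M,\ t\le \rho_j\},
\]
where $\rho_j$ is defined recursively. For $j\in[i_1,i_2]$ we take $\rho_j=T$. For $j<i_1$ we take $\rho_j$ to be the last collision time of the pair $(j,j+1)$ before $\rho_{j+1}$, with the symmetric definition for $j>i_2$, and $\rho_j=0$ if no such collision occurs. The bound on $\Ks$ ensures $\rho_{i_1-M}=0$, or at least that the outer pair has no collision before $\rho_{i_1-M+1}$, so the boundary local times $L_{(i_1-M-1,i_1-M)}$ and $L_{(i_2+M,i_2+M+1)}$ vanish on the relevant intervals. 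On $D$ the infinite-system pair $(X,L)$ restricted to the block then satisfies the finite-system equations \eqref{eq:finfam}, with time-varying boundaries at which local times do not charge. Pathwise uniqueness of finite reflected systems (Harrison--Reiman Skorokhod map, Lipschitz and nonanticipating) gives agreement. I would run this as an induction along the stopping-type times $\rho_j$, working inward from the outermost index, because uniqueness for a finite system only applies on an interval where the boundary pushes are identical. I expect this to be the main obstacle: the $\rho_j$ are not stopping times, so the argument must be pathwise, using the deterministic Skorokhod map for a fixed realization of the driving paths $x_j+B_j$ together with the known boundary local time input on each subinterval.

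Part (ii) then follows directly. Given two solutions $(X,L)$ and $(X',L')$ with property $(\clp)$, the hypothesis gives that both $\Ks$ and $\Ks{}'$ for $[i,i]$ on $[0,T]$ are a.s.\ finite. Hence for a.e.\ $\omega$ there is a finite $M$ with both chain counts at most $M$. By (i) both $X_i$ and $X'_i$ equal $X_i^{[i-M,i+M]}$ on $[0,T]$, and this finite system is a deterministic function of $x$ and the $B$'s. Taking a union over $i\in\ZZ$ and $T\in\NN$ gives $X=X'$ a.s. Finally $L=L'$ follows because $L_{(i,i+1)}$ is recovered from \eqref{eq:asym_eq} by solving the triangular relations: the finite-system local times also coincide, or one can use the Skorokhod representation of local times given $X$.
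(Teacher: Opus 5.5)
Your part (ii) is fine, and your strategy for (i) is the right one: isolate the block $[i_1-M,i_2+M]$ on a region whose boundary pairs carry no local time, then invoke finite-system uniqueness. The paper organizes this forward in time. Its stopping times $\tau^\pm_j$ record when an outside collision chain first reaches particle $j$, the window $\mathcal{J}(t)$ shrinks at these times, and it inducts over the removal times. Your backward times $\rho_j$ give a different region $D$, which serves equally well for $X$ itself.

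The genuine gap is the sentence asserting that pathwise uniqueness of finite reflected systems gives agreement on $D$. Uniqueness applies only on a window $[a,b]\times[s,s')$ on which \emph{both} $X$ and $X^{[i_1-M,i_2+M]}$ are closed systems with the same frozen boundary local times. Because $D$ is built from the collisions of $X$, you know that $L_{(j,j+1)}$ is constant on $(\rho_j,\rho_{j+1}]$ on the lower side. You never show that $L^{[i_1-M,i_2+M]}_{(j,j+1)}$ is constant there, and this is not automatic. Once $L_{(i_1-M-1,i_1-M)}$ starts charging, the outer particles of the finite system follow different paths from those of $X$. A priori, $X^{[i_1-M,i_2+M]}_j$ could meet $X^{[i_1-M,i_2+M]}_{j+1}=X_{j+1}$ at a time in $(\rho_j,\rho_{j+1}]$ at which $X_j<X_{j+1}$, after which the inner particles of the two systems separate. (The paper's sketch is terse at the same point: its statement (2), obtained by ``the same argument with $X$ replaced by $X^{[i_1-M,i_2+M]}$'', tacitly needs this fact, since $\mathcal{J}(t)$ is defined from $X$.)

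The missing ingredient is a comparison step. On the lower side, the block of $X$ differs from the finite system only through the extra nonnegative, nondecreasing input $pL_{(i_1-M-1,i_1-M)}$ on its bottom particle. Monotonicity of the finite-system Skorokhod map (in initial positions and in such inputs) then gives $X^{[i_1-M,i_2+M]}_j\le X_j$ for the lower particles that have left the window, as long as the particles inside the window agree. On the upper side the extra input is $-qL_{(i_2+M,i_2+M+1)}$ and the inequality reverses. A finite-system collision across the window boundary would then force a collision of $X$ there, e.g.\ $X_j\ge X^{[i_1-M,i_2+M]}_j=X^{[i_1-M,i_2+M]}_{j+1}=X_{j+1}$. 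That contradicts the definition of $\rho_j$. Run as a first-failure argument forward in time, this closes the induction.

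Three smaller points:
\begin{enumerate}
\item The obstacle you single out (the $\rho_j$ are not stopping times) is not one. The finite-system solution is a deterministic nonanticipating functional of the driving paths, so restriction to random intervals is harmless path by path.
\item The induction has to run forward in time over the ordered values of the $\rho_j$, not index by index. Particle $i_1-M$ on $[0,\rho_{i_1-M}]$ is coupled to the whole block.
\item $\Ksu(i_1,[0,T])\le M$ does not give $\rho_{i_1-M}=0$. It gives only that the pair $(i_1-M-1,i_1-M)$ does not collide on $[0,\rho_{i_1-M}]$, which is what you actually need.
\end{enumerate}
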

\begin{proof} 
The proof follows along the lines of \cite[Lemma 3.2]{banbudrud2025wp}. 
We just sketch the parts that are similar.

{The basic idea in the proof of Lemma \ref{lem:finsys}(i) is as follows. Particles outside the cluster $\{X_j, i_1 \leq j \leq i_2\}$ can only affect the cluster through an up-right chain of collisions reaching the cluster from below or a down-right chain of collisions reaching the cluster from above. Hence, on the event $\Ks([i_1,i_2],[0,T]) \leq M$, particles outside of the larger cluster $\{X_j, i_1 - M \leq j \leq i_2 + M\}$ are causally isolated from the smaller cluster on the time interval $[0,T]$. Therefore, up to time $T$, the evolution of the smaller cluster is indistinguishable from the evolution we would observe if we were to remove all particles not in the larger cluster. We now give details.} 

Fix $i_1, i_2, M$ as in the statement of part (i) of the lemma. Define sequences of stopping times 
\[
\tau_{i_1-M-1}^- \leq \tau_{i_1-M}^- \leq \cdots \leq \tau_{i_1}^-, \quad\quad \tau_{i_2+M+1}^+ \leq \tau_{i_2+M}^+ \leq \cdots \leq \tau_{i_2}^+
\]
inductively as follows: $\tau_{i_1 - M - 1}^- = \tau_{i_2 + M + 1}^+ = 0$, and for each $i_1 - M \leq j \leq 1$ and $1 \leq k \leq i_2 + M$, 
\[
\tau_j^- = \inf\{t \geq \tau_{j-1} : X_{j-1}(t) = X_j(t)\}, \quad\quad \tau_k^+ = \inf\{t \geq \tau_{k+1} : X_{k+1}(t) = X_k(t)\}.
\]
Define the time-dependent integer interval $\mathcal{J}(t)$ as follows:
\[
\mathcal{J}(t) = [\min\{j : \tau_j^- > t\}, \max\{k : \tau_k^+ > t\}], \quad 0 \leq t < \min\{\tau_{i_1}^-, \tau_{i_2}^+\}.
\]
(Hence, $\mathcal{J}(0) = [i_1 - M, i_2 + M]$, and we remove an element $j$ from the boundary of $\mathcal{J}(t)$ each time an external particle collides with the particle $X_j(t)$ in the cluster of particles $\{X_i(t), i \in \mathcal{J}(t)\}$.) 

Observe that if $K([i_1,i_2],[0,T]) \leq M$, then $\min\{\tau_{i_1}^-, \tau_{i_2}^+\} > T$; otherwise, either the sequence $\{s_j = \tau_j^-\}_{i_1 - M \leq k \leq i_1}$ would violate the definition of $\Ksu(i_1,[0,T])$ or the sequence $\{s_k = \tau_k^+\}_{i_2 \leq k \leq i_2 + M}$ would violate the definition of $\Ksd(i_2,[0,T])$. Also note that $[i_1,i_2] \subset \mathcal{J}(t)$ for all $0 \leq t < \min\{\tau_{i_1}^-, \tau_{i_2}^+\}$. Thus, to prove part (i) of the lemma, it suffices to prove the following result.

\vspace{0.1in}

\begin{claim}{1}
For all $0 \leq t < \min\{\tau_{i_1}^-, \tau_{i_2}^+\}$, on the event that $\mathcal{J}(t) = [j_1^*,j_2^*]$, 
\[
X_i(t) = X_i^{[i_1 - M,i_2 + M]}(t) \text{ for } j_1^* \leq i \leq j_2^*.
\]
\end{claim}

Let 
$$
L = |\{j : 0 < \tau_j^- \leq \min\{\tau_{i_1}^-, \tau_{i_2}^+\}\}| + |\{k : 0 < \tau_k^+ \leq \min\{\tau_{i_1}^-, \tau_{i_2}^+\}\}|.
$$
Define stopping times $\tau_0 \leq \tau_1 \leq \tau_2 \leq \cdots \leq \tau_{L}$ as follows: $\tau_0 = 0$ and for $1 \leq j \leq 2M+1$, $\tau_j$ is the first time after $\tau_{j-1}$ that an element is removed from the interval $\mathcal{J}(t)$; equivalently, 
\[
\tau_i = \inf\{t > 0 : |\{j : 0 < \tau_j^- < t\}| + |\{k : 0 < \tau_k^+ < t\}| \geq i\}, \quad 1 \leq i \leq 2M+1.
\]
The claim will be proved by fixing $\omega \in \Omega$ and establishing (1) for all $0 \leq t < \min\{\tau_{i_1}^-, \tau_{i_2}^+\}$ and all $i \in \mathcal{J}(t)$, $X_i(t) = X_i^{\mathcal{J}(t)}(t)$, and (2) for all $0 \leq t < \min\{\tau_{i_1}^-, \tau_{i_2}^+\}$ and all $i \in \mathcal{J}(t)$, $X_i^{[i_1 - M,i_2 + M]}(t) = X_i^{\mathcal{J}(t)}(t)$. The sketch we provide below assumes that $\tau_k < \tau_{k+1}$ for each $0 \leq k \leq L-1$, but the proof can be easily modified in the case that the inequality is not strict.

We sketch the proof of the first statement (1). We show by induction that for $0 \leq k \leq 2M$, for all $t \in [\tau_k,\tau_{k+1})$, for all $i \in \mathcal{J}(t)$, $X_i(t) = X_i^{\mathcal{J}(t)}$. The base case is obtained as follows: On the time interval $[0,\tau_1)$, $\mathcal{J}(t) = [i_1 - M, i_2 + M]$, and $\{X_i\}_{i \in [i_1 - M, i_2 + M]}$, with associated local times $\{L_{(i,i+1)}\}_{i \in [i_1 - M-1, i_2 + M]}$, can be shown to solve the finite system of equations \eqref{eq:finfam} with the interval $[i_1 - M,i_2 + M]$ replacing $[i_1,i_2]$ (this can be seen from the fact that the boundary local times $L_{(i,i+1)}$ are constant on this time interval). Since both systems have the same initial data, they agree on the interval $[0,\tau_1)$ by pathwise uniqueness for finite systems of singularly interacting Brownian particles. For the induction step, suppose that the equality has been established for all $t < \tau_k$. Note that the interval $\mathcal{J}(t)$ is constant for $t \in [\tau_k,\tau_{k+1})$, say $\mathcal{J}(t) = [i_1^*,i_2^*]$. By the induction assumption and continuity, $X_i(\tau_k) = X_i^{[i_1^*,i_2^*]}(\tau_k)$ for all $i \in [i_1^*,i_2^*]$. Hence, the same argument as in the base case, again using pathwise uniqueness of solutions to the finite system, shows that for all $t \in [\tau_k,\tau_{k+1})$ and $i \in [i_1^*,i_2^*]$, $X_i(t) = X_i^{[i_1^*,i_2^*]}(t)$. This completes the induction step.

The proof of the second statement (2) is the same as the first, except that we replace the process $X$ by $X^{[i_1 - M, i_2 + M]}$. This completes the proof of the claim and the proof of part (i) of the lemma.

The proof of part (ii) of the lemma is a direct consequence of the first part and is completed exactly as in \cite[Lemma 3.3]{banbudrud2025wp}.
\end{proof}

By the lemma, to prove Theorem \ref{thm:unique}, it is enough to show that 
for any solution with initial condition satisfying \eqref{eq:ainit} and the associated local times satisfying \eqref{eq:aloc},
 $\Ksd(i,[0,T])$ and $\Ksu(i,[0,T])$ are both finite almost surely. 

Finiteness of these quantities depends crucially on the possibility of controlling the following quantities from Brownian last-passage percolation. For $i \in \ZZ, M \in \NN, 0 \le s \le t <\infty$, let
\begin{equation} \label{eq:mLPP}
\clB_M^-(i,[s,t]) = \inf_{s \leq s_{i + M - 2} \leq \cdots \leq s_i \leq t} \sum_{j = i}^{i+M-1} (B_j(s_{j-1}) - B_j(s_j)),
\end{equation}
and
\begin{equation} \label{eq:pLPP}
\clB_M^+(i,[s,t]) = \sup_{s \leq t_{i - M + 2} \leq \cdots \leq t_i \leq t} \sum_{j = i - M + 1}^i (B_j(t_{j+1}) - B_j(t_{j})),
\end{equation}
where by convention $s_{i + M - 1} = t_{i-M+1} = s$ and $s_{i-1} = t_{i + 1} = t$ in the sums appearing above. See \cite[Theorem 1]{smalldev2010} and \cite[Lemma 5.1]{banbudrud2025wp}. When $s=0$, we will write the above quantities as simply $\clB_M^{\pm}(i,t)$.

\subsection{Pathwise Uniqueness}
\label{sec:pathuniq}

In this section we provide the proof of Theorem \ref{thm:unique}.
For the proof we will consider certain auxiliary half-infinite systems of singularly interacting particles.

Let $(X,L)$ be a solution to \eqref{eq:asym_eq}. Associated with such a solution are processes $\hX = \{\hX_j\}_{j \in \NN}$, $\hL = \{\hL_{(j-1,j)}\}_{j \in \NN}$, $\cX = \{\cX_j\}_{j \in \NN}$, and $\cL = \{\cL_{(j-1,j)}\}_{j \in \NN}$, defined as follows: For $i \in \NN$ and $t \in \half$, let 
\[
\begin{split}
& \hX_j(t) = X_j(t), \quad \hL_{(j-1,j)}(t) = \begin{cases} 0 & \text{ if } j = 1, \\
L_{(j-1,j)}(t) & \text{ if } j \geq 2,
\end{cases} \\
& \cX_j(t) = -X_{-j}(t), \quad \cL_{(j-1,j)} = \begin{cases} 0 & \text{ if } j = 1, \\
L_{(-j,-j+1)}(t) & \text{ if } j \geq 2.
\end{cases}
\end{split}
\]
Also, for $j \in \NN$, let 
\[
\hx_j = x_j, \quad \cx_j = -x_{-j}, \quad \hB_j = B_j, \quad \cB_j = -B_{-j}.
\]
For $j \in \NN$ and $t \in \half$, let 
\begin{equation} \label{eq:Vhatch-def}
\hV_j(t) = \begin{cases}
\hB_1(t) + pL_{(0,1)}(t) & \text{ if } j = 1, \\
\hB_j(t) & \text{ if } j \geq 2,
\end{cases}
\quad\quad 
\cV_j(t) = \begin{cases}
\cB_1(t) + qL_{(-1,0)}(t) & \text{ if } j = 1, \\
\cB_j(t) & \text{ if } j \geq 2.
\end{cases}
\end{equation}
From \eqref{eq:asym_eq}, it is easy to check that the processes $(\hX,\hL)$ and $(\cX,\cL)$ satisfy the systems of equations
\begin{equation} \label{eq:upsystem}
\hX_j(t) = \hx_j + \hV_j(t) + p\hL_{(j-1,j)}(t) - q\hL_{(j,j+1)}(t), \quad t \in \half, \quad j \in \NN,
\end{equation}
and 
\begin{equation} \label{eq:downsystem}
\cX_j(t) = \cx_j + \cV_j(t) + q\cL_{(j-1,j)}(t) - p\cL_{(j,j+1)}(t), \quad t \in \half, \quad j \in \NN,
\end{equation}
respectively. Note that the roles of $p$ and $q$ have been reversed in the second system. In the terminology of \cite{AS2}, \eqref{eq:upsystem} and \eqref{eq:downsystem} are infinite systems of singularly interacting particles with driving functions $\{\hx_j+\hV_j(t)\}_{j \in \NN}$ and $\{\cx_j + \cV_j(t)\}_{j \in \NN}$, respectively.

Note that in Theorem \ref{thm:unique}, the assumption \eqref{eq:ainit} on the initial data is equivalent to 
\begin{equation} \label{eq:acinit}
\limsup_{M \to \infty} \frac{\cx_M}{\sqrt{M}} = \infty \text{ a.s.},
\end{equation}
and the assumption on the local times is equivalent to 
\begin{equation} \label{eq:acloc}
\forall T \in \half, \quad \limsup_{M \to \infty} \left(\frac{p}{q}\right)^M \cL_{(M,M+1)}(T) = 0 \text{ a.s.}
\end{equation}

For $0 \leq u < v < \infty$ and $i \in \NN$, let
\begin{equation} \label{eq:hchain}
\begin{split}
\hKs(i,[u,v]) = \sup\{k \geq 0 & : \exists u \leq s_{i + k - 1} \leq \cdots \leq s_{i + 1} \leq s_i \leq v \\
& \text{ such that, for } i \leq j \leq i + k - 1, \hX_{j+1}(s_j) = \hX_j(s_j)\},
\end{split}
\end{equation}
and let 
\begin{equation} \label{eq:cchain}
\begin{split}
\cKs(i,[u,v]) = \sup\{k \geq 0 & : \exists u \leq s_{i + k - 1} \leq \cdots \leq s_{i + 1} \leq s_i \leq v \\
& \text{ such that, for } i \leq j \leq i + k - 1, \cX_{j+1}(s_j) = \cX_j(s_j)\}.
\end{split}
\end{equation}
Clearly, for $i \in \ZZ$, if $i \geq 1$, then $\hKs(i,[u,v]) = \Ksd(i,[u,v])$, and if $i \leq -1$, then $\cKs(-i,[u,v]) = \Ksu(i,[u,v])$. Fix $T \in \ohalf$. To show that for all $i \in \ZZ$, $\Ksd(i,[0,T])$ is finite almost surely, it suffices to show that for all $i \in \NN$, $\hKs(i,[0,T])$ is finite almost surely. This follows from the first equality and noting that if $i \leq 0$, then $\Ksd(i,[0,T]) \leq -i + 1 + \Ksd(1,[0,T]) = -i + 1 + \hKs(1,[0,T])$. Similarly, to show that for all $i \in \ZZ$, $\Ksu(i,[0,T])$ is finite almost surely, it suffices to show that for all $i \in \NN$, $\cKs(i,[0,T])$ is finite almost surely.

Theorem \ref{thm:unique} is now an immediate consequence of the following proposition. Since the proof is an adaptation of \cite[Theorem 2.3(i)]{banbudrud2025wp}, we sketch the argument in the appendix.

\begin{proposition}
\label{prop:fin1210}
Let $X$ be any solution of \eqref{eq:asym_eq} with initial condition satisfying \eqref{eq:ainit} and the associated local times satisfying \eqref{eq:aloc}.
Then, for each $i \in \NN$ and $T \ge 0$,  $\hKs(i,[0,T]) < \infty$
and $\cKs(i,[0,T])<\infty$ a.s.
\end{proposition}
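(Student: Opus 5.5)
We reduce each half of the statement to the one-sided results of \cite{banbudrud2025wp}, using the auxiliary systems \eqref{eq:upsystem} and \eqref{eq:downsystem}. Both are one-sided infinite systems of singularly interacting particles in the sense of \cite{AS2}. Their driving functions $\hx_j+\hV_j$ and $\cx_j+\cV_j$ coincide with independent Brownian motions plus initial positions for $j \ge 2$. Only the first particle carries an additional continuous non-decreasing term: $pL_{(0,1)}$, respectively $qL_{(-1,0)}$.

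The first observation is that a chain of collisions starting at particle $i$ and going up (for $\hX$) involves only particles $j\ge i$. For $j \ge 2$ these particles are driven by pure Brownian motions. The extra drift on particle $1$ enters only through its position, and it is non-decreasing, so it pushes particle $1$ upward. The plan is therefore to run the argument of \cite[Theorem 2.3(i)]{banbudrud2025wp} for the chain events $\{\hKs(i,[0,T]) \ge M\}$ and $\{\cKs(i,[0,T]) \ge M\}$.

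The core of that argument is a deterministic inclusion. Suppose a down-right chain of length $M$ ends at particle $i$. Summing \eqref{eq:upsystem} along the chain times $s_{i+M-1}\le\cdots\le s_i$, and using that consecutive particles coincide at $s_j$, the local time increments telescope. The result bounds the initial gap $\hx_{i+M}-\hx_i$ by
\[
-\clB_M^-(i+1,[0,T]) \;+\; \text{a local-time boundary term at } (i+M,i+M+1) \;+\; \text{a term involving particle } i.
\]
Since $p$ and $q$ weight the two sides of each collision differently, the boundary term in the telescoping carries a factor $(q/p)^{M}$ or $(p/q)^{M}$, depending on orientation.

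The two halves then use different inputs.
\begin{itemize}
\item For $\hX$, where $p<1/2$, the local time weights favour the downward push. The local-time term should enter with a sign that makes it harmless, as in the $p<1/2$ case of \cite{banbudrud2025wp}, which needed no local-time assumption. Here the growth of $\hx_M$ comes for free from the fact that $X$ is a solution of the full system.
\item For $\cX$, where the roles of $p$ and $q$ are reversed so the effective parameter exceeds $1/2$, the boundary term is $(p/q)^M\cL_{(M,M+1)}(T)$. This is precisely what \eqref{eq:acloc} controls.
\end{itemize}
In both cases the Brownian part is controlled by the last-passage estimates of \cite[Theorem 1]{smalldev2010} and \cite[Lemma 5.1]{banbudrud2025wp}. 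These give that $|\clB_M^\pm(i,[0,T])|$ is of order $\sqrt{MT}$, with exponential tails in $\sqrt M$ on that scale.

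Combining these, on $\{\cKs(i,[0,T])\ge M\}$ we obtain $\cx_M \lesssim C\sqrt{MT} + o(1)$ plus a random constant depending only on the first few particles. By \eqref{eq:acinit} this fails for infinitely many $M$. Since the events $\{\cKs\ge M\}$ decrease in $M$, we conclude $\cKs(i,[0,T])<\infty$ a.s.; note that a subsequence suffices, so no Borel–Cantelli argument is needed.

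I expect the main obstacle to be the $\hX$ side, together with the non-Brownian first driver. We must check that the ordering of $X_0 \le X_1$ makes the extra term $pL_{(0,1)}$ appear with a favourable sign, or at least as a finite random constant independent of $M$. We also need growth of $\hx_M$ to the right, which is not assumed in Theorem \ref{thm:unique}. The fallback is to argue as in the $p<1/2$ case of \cite{banbudrud2025wp}: long down-right chains would force the upper particle $\hX_{i+M}$ to be pushed below level $\sup_{t\le T}\hX_i(t)$. Doing so requires the $q$-weighted local times to grow geometrically, which is incompatible with the finiteness of $\hX_i$ on $[0,T]$.
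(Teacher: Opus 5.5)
There is a genuine gap: your reduction matches the paper's, but the mechanism you describe between the reduction and the conclusion does not work, and it hides the step that carries both halves. The paper, like you, applies the one-sided theory of \cite{banbudrud2025wp} to $\hX$ and $\cX$, treating $pL_{(0,1)}$ and $qL_{(-1,0)}$ as perturbations of the first driver that cost only $O(1/\sqrt M)$.

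\textbf{Local times do not telescope along a chain.} The local time $L_{(j,j+1)}$ enters particle $j$'s increment on $[s_j,s_{j-1}]$ with weight $-q$. It enters particle $j+1$'s increment on $[s_{j+1},s_j]$ with weight $+p$. These are different time windows, so no $(q/p)^{\pm M}$ factor appears. Do the bookkeeping honestly: take $s_j$ to be the last meeting time of $j,j+1$ before $s_{j-1}$, and drop the nonnegative $p\,\Delta L_{(j-1,j)}$ terms. The chain then gives
\[
\hx_{i+M}-\hx_i\le \sup_{t\le T}\bigl(\hX_i(t)-\hx_i\bigr)+\bigl(\text{last-passage term of order }\sqrt{MT}\bigr)+q\,\hL_{(i+M,i+M+1)}(T),
\]
and the same bound holds for $\cX$ with $p\,\cL_{(i+M,i+M+1)}(T)$. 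So the boundary term is the \emph{undamped} push that the top particle of the chain receives from the particles beyond it. Controlling it at scale $\sqrt M$ is the entire difficulty.

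\textbf{The $\cX$ half.} This half has effective parameter $q>1/2$, so it is the case of \cite[Theorem 2.3(iii)]{banbudrud2025wp}, not 2.3(i). Here \eqref{eq:acloc} only gives $\cL_{(M,M+1)}(T)=o((q/p)^M)$, which is useless at scale $\sqrt M$. Its real role, as in the proof of Lemma \ref{lem:loctimemoments}, is to kill the tail of the $\sigma^k$-weighted identity, with $\sigma=p/q$. That writes the push as a geometric series over the particles further out. Bounding this series produces \eqref{eq:init_ubd}, which involves the multi-path last-passage functionals $\widecheck\clu(j,M,T)$ over doubly indexed time arrays. The almost sure bound \eqref{eq:gaussianstuff} (Lemmas 5.1(i), 6.4, 6.5 of \cite{banbudrud2025wp}) then lets \eqref{eq:acinit} close the argument. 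Nothing in your proposal plays this role: the bound ``$\cx_M\lesssim C\sqrt{MT}+o(1)$'' is asserted, not derived.

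\textbf{The $\hX$ half.} The local-time term is not harmless here either, since $q>1/2$ is exactly the weight with which particles beyond the chain drag its top particle down. The paper's input is Lemma \ref{lem:finite-hatI}: $\liminf_{M}\inf_{s\le T}\hX_{i+M}(s)/\sqrt M=\infty$ a.s., imported from \cite[Lemma 6.2]{banbudrud2025wp} with lowest driver $\hB_1+pL_{(0,1)}$. On $\{\hKs(i,[0,T])=\infty\}$ the chain bound $\inf_{s\le T}\hX_{i+M}(s)\le\sup_{s\le T}\hX_i(s)+O(\sqrt M)$ holds for every $M$, and together with the lemma this gives the contradiction. Your claim that growth of $\hx_M$ ``comes for free'' from $X$ being a solution is not an argument. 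Neither is your fallback about geometrically growing local times contradicting finiteness of $\hX_i$; neither yields such a lower bound.

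You were right to flag the growth issue, though. Evaluating Lemma \ref{lem:finite-hatI} at $s=0$ already forces $\liminf_M x_M/\sqrt M=\infty$. So growth to the right is genuinely part of what is imported from \cite{banbudrud2025wp}, not something the chain bookkeeping can produce.
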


\begin{proof}[Proof of Theorem \ref{thm:unique}] 
From Lemma \ref{lem:finsys} (ii), it suffices to show that for any solution of \eqref{eq:asym_eq} with initial condition satisfying \eqref{eq:ainit} and the associated local times satisfying \eqref{eq:aloc},
for all $i \in \ZZ$ and $T \in (0,\infty)$, $\Ksu(i,[0,T])$ and $\Ksd(i,[0,T])$ are finite almost surely. For this, in view of the discussion above 
Proposition \ref{prop:fin1210}, it suffices to show that for each $i \in \NN$ and $T \ge 0$,  $\hKs(i,[0,T]) < \infty$
and $\cKs(i,[0,T])<\infty$ a.s.
The last statement is a consequence of Proposition \ref{prop:fin1210}, completing the proof of the theorem.
\end{proof}

\subsection{Strong existence}
\label{sec:strexi}
In this section we prove Theorem \ref{thm:exist}.
Choose $\gamma \in \clp_0(\RR^{\ZZ})$ as in the statement of the theorem, and consider the initial data $x = \{x_j\}_{j \in \ZZ} \sim \gamma$ given on some filtered probability space $(\Omega, \clf, \{\clf_t\}_{t \in \half}, \PP)$ on which is given a collection of i.i.d. standard $\{\clf_t\}$-Brownian motions $\{B_i\}_{i \in \ZZ}$. We assume that $x$ is $\clf_0$-measurable and is therefore independent of 
$\{B_i\}_{i \in \ZZ}$. These choices will be fixed throughout the section.

Let $\{(M_n, N_n)\}_{n \in \NN}$ be a sequence of integer pairs such that $M_n \leq N_n$, the sequences $\{-M_n\}$ and $\{N_n\}$ are both non-decreasing, and $\lim_{n \to \infty} M_n = -\infty$ and $\lim_{n \to \infty} N_n = \infty$.
Although the system that we are interested in (i.e. the one described by equation \eqref{eq:asym_eq0}) does not have a drift, for studying stationary distributions in Section \ref{sec:stationary}, it will be useful to consider certain related models with non zero drift. Specifically, we consider the following.
Let $g_-, g_+ \in \mathbb{R}$. For $m,n \in \NN$, consider the finite system of equations
\begin{equation} \label{eq:m_to_n_system}
X_j^{[m,n]}(t) = x_j + B_j(t) + g_j^{[m,n]} t + pL_{(j-1,j)}^{[m,n]}(t) - q L_{(j,j+1)}^{[m,n]}(t), \quad t \in \half, \quad M_m \leq j \leq N_n.
\end{equation}
Here $L_{(M_m-1,M_m)}^{[m,n]} = L_{(N_n,N_n+1)}^{[m,n]} \equiv 0$, and for $M_m \leq j \leq N_n - 1$, 
$X_j^{[m,n]}(t) \le X_{j+1}^{[m,n]}(t)$ for all $j$ and $t$,
$L_{(j,j+1)}^{[m,n]}$ is continuous, nondecreasing, equal to zero at $t = 0$, and can only increase at times $t$ when $X_j^{[m,n]}(t) = X_{j+1}^{[m,n]}(t)$. The drifts $g_j^{[m,n]}$ are given by 
$$
g_{M_m}^{[m,n]} = g_-, \quad g_{N_n}^{[m,n]} = g_+, \quad g_j^{[m,n]} = 0 \text{ for } M_m+1 \leq j \leq N_n - 1.
$$
Thus the lowest and the highest particles are given drifts $g_{-}$ and $g_{+}$, respectively, while the remaining receive a drift of $0$. We caution the reader that in Section \ref{sec:pathuniq}, the notation
$X^{[m,n]}$ and $L^{[m,n]}$ was used for a different collection of processes.

The solution $(X^{[m,n]},L^{[m,n]}) = \{(X_j^{[m,n]},L_{(j,j+1)}^{[m,n]})\}_{M_m \leq j \leq N_n}$ to \eqref{eq:m_to_n_system} exists in the strong sense and is pathwise unique. (See \cite[Section 2.1]{karatzas2016systems}). The next result implies Theorem \ref{thm:exist}.

\begin{theorem} \label{thm:exist+}
 For each $n_0 \in \NN$, and $M_{n_0} \leq j \leq N_{n_0}$, the following limits exist almost surely, uniformly on compact subsets of $\half$:
\begin{equation} \label{eq:balanced_lim}
X_j = \lim_{\substack{n \to \infty, \\ n \geq n_0}}  X_j^{[n,n]}, \quad\quad L_{(j,j+1)} = \lim_{\substack{n \to \infty, \\ n \geq n_0}} L_{(j,j+1)}^{[n,n]}, 
\end{equation}
\begin{equation} \label{eq:m_lim}
X_j^- = \lim_{\substack{n \to \infty, \\ n \geq n_0}} \lim_{\substack{m \to \infty, \\ n,m\geq n_0}} X_j^{[m,n]}, \quad\quad L_{(j,j+1)}^- = \lim_{\substack{n \to \infty, \\ n \geq n_0}} \lim_{\substack{m \to \infty, \\ n,m\geq n_0}} L_{(j,j+1)}^{[m,n]},
\end{equation}
\begin{equation} \label{eq:p_lim}
X_j^+ = \lim_{\substack{m \to \infty, \\ m \geq n_0}} \lim_{\substack{n \to \infty, \\ n,m\geq n_0}} X_j^{[m,n]}, \quad\quad L_{(j,j+1)}^+ = \lim_{\substack{m \to \infty, \\ m \geq n_0}} \lim_{\substack{n \to \infty, \\ n,m\geq n_0}} L_{(j,j+1)}^{[m,n]}.
\end{equation}
Moreover, the processes $\{(X_j,L_{(j,j+1)})\}_{j \in \ZZ}$, $\{(X^{\pm}_j,L_{(j,j+1)}^\pm)\}_{j \in \ZZ}$ solve \eqref{eq:asym_eq} (with initial data $\{x_j\}$ and Brownian motions $\{B_j\}$). The collections of local times $\{L_{(j,j+1)}\}$, $\{L_{(j,j+1)}^\pm\}$  satisfy the assumption \eqref{eq:aloc}. Furthermore, the three solutions coincide: $X_j = X_j^- = X_j^+$, and $L_{(j,j+1)} = L_{(j,j+1)}^- = L_{(j,j+1)}^+$ for all $j \in \ZZ$.
\end{theorem}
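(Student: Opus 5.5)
The plan is to localize with chains of collisions, following the one-sided argument of \cite{banbudrud2025wp}. Fix $T<\infty$ and a block $[M_{n_0},N_{n_0}]$. For every finite system $X^{[m,n]}$ define the chain lengths $\Ksd$, $\Ksu$ as in Section \ref{sec:wellp}. The analogue of Lemma \ref{lem:finsys}(i) for finite systems with drifts at the extreme particles holds with the same proof. It shows that on $\{\Ks([M_{n_0},N_{n_0}],[0,T])\le K\}$ for $X^{[m,n]}$, the particles of the block coincide on $[0,T]$ with those of $X^{[M_{n_0}-K,\,N_{n_0}+K]}$, as long as both sub-ranges are interior to $[M_m,N_n]$. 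Hence the whole argument reduces to a single statement: the chain lengths are tight, uniformly in $(m,n)$. Precisely, we want a random $K<\infty$ a.s., not depending on $m,n$, that dominates both chain lengths for every system $X^{[m,n]}$ with $m,n\ge n_0$. Given such a $K$, for all $m,n$ large the block trajectories and their local times on $[0,T]$ are \emph{identical} to those of one fixed finite system. So all three iterated limits \eqref{eq:balanced_lim}--\eqref{eq:p_lim} exist and are eventually constant, hence equal.

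To produce $K$, bound the two chain directions separately.

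\emph{Down-right chains.} A chain of length $k$ starting from particle $i$ forces a collision between particle $i+k-1$ and particle $i+k$. Since the local-time terms push the lower particle down only by $q L$ and the higher particle up only by $pL$, we compare $X_{i+k}$ with its free motion. Iterating along the chain, as in \cite[Lemma 5.1]{banbudrud2025wp}, gives
\[
x_{i+k}-x_i \le \clB^-\text{-type LPP quantity} + |g_\pm|T,
\]
where the LPP quantity is bounded by $-\clB_k^-(i,T)$ plus the upward push received by particle $i$. By the small-deviation/LPP estimates for $\clB^\pm_k$ \cite{smalldev2010}, $|\clB_k^\pm(i,T)|\le C\sqrt{kT}$ eventually. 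The upward side of the system is a one-sided system with $p<1/2$, where \cite{banbudrud2025wp} needs no local-time condition, but the gaps $x_{i+k}-x_i$ are not controlled by \eqref{eq:ainit2}. So I would instead run the argument as in the one-sided uniqueness proof for $\hX$: dominate by a totally asymmetric comparison system, which is monotone in the number of particles via the comparison results of \cite{AS2} applied to \eqref{eq:upsystem}. This yields a bound on $\hKs$ uniform in $n$. The same structure treats the $\cX$ side, using \eqref{eq:ainit2} through Borel--Cantelli: a.s.\ $-x_{-k}\ge k^\chi\gg \sqrt k$ for large $k$. That growth is what defeats the $\sqrt{k}$ LPP fluctuations on the $p/q$-reversed side \eqref{eq:downsystem}.

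\emph{Local-time condition \eqref{eq:aloc}.} This should come from the same estimate. Large $-x_{-M}\gtrsim M^\chi$ implies that the cluster near $-M$ is, by time $T$, disconnected from the bulk with overwhelming probability. On that event $L_{(-M,-M+1)}(T)$ is the local time of a finite system of order $\sqrt{T}$-size particles. Its tail, summed over $M$, is $\sum_M (q/p)^{M}\cdot\PP(\cdot)$-negligible, because the probability of a chain of length $M^{\chi}$-ish decays super-exponentially (Gaussian in $M^{2\chi-1}\cdot M$). Verifying the limiting processes solve \eqref{eq:asym_eq} is immediate, since on each block they coincide with a finite-system solution.

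\emph{Main obstacle.} The key difficulty is the uniformity in $(m,n)$ of the chain bound for the \emph{iterated} limits, where one boundary goes to infinity first. The boundary drifts $g_\pm$ and the $p<1/2$ reflection could in principle allow arbitrarily long down-left-propagating effects. I would first try the monotonicity/comparison of \cite{AS2} in the driving functions to sandwich $X^{[m,n]}$ between $X^{[m,\infty]}$-type and $X^{[\infty,n]}$-type systems, and bound chains for the extremes only.
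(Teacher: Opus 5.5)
\textbf{There is a genuine gap: the uniform chain bound your whole argument rests on is never established, and the proposed ways of getting it cannot work.}

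Your reduction itself is sound. Suppose you had a single a.s.\ finite $K$ with $\Ks([M_{n_0},N_{n_0}],[0,T])\le K$ for \emph{every} system $X^{[m,n]}$, $m,n\ge n_0$. Then the finite-system analogue of Lemma \ref{lem:finsys}(i) would make all three iterated limits eventually constant and equal, which is even stronger than the theorem. But that uniform bound is the whole difficulty, and nothing in the proposal proves it.

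\begin{itemize}
\item The chain arguments of \cite{banbudrud2025wp} and Proposition \ref{prop:fin1210} show that chains are a.s.\ finite for \emph{one} given solution. They use properties of that solution: Lemma \ref{lem:finite-hatI} on the upper side, and the local-time condition \eqref{eq:aloc} in the lower-side argument.
\item They give no bound that is uniform over a countable family of different systems with different boundary drifts. A union bound over $(m,n)$ is not available, since $\sup_{m,n}$ of a.s.\ finite chain lengths can be infinite.
\item Your two routes to uniformity fail. Domination by a totally asymmetric system, or sandwiching $X^{[m,n]}$ between ``$X^{[m,\infty]}$-type'' and ``$X^{[\infty,n]}$-type'' systems via \cite{AS2}, controls \emph{positions}. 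Chain lengths, however, are defined through zeros of the gaps $X_{j+1}-X_j$, which are not monotone under coordinatewise comparison. A sandwiched configuration can have long collision chains even when the two bounding ones have none.
\item So ``this yields a bound on $\hKs$ uniform in $n$'' is a non sequitur, and the ``main obstacle'' you flag at the end is precisely the unproved claim.
\item Your treatment of \eqref{eq:aloc} misreads the weight. Since $(q/p)^{-M}=(p/q)^M\to0$, one only needs $L_{(-M,-M+1)}(T)=o((q/p)^M)$. The ``disconnected cluster'' heuristic with $\sum_M (q/p)^M\PP(\cdot)$ is neither needed nor justified.
\end{itemize}

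\textbf{What the paper does instead.} The missing ingredient is an a priori estimate that is uniform in $(m,n)$ and does not go through chains. The paper obtains it in four steps.
\begin{enumerate}
\item Lemma \ref{lem:fincomp} compares $X^{[m,n]}$ from below with the $p=0$ system: $X_j^{[m,n]}(t)\ge X_j^0(t)+g_*t\ge x_j+\clB^-_{\ell-j+1}(j,t)-L^0_{(\ell,\ell+1)}(t)+g_*t$. This bound is uniform in $(m,n)$.
\item Summing $r^{j-k}X_k^{[m,n]}$ with $r=p/q$ over $M_m\le k\le j$ makes the local times telescope, because $L^{[m,n]}_{(M_m-1,M_m)}\equiv0$. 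This gives the exact identity $L^{[m,n]}_{(j,j+1)}(t)=q^{-1}\sum_{k=M_m}^j r^{j-k}\bigl(g_kt+B_k(t)+x_k-X_k^{[m,n]}(t)\bigr)$.
\item Inserting the lower bound from step 1 yields the increment bound \eqref{eq:loc_contbd}, uniform in $(m,n)$. Hence $L^{[m,n]}$ and $X^{[m,n]}$ are equicontinuous and pointwise bounded on each block.
\item Arzel\`a--Ascoli gives subsequential limits that solve \eqref{eq:asym_eq}, and the same bound \eqref{eq:limlocbd} gives \eqref{eq:aloc}. Pathwise uniqueness (Theorem \ref{thm:unique}) then identifies all subsequential limits. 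This yields convergence of the full sequences and of both iterated limits, and equality of the three solutions.
\end{enumerate}
Chains of collisions enter only through uniqueness, i.e.\ for a single infinite solution, where their finiteness is actually proved.
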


Let $(X^0,L^0) = \{(X_j^0, L_{(j,j+1)}^0)\}_{j \in \ZZ}$ be the unique solution to \eqref{eq:asym_eq} with $p = 0, q = 1$ and with initial data $x = \{x_j\}$ and Brownian motions $\{B_j\}$, as above. The existence of such a solution was originally proved in \cite[Section 2.3.1]{WFSbook}. The uniqueness of this solution, under the assumption \eqref{eq:ainit} on the initial data (which is implied by \eqref{eq:ainit2}), follows from \cite[Theorem 2.3(i)]{banbudrud2025wp}. Note that although \cite{banbudrud2025wp} only considers a one-sided infinite system of singularly interacting particles, we still get pathwise uniqueness for the two-sided system in this case, owing to the totally asymmetric nature of the interactions when $p = 0$. Indeed, for any $K \in \ZZ$, the subsystem $\{(X_j^0,L^0_{(j,j+1)})\}_{j = K}^\infty$ is a one-sided system of singularly interacting Brownian particles of the sort studied in \cite{banbudrud2025wp}, with initial data satisfying \eqref{eq:ainit}. Hence, these trajectories are uniquely determined for any $K$.

For each $j \in \ZZ$, $X_j^0$ is the Skorokhod reflection from below of the trajectory $x_j + B_j(\cdot)$ from $X_{j+1}(\cdot)$. Hence, the following recursive relationship holds:
\begin{equation} \label{eq:skor1}
X_j^0(t) = x_j + B_j(t) + \inf_{0 \leq s \leq t} \left( X_{j+1}^0(s) - x_j - B_j(s) \right) \wedge 0 = x_j + B_j(t) -L_{(j,j+1)}^0(t).
\end{equation}

To prove Theorem \ref{thm:exist+}, we will need the following lemma. Recall the definition of $\clB_M^-(i,[0,t])= \clB_M^-(i,t)$ from 
\eqref{eq:mLPP}.

\begin{lemma} \label{lem:fincomp}
Let $g_* := \min\{g_-,g_+,0\}$. For any $m,n,j,\ell \in \NN$ with $M_m \leq j \leq \ell \leq N_n$,
\begin{equation} \label{eq:finbds}
X_j^{[m,n]}(t) \geq X_j^0(t) + g_* t \geq x_j + \clB^-_{\ell - j + 1}(j,t) - L_{(\ell,\ell+1)}^0(t) + g_*t.
\end{equation}
\end{lemma}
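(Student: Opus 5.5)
The plan is to prove both inequalities by comparison, working from the explicit Skorokhod-map formula \eqref{eq:skor1} for the totally asymmetric system $X^0$.

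\emph{First inequality.} I will show $X_j^{[m,n]}(t)\ge X_j^0(t)+g_*t$ for all $t$ by downward induction on $j$, starting at $j=N_n$.

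For the top particle, $L^{[m,n]}_{(N_n,N_n+1)}\equiv 0$. Hence $X^{[m,n]}_{N_n}(t)\ge x_{N_n}+B_{N_n}(t)+g_*t$, because $g_+\ge g_*$ and the local time enters with the nonnegative coefficient $p$. On the other hand, \eqref{eq:skor1} gives $X^0_{N_n}\le x_{N_n}+B_{N_n}$, which closes the base case.

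For the induction step, suppose the bound holds for $j+1$. Set $Y(t)=x_j+B_j(t)+g_j^{[m,n]}t+pL^{[m,n]}_{(j-1,j)}(t)$, so that $X^{[m,n]}_j=Y-qL^{[m,n]}_{(j,j+1)}$. Here $qL^{[m,n]}_{(j,j+1)}$ is nondecreasing, starts at $0$, increases only when $X_j^{[m,n]}=X_{j+1}^{[m,n]}$, and keeps $X_j^{[m,n]}\le X^{[m,n]}_{j+1}$. By uniqueness of the one-sided Skorokhod problem with the continuous barrier $X^{[m,n]}_{j+1}$, it is the regulator, and therefore
\[
X^{[m,n]}_j(t)=\min\Big\{Y(t),\ \inf_{s\le t}\big(X^{[m,n]}_{j+1}(s)+Y(t)-Y(s)\big)\Big\}.
\]
Since $g_j^{[m,n]}\ge g_*$ and $L^{[m,n]}_{(j-1,j)}$ is nondecreasing, we have $Y(t)-Y(s)\ge B_j(t)-B_j(s)+g_*(t-s)$ and $Y(t)\ge x_j+B_j(t)+g_*t$. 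Inserting these bounds together with the induction hypothesis $X^{[m,n]}_{j+1}(s)\ge X^0_{j+1}(s)+g_*s$ gives
\[
X^{[m,n]}_j(t)\ge g_*t+\min\Big\{x_j+B_j(t),\ \inf_{s\le t}\big(X^0_{j+1}(s)+B_j(t)-B_j(s)\big)\Big\}=g_*t+X^0_j(t),
\]
where the last equality is \eqref{eq:skor1}. The $g_*$ terms combine cleanly because $g_*s+g_*(t-s)=g_*t$.

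\emph{Second inequality.} I will iterate the same recursion \eqref{eq:skor1} from index $j$ up to $\ell$. At the top level I use
\[
X^0_\ell(s)=x_\ell+B_\ell(s)-L^0_{(\ell,\ell+1)}(s)\ge x_\ell+B_\ell(s)-L^0_{(\ell,\ell+1)}(t)\qquad (s\le t).
\]
Unrolling the recursion expresses $X_j^0(t)$ as a minimum over $k\in\{j,\dots,\ell\}$ and ordered times $t\ge s_j\ge\dots\ge s_{k-1}\ge 0$ of
\[
x_k+\sum_{i=j}^{k}\big(B_i(s_{i-1})-B_i(s_i)\big)\ \big(-L^0_{(\ell,\ell+1)}(t)\ \text{if } k=\ell\big),
\]
with the conventions $s_{j-1}=t$ and $s_k=0$.

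Three observations finish the argument. First, $x_k\ge x_j$ by monotonicity of the initial data. Second, $-L^0_{(\ell,\ell+1)}(t)\le 0$, so the $L^0$ term can be attached to every branch at the cost of a lower bound. Third, a branch that stops at $k<\ell$ is an admissible chain in the infimum \eqref{eq:mLPP} with $M=\ell-j+1$: take $s_k=\dots=s_{\ell-1}=0$, which makes the extra increments vanish. Hence every branch is bounded below by $x_j+\clB^-_{\ell-j+1}(j,t)-L^0_{(\ell,\ell+1)}(t)$.

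\emph{Main obstacle.} The only step that needs care is the identification of $qL^{[m,n]}_{(j,j+1)}$ as the Skorokhod regulator, together with the monotonicity of the Skorokhod map in both the barrier and the increments of the free path. That is what allows the drift and $p$-local-time terms to be dropped. The remaining work is index bookkeeping, matching the unrolled chain to the time conventions in \eqref{eq:mLPP}.
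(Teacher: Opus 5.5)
Your proof is correct and follows essentially the paper's route: downward induction using the Skorokhod reflection representation of $X_j^{[m,n]}$ from $X_{j+1}^{[m,n]}$ together with the recursion \eqref{eq:skor1}, and then iterating \eqref{eq:skor1} from $j$ up to $\ell$, using $x_k\ge x_j$ and the fact that chains padded with zero times are admissible in $\clB^-_{\ell-j+1}(j,t)$. The only differences are cosmetic. You compare $X^{[m,n]}$ directly with $X^0+g_*t$, whereas the paper passes through its auxiliary totally asymmetric system $X^*$. You also unroll the second induction into an explicit chain; strictly speaking, after the top-level substitution this gives a lower bound for $X_j^0(t)$ rather than an exact expression, which is all you need.
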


\begin{proof}
For $M_m \leq j \leq N_n$, let 
$$
U_j^*(t) = \begin{cases}
B_{M_m}(t) + g_- t & \text{ if } j = M_m, \\
B_j(t) & \text{ if } M_m+1 \leq j \leq N_n - 1, \\
B_{N_n}(t) + g_+t - L_{(N_n,N_n+1)}^0(t) & \text{ if } j = N_n.
\end{cases}
$$
Let $X^* = \{X_j^*\}_{M_m \leq j \leq N_n}$ be the finite system of singularly interacting particles satisfying the system of equations 
$$
X_j^*(t) = x_j + U_j^*(t) - L_{(j,j+1)}^*(t), \quad t \in \half, \quad M_m \leq j \leq N_n,
$$
where $L^*_{(N_n,N_n+1)} \equiv 0$, for $M_m \leq j \leq N_n - 1$, $X_j^*(t) \le X_{j+1}^*(t)$, $L^*_{(j,j+1)}$ is continuous, nondecreasing, equal to zero at $t = 0$, and can only increase at times $t$ when $X_j^*(t) = X_{j+1}^*(t)$. 

We will prove that, for $M_m \leq j \leq N_n$ and $t \in \half$, 
\begin{equation} \label{eq:intermediate}
X_j^{[m,n]}(t) \geq X_j^*(t) \geq X_j^0(t) + g_*t.
\end{equation}
The first inequality in \eqref{eq:finbds} follows from \eqref{eq:intermediate}. To prove the first inequality in \eqref{eq:intermediate}, we proceed by downward induction on $j$, starting with $j=N_n$. The base case $j = N_n$ holds since
\[
\begin{split}
X_{N_n}^{[m,n]}(t) & = x_{N_n} + B_{N_n}(t) + g_+t + pL_{({N_n}-1,{N_n})}^{[m,n]}(t) \\
& \geq x_{N_n} + B_{N_n}(t) + g_+t - L^0_{({N_n},{N_n}+1)}(t) = X_{N_n}^*(t).
\end{split}
\]
Assume the inequality has been proved for all $j$ such that $i+1 \leq j \leq N_n$, where $M_m \leq i \leq N_n - 1$. Notice that for all $t \in \half$, we can write 
\begin{equation} \label{eq:skorform}
X_i^{[m,n]}(t) = x_i + R_i(t) - L'_{(i,i+1)}(t),
\end{equation}
where $R_i(t) = B_i(t) + g_i^{[m,n]}t + pL_{(i-1,i)}^{[m,n]}(t)$ and $L'_{(i,i+1)}(t) = qL_{(i,i+1)}^{[m,n]}(t)$. By the properties of $L_{(i,i+1)}^{[m,n]}$, we see that $X_i^{[m,n]}$ is the Skorokhod reflection from below of the trajectory $x_i + R_i(\cdot)$ from $X_{i+1}^{[m,n]}(\cdot)$. Thus
\[
\begin{split}
L'_{(i,i+1)}(t) & = -\inf_{0 \leq s \leq t} \left( X_{i+1}^{[m,n]}(s) - x_i - R_i(s) \right) \wedge 0 \\
& \leq -\inf_{0 \leq s \leq t} \left( X_{i+1}^*(s) - x_i - R_i(s) \right) \wedge 0.
\end{split}
\]
Here the second line follows by the induction assumption. Applying this bound to \eqref{eq:skorform}, we obtain 
\[
\begin{split}
X_i^{[m,n]}(t) & \geq x_i + R_i(t) + \inf_{0 \leq s \leq t} \left( X_{i+1}^*(s) - x_i - R_i(s) \right) \wedge 0 \\
& = \inf_{0 \leq s \leq t} \left( X_{i+1}^*(s) + R_i(t) - R_i(s) \right) \wedge \left(x_i + R_i(t) \right). \\
& \geq \inf_{0 \leq s \leq t} \left( X_{i+1}^*(s) + U^*_i(t) - U^*_i(s) \right) \wedge \left(x_i + U^*_i(t) \right) = X_i^*(t).
\end{split}
\]
In the last line, we use the fact that, for any $0 \leq s \leq t < \infty$, $R_i(t) \geq U^*_i(t)$ and $R_i(t) - R_i(s) \geq U^*_i(t) - U^*_i(s)$ (using properties of the local time $L_{(i-1,i)}^{[m,n]}$) and  that $X_i^*$ is the Skorohod reflection from below of the trajectory $x_i +U_i^*$ from $X_{i+1}^*$.  This completes the induction step, and the first inequality in \eqref{eq:intermediate} is proved.

To prove the second inequality in \eqref{eq:intermediate}, we also proceed by downward induction on $j$. The base case holds, since by definition 
$$
X_{N_n}^*(t) = x_{N_n} + B_{N_n}(t) + g_+t - L_{(N_n,N_n+1)}^0(t) = X_{N_n}^0(t) + g_+t \geq X_{N_n}^0(t) + g_*t.
$$
Suppose the second inequality in \eqref{eq:intermediate} has been established for all $j$ such that $i+1 \leq j \leq N_n$, where $M_m+1 \leq i \leq N_n-1$. By \eqref{eq:skor1} and the induction assumption, 
\[
\begin{split}
X_i^*(t) & = x_i + B_i(t) + \inf_{0 \leq s \leq t} \left( X_{i+1}^*(s) - x_i - B_i(s) \right) \wedge 0 \\
& \geq  x_i + B_i(t) + \inf_{0 \leq s \leq t} \left( X_{i+1}^0(s) + g_*s - x_i - B_i(s) \right) \wedge 0 \\
& \geq x_i + B_i(t) + \inf_{0 \leq s \leq t} \left( X_{i+1}^0(s) - x_i - B_i(s) \right) \wedge 0 + g_* t \\
& = X_i^0(t) + g_*t.
\end{split}
\]
Here the third line uses the fact that $g_* \leq 0$. This completes the induction step for $i \geq M_m+1$. If $i = M_m$, we similarly have 
\[
\begin{split}
X_{M_m}^*(t) & = x_{M_m} + B_{M_m}(t) + g_-t + \inf_{0 \leq s \leq t} \left( X_{{M_m}+1}^*(s) - x_i - B_{M_m}(s) - g_- s \right) \wedge 0 \\
& \geq x_{M_m} + B_{M_m}(t) + g_-t + \inf_{0 \leq s \leq t} \left( X_{{M_m}+1}^0(s) + g_*s - x_i - B_{M_m}(s) - g_- s \right) \wedge 0.
\end{split}
\]
Noting that $g_* - g_- \leq 0$, the quantity above is bounded below by 
\[
\begin{split}
& x_{M_m} + B_{M_m}(t) + \inf_{0 \leq s \leq t} \left\{ g_*s + g_-(t - s) + \left( X_{{M_m}+1}^0(s) - x_{M_m} - B_{M_m}(s) \right) \wedge 0 \right\} \\
& \geq x_{M_m} + B_{M_m}(t) + g_*t + \inf_{0 \leq s \leq t} \left( X_{{M_m}+1}^0(s) - x_{M_m} - B_{M_m}(s) \right) \wedge 0 \\
& = X_{M_m}^0(t) + g_*t.
\end{split}
\]
This completes  the proof of \eqref{eq:intermediate} and thus proves the first inequality in \eqref{eq:finbds}.

To prove the second equality in \eqref{eq:finbds}, fix $\ell$ such that $M_m \leq \ell \leq N_n$. It suffices to show that for $M_m \leq j \leq \ell$,
\begin{equation}
X_j^0(t) \geq x_j + \clB_{\ell - j + 1}^-(j,t) - L_{(\ell,\ell+1)}^0(t).
\end{equation}
We proceed by downward induction on $j$. The base case $j = \ell$ is clear since, by definition, $\clB_1^-(\ell,t) = B_\ell(t)$, and hence
$$
X_\ell^0(t) = x_\ell + B_\ell(t) - L_{(\ell,\ell+1)}^0(t) = x_\ell + \clB_1^-(\ell,t) - L_{(\ell,\ell+1)}^0(t).
$$
Suppose the result has been proved for $i+1 \leq j \leq \ell$ where $M_m \leq i \leq \ell - 1$. Then 
\begin{equation} \label{eq:x0lb}
\begin{split}
X_i^0(t) & = x_i + B_i(t) - L^0_{(i,i+1)}(t) \\
& = x_i + B_i(t) + \inf_{0 \leq s \leq t} \left(X_{i+1}^0(s) - x_i - B_i(s) \right) \wedge 0 \\
& = \inf_{0 \leq s \leq t} \left( X_{i+1}^0(s) + B_i(t) - B_i(s) \right) \wedge (x_i + B_i(t)) \\
& \geq \inf_{0 \leq s \leq t} \left( x_{i} + \clB^{-}_{\ell - i}(i+1,s) - L^0_{(\ell,\ell+1)}(s) + B_i(t) - B_i(s) \right) \wedge (x_i + B_i(t)).
\end{split}
\end{equation}
Here the last line follows from the induction assumption and the fact that $x_i \leq x_{i+1}$. Observe that since $\clB_{\ell - i}(i+1,s)$ and $- L^0_{(\ell,\ell+1)}(s)$ are non-positive, and $B_i(0) = 0$, 
$$
\inf_{0 \leq s \leq t} \left( x_{i} + \clB^{-}_{\ell - i}(i+1,s) - L^0_{(\ell,\ell+1)}(s) + B_i(t) - B_i(s) \right) \leq (x_i + B_i(t)).
$$
Thus, the last line of \eqref{eq:x0lb} is equal to 
\[
\begin{split}
 & \inf_{0 \leq s \leq t} \left( x_{i} + \clB^{-}_{\ell - i}(i+1,s)  - L^0_{(\ell,\ell+1)}(s) + B_i(t) - B_i(s) \right) \\
 & \geq x_i + \inf_{0 \leq s \leq t} \left(\clB_{\ell - i}^-(i+1,s) + B_i(t) - B_i(s) \right)  - L^0_{(\ell,\ell+1)}(t) \\
 & = x_i + \clB_{\ell - i + 1}^-(i,t) - L^0_{(\ell,\ell+1)}(t),
\end{split}
\]
where the second line follows from the fact that $L^0   _{(\ell,\ell+1)}$ is nondecreasing, and the last line follows from the definition \eqref{eq:mLPP} of $\clB_{\ell - i + 1}^-(i,[0,t])$. This completes the induction step and the proof of the second inequality in \eqref{eq:finbds}.
\end{proof}

Next we will prove the following fluctuation estimate. Let $r:= p/q$ and note that $0< r<1$.

\begin{lemma}

Let $g^* = \max\{g_-,g_+,0\}$. Fix $m,n,j,\ell\in \NN$ with $M_m \leq j \leq \ell \leq N_n-1$. There exists $m_0 = m_0(\omega) \in \NN$ such that if $m \geq m_0$ and $0 \leq s \leq t < \infty$,
\begin{equation} \label{eq:loc_contbd}
\begin{split}
L_{(j,j+1)}^{[m,n]}(t) - L_{(j,j+1)}^{[m,n]}(s) & \leq \frac{q^{-1}(g^* - g_*)(t - s)}{1-r} + \frac{q^{-1}(L_{(\ell,\ell+1)}^0(t) - L_{(\ell,\ell+1)}^0(s))}{1 - r} \\
& \quad + q^{-1}\sum_{k = -\infty}^j r^{j - k}(B_k(t) - B_k(s) - \clB^-_{\ell - k + 1}(k,[s,t])).
\end{split}
\end{equation}
\end{lemma}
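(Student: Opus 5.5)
The plan is to exploit the equation for a single particle in the finite system \eqref{eq:m_to_n_system} to express the increment of $L_{(j,j+1)}^{[m,n]}$ through the increment of $L_{(j-1,j)}^{[m,n]}$, a Brownian increment and a lower bound on the displacement of $X_j^{[m,n]}$. Iterating this relation downward in the particle index gives the geometric weights $r^{j-k}$. Concretely, from \eqref{eq:m_to_n_system},
\[
q\bigl(L_{(j,j+1)}^{[m,n]}(t)-L_{(j,j+1)}^{[m,n]}(s)\bigr) = \bigl(B_j(t)-B_j(s)\bigr) + g_j^{[m,n]}(t-s) + p\bigl(L_{(j-1,j)}^{[m,n]}(t)-L_{(j-1,j)}^{[m,n]}(s)\bigr) - \bigl(X_j^{[m,n]}(t)-X_j^{[m,n]}(s)\bigr).
\]
It therefore suffices to bound $X_j^{[m,n]}(t)-X_j^{[m,n]}(s)$ from below, and then divide by $q$. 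Dividing by $q$ turns $p$ into $r=p/q$, and $g_j^{[m,n]}\le g^*$.

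The key intermediate claim is a time-shifted version of Lemma \ref{lem:fincomp}:
\begin{equation*}
X_k^{[m,n]}(t)-X_k^{[m,n]}(s) \ge \clB^-_{\ell-k+1}(k,[s,t]) - \bigl(L^0_{(\ell,\ell+1)}(t)-L^0_{(\ell,\ell+1)}(s)\bigr) + g_*(t-s), \qquad M_m\le k\le \ell.
\end{equation*}
I would prove it by repeating both downward inductions in the proof of Lemma \ref{lem:fincomp} on the interval $[s,t]$, writing every Skorokhod reflection as an infimum over $u\in[s,t]$ started from the value at time $s$. Two facts enter. First, the configuration at time $s$ is ordered, so $X_k^{[m,n]}(s)\le X_{k+1}^{[m,n]}(s)$; this replaces the use of $x_i\le x_{i+1}$. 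Second, the upper boundary is handled exactly as the comparison process $U^*$ was: the top particle $\ell$ is pushed down by at most the increment of $L^0_{(\ell,\ell+1)}$. This second fact is the step I expect to be the main obstacle. The cleanest approach I would try first is a Skorokhod-map comparison. The increments of the system reflected against $X^{[m,n]}_{\ell+1}$ on $[s,t]$ should be dominated by those of the $p=0$ system, since $L^0$ is the maximal pushing (all mass $q=1$ to the lower particle, and an extra $pL$ from below only helps). If this is delicate, I would fall back on the one-sided bound $q\,\Delta L^{[m,n]}_{(\ell,\ell+1)}\le \Delta L^0_{(\ell,\ell+1)} + (g^*-g_*)\Delta t$ and propagate it.

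Inserting the claim with $k=j$ gives the one-step inequality
\[
\Delta L_{(j,j+1)}^{[m,n]} \le q^{-1}\Bigl[(g^*-g_*)(t-s) + \Delta L^0_{(\ell,\ell+1)} + \bigl(\Delta B_j - \clB^-_{\ell-j+1}(j,[s,t])\bigr)\Bigr] + r\,\Delta L_{(j-1,j)}^{[m,n]},
\]
where $\Delta$ denotes the increment over $[s,t]$. Iterate this for $k=j,j-1,\dots,M_m$; it terminates because $L^{[m,n]}_{(M_m-1,M_m)}\equiv0$. The bracketed terms are nonnegative: choosing all intermediate times in \eqref{eq:mLPP} equal to $t$ except the first collapses the sum to $B_k(t)-B_k(s)$, so $\clB^-\le \Delta B_k$. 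Nonnegativity lets us extend the finite sum over $k\ge M_m$ to the full sum over $k\le j$ in \eqref{eq:loc_contbd}. The drift and $L^0$ terms contribute at most $\sum_{i\ge0}r^i=(1-r)^{-1}$ times their value, which gives the first two terms of \eqref{eq:loc_contbd}.

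Finally, $m_0(\omega)$ is chosen so that $M_m\le j$ for $m\ge m_0$, making the system contain the indices in question. One should also note that the series on the right side is a.s.\ finite. By the Brownian last-passage estimates cited after \eqref{eq:pLPP}, $-\clB^-_{\ell-k+1}(k,[s,t])$ grows only like $\sqrt{(\ell-k)(t-s)}$ up to logarithmic corrections, so Borel--Cantelli makes the series with weights $r^{j-k}$ converge. The bound itself holds trivially when the right side is infinite.
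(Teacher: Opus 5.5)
At $s=0$ your argument is the paper's. Iterating the one-step relation down to $M_m$ is exactly its weighted telescoping identity $L^{[m,n]}_{(j,j+1)}(t)=q^{-1}\sum_{k=M_m}^{j}r^{j-k}\bigl(g^{[m,n]}_k t+B_k(t)+x_k-X^{[m,n]}_k(t)\bigr)$, and your intermediate claim at $s=0$ is Lemma \ref{lem:fincomp}.

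\textbf{The gap.} The time-shifted claim with increments of the \emph{original} $L^0$ is false. The heuristic that ``an extra $pL$ from below only helps'' is exactly where it breaks. Pushes from below can lift particle $\ell$ of the finite system into contact with particle $\ell+1$, while in the $p=0$ system particle $\ell$ never feels particle $\ell-1$. The level comparison of Lemma \ref{lem:fincomp} survives, but an increment comparison does not.

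\textbf{A counterexample.} Take $M_m=\ell-1$, $j=\ell$, $N_n=\ell+1$, $g_\pm=0$, $x_{\ell-1}=x_\ell=0<G=x_{\ell+1}$, and put particle $\ell+2$ of the $p=0$ system far above.
\begin{itemize}
\item On $[0,s]$ let $B_{\ell-1}$ increase to $G/p$ while $B_\ell,B_{\ell+1}$ stay flat. Then $L^{[m,n]}_{(\ell-1,\ell)}=B_{\ell-1}$ and the pair $\{\ell-1,\ell\}$ sits at height $pB_{\ell-1}$. At time $s$ all three particles touch at height $G$, whereas $X^0_\ell(s)=0$.
\item On $[s,t]$ let $B_{\ell+1}$ drop by $D\in(0,G)$ with all other $B_k$ flat. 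The cluster moves as one, and solving the collision equations gives $\Delta L^{[m,n]}_{(\ell,\ell+1)}=D/(p+q^2)$, $\Delta L^{[m,n]}_{(\ell-1,\ell)}=qD/(p+q^2)$ and $\Delta X^{[m,n]}_\ell=-q^2D/(p+q^2)$.
\item Meanwhile $X^0_{\ell+1}\ge G-D>0=X^0_\ell$ throughout, so $\Delta L^0_{(\ell,\ell+1)}=0$.
\end{itemize}
Your claim at $k=\ell$ asserts $\Delta X^{[m,n]}_\ell\ge 0$, and your fallback asserts $q\,\Delta L^{[m,n]}_{(\ell,\ell+1)}\le 0$; both fail.

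In fact every term on the right of \eqref{eq:loc_contbd} with $j=\ell$ vanishes here, since those terms see only $B_k$, $k\le\ell$, on $[s,t]$, together with $\Delta L^0_{(\ell,\ell+1)}$. The left side, however, is $D/(p+q^2)$. Small perturbations of this picture occur with positive probability. So \eqref{eq:loc_contbd} for $s>0$, with increments of the original $L^0$, cannot be obtained by any route.

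\textbf{What the paper does instead.} It never compares increments. It proves $s=0$ and then passes in one line to the time-shifted evolution: the finite system with initial data $X^{[m,n]}(s)$, driving motions $B(s+\cdot)-B(s)$ and local times $L^{[m,n]}(s+\cdot)-L^{[m,n]}(s)$. This is again a system of the form \eqref{eq:m_to_n_system}. That restart is the idea you need: apply your $s=0$ argument (with Lemma \ref{lem:fincomp}) to the restarted system.

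Note, though, what the restart actually delivers. It gives \eqref{eq:loc_contbd} with $L^0_{(\ell,\ell+1)}(t)-L^0_{(\ell,\ell+1)}(s)$ replaced by $\tilde L^{0}_{(\ell,\ell+1)}(t-s)$, the local time of a totally asymmetric system started afresh at time $s$ from $X^{[m,n]}(s)$. In the example above $\tilde L^0_{(\ell,\ell+1)}$ increases by $D$, and indeed $D/(p+q^2)\le D/(q-p)$. Since $\tilde L^0$ depends on $s$, and through $X^{[m,n]}(s)$ on $m$ and $n$, this corrected bound does not by itself give the $(m,n)$-independent modulus that \eqref{eq:loc_contbd2} is later used to supply.
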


\begin{proof} 
We will only consider the case when $s=0$. The general case follows on considering the evolution of $X(s+\cdot)-X(s)$(see \cite[Lemma 3.1]{banbudrud2025wp}).
Using \eqref{eq:m_to_n_system}, we compute
\[
\begin{split}
\sum_{k = M_m}^j r^{j - k} X_k^{[m,n]}(t) & = \sum_{k = M_m}^j r^{j - k}(x_k + g_k^{[n,m]}t + B_k(t)) \\
& \hspace{0.5in} + \sum_{k = M_m}^j r^{j - k}\left(pL_{(k-1,k)}^{[m,n]}(t) - qL_{(k,k+1)}^{[m,n]}(t)\right) \\
& = \sum_{k = M_m}^j r^{j - k}(x_k + g_k^{[n,m]}t + B_k(t)) \\
& \hspace{0.5in} + \sum_{k = M_m}^j q\left(r^{j - k + 1}L_{(k-1,k)}^{[m,n]}(t) - r^{j - k}L_{(k,k+1)}^{[m,n]}(t)\right) \\
& = \sum_{k = M_m}^j r^{j - k}(x_k + g_k^{[n,m]}t + B_k(t)) - qL_{(j,j+1)}^{[m,n]}(t).
\end{split}
\]
Hence,  
\begin{equation}
L_{(j,j+1)}^{[m,n]}(t) = q^{-1} \sum_{k = M_m}^j r^{j - k}\left(g_k^{[n,m]}t + B_k(t) + x_k - X_k^{[m,n]}(t)\right).
\end{equation}
By Lemma \ref{lem:fincomp}, the right-hand side is bounded above by 
\begin{equation} \label{eq:locub1}
\begin{split}
& q^{-1} \sum_{k = M_m}^j r^{j - k}\left((g_k^{[n,m]} - g_*)t + B_k(t) - \clB^-_{\ell-k+1}(k,t) + L_{(\ell,\ell+1)}^0(t)\right) \\
& \leq  q^{-1} \sum_{k = M_m}^j r^{j - k}\left((g^*-g_*)t + B_k(t) - \clB^-_{\ell-k+1}(k,t) + L_{(\ell,\ell+1)}^0(t)\right),
\end{split}
\end{equation}
noting that $g_k^{[n,m]}\le g^*$. 
Note that the summand on the right side above is  nonnegative with probability 1.
Hence,  the quantity \eqref{eq:locub1} is bounded above by 
$$
\frac{q^{-1}(g^* - g_*)t}{1-r} + q^{-1} \sum_{k = -\infty}^j r^{j - k}\left(B_k(t) - \clB^-_{\ell-k+1}(k,t)\right) + \frac{q^{-1}L_{(\ell,\ell+1)}^0(t)}{1 - r},
$$
which is what we needed to show.
\end{proof}

\begin{proof}[Proof of Theorem \ref{thm:exist+}]
We note that the final statement is immediate from the previous statements in the theorem and Theorem \ref{thm:unique} on noting that assumption \eqref{eq:ainit2} on the initial condition implies that \eqref{eq:ainit} is satisfied.
We will only prove  the statements on the limits  in \eqref{eq:balanced_lim}. Corresponding statements for the quantities 
in \eqref{eq:m_lim} and \eqref{eq:p_lim} are proved similarly.

We begin by proving that the limits \eqref{eq:balanced_lim} exist and satisfy \eqref{eq:asym_eq}. Fix $T<\infty$. For $d \in \NN$, $f \in C([0,T],\RR^{d})$, $\delta > 0$, let 
\[
\osc_\delta(f) := \sup\{ \|f(t) - f(s)\|_1 : 0 \leq s \leq t \leq T, |t - s| \leq \delta\}. 
\]
Here, for $x = \{x_k\} \in \RR^d$, $\|x\|_1 = \sum_{k = 1}^d |x_k|$. From \eqref{eq:loc_contbd} we obtain, for $n_0, n, j \in \NN$, $n_0 \le n$, with $M_n \leq j \leq N_{n_0}-1 \leq N_n-1$, and $0 \leq s \leq t \leq T$, 
\begin{equation} \label{eq:loc_contbd2}
\begin{split}
& L_{(j,j+1)}^{[n,n]}(t) - L_{(j,j+1)}^{[n,n]}(s) \\
& \quad \leq \frac{q^{-1}(g^* - g_*)(t - s)}{1-r} + \frac{q^{-1}(L_{(N_{n_0}-1,N_{n_0})}^0(t) - L_{(N_{n_0}-1,N_{n_0})}^0(s))}{1 - r} \\
& \quad\quad + q^{-1}\sum_{k = -\infty}^j r^{j - k}(B_k(t) - B_k(s) - \clB^-_{N_{n_0} - k}(k,[s,t])).
\end{split}
\end{equation}
We claim that, almost surely, the right-hand side of \eqref{eq:loc_contbd2} is continuous as a function of $(s,t)$ in the compact region of the plane $0 \leq s \leq t \leq T$ (where we define $\clB^-_{N_{n_0} - k + 1}(k,[s,t])) = 0$ when $s = t$, by continuity). To prove this claim, observe that, by \eqref{eq:mLPP}, for all $0 \leq s \leq t \leq T$,
\[
\begin{split}
& B_k(t) - B_k(s) - \clB^-_{N_{n_0} - k + 1}(k,[s,t])) \\ 
& \leq \sup_{0 \leq s \leq t \leq T} (B_k(t) - B_k(s)) - \sum_{i = k}^{N_{n_0}} \inf_{0 \leq s \leq t \leq T} (B_i(t) - B_i(s)).
\end{split}
\]
An argument using standard Gaussian tail bounds to control the right-hand side above shows that, almost surely, the infinite sum in \eqref{eq:loc_contbd2} converges uniformly on $0 \leq s \leq t \leq T$, and the claim follows. Furthermore, noting that the right-hand side of \eqref{eq:loc_contbd2} is equal to zero when $s = t$, we conclude that, with
$[L^{[n,n]}]_{n_0} = (L^{[n,n]}_{M_{n_0}, M_{n_0}+1}, \dots ,
L^{[n,n]}_{N_{n_0}-1, N_{n_0}})$,
\begin{equation} \label{eq:Lequicontinuity}
\lim_{\delta \to 0} \sup_{n \geq n_0} \osc_{\delta}([L^{[n,n]}]_{n_0}) \to 0 \text{ a.s.}
\end{equation}
Also, the bound \eqref{eq:loc_contbd2}, with $s = 0$, also implies that, for all $t \in [0,T]$,
\begin{equation} \label{eq:Lptbd}
\sup_{n \geq n_0} \| [L^{[n,n]}(t)]_{n_0} \|_1 < \infty \text{ a.s.}
\end{equation}
In view of \eqref{eq:m_to_n_system}, it follows from \eqref{eq:Lequicontinuity} and \eqref{eq:Lptbd} that, with $[X^{[n,n]}]_{n_0}= (X^{[n,n]}_{M_{n_0}}, \cdots, X^{[n,n]}_{N_{n_0}})$, 
\begin{equation} \label{eq:Xequicontinuity}
\lim_{\delta \to 0} \sup_{n \geq n_0} \osc_{\delta}([X^{[n,n]}]_{n_0}) \to 0 \text{ a.s.},
\end{equation}
and, for all $t \in [0,T]$,
\begin{equation} \label{eq:Xptbd}
\sup_{n \geq n_0} \| [X^{[n,n]}(t)]_{n_0} \|_1 < \infty \text{ a.s.}
\end{equation}
In view of \eqref{eq:Lequicontinuity}, \eqref{eq:Lptbd}, \eqref{eq:Xequicontinuity}, and \eqref{eq:Xptbd}, by the Arzel\`{a}-Ascoli Theorem, for a.e. $\omega$, there is a subsequence along which the following limits hold,  in $C([0,T], \RR^{2n_0+1})$, for any $T \in \ohalf$:
\begin{equation} \label{eq:subseq_lims}
\begin{split}
&\{L^{[n,n]}_{(j,j+1)}\}_{j = M_{n_0}}^{N_{n_0}} \to \{L_{(j,j+1)}\}_{j = M_{n_0}}^{N_{n_0}}, \\
&\{X^{[n,n]}_j\}_{j = M_{n_0}}^{N_{n_0}} \to \{X_j\}_{j = M_{n_0}}^{N_{n_0}} \quad \text{ as } \quad n \to \infty.
\end{split}
\end{equation}
By a diagonal argument, we may suppose that a.s., there is a subsequence along which convergence holds for all $n_0$ and $T$, and consequently the limits $X = \{X_j\}_{j \in \NN}$ and $L = \{L_{(j,j+1)}\}_{j \in \NN}$ do not depend on $n_0$.

By uniqueness of solutions to \eqref{eq:asym_eq} (see Theorem \ref{thm:unique}), to show that \eqref{eq:balanced_lim} holds, it suffices to show that the subsequential limit $(X,L)$ solves \eqref{eq:asym_eq}, and the local times $L$ satisfy \eqref{eq:aloc}. (Note that the initial data $x = \{x_j\}$ satisfy \eqref{eq:ainit} by assumption.) The fact that \eqref{eq:asym_eq} is satisfied is immediate from \eqref{eq:m_to_n_system} and \eqref{eq:subseq_lims}. To see that \eqref{eq:aloc} holds, note that by \eqref{eq:loc_contbd2}, with $s = 0$ and $t = T$, and \eqref{eq:subseq_lims}, for $n_0, j \in \NN$ with $j \leq N_{n_0} - 1$,
\begin{equation} \label{eq:limlocbd}
\begin{split}
r^{-j}L_{(j,j+1)}(T) & \leq \frac{r^{-j}q^{-1}(g^* - g_*)T}{1 - r} + \frac{r^{-j}q^{-1} L_{(N_{n_0}-1,N_{n_0})}^0(T)}{1 - r} \\ 
& \quad\quad + q^{-1}\sum_{k = -\infty}^j r^{-k}(B_k(T) - \clB^-_{N_{n_0} - k}(k,[0,T])).
\end{split}
\end{equation}
Since the sum on the right-hand side of \eqref{eq:limlocbd} converges, the right-hand side goes to zero as $j \to -\infty$, and this proves \eqref{eq:aloc}.
\end{proof}

\section{Stationary measures for the two-sided gap process} \label{sec:stationary}

In this section we prove Theorem \ref{thm:stationary}.

To prove the theorem, we first recall some known results on the stationary distributions of finite systems of singularly interacting Brownian particles. Fix $N \in \NN$ and a (possibly random) nondecreasing sequence $x^N=\{x_j^N\}_{-N \leq j \leq N} \in \RR^{2N+1}$. Let $\{g_j\}_{-N \leq j \leq N} \in \RR^{2N+1}$, and consider a finite system of singularly interacting Brownian particles $\{X_j^N\}_{-N \leq j \leq N}$, solving the system of equations
\begin{equation} \label{eq:approx_system}
X_j^N(t) = x_j^N + g_j t + B_j(t) + pL_{(j-1,j)}^N(t) - qL_{(j,j+1)}^N(t), \quad t \in \half, \quad -N \leq j \leq N,
\end{equation}
where, as before, $\{B_j\}$ are mutually independent Brownian motions, independent of  $x^N$, $X_j^N(t)\le X_{j+1}^N(t)$,
$L_{(-N-1,-N)}^N = L_{(N,N+1)}^N \equiv 0$, and the collection $\{L^N_{(j,j+1)}\}_{-N+1 \leq j \leq N-1}$ satisfies the usual local time conditions, namely $L^N_{(j,j+1)}$ is continuous, nondecreasing, equal to zero at $t = 0$, and can only increase at times $t$ when $X_j^N(t) = X_{j+1}^{N}(t)$.   For $-N \leq j \leq N-1$, let $\mu_j = g_{j+1} - g_j$. Define the gap process $\{Z_j^N\}_{-N \leq j \leq N-1}$ by 
$$
Z_j^N(t) = X_{j+1}^N(t) - X_j^N(t), \quad -N \leq j \leq N-1, \quad t \in \half.
$$
Suppose that $\bar{\lambda} := (\lambda_1,\dots,\lambda_{N-1}) \in \ohalf^{N-1}$ solves the system of linear equations:
\begin{equation} \label{eq:linsystem}
\begin{split}
& \lambda_{-N} - q \lambda_{-N+1} = -\mu_{-N}, \\
& -p \lambda_{k-2} + \lambda_{k-1} - q \lambda_k = -\mu_{k-1}, \text{ for } -N+2 \leq k \leq N-1, \\
& -p \lambda_{N-2} + \lambda_{N - 1} = -\mu_{N-1}.
\end{split}
\end{equation}
From classical results of \cite{harrison1987multidimensional} (cf. \cite[Proposition 2.1]{sarantsev2017infinite}),
$$
\pi := \bigotimes_{k = -N}^{N-1} \text{Exp}(\lambda_k)
$$
is the unique stationary distribution for the gap process $\{Z_j^N\}_{1 \leq j \leq N-1}$.

\begin{proof}[Proof of Theorem \ref{thm:stationary}]

Let $x= \{x_j\}_{j \in \ZZ}\sim \gamma_{\lambda}$. For $N \in \NN$, let $X^N = \{X_j^N\}_{-N \leq j \leq N}$ be the process defined by \eqref{eq:approx_system}, where we take the same driving Brownian motions as used to define $X^{\lambda}$,  $x^N = \{x_j\}_{-N \leq j \leq N}$ and 
\[
g_j = \begin{cases} 
p\lambda & \text{ if } j = -N, \\
0 & \text{ if } -N+1 \leq j \leq N-1, \\
-q\lambda & \text{ if } j = N.
\end{cases}
\]
Then $\lambda_j = \lambda$ for $-N \leq j \leq N$ uniquely solves the system of equations \eqref{eq:linsystem}. Hence, using the results on stationary distributions for finite systems from \cite{harrison1987multidimensional, sarantsev2017infinite} noted above, if $Z^N = \{Z_j^N = X_{j+1}^N - X_j^N\}_{-N \leq j \leq N-1}$ is the corresponding gap process, then $Z^N(t)$ is an i.i.d. sequence of Exp($\lambda$) random variables for all $t \in \half$. Moreover, by Theorem \ref{thm:exist+}, for any $j,N_0 \in \NN$ with $-N_0 \leq j \leq N_0$, and $t \in \half$, the following limit holds almost surely, uniformly on compact time intervals:
\[
\lim_{N \to \infty, N \geq N_0} X_j^N(t) = X^{(\lambda)}_j(t).
\]
Therefore, $\{Z_j^{(\lambda)}(t)\}_{-N_0 \leq j \leq N_0-1}$ is an i.i.d. sequence of Exp($\lambda$) random variables, and since $N_0$ is arbitrary, $Z^{(\lambda)}(t) \sim \pi_\lambda$, for every $t\ge 0$.
\end{proof}

\section{Preliminary estimates on particle trajectories and local times}

In this section, we first prove some general bounds on the particle trajectories and local times, for the system described in Definition \ref{def:biinf}, that hold for an arbitrary $p \in [0,1/2)$. 

We once again assume throughout that  $\gamma \in \clp_0(\RR^{\ZZ})$ satisfies \eqref{eq:ainit2} with some $\chi>1/2$ (and therefore also satisfies \eqref{eq:ainit}). Let $\{(X_j, L_{(j,j+1)})\}_{j \in \ZZ}$ be the unique strong solution to \eqref{eq:asym_eq}, with initial data $\{x_j\} \sim \gamma$ and Brownian motions $\{B_j\}$, such that the local times satisfy \eqref{eq:aloc}. By Lemma \ref{lem:fincomp} and \eqref{eq:balanced_lim}, with $g_+ = g_- = 0$, for all $j \in \ZZ$, $t \in \half$, 
\begin{equation} \label{eq:plb-pointwise}
X_j(t) \geq X_j^0(t),
\end{equation}
where $\{X_j^0\}_{j \in \ZZ}$ is the solution to \eqref{eq:asym_eq} with $p = 1 - q = 0$, defined as in the discussion following Theorem \ref{thm:exist+}. By uniqueness of solutions to \eqref{eq:asym_eq}, $\{X_j^0\}_{j \in \ZZ}$ is given by the explicit formula \cite[(2.3.2)]{WFSbook} (appropriately modified to account for the fact that in \cite{WFSbook}, the roles of $p$ and $q$ have been reversed), namely, for all $j \in \ZZ$ and $t \in \half$,
\begin{equation} \label{eq:WFSsolution}
X_j^0(t) = \min_{k \geq j} \left\{ x_k + \clB_{k - j + 1}^-(j,t) \right\},
\end{equation}
where $\clB_{k - j + 1}^-(j,t) = \clB_{k - j + 1}^-(j,[0,t])$ is defined as in \eqref{eq:mLPP}.

Assumption \eqref{eq:ainit2} (which implies Assumption \eqref{eq:ainit}) will be made throughout and therefore will not be explicitly noted in the statement of results.
To obtain good control on the trajectories and local times, the following additional assumption on the initial distribution $\gamma$ will be needed. 

\begin{assumption} \label{assump:initdiff}
For all $c,\xi \in \ohalf$, 
\begin{equation} \label{eq:sqrtcomp}
\sup_{j \in \ZZ} \sum_{k \geq j} e^{c\sqrt{k - j}} \EE_\gamma[e^{-\xi(x_k - x_j)}] < \infty.
\end{equation}
Moreover, there exists $c_0 = c_0(\xi) \in (0,\infty)$ such that $c_0(\xi) \to \infty$ as $\xi \to \infty$, and the following bound holds for any $\xi \in (0,\infty)$:
\begin{equation} \label{eq:posbd}
\sup_{j \in \NN} e^{c_0(\xi) j}\EE_{\gamma}[e^{-\xi x_j}] < \infty. 
\end{equation}
\end{assumption}

We note that the above assumption is satisfied if  $\gamma = \gamma_\lambda$ for some $\lambda > 0$ (see \eqref{eq:hominit}). Indeed, in that case,  for all $j,k \in \ZZ$ with $j \leq k$,
\[
\EE_\gamma[e^{-\xi x_j}] = \left( \frac{\lambda}{\lambda + \xi} \right)^j, \quad\quad \EE_\gamma[e^{-\xi (x_k - x_j)}] = \left( \frac{\lambda}{\lambda + \xi} \right)^{k - j},
\]
{and it is easy to check that the assumption is satisfied with $c_0(\xi) = \log(1 + \xi/\lambda)$.}

\begin{lemma} \label{lem:plbXj}
Under Assumption \ref{assump:initdiff}, for all $T \in \half$ and $\xi \in \ohalf$,
\begin{equation} \label{eq:changeprobbd}
\sup_{j \in \ZZ, u \in \RR} e^{u\xi}\PP( \inf_{0 \leq s \leq T} X_j(s) - x_j \leq -u ) < \infty,
\end{equation}
and 
\begin{equation} \label{eq:posprobbd}
\sup_{j \in \ZZ, u \in \RR} e^{u\xi + c_0(\xi) j} \PP( \inf_{0 \leq s \leq T} X_j(s) \leq -u ) < \infty,
\end{equation}
where $c_0(\xi)$ is the quantity appearing in \eqref{eq:posbd}.
\end{lemma}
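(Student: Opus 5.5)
The plan is to reduce everything to the totally asymmetric comparison process. By \eqref{eq:plb-pointwise} we have $X_j(s)\ge X_j^0(s)$ for all $s$. The explicit formula \eqref{eq:WFSsolution} then gives
\[
\inf_{0\le s\le T} X_j(s) - x_j \;\ge\; \min_{k\ge j}\Bigl\{ (x_k-x_j) + \inf_{0\le s\le T}\clB^-_{k-j+1}(j,s)\Bigr\}.
\]
Since the increments in \eqref{eq:mLPP} run over nested time intervals, $\clB^-_{M}(j,s)$ is nonincreasing in $s$. Hence the infimum over $s\le T$ equals $\clB^-_{k-j+1}(j,T)$. By a union bound,
\[
\PP\Bigl(\inf_{0\le s\le T}X_j(s)-x_j\le -u\Bigr)\le \sum_{k\ge j}\PP\bigl((x_k-x_j)+\clB^-_{k-j+1}(j,T)\le -u\bigr).
\]
Because $x$ is independent of $\{B_i\}$, the exponential Chebyshev inequality with parameter $\xi$ bounds each summand by
\[
e^{-u\xi}\,\EE_\gamma\bigl[e^{-\xi(x_k-x_j)}\bigr]\,\EE\bigl[e^{-\xi\clB^-_{k-j+1}(j,T)}\bigr].
\]
The claim \eqref{eq:changeprobbd} therefore follows from \eqref{eq:sqrtcomp} once we show that, for every $\xi$, there is a constant $c=c(\xi,T)$ with
\[
\EE\bigl[e^{-\xi\clB^-_M(0,T)}\bigr]\le C e^{c\sqrt M}\quad\text{for all } M.
\]
Note that the law of $\clB^-_M(j,T)$ does not depend on $j$.

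The key step is this exponential moment bound for Brownian last-passage percolation. The quantity $-\clB^-_M(0,T)$ is equal in law to the Brownian LPP value $\sup\sum_j (B_j(s_{j})-B_j(s_{j-1}))$ over $M$ lines, by symmetry $B\mapsto -B$. By Brownian scaling it has the law of $\sqrt T\,\lambda_{\max}$ of an $M\times M$ GUE matrix (the Baryshnikov / Gravner–Tracy–Widom identity). Its mean is $2\sqrt{MT}$ to leading order. It also has Gaussian concentration: it is a Lipschitz function of the Brownian paths with Lipschitz constant $O(\sqrt T)$ in sup-norm over the concatenated path. Alternatively, one can use the small-deviation and tail estimates cited in \cite[Theorem 1]{smalldev2010} and \cite[Lemma 5.1]{banbudrud2025wp}. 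Either route gives $\EE[e^{\xi \lambda}]\le e^{2\xi\sqrt{MT}+C\xi^2 T}$, which is of the form $Ce^{c\sqrt M}$. This is where I expect the main obstacle to lie, although it should essentially be quotable from \cite{banbudrud2025wp}. Uniformity in $u\in\RR$ is trivial: for $u\le 0$ we may take $\PP\le 1$ and $e^{u\xi}\le 1$.

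For \eqref{eq:posprobbd} we use the same argument without subtracting $x_j$. It gives
\[
\PP\Bigl(\inf_{s\le T}X_j(s)\le -u\Bigr)\le e^{-u\xi}\sum_{k\ge j}\EE_\gamma\bigl[e^{-\xi x_k}\bigr]\,Ce^{c\sqrt{k-j+1}}.
\]
For $j\ge 1$, I will use \eqref{eq:posbd} to bound $\EE_\gamma[e^{-\xi x_k}]\le C e^{-c_0(\xi)k}$. The sum is then at most $Ce^{-c_0(\xi)j}\sum_{m\ge0}e^{-c_0(\xi)m+c\sqrt{m+1}}$, which is finite and gives the factor $e^{-c_0(\xi)j}$.

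The range $j\le 0$ needs separate treatment. I would handle it as follows. For $j\le0$ we have $c_0(\xi)j\le 0$, so it suffices to show that $\sup_u e^{u\xi}\PP(\cdot)$ is bounded uniformly in $j\le 0$. Since $X_j\ge X_j^0$ and $X^0_j$ is nondecreasing in $j$ (the ordering is preserved), we get $\inf_s X_j(s)\ge \inf_s X^0_j(s)$. I would then split the sum at $k=0$. For $k\ge 1$ the bound is as above. For $j\le k\le 0$, I would write $x_k=x_0-(x_0-x_k)$ and use $x_k\le x_0$ together with \eqref{eq:sqrtcomp}.

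If this becomes messy, there is a more robust fallback. First apply \eqref{eq:changeprobbd} with $2\xi$. Then note that $\PP(\inf X_j\le -u)\le \PP(\inf X_j-x_j\le -u/2)+\PP(x_j\le -u/2)$. The last term is controlled by \eqref{eq:posbd} when $j\ge1$. When $j\le 0$ it may be non-summable in a naive way, in which case I would fall back on the splitting above. In either approach the $e^{c\sqrt M}$ LPP moment bound remains the central input.
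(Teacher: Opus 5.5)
Your overall route is the paper's: comparison with $X^0$, the formula \eqref{eq:WFSsolution}, a union bound over $k$, exponential Chebyshev using independence of $x$ and $\{B_i\}$, GUE exponential moments, and summation via \eqref{eq:sqrtcomp} or \eqref{eq:posbd}. But one step is false: $\clB^-_M(j,s)$ is \emph{not} nonincreasing in $s$. Already $\clB^-_1(j,s)=B_j(s)$. In general $\clB^-_M(j,s)=\inf_{0\le r\le s}\{\clB^-_{M-1}(j+1,r)+B_j(s)-B_j(r)\}$ carries the freely fluctuating endpoint term $B_j(s)$; Brownian last-passage values are not monotone in the endpoint because the weights are signed. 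So your Chebyshev bound controls only the terminal value $\clB^-_{k-j+1}(j,T)$, not the running infimum $\inf_{s\le T}\clB^-_{k-j+1}(j,s)$ produced by \eqref{eq:WFSsolution}. Hence neither \eqref{eq:changeprobbd} nor \eqref{eq:posprobbd} follows as written.

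The missing step is a pathwise comparison of the running infimum with a terminal passage value plus a one-dimensional Brownian correction. The paper adds a line: the recursion $\clB^-_{M+1}(j-1,T)=\inf_{s\le T}\{\clB^-_M(j,s)+B_{j-1}(T)-B_{j-1}(s)\}$ gives $\inf_{s\le T}\clB^-_M(j,s)\ge \clB^-_{M+1}(j-1,T)-\sup_{s\le T}(B_{j-1}(T)-B_{j-1}(s))$. The same recursion on line $j$ works equally well, giving $\inf_{s\le T}\clB^-_M(j,s)\ge\clB^-_M(j,T)-\sup_{s\le T}(B_j(T)-B_j(s))$. The two terms on the right are dependent, so you must split the exponential moment by Cauchy--Schwarz instead of factoring it. Then use the reflection-principle bound $\EE[e^{2\xi\sup_{s\le T}(B(T)-B(s))}]\le 2e^{2\xi^2T}$. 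Your $e^{c\sqrt M}$ bound, applied with $2\xi$ (and with $M+1$ lines in the paper's variant), then finishes as you intended. For that bound, the concentration comes from $-\clB^-_M(j,T)$ being a supremum of centered Gaussians of variance $T$, or from the Frobenius-Lipschitz property on the GUE side as in the paper. In the sup-norm the Lipschitz constant is of order $M$, not $\sqrt T$.

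Your worry about $j\le0$ in \eqref{eq:posprobbd} is justified, but the proposed fix cannot work. Writing $x_k=x_0-(x_0-x_k)$ and using $x_k\le x_0$ runs the wrong way, since it gives $e^{-\xi x_k}\ge e^{-\xi x_0}$. Also, \eqref{eq:sqrtcomp} controls exponential moments of $-(x_k-x_j)\le0$ only, never of $x_0-x_k\ge0$. In fact no argument can work. The measure $\gamma_\lambda$ satisfies Assumption \ref{assump:initdiff}, yet $\PP(\inf_{s\le T}X_{-1}(s)\le-u)\ge\PP(x_{-1}\le-u)=e^{-\lambda u}$ for $u\ge0$. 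So the supremum in \eqref{eq:posprobbd} is infinite at $j=-1$ whenever $\xi>\lambda$. For $\xi<\lambda$, since $-x_j$ is Gamma$(|j|,\lambda)$, $\sup_u e^{u\xi}\PP(x_j\le-u)$ grows like $(\lambda/(\lambda-\xi))^{|j|}$ up to subexponential factors. This is faster than the allowed $e^{-c_0(\xi)j}=(1+\xi/\lambda)^{|j|}$.

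The paper's one-line extension silently applies \eqref{eq:posbd} to every $k\ge j$, although it is assumed only for $k\in\NN$. The formula $\EE_\gamma[e^{-\xi x_j}]=(\lambda/(\lambda+\xi))^j$ quoted after the assumption is also wrong for $j<0$. What your main argument, like the paper's, actually proves is \eqref{eq:posprobbd} with the supremum over $j\ge1$.

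Drop the ``robust fallback'' too. At $u=0$ the term $\PP(\inf_{s\le T}X_j(s)-x_j\le0)$ equals $1$, so after multiplication by $e^{c_0(\xi)j}$ it is unbounded as $j\to\infty$. The decay in $j$ has to come from keeping $x_k$ inside the union bound, as in your main route.
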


\begin{proof}
By \eqref{eq:plb-pointwise} and \eqref{eq:WFSsolution}, 
\begin{equation} \label{eq:Xjlb1}
\begin{split}
\inf_{0 \leq s \leq T} X_j(s) & \geq \inf_{0 \leq s \leq T} X_j^0(s) \\
& = \min_{k \geq j} \left\{ x_k + \inf_{0 \leq s \leq T}\clB_{k - j + 1}^-(j,s) \right\} \\
& \geq \min_{k \geq j} \left\{ x_k + \clB_{k - j + 2}^-(j-1,T) + B_{j-1}^*(T) \right\},
\end{split}
\end{equation}
where 
\[
B_{j-1}^*(T) = -\sup_{0 \leq s \leq T} (B_{j-1}(T) - B_{j-1}(s)).
\]
Here the last inequality is obtained by observing that 
$$\clB_{k - j + 2}^-(j-1,T) = \inf_{s \in [0,T]} \left(\clB_{k - j + 1}^-(j,s) +  B_{j-1}(T) - B_{j-1}(s)\right).$$
By \eqref{eq:Xjlb1}, for all $u \in \RR$ and $\xi \in \half$,
\begin{align} \label{eq:Xjproblb1}
& \PP(\inf_{0 \leq s \leq T} X_j(s) - x_j \leq -u) \leq \PP\left( \min_{k \geq j} \left\{ x_k - x_j + \clB_{k - j + 2}^-(j-1,T) + B_{j-1}^*(T) \right\} \leq -u \right) \nonumber\\
& \leq \PP\left( \max_{k \geq j} \exp({-\xi\left\{ x_k - x_j + \clB_{k - j + 2}^-(j-1,T) + B_{j-1}^*(T) \right\}}) \geq e^{\xi u} \right) \nonumber\\
& \leq e^{-\xi u}\EE\left[ \max_{k \geq j} \exp({-\xi\left\{ x_k - x_j + \clB_{k - j + 2}^-(j-1,T) + B_{j-1}^*(T) \right\}}) \right] \nonumber\\
& \leq e^{-\xi u} \sum_{k \geq j} \EE\exp({-\xi\left\{ x_k - x_j + \clB_{k - j + 2}^-(j-1,T) + B_{j-1}^*(T) \right\}}) \nonumber\\
& = e^{-\xi u} \sum_{k \geq j} \EE[e^{-\xi(x_k - x_j)}] \EE[e^{-2\xi \clB_{k - j + 2}^-(j-1,T)}]^{1/2} \EE[ e^{-2\xi B_{j-1}^*(T)} ]^{1/2}.
\end{align}
Here the last line follows by independence of the initial data $\{x_j\}$ and the Brownian motions $\{B_j\}$, and the Cauchy-Schwarz inequality. By standard Gaussian estimates, 
\begin{equation} \label{eq:Bstarest}
\EE[e^{-2\xi B_{j-1}^*(T)}] \leq 2e^{2\xi^2T}.
\end{equation}
Furthermore, recall that $-\clB_{k - j + 2}^-(j-1,T)$ is distributed as the maximum eigenvalue of a $(k - j + 2) \times (k- j + 2)$ complex Hermitian matrix with $N(0,T)$ entries (see \cite{GTWLimitThms2001} and \cite{bary2001}). Using the Lipschitz property of the maximum eigenvalue map with the Frobenius norm on the space of matrices, together with standard Gaussian concentration bounds and the known mean value of the above eigenvalue, it follows that (cf. \cite[Lemmas 2.3.2 and 2.3.3]{AGZintroRM})
\begin{equation} \label{eq:topeigconc}
\EE[e^{-2\xi \clB_{k - j + 2}^-(j-1,T)}] \leq e^{4\xi\sqrt{(k - j + 2)T} + 2\xi^2 T}.
\end{equation}
Applying the bounds \eqref{eq:Bstarest} and \eqref{eq:topeigconc} to \eqref{eq:Xjproblb1}, we obtain, for some $c_1<\infty$,
\begin{equation} \label{eq:probjustabout}
\PP(\inf_{0 \leq s \leq T} X_j(s) - x_j \leq -u) \leq c_1 e^{-\xi u + 2\xi^2 T} \sum_{k \geq j} e^{2\xi\sqrt{(k - j + 2)T}} \EE[e^{-\xi(x_k - x_j)}].
\end{equation}
The bound \eqref{eq:changeprobbd} follows from this and \eqref{eq:sqrtcomp}.

In the calculation \eqref{eq:Xjproblb1}, if we replace $X_j(s) - x_j$ with simply $X_j(s)$, then the everything carries through as before except that instead of \eqref{eq:probjustabout}, we obtain 
\[
\begin{split}
\PP(\inf_{0 \leq s \leq T} X_j(s) \leq -u) & \leq c_1 e^{-\xi u + 2\xi^2 T} \sum_{k \geq j} e^{2\xi\sqrt{(k - j + 2)T}} \EE[e^{-\xi x_k}] \\
& = c_1 e^{-\xi u - c_0(\xi)j + 2\xi^2 T} \sum_{k \geq j} e^{2\xi\sqrt{(k - j + 2)T} - c_0(\xi)(k - j)} e^{c_0(\xi)k}\EE[e^{-\xi x_k}],
\end{split}
\]
and the bound \eqref{eq:posprobbd} follows from this and \eqref{eq:posbd}. 
\end{proof}

\begin{lemma} \label{lem:loctimemoments}
Under Assumption \ref{assump:initdiff}, for all $T \in \half$, $\xi \in \ohalf$, and $r \in [1,\infty)$, the following bounds hold: 
\begin{equation} \label{eq:laplocbd}
\sup_{j \in \ZZ} \EE e^{\xi L_{(j,j+1)}(T)} < \infty;
\end{equation}
and consequently,
\begin{equation} \label{eq:momlocbd}
\sup_{j \in \ZZ} \EE|L_{(j,j+1)}(T)|^r < \infty.
\end{equation}
\end{lemma}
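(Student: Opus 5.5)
We will prove \eqref{eq:laplocbd} from an explicit representation of $L_{(j,j+1)}(T)$ as a geometrically weighted sum of particle displacements. Bound \eqref{eq:momlocbd} then follows at once from $|y|^r \le C_{r,\xi} e^{\xi y}$ for $y \ge 0$.

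\emph{Step 1 (representation).} The computation in the proof of \eqref{eq:loc_contbd} telescopes the weighted sum $\sum_{k} r^{j-k} X_k$ with $r = p/q \in (0,1)$. Applying the same computation to the infinite system over $k \in [-M,j]$ gives
\[
qL_{(j,j+1)}(T) = \sum_{k=-M}^{j} r^{j-k}\bigl(x_k + B_k(T) - X_k(T)\bigr) + q\, r^{M+j+1} L_{(-M-1,-M)}(T).
\]
By \eqref{eq:aloc}, $r^{M}L_{(-M-1,-M)}(T) \to 0$ a.s. as $M\to\infty$, so
\[
L_{(j,j+1)}(T) = q^{-1}\sum_{k\le j} r^{j-k}\bigl(B_k(T) - (X_k(T)-x_k)\bigr).
\]
Here we use that the partial sums converge; if convergence of the series is not clear directly, we will use Fatou's lemma instead.

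\emph{Step 2 (H\"older on a weighted sum).} Set $Y_k := |B_k(T)| + \bigl(x_k - \inf_{0\le s\le T} X_k(s)\bigr)^+ \ge 0$. Then $L_{(j,j+1)}(T) \le q^{-1}\sum_{k\le j} r^{j-k} Y_k$. Let $w_k = (1-r)r^{j-k}$, which are probability weights. Jensen's inequality (convexity of the exponential), together with Fatou's lemma for the infinite sum, gives
\[
\EE e^{\xi L_{(j,j+1)}(T)} \le \sum_{k\le j} w_k\, \EE \exp\Bigl(\tfrac{\xi}{q(1-r)} Y_k\Bigr).
\]
It therefore suffices to show $\sup_k \EE e^{\eta Y_k} < \infty$ for every $\eta > 0$. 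By Cauchy--Schwarz this reduces to two bounds. The first is $\EE e^{2\eta |B_k(T)|} \le 2e^{2\eta^2 T}$. The second is a uniform bound on $\EE \exp\bigl(2\eta (x_k - \inf_{s\le T} X_k(s))^+\bigr)$.

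\emph{Step 3 (main obstacle: uniform exponential moments of the downward excursion).} This is exactly where Lemma \ref{lem:plbXj} enters. By \eqref{eq:changeprobbd} with $\xi' = 2\eta + 1$, we have $\PP\bigl(x_k - \inf_{s\le T}X_k(s) \ge u\bigr) \le C e^{-(2\eta+1)u}$, uniformly in $k$ and $u$. Integrating the tail, $\EE e^{2\eta V} = 1 + \int_0^\infty 2\eta e^{2\eta u}\PP(V>u)\,du$ with $V = (x_k - \inf_s X_k(s))^+$, gives a bound uniform in $k$. The only delicate point is justifying the representation in Step 1 for the limiting infinite system, namely the vanishing of the boundary local-time term. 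This is supplied by \eqref{eq:aloc}, which Theorem \ref{thm:exist} guarantees for our solution.

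Combining the three steps gives \eqref{eq:laplocbd}, uniformly in $j$.
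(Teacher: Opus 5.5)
Your proposal is correct and follows the paper's proof. It uses the same telescoping identity with weights $(p/q)^{j-k}$, removes the boundary term via \eqref{eq:aloc}, and applies Jensen's inequality with the geometric probability weights $(1-p/q)(p/q)^{j-k}$, which gives the same prefactor $\xi/(q-p)$. The one difference is that you obtain the uniform exponential tail of $x_k - X_k(T)$ by citing \eqref{eq:changeprobbd} from Lemma \ref{lem:plbXj}, after a Cauchy--Schwarz split that separates off $|B_k(T)|$. The paper instead re-derives this tail inline from $X_k(T)\ge X_k^0(T)$, \eqref{eq:WFSsolution} and the eigenvalue concentration bound. That is exactly how Lemma \ref{lem:plbXj} was proved, so your citation is a legitimate and slightly cleaner shortcut.
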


\begin{proof}
For all $M \in \ZZ$ and $j \geq -M$, we have 
\[
\begin{split}
\sum_{i = -M}^j (\frac{q}{p})^i X_i(T) & = \sum_{i = -M}^j (\frac{q}{p})^i \left( x_i + B_i(T) + p L_{(i-1,i)}(T) - q L_{(i,i+1)}(T) \right) \\
& = \sum_{i = -M}^j (\frac{q}{p})^i \left( x_i + B_i(T) \right) + \sum_{i = -M}^j (\frac{q}{p})^{i-1}qL_{(i-1,i)}(T) - (\frac{q}{p})^i qL_{(i,i+1)}(T) \\
& = \sum_{i = -M}^j (\frac{q}{p})^i \left( x_i + B_i(T) \right) + (\frac{q}{p})^{-M-1}q L_{(-M-1,-M)}(T) - q( \frac{q}{p})^j L_{(j,j+1)}(T).
\end{split}
\]
In view of the assumption \eqref{eq:aloc}, taking the limit as $M \to \infty$ and rearranging terms, we obtain
\[
L_{(j,j+1)}(T) = q^{-1}\sum_{i = -\infty}^j (\frac{q}{p})^{i - j} \left( x_i - X_i(T) + B_i(T)\right),
\]
where the infinite sum on the right-hand side converges a.s. By Jensen's inequality, equation \eqref{eq:WFSsolution}, and the bound \eqref{eq:plb-pointwise},
\[
\begin{split}
e^{\xi L_{(j,j+1)}(T)} & \leq \frac{q - p}{q} \sum_{i = -\infty}^j (\frac{q}{p})^{i - j} \exp( \frac{\xi}{q - p} (x_i - X_i(T) + B_i(T)) ) \\
& \leq \frac{q - p}{q} \sum_{i = -\infty}^j (\frac{q}{p})^{i - j} \max_{k \geq i}\exp( \frac{\xi}{q - p} (x_i - x_k - \clB_{k-i+1}^-(i,T) + B_i(T)) ).
\end{split}
\]
Replacing the maximum with a sum and taking the expectation of both sides, we obtain 
\begin{equation} \label{eq:laplacelocbd1}
\begin{split}
\EE e^{\xi L_{(j,j+1)}(T)} & \leq \frac{q - p}{q} \sum_{i = -\infty}^j (\frac{q}{p})^{i - j} \sum_{k \geq i} \EE[ e^{-\frac{\xi(x_k - x_i)}{q - p}} ] \EE[ e^{-\frac{2\xi\clB_{k - i + 1}^-(i,T)}{q - p}} ]^{1/2} \EE[ e^{\frac{2\xi B_i(T)}{q - p}} ]^{1/2},
\end{split}
\end{equation}
here using independence of the initial data and the Brownian motions and the Cauchy-Schwarz inequality. Note that
\begin{equation} \label{eq:lapbmbd}
\EE[ e^{\frac{2\xi B_i(T)}{q - p}} ] = e^{\frac{2\xi^2 T}{(q - p)^2}},
\end{equation}
and  as in \eqref{eq:topeigconc},
\begin{equation} \label{eq:lapeigbd}
\EE[ e^{-\frac{2\xi\clB_{k - i + 1}^-(T)}{q - p}} ] \leq e^{\frac{4\xi\sqrt{(k - i + 1)T}}{(q - p)} + \frac{2\xi^2 T}{(q - p)^2}}.
\end{equation}
Applying \eqref{eq:lapbmbd} and \eqref{eq:lapeigbd} to \eqref{eq:laplacelocbd1}, we conclude that 
\[
\EE e^{\xi L_{(j,j+1)}(T)} \leq c_1(\frac{q - p}{q}) e^{\frac{2\xi^2 T}{(q - p)^2}} \sum_{i = -\infty}^j (\frac{q}{p})^{i - j} \sum_{k \geq i} e^{\frac{4\xi\sqrt{(k - i + 1)T}}{(q - p)}} \EE[ e^{-\frac{\xi(x_k - x_i)}{q - p}} ].
\]
The bound \eqref{eq:laplocbd} follows since by Assumption \ref{assump:initdiff}, the inner sum is bounded by some finite constant $C(p,\xi,T) \in \ohalf$ not depending on $i$. The bound \eqref{eq:momlocbd} is immediate from \eqref{eq:laplocbd}.
\end{proof}

\section{Fluctuation estimates for the counting function} \label{ssec:countfluc}

We next establish some key fluctuation estimates for the counting function \eqref{eq:rescaled_count} in the weakly asymmetric case. Recall the choice $p=p_{\eps}$ introduced below Theorem \ref{thm:stationary}. Recall the scaled processes \eqref{eq:rescaled_sys} and the scaled counting function \eqref{eq:rescaled_count}. The lemma below furnishes local fluctuation estimates for the counting function when the initial data $\{x_i\}$ (before rescaling) is drawn from the distribution $\gamma_1$, defined by \eqref{eq:hominit} (i.e. $\init$ introduced below \eqref{eq:weak-asym-intro} equals $\gamma_1$). These estimates are key to the uniform approximation of $\clg^{\veps}(t,x)$ by $\tilde \cle^{\eps}_{\theta}(t,x)$ on compact subsets, claimed in \eqref{eq:uniapprox}.

{The proof proceeds by reducing the local oscillation of the counting function to estimates for a finite collection of crossing events. First, we control the displacement of the reference particle \(X_0^\varepsilon\) over \([0,T]\) in \eqref{eq:415a}-\eqref{eq:415b} by comparison with the totally asymmetric systems \(p=0\) and \(p=1\), obtaining Gaussian tail bounds that enables us to restrict our fluctuation analysis to a deterministic compact space-time set, depending on $\eps$. We then partition this set into rectangles of suitably chosen dimensions, and use stationarity of the gap process viewed from the tagged particle to further reduce the analysis to each such rectangle via a union bound. The fluctuation estimate for the counting function within each rectangle is controlled by three elementary quantities introduced below \eqref{eq:751n}: the number \(U\) of particles initially inside the interval, and the numbers \(V\) and \(W\) of particles entering the rectangle from below and above during the short time window of length \(\varepsilon\). The variable \(U\) has Poisson tails under the exponential-gap initialization, while \(V\) and \(W\) are controlled by comparison with the one-sided totally asymmetric systems and Brownian last-passage estimates. Combining these estimates gives the tail bound in the first part of the lemma; the moment estimate in the second part follows by integrating this tail bound, with separate treatments of the regimes \(\theta>1/2\) and \(\theta\le 1/2\).}


\begin{lemma} \label{lem:countfluc}
Let $\{x_i\}_{i \in \ZZ} \sim \gamma_1$. Fix $T \in \half$ and $\theta \in (0,1)$. For $(w,t_0) \in \RR \times [0,T]$, let 
\begin{equation}\label{eq:thetadefn}
\Theta(w,t_0, \theta) \equiv \Theta(w,t_0) = \left[ w - \eps^{\frac{1}{4\theta}}, w + \eps^{\frac{1}{4\theta}} \right] \times \left[ t_0, t_0 + \eps \right].
\end{equation}
There exists $\eps_0 \in (0,1)$ and $C_1, C_2, C_3 \in \half$ such that for all $\eps \in (0,\eps_0]$, $w \in \RR$, $t_0 \in [0,T]$,  
\[
\begin{split}
& \PP\left( \sup_{(x,t) \in \Theta(w,t_0)} \left| N_\eps(t,x-\sigma\eps^{-1/4}t) - N_\eps(t_0,w - \sigma\eps^{-1/4}t_0) \right| \geq \ell \right) \\
& \leq C_1 \left( \eps^{-\frac{1}{2} - (\frac{1}{4\theta} \wedge \frac{3}{4})}  + \eps^{-(\frac{1}{4\theta} \wedge \frac{3}{4})} (|w| + 1)\right) \left[ \exp( -C_2 \ell \log( \eps^{\frac{1}{2} - (\frac{1}{4\theta} \wedge \frac{3}{4})} \ell) ) + \exp( - C_2 \ell \log \ell )\right],
\end{split}
\]
for all $\ell \geq \ell_0(\eps) := (C_3\eps^{-\frac{1}{2} + (\frac{1}{4\theta} \wedge \frac{3}{4}})\vee 4$.

Furthermore, for all $r \geq 1$, there exists a $d=d(r)>0$ and $\eps_0 = \eps_0(r)  \in (0,1)$, such that
\begin{equation}\label{eq:907nn}
\begin{split}
\sup_{w \in \RR} \sup_{\eps \in (0,\eps_0]} \frac{\eps^{-\frac{1}{4}[(\frac{1}{\theta} - 1) \wedge \frac{1}{2}]}}{(|w| + 1)^{d}} \left\| e^{\sigma \eps^{1/4}  \sup_{(x,t) \in \Theta(w,t_0)} \left| N_\eps(t,x-\sigma\eps^{-1/4}t) - N_\eps(t_0,w - \sigma\eps^{-1/4}t_0) \right|} -1 \right\|_r
\end{split}
\end{equation}
 is finite for $t_0 \in [0,T]$.
\end{lemma}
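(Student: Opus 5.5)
The approach is to pass to microscopic units and reduce the supremum over $\Theta(w,t_0)$ to three counting variables. Write $a_\eps:=\eps^{-\frac12+(\frac1{4\theta}\wedge\frac34)}$. In microscopic coordinates the rectangle $\Theta(w,t_0)$, viewed in the moving frame $x-\sigma\eps^{-1/4}t$, covers the following region. The time window is $[\eps^{-1}t_0,\eps^{-1}t_0+1]$. Spatially, the window $2\eps^{\frac1{4\theta}-\frac12}$ is swept by the frame shift $\sigma\eps^{1/4}\cdot\eps^{-1}\cdot\eps^{1/2}=\sigma\eps^{-1/4}$ (microscopic) over unit time. So the region is contained in a microscopic space interval $I$ of length $O(a_\eps)$. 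This explains the threshold $\ell_0(\eps)=C_3a_\eps\vee4$. On the rectangle,
\[
\sup_{(x,t)\in\Theta(w,t_0)}\bigl|N_\eps(t,x-\sigma\eps^{-1/4}t)-N_\eps(t_0,w-\sigma\eps^{-1/4}t_0)\bigr|\le U+V+W,
\]
where $U$ is the number of particles in $I$ at microscopic time $\eps^{-1}t_0$. The variable $V$ (resp.\ $W$) counts particles lying below (resp.\ above) $I$ at time $\eps^{-1}t_0$ that enter $I$ during the unit time window. The bound holds because the count at a point can change only through particles present in $I$ or crossing into it.

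The first step is to make the labels deterministic. The particle labels near the microscopic location $\eps^{-1/2}(w-\sigma\eps^{-1/4}t_0)$ are random. I would first control $\sup_{s\le \eps^{-1}T}|\tilde X^\eps_0(s)|$ by sandwiching between the $p=0$ system (via \eqref{eq:plb-pointwise}, \eqref{eq:WFSsolution} and Lemma~\ref{lem:plbXj}) and the symmetric $p=1$ system, which gives Gaussian tails. Together with the Exp$(1)$ gap law, this confines the relevant labels, outside an event of negligible probability, to a deterministic range of cardinality $O(\eps^{-1/2-(\frac1{4\theta}\wedge\frac34)}+\eps^{-(\frac1{4\theta}\wedge\frac34)}(|w|+1))$, measured in units of blocks of size $a_\eps$. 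I would then take a union bound over candidate reference labels $j$. For each fixed $j$, Theorem~\ref{thm:stationary} together with shift invariance of $\gamma_1$ implies that at time $\eps^{-1}t_0$ the gaps seen from particle $j$ are i.i.d.\ Exp$(1)$, and the post-$t_0$ dynamics is again a solution started from $\gamma$-type data, by the Markov property and pathwise uniqueness (Theorem~\ref{thm:unique}). The number of candidate labels is exactly the prefactor in the statement.

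For a fixed reference label, the tails are estimated as follows. The variable $U$ is a Poisson-type count of Exp$(1)$ gaps in an interval of length $O(a_\eps)$, so $\PP(U\ge\ell)\le\exp(-C\ell\log(\ell/a_\eps))$ for $\ell\ge C_3a_\eps$. This yields the first exponential, since $\ell/a_\eps=\eps^{\frac12-(\frac1{4\theta}\wedge\frac34)}\ell$. For $V\ge\ell$, the $\ell$-th particle below $I$, which sits at distance at least about $\ell$ below $I$ except with Poisson-small probability, must rise by at least about $\ell$ in unit time. I would bound the upward displacement using the comparison with the $p=1$ (mirror) system and the estimate \eqref{eq:changeprobbd} applied to reflected coordinates. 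The latter gives $e^{-\xi u}$ tails for every $\xi$, so choosing $\xi\asymp\log\ell$ yields $\exp(-C\ell\log\ell)$. The variable $W$ is handled symmetrically through \eqref{eq:changeprobbd} for the original system. Combining these three bounds proves the first part of the lemma.

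The second part follows by writing $S$ for the supremum and using
\[
\|e^{\sigma\eps^{1/4}S}-1\|_r\le \sigma\eps^{1/4}\|S e^{\sigma\eps^{1/4}S}\|_r .
\]
On $\{S\le\ell_0(\eps)\}$ this is of order $\eps^{1/4}a_\eps$. The tail is integrated over $\ell\ge\ell_0$ using the first part, where the prefactor is absorbed by the superexponential decay up to a factor $(|w|+1)^d$. The exponent $\tfrac14[(\tfrac1\theta-1)\wedge\tfrac12]$ comes from a case split. When $\theta>\tfrac12$ the bulk term $\eps^{1/4}a_\eps$ dominates. When $\theta\le\tfrac12$ one must instead use the cruder $\exp(-C_2\ell\log\ell)$ regime and the frame-sweep contribution, giving the saturated rate $\eps^{1/8}$.

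I expect the main obstacle to be the $V/W$ estimate, which needs the displacement bound uniformly over the random starting configuration at time $\eps^{-1}t_0$. This is where Assumption~\ref{assump:initdiff}, which holds under stationarity with $c_0(\xi)=\log(1+\xi)$, must be reapplied to the configuration seen from the tagged particle. A second difficulty is keeping the $\eps$-bookkeeping in the $\theta\le\tfrac12$ regime consistent with the claimed rate.
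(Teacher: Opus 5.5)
\textbf{Overall structure.} Your overall architecture is sound. It has three parts: tagged-particle control via the $p=0$/$p=1$ sandwich, a union bound to pass to a stationary reference frame, and the $U+V+W$ decomposition. The paper fixes particle $0$ and takes a union over a spatial grid of $\mathcal{I}(b)$; you fix the window and take a union over reference labels $j$. That is a legitimate variant. For $\theta>1/2$ the raw label count $O(\eps^{-1}+\eps^{-1/2}(|w|+1+\ell))$ exceeds the stated prefactor by a factor $a_\eps\le \ell/C_3$. You can absorb this into the exponential, or remove it by your blocking.

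\textbf{The gap: the $V/W$ tail.} As written, this step cannot give the $\exp(-C_2\ell\log\ell)$ rate the lemma asserts. Consider the event that the $\ell$-th particle below $I$ lies within distance $c\ell$ of $I$. This is a lower-deviation event for a sum of $\ell$ i.i.d.\ Exp$(1)$ gaps, so its probability is of order $e^{-c'\ell}$: only exponentially small. Hence even with a perfect displacement estimate your argument yields only $\PP(V\ge\ell)\le e^{-c\ell}$.

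To reach $\ell\log\ell$ you must cut much closer, at $y=\sqrt{\ell\log\ell}$ (or any $\ell^{1-\alpha}$ with $\alpha<1/2$), where $\PP(\mathrm{Poi}(y)\ge \ell-1)\le e^{-c\ell\log\ell}$. You then need a Gaussian displacement bound $\PP(\sup_{s\le 1}(X_k(s)-X_k(0))\ge y)\le Ce^{-cy^2}$, uniformly in $k$. The estimate \eqref{eq:changeprobbd} does not supply this as stated. It holds for each fixed $\xi$ with an unspecified $\xi$-dependent constant, so letting $\xi$ grow with $\ell$ (whether $\xi\asymp\log\ell$ or $\xi\asymp y$) is not licensed by the lemma.

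The missing step is to reopen the proof of Lemma~\ref{lem:plbXj} under $\gamma_1$ and check that $\sum_{m\ge 0}e^{2\xi\sqrt{m+2}}(1+\xi)^{-m}\le e^{C\xi^2}$ for $\xi\ge 1$. The bound then reads $Ce^{-\xi u+C\xi^2}$, and choosing $\xi\asymp u$ gives Gaussian tails. This is exactly what the paper achieves with the split $\tilde V_{a,y}^{1,\eps}+\tilde V_{a,y}^{2,\eps}$ at $y=\sqrt{\ell\log\ell}$, together with the cluster/Brownian last-passage bound on $\tilde V_{a,y}^{1,\eps}$. Once repaired, your single-particle route is a lighter alternative to that construction.

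\textbf{Smaller issues.} The same untracked-constant problem affects your control of $\tilde X_0^\eps$. Lemma~\ref{lem:plbXj} is for a fixed horizon, and on $[0,\eps^{-1}T]$ its constants blow up, so you would again need explicit bookkeeping. The paper avoids this by using that $X_0^0(t)+t$ is a standard Brownian motion under $\gamma_1$ \cite[Proposition 2.7]{WFSbook}, together with the mirror statement for $X^1$.

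In the second part, for $\theta\le 1/2$ you have $\ell_0(\eps)=4$, so the polynomial prefactor is \emph{not} absorbed near $\ell_0$. You must truncate at $\ell\asymp\log(\eps^{-1}(|w|+1))$, and this is where the factor $(|w|+1)^d$ comes from. Your route then gives a bound of order $\eps^{1/4}\log(\eps^{-1}(|w|+1))$ up to a power of $|w|+1$, which is within the claimed rate. The $\eps^{1/8}$ in the paper is an artifact of truncating at $\eps^{-1/8}$, not a frame-sweep effect. Note also that the frame sweep has microscopic size $\sigma\eps^{1/4}$, not $\sigma\eps^{-1/4}$, although your $a_\eps$ is correct.
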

\begin{proof}
Fix $T, \theta$ as in the statement of the lemma.
Recall the processes $(X^0, L^0)$ with $p=0$ and $q=1$, introduced below Theorem \ref{thm:exist+}. 
{We also introduce the analogous process $(X^1,L^1)$ with $p = 1$ and $q = 0$ and starting from the same initial data as $X$ and $X^0$. By \eqref{eq:plb-pointwise} and a symmetric argument applied to the process $X^1$, we have} 
\begin{equation} \label{eq:dombyta}
{X_j^0(t) \leq X_j(t) \leq X_j^1(t) \text{ for all } t \geq 0, j \in \ZZ.}
\end{equation}
Since the initial distribution for $X$ and $X^0$ is $\gamma_1$, it follows from \cite[Proposition 2.7]{WFSbook} that $X^0_0(t) +t$ is a standard Brownian motion.
Thus, for any $b \ge 1$, with $B$  a standard Brownian motion,
\begin{multline}
\PP(\inf_{0\le t \le T} X_0^{\eps}(t) \le - \eps^{-1/2}T-b)
\le \PP(\inf_{0\le t \le T} (B(t)- \eps^{-1/2}t) \le - \eps^{-1/2}T-b)\\
\le  \PP(\inf_{0\le t \le T} B(t) \le -b) \le 2 \exp\{-b^2/2T\}.\label{eq:415a}
\end{multline}
{A similar calculation applied to the process $X^1$ shows that}
\begin{equation}\label{eq:415b}
\PP(\sup_{0\le t \le T} X_0^{\eps}(t) \ge  \eps^{-1/2}T+b) \le 2 \exp\{-b^2/2T\}.
\end{equation}

For $w \in \RR$, let $A=A(w) := |w|+1$.
For a given $b\ge 1$, consider the interval
$$\cli(b):= [-A - \eps^{-1/2}(1+\sigma)(1+T)-b, A + \eps^{-1/2}(1+\sigma)(1+T)+b],$$
and consider a partition of this interval as 
$$-A - \eps^{-1/2}(1+\sigma)(1+T)-b=y_0 < y_1 <\cdots < y_N = A + \eps^{-1/2}(1+\sigma)(1+T)+b, 
$$
such that, with $a(\theta):= \frac{1}{4\theta} \wedge \frac{3}{4}$, 
$$
y_i-y_{i-1} = (1+\sigma) \eps^{a(\theta)} \text{ for $1 \leq i \leq N-1$}, \quad y_N - y_{N-1} \leq (1+\sigma) \eps^{a(\theta)},
$$
and
\begin{equation}
N \le 3(1+\sigma)^{-1}\eps^{-a(\theta)}(A+b) + 3\eps^{-1/2-a(\theta)}(1+T). \label{eq:758mm}
\end{equation}
For $t, t_0 \in [0,T]$, consider the  point process on $\RR$ with  points
$$\{X^{\eps, t_0}_i(t) := X_i^{\eps}(t)- X_0^{\eps}(t_0), \; i \in \ZZ\},
$$
and let for $x \in \RR$
$N_{\eps}^{t_0}(t, x) := \max\{i \in \ZZ: X^{\eps, t_0}_i(t) \le x\}$.
Then, for $\ell >0$,
\begin{equation} \label{eq:diffneps-probbd}
\begin{split} 
&\PP\left( \sup_{(x,t) \in \Theta(w,t_0)} \left| N_\eps(t,x-\sigma\eps^{-1/4}t) - N_\eps(t_0,w - \sigma\eps^{-1/4}t_0) \right| \geq \ell \right)\\
&\quad= \PP\left( \sup_{(x,t) \in \Theta(w,t_0)} \left| N_\eps^{t_0}(t,x-X_0^{\eps}(t_0)-\sigma\eps^{-1/4}t) - N_\eps^{t_0}(t_0,w - X_0^{\eps}(t_0) -\sigma\eps^{-1/4}t_0) \right| \geq \ell \right)\\
&\quad\le \PP\left( \sup_{(y,z,t) \in S_{\eps}(A,b)} \left| N_\eps^{t_0}(t,y) - N_\eps^{t_0}(t_0,z) \right| \geq \ell \right)\\
&\qquad+ \PP (\inf_{0\le t \le T} X_0^{\eps}(t) \le - \eps^{-1/2}T-b)
+ \PP (\sup_{0\le t \le T} X_0^{\eps}(t) \ge  \eps^{-1/2}T+b)\\
&\quad\le \PP\left( \sup_{(y,z,t) \in S_{\eps}(A,b)} \left| N_\eps^{t_0}(t,y) - N_\eps^{t_0}(t_0,z) \right| \geq \ell \right) + 4 \exp\{-b^2/2T\},
\end{split}
\end{equation}
where 
\begin{equation*}
S_{\eps}(A,b):= \{(y,z,t): t \in [t_0, t_0+\eps],
y,z \in \cli(b),\;
|y-z| \le (1+\sigma) \eps^{a(\theta)}\},
\end{equation*}
and the second inequality in the above display is a consequence of \eqref{eq:415a}-\eqref{eq:415b}, whereas the first inequality comes from the observation that, on the set
where $X_0^{\eps}(t) \in [- \eps^{-1/2}T-b,  \eps^{-1/2}T+b]$,
$$|(x-X_0^{\eps}(t_0)-\sigma\eps^{-1/4}t) - (w - X_0^{\eps}(t_0) -\sigma\eps^{-1/4}t_0|
\le |x-w| + \sigma\eps^{-1/4} |t-t_0| \le \eps^{1/4\theta} + \sigma\eps^{3/4} \le
(1+\sigma) \eps^{a(\theta)},
$$
and
$$
x-X_0^{\eps}(t_0)-\sigma\eps^{-1/4}t \mbox{ and } w - X_0^{\eps}(t_0) -\sigma\eps^{-1/4}t_0
$$
are both in $\cli(b)$.

By the definition of the partition $\{y_j\}$, for any $(y,z,t) \in S_{\eps}(A,b)$, there is at most one $y_{j+1}$ between $y$ and $z$, {for some $0 \leq j \leq N-2$. If no such $y_{j+1}$ lies between $y$ and $z$ and $[y_j,y_{j+1}]$ is the interval containing both, then we write
\begin{equation} \label{eq:nosplitbound}
|N_\eps^{t_0}(t,y) - N_\eps^{t_0}(t_0,z)| \leq |N_\eps^{t_0}(t,y) - N_\eps^{t_0}(t_0,y_j)| + |N_\eps^{t_0}(t_0,z) - N_\eps^{t_0}(t_0,y_j)|.
\end{equation}
On the other hand, if $y \leq y_{j+1} \leq z$, then by definition of the partition and $S_\eps(b,A)$, $y \in [y_j,y_{j+1}]$ and $z \in [y_{j+1},y_{j+2}]$, and we write
\begin{equation} \label{eq:splitbound}
\begin{split}
& \left| N_\eps^{t_0}(t,y) - N_\eps^{t_0}(t_0,z) \right| \\
& \leq \left| N_\eps^{t_0}(t,y) - N_\eps^{t_0}(t_0,y_{j}) \right| + \left| N_\eps^{t_0}(t_0,y_{j+1}) - N_\eps^{t_0}(t_0,y_j) \right| + \left| N_\eps^{t_0}(t_0,z) - N_\eps^{t_0}(t_0,y_{j+1}) \right|.
\end{split}
\end{equation}
A symmetric bound holds when $z \leq y_{j+1} \leq y$. In both the cases of \eqref{eq:nosplitbound} and \eqref{eq:splitbound}, if the left-hand side is bounded below by $\ell$, then one of the terms on the right-hand side is bounded below by $\ell/3$. Hence, a union bound argument applied to \eqref{eq:diffneps-probbd} gives  
}
\begin{multline*}
\PP\left( \sup_{(x,t) \in \Theta(w,t_0)} \left| N_\eps(t,x-\sigma\eps^{-1/4}t) - N_\eps(t_0,w - \sigma\eps^{-1/4}t_0) \right| \geq \ell \right)\\
\le 3\PP\left( \max_{j\le N}\sup_{t \in [t_0, t_0+\eps]}\sup_{y \in [y_j, y_{j+1}]} \left| N_\eps^{t_0}(t,y) - N_\eps^{t_0}(t_0,y_j) \right| \geq \ell/3 \right) + 4 \exp\{-b^2/2T\},
\end{multline*}
where $N$ is as introduced below \eqref{eq:415b}.
Using the observation that from the stationary property of $\gamma_1$,
$\{N_{\eps}^{t_0}(t+t_0, \cdot), t \ge 0\}$ has the same distribution as
$\{N_{\eps}(t, \cdot), t \ge 0\}$, 
the right side above can be bounded above by
\begin{equation}\label{eq:751n}
N \times 3\sup_{a \in \cli(b)} \PP\left(\sup_{s \in [0,\eps]}
\sup_{y \in [a, a+(1+\sigma)\eps^{a(\theta)}]} |N_{\eps}(s,y) - N_{\eps}(0,a)| \ge \ell/3\right) + 4 \exp\{-b^2/2T\}.
\end{equation}
We now estimate the above probability.
Define
$$
U:= \#\{i \in \ZZ: X_i^{\eps}(0) \in [a, a+ (1+\sigma)\eps^{a(\theta)}], \;\;
V:= \#\{i \in \ZZ: X_i^{\eps}(0) < a, \; \sup_{s \in [0, \eps]} X_i^{\eps}(s) >a\}
$$
and
$$
W:= \#\{i \in \ZZ: X_i^{\eps}(0) > a + (1+\sigma)\eps^{a(\theta)}, \;
\inf_{s\in [0,\eps]}X_i^{\eps}(s) < a + (1+\sigma)\eps^{a(\theta)}\}.
$$
Then, using the inequality
$$
|N_{\eps}(s,y) - N_{\eps}(0,a)| \le |N_{\eps}(s,y) - N_{\eps}(0,y)|
+ |N_{\eps}(0,y) - N_{\eps}(0,a)|,
$$
we see that
\begin{equation}\label{eq:433n}
\sup_{s \in [0,\eps]}
\sup_{y \in [a, a+(1+\sigma)\eps^{a(\theta)}]} |N_{\eps}(s,y) - N_{\eps}(0,a)| \le 2U+V+W.
\end{equation}
Recalling the definition of $\gamma_1$ we see that 
\begin{equation}\label{eq:514n}
U \sle \mbox{Poi}((1+\sigma)\eps^{a(\theta)-1/2}) + 1,
\end{equation}
where $\mbox{Poi}(\lambda)$ denotes a Poisson random variable with mean $\lambda$.

{Next, 
observe that by the second inequality in \eqref{eq:dombyta},}
\begin{equation}\label{eq:413n}
V \sle \tilde V_a^{\eps},
\end{equation}
where
\begin{align*}
\tilde V_a^{\eps} &:= \#\{i: X_i^{\eps}(0) <a, \; \sup_{s \in [0,\eps]} \eps^{1/2} X_i^1(\eps^{-1}s) > a\}\\
&= \#\{i: X_i(0) <a\eps^{-1/2}, \; \sup_{s \in [0,1]}  X_i^1(s) > a\eps^{-1/2}\}.
\end{align*}
For $y \ge 1$, write $\tilde V_a^{\eps} = \tilde V_{a,y}^{1,\eps} + \tilde V_{a,y}^{2,\eps}$, where
\begin{align*}
\tilde V_{a,y}^{1,\eps} &:= \#\{i: X_i(0) <a\eps^{-1/2}-y, \; \sup_{s \in [0,1]}  X_i^1(s) > a\eps^{-1/2}\},\\
\tilde V_{a,y}^{2,\eps} &:= \#\{i: a\eps^{-1/2}-y\le X_i(0) <a\eps^{-1/2}, \; \sup_{s \in [0,1]}  X_i^1(s) > a\eps^{-1/2}\}.
\end{align*}
Note that
\begin{equation}\label{eq:410nn}
\tilde V_{a,y}^{2,\eps} \sle \mbox{Poi}(y) +1.
\end{equation}
We will now estimate $\PP(\tilde V_{a,y}^{1,\eps} \ge 1)$ for large $y$.
For this, we will construct an auxiliary particle system that suitably dominates the original particle system, as follows.
Let $I_k:= [\eps^{-1/2}a - (k+1)^2y, \eps^{-1/2}a - k^2y]$.
Consider the particle system as in Definition \ref{def:biinf} with $p=1$, denoted as $\tilde X_i^1$, driven using the same Brownian motions as $X$ and $X^1$,  but with
initial values $\tilde X_i^1(0)$ defined as 
$$
\tilde X_i^1(0) := 
\begin{cases}
\eps^{-1/2}a - k^2y & \mbox{ if } X_i(0) \in I_k, \; k \ge 1\\
X_i(0) & \mbox{ if } X_i(0) > \eps^{-1/2}a - y.
\end{cases}
$$
{Note that the initial data of the system $\tilde X^1$ dominate the initial data of $X^1$. Hence, it follows from the representation analogous to \eqref{eq:WFSsolution} with $p = 1, q = 0$ that 
\begin{equation} \label{eq:dombytilde}
X_i^1(t) \le \tilde X_i^1(t)
\end{equation} 
for all $t\ge 0$ and $i \in \ZZ$}.

We now introduce another singularly interacting Brownian particle system with $p=1$, in which the particles  are divided into clusters and particles in different clusters do not interact with each other. The state process for this system will be denoted as $\hat X^1 = \{\hat X^1_i, i \in \ZZ\}$. 
For $k\ge 1$, the $k$-th cluster in this particle system will be
$\clc_k:= \{i \in \ZZ: \tilde X_i^1(0) = \eps^{-1/2}a - k^2y\}$
and we set $\clc_0:= \{i \in \ZZ:  \tilde X_i^1(0) = X_i^1(0) > \eps^{-1/2}a - y\}$.
The motion of particles in the $k$-th cluster is defined using the same initial condition and Brownian motions as the corresponding particles for $\{\tilde X_i^1\}$, but the evolution equation \eqref{eq:asym_eq} is modified so that the particles in different clusters do not interact.  This is done by setting $L_{(i^*_{k}, i^*_{k}+1)} = L_{(j^*_{k}-1, j^*_{k})}=0$ where $i^*_k$ (resp. $j^*_k$) is the index of the highest (resp. lowest) particle in the $k$-th cluster. For $k=0$, $i^*_k=\infty$ and so only the lowest particle has a modified evolution. Condition 2 in Definition \ref{def:biinf} is also modified so that only particles in each cluster satisfy the monotonicity property. 

For $k \ge 1$, let $E_k$ be the event that the particles in clusters $\clc_k$ and $\clc_{k+1}$ do not meet up to time $t = 1$, i.e.
$$E_k := \{\sup_{i\in \clc_{k+1}}\sup_{s\in [0,1]} \hat X_i^1(s) < \inf_{i\in \clc_{k}}\inf_{s\in [0,1]} \hat X_i^1(s)\}.$$
For $M>1$ and mutually independent Brownian motions $\{B_j, j \ge 1\}$ define
$$
\clv^+_M = \sup_{0 \le t_1\le \cdots \le t_{M-1}\le 1} \sum_{j=1}^{M-1}(B_j(t_j) - B_j(t_{j-1})).$$
Then, from \cite[Lemma 4.2]{banbudrud2025wp} and the observation that $\clv^+_M \leq \clv_{M'}^+$ whenever $M \leq M'$, it follows that, for any $c_0>0$, 
$$
\PP(\sup_{i\in \clc_{k+1}}\sup_{s\in [0,1]} \hat X_i^1(s) > \eps^{-1/2}a - (k+1)^2y +ky)
\le \PP(|\clc_{k+1}| > c_0ky^2) + \PP(\clv^+_{\lceil c_0ky^2\rceil} \ge ky).$$

Choose $y_0>1$ and $c_0$ sufficiently small so that, for all $y\ge y_0$,
$$
\PP(\clv^+_{\lceil c_0ky^2\rceil} \ge ky) = 
\PP\left(\frac{\clv^+_{\lceil c_0ky^2\rceil}}{\sqrt{\lceil c_0ky^2\rceil}} -2 \ge \frac{ky}{\sqrt{\lceil c_0ky^2\rceil}}-2\right) \le \PP\left(\frac{\clv^+_{\lceil c_0ky^2\rceil}}{\sqrt{\lceil c_0ky^2\rceil}} -2 \ge \sqrt{k}\right).
$$
Then from \cite[Lemma 5.1(i)]{banbudrud2025wp} we have that for some $C_1, C_2 <\infty$ (depending on $c_0$),
$$\PP(\clv^+_{\lceil c_0ky^2\rceil} \ge ky) \le C_1e^{-C_2k^{7/4}y^2} \mbox{ for all } y \ge y_0,\; k \ge 1.
$$
Next, recalling the initial distribution $\gamma_1$, there is a $y_1>y_0$ and $C_1', C_2'<\infty$, such that, for all $y \ge y_1$ and $k\ge 1$,
$$
\PP(|\clc_{k+1}| > c_0ky^2) \le \PP(\mbox{Poi}((2k+3)y) + 1 > c_0ky^2) \le C_1'e^{-C_2'ky^2}.
$$
Thus, for all $y\ge y_1$ and $k\ge 1$,
\begin{equation}\label{eq:343nn}
\PP(\sup_{i\in \clc_{k+1}}\sup_{s\in [0,1]} \hat X_i^1(s) > \eps^{-1/2}a - (k+1)^2y +ky)
\le C_1'e^{-C_2'ky^2} + C_1e^{-C_2k^{7/4}y^2}.
\end{equation}
Also, 
$$
\PP(\inf_{i\in \clc_{k}}\inf_{s\in [0,1]} \hat X_i^1(s)
< \eps^{-1/2}a - k^2y -ky)
=\PP(\inf_{0\le s \le 1} B(s) \le -ky) \le 2 e^{-k^2y^2/2},
$$
here noting that $X_{j_k^*} \overset{d}{=} B$, where $B$ is a standard Brownian motion.

Combining the last two displays, there exist $C_1^*, C_2^*<\infty$ such that, for all $k\ge 1$ and $y\ge y_1$,
$$\PP(E_k^c) \le C_1^* e^{-C_2^*ky^2}.$$
Consequently, choosing $y_2\ge y_1$ such that $1- e^{-C_2^*y_2^2} \ge 1/2$, with $\tilde C_1^* = 2C_1^*$,
$$\PP(\bigcap_{k=1}^{\infty} E_k) \ge 1 - \tilde C_1^* e^{-C_2^*y^2} \mbox{ for all } y \ge y_2.
$$
Note that, {in view of \eqref{eq:dombytilde},} on the event $\bigcap_{k=1}^{\infty} E_k$, 
\begin{equation} \label{eq:yjump-ub}
\sup_{i :  X_i^1(0) < \eps^{-1/2}a-y}\sup_{s \in [0,1]}
X_i^1(s)
\le \sup_{i \in \clc_1}\sup_{s \in [0,1]} \tilde X_i^1(s) = \sup_{i \in \clc_1}\sup_{s \in [0,1]} \hat X_i^1(s).
\end{equation}
{Here the equality holds since, on the event in question, particles starting in $\clc_1$ do not interact with the lower clusters in the interval $[0,1]$, and they are not affected by the higher up particles due to the totally asymmetric nature of the particle interactions.}
Calculations similar to those leading to \eqref{eq:343nn} show that, there exists $\bar C_1, \bar C_2<\infty$ and $y_3 \ge y_2$, so that, for all $y\ge y_3$,
$$
\PP(\sup_{i\in \clc_1}\sup_{s \in [0,1]} \hat X_i^1(s) \ge a\eps^{-1/2})
\le \bar C_1 e^{-\bar C_2 y^2}.
$$
Consequently, in view of \eqref{eq:yjump-ub}, for all $y\ge y_3$,
\begin{equation}\label{eq:810nn}
\PP(\tilde V_{a,y}^{1,\eps}\ge 1) \le \PP((\bigcap_{k=1}^{\infty} E_k)^c)
+ \PP(\sup_{i\in \clc_1}\sup_{s \in [0,1]} \hat X_i^1(s) \ge a\eps^{-1/2}) 
\le \tilde C_1^* e^{-C_2^*y^2}+ \bar C_1 e^{-\bar C_2 y^2}.\end{equation}
Next, using standard tail probability estimates for a Poisson random variable,
there is a $\ell_0>0$ and $c_1>0$ such that, for all $\ell \ge \ell_0$,
\begin{equation}\label{poicon}
 \PP(\mbox{Poi}(\sqrt{\ell \log \ell})+1 \ge \ell -1) \le e^{-c_1 \ell \log \ell}.
\end{equation}
Now for $\ell \ge \max\{4, \ell_0)$ such that $\sqrt{\ell \log \ell} \ge y_3$, taking $y = y(\ell):= \sqrt{\ell \log \ell}$, we have, with $D_1 = \max\{C_1^*, \bar C_1, 1\}$ and
$D_2 = \min\{C_2^*, \bar C_2, c_1\}$,
\begin{align*}
\PP( V \ge \ell) \le \PP(\tilde V^{\eps}_a \ge \ell)
& \le \PP(\tilde V_{a,y(\ell)}^{1,\eps} \ge 1)
+ \PP(\tilde V_{a,y(\ell)}^{2,\eps} \ge \ell -1)\\
&\quad \le 
2D_1 e^{-D_2\ell \log \ell} + \PP(\mbox{Poi}(\sqrt{\ell \log \ell})+1 \ge \ell -1)
\le 3D_1 e^{-D_2\ell \log \ell},
\end{align*}
where the first inequality is from \eqref{eq:413n} and the third inequality follows from \eqref{eq:810nn}, \eqref{eq:410nn} and \eqref{poicon}.

An analogous calculation for $W$ gives a similar upper bound for $\PP(W \ge \ell)$, and we conclude that, for some $\tilde D_1, \tilde D_2 <\infty$,
\begin{equation}\label{eq:516nn}
\PP(V \ge \ell) + \PP(W \ge \ell) \le \tilde D_1 e^{-\tilde D_2\ell \log \ell}, \; \mbox{ for all } \ell \ge 1 \mbox{ and } a \in \RR.
\end{equation}
From \eqref{eq:433n} we now have that
\begin{align*}
\sup_{a\in \RR}\PP(\sup_{s \in [0,\eps]}
\sup_{y \in [a, a+(1+\sigma)\eps^{a(\theta)}]} |N_{\eps}(s,y) - N_{\eps}(0,a)| > \ell/3)
\le \sup_{a\in \RR} \PP(2U+V+W > \ell/3)\\
\le \PP(\mbox{Poi}((1+\sigma)\eps^{a(\theta)-1/2})+1 \ge \ell/18) + 
\sup_{a\in \RR}(\PP(V \ge \ell/9) + \PP(W \ge \ell/9)),
\end{align*}
where the second inequality follows from \eqref{eq:514n}.
Using \eqref{eq:516nn} along with the inequality that for some $\kappa_1, \kappa_1', \kappa_2 < \infty$,
\begin{equation}\label{eq:839n}
\PP(1+\mbox{Poi}((1+\sigma)\lambda) \ge \ell/18) \le \kappa_1'\exp\{-\kappa_1 \ell \log \frac{\ell}{\lambda}\} \mbox{ for } \ell \ge \max\{4, \kappa_2\lambda\}, \; \lambda>0,
\end{equation}
we now have that, 
for some
$D_1^*, D_2^*, <\infty$, whenever $\ell \ge \ell(\eps) = \max\{4, \kappa_2 \eps^{a(\theta)-1/2}\}$,
\begin{align*}
&\sup_{a\in \RR}\PP(\sup_{s \in [0,\eps]}
\sup_{y \in [a, a+(1+\sigma)\eps^{a(\theta)}]} |N_{\eps}(s,y) - N_{\eps}(0,a)| > \ell/3)\\
&\quad \le D_1^*\left(\exp\left\{-D_2^* \ell \log \frac{\ell}{\eps^{a(\theta)-1/2}}\right\} +
\exp\{-D_2^* \ell \log \ell\}\right).
\end{align*}
Thus taking $b= \ell$, for $\ell \ge \ell(\eps)$, in \eqref{eq:751n}, we have, using \eqref{eq:758mm} and the above display
\begin{multline}
\PP\left( \sup_{(x,t) \in \Theta(w,t_0)} \left| N_\eps(t,x-\sigma\eps^{-1/4}t) - N_\eps(t_0,w - \sigma\eps^{-1/4}t_0) \right| \geq \ell \right)\\
\le 9D_1^*\left((1+\sigma)^{-1}\eps^{-a(\theta)}(A+\ell) +  \eps^{-1/2-a(\theta)}(1+T)\right)
\left(\exp\left\{-D_2^* \ell \log \frac{\ell}{\eps^{a(\theta)-1/2}}\right\} +
\exp\{-D_2^* \ell \log \ell\}\right)\\
+ 4 \exp\{-\ell^2/2T\}. \label{eq:843nn}
\end{multline}
This completes the proof of the first statement in the lemma.

Consider now the second statement.  Write
$$
\Psi_{\eps}= \Psi_{\eps}(w, t_0) := \sup_{(x,t) \in \Theta(w,t_0)} \left| N_\eps(t,x-\sigma\eps^{-1/4}t) - N_\eps(t_0,w - \sigma\eps^{-1/4}t_0) \right|.
$$
We consider two cases.

{\em Case 1:} $\theta >1/2$. Fix $r\ge 1$. Recall $\kappa_2$ from \eqref{eq:839n}.  We can find $\eps_0 \in (0,1)$ so that $\kappa_2\sigma \eps_0^{\frac{1}{4\theta}-\frac{1}{4}} \le 1$. Then for all $\eps \le \eps_0$, 
\begin{align}
&\EE\left| \exp\{\sigma \eps^{1/4} \Psi_{\eps}\} -1\right|^r\nonumber\\
&\le \EE\left[\left| \exp\{\sigma \eps^{1/4} \Psi_{\eps}\} -1\right|^r
\mathbf{1}\{\Psi_{\eps} \ge \kappa_2 \eps^{\frac{1}{4\theta}-\frac{1}{2}}\}\right]
+ (e\kappa_2\sigma)^r\eps^{r(\frac{1}{4\theta}-\frac{1}{4})}\nonumber\\
&\le 2^r\left(\EE \exp\{2r\sigma\eps^{1/4}\Psi_{\eps}\} +1\right)^{1/2}
\left(\PP(\Psi_{\eps} \ge \kappa_2 \eps^{\frac{1}{4\theta}-\frac{1}{2}})\right)^{1/2}
+ (e\kappa_2\sigma)^r\eps^{r(\frac{1}{4\theta}-\frac{1}{4})}.\label{eq:854nn}
\end{align}
Since $\frac{1}{2} - \frac{1}{4\theta} >0$, we can choose $\eps_1 \le \eps_0$ such that $\kappa_2\eps_1^{\frac{1}{4\theta}-\frac{1}{2}} > 4$.
Then, from \eqref{eq:843nn}, we can find $c_1, c_2<\infty$ such that, for all $z\ge \kappa_2 \vee e$  and $\eps \le \eps_1$,
$$
\PP(\Psi_{\eps} \ge z \eps^{\frac{1}{4\theta}-\frac{1}{2}}) \le c_1 A \exp\{-c_2 z \eps^{\frac{1}{4\theta}-\frac{1}{2}}\}.
$$
Thus, in particular, choosing $\eps_2 = \eps_2(r) \le \eps_1$ such that $c_2 \eps_2^{\frac{1}{4\theta}-\frac{1}{2}} > 2r\sigma$, we obtain for some $C_r <\infty$, for all $\eps \le \eps_2$, $w\in \RR$ and $t_0 \in [0,T]$,
$$\EE[\exp\{2r\sigma\eps^{1/4}\Psi_{\eps}\}] \le 
\EE[\exp\{2r\sigma\eps^{-\frac{1}{4\theta}+\frac{1}{2}}\Psi_{\eps}\}] \le C_rA.
$$
Using the last two estimates in \eqref{eq:854nn}, we have, for some $C'_r < \infty$ and all $\eps \le \eps_2$, 
$w\in \RR$ and $t_0 \in [0,T]$,
$$
\EE\left| \exp\{\sigma \eps^{1/4} \Psi_{\eps}\} -1\right|^r\le
 C'_r\left(\eps^{r(\frac{1}{4\theta}-\frac{1}{4})} + A \exp\{-\frac{c_2\kappa_2}{2}\eps^{\frac{1}{4\theta}-\frac{1}{2}}\}\right).
$$
This proves \eqref{eq:907nn} with $d= 1/r$ for the case $\theta >1/2$. 

{\em Case 2:} $\theta \le 1/2$. Fix $r\ge 1$. Then, choosing $\eps_0 \in (0,1)$ so that $\sigma \eps_0^{1/8}\le 1$, for all
$\eps \le \eps_0$,
\begin{align}
\EE\left| \exp\{\sigma \eps^{1/4} \Psi_{\eps}\} -1\right|^r
&\le \EE\left[\left| \exp\{\sigma \eps^{1/4} \Psi_{\eps}\} -1\right|^r\mathbf{1}\{\Psi_{\eps} \ge \eps^{-1/8}\}\right] + (e\sigma)^r \eps^{r/8}\nonumber\\
&\le 2^r\left(\EE e^{2r\sigma \eps^{1/4}\Psi_{\eps}} +1\right)^{1/2}
\left(\PP(\Psi_{\eps} \ge \eps^{-1/8})\right)^{1/2} + (e\sigma)^r \eps^{r/8}.\label{eq:1019nn}
\end{align}
Choosing $\eps_1\le \eps_0$ so that $\eps^{-1/8} \ge \max\{\kappa_2, 4\}$, we have, applying  \eqref{eq:843nn} with $\ell = \eps^{-1/8}$,  for some $c_1, c_2<\infty$, and all $\eps \le \eps_1$, and all
$w\in \RR$ and $t_0 \in [0,T]$, 
\begin{equation}\label{eq:1018n}
\PP(\Psi_{\eps} \ge \eps^{-1/8}) \le c_1 A e^{-c_2 \eps^{-1/8}\log \eps^{-1}}.
\end{equation}
Furthermore, using \eqref{eq:843nn} once more, we can find a $\beta>0$ such that, for some $c_1', c_2'<\infty$, and all $\eps \le \eps_1$, $m >1$ and all
$w\in \RR$ and $t_0 \in [0,T]$,
$$
\PP(\Psi_{\eps} \ge \beta (\log \eps^{-1} + \log A) +m) \le c_1' e^{-c_2' m \log m}.
$$
Consequently, we can find $C_r<\infty$ such that for all $\eps \le \eps_1$,  and all
$w\in \RR$ and $t_0 \in [0,T]$,
$$
\EE e^{2r\sigma\eps^{1/4}\Psi_{\eps}} \le C_r e^{2r\sigma \eps^{1/4} (\beta(\log \eps^{-1} +\log A) +1)}, 
$$
and thus for some $C'_r<\infty$, for all $\eps \le \eps_1$,  and all
$w\in \RR$ and $t_0 \in [0,T]$,
$$\EE e^{2r\sigma\eps^{1/4}\Psi_{\eps}} \le C'_r A^{2r\sigma \beta}.$$
Using the above estimate and \eqref{eq:1018n} in \eqref{eq:1019nn}, we have, for some $C''_r<\infty$
$$
\EE\left| \exp\{\sigma \eps^{1/4} \Psi_{\eps}\} -1\right|^r \le 
C''_r A^{(2r\sigma \beta+1)/2} e^{-\frac{c_2}{2}\eps^{-1/8}\log \eps^{-1}} + (e\sigma)^r \eps^{r/8}.
$$
This proves \eqref{eq:907nn} with $d= (1+2r\sigma \beta)/2r$, for the case $\theta \le 1/2$.

The result follows. 
\end{proof}

\section{Tightness and Convergence} \label{sec:tight-conv}
{We now turn to the proof of the convergence of the process $\clG^\eps$ and its mollification $\tlE_\theta^\eps$, defined by \eqref{eq:Gdef0} and \eqref{eq:tEdef0}, respectively,} to the stochastic heat equation. As before, throughout this section $\{(X_j,L_{(j,j+1)})\}_{j \in \ZZ}$ denotes the unique strong solution to \eqref{eq:asym_eq} with initial data $\{x_j\} \sim \gamma$ and Brownian motions $\{B_j\}$, where $\gamma$ satisfies \eqref{eq:ainit2} and the local times satisfy \eqref{eq:aloc}. 
Recall we  consider the weakly asymmetric case, i.e. $p= p_{\eps} = (1+ e^{\sigma\eps^{1/4}})^{-1}$.
The  rescaled processes $\{X_j^\eps, L_{(j,j+1)}^\eps,B_j^\eps\}$ are defined as in \eqref{eq:rescaled_sys}. 

{While the convergence of $\clG^\eps$ to SHE will be obtained for the stationary initial distribution $\gamma = \gamma_1$, in view of Remark \ref{rem:gentightness} and Remark \ref{rem:gencont} it is worthwhile to consider more general initial distributions as well. We will specify the assumptions on the distribution $\gamma$ at the beginning of each subsection.}


As noted in the introduction, a major difficulty in our singular interaction model is that the microscopic Hopf--Cole field \(\clG^\eps\) does not, by itself, admit a representation in terms of a microscopic SHE with martingale noise, as in the case of \cite{BertiniGiacomin}. Instead, we work with the mollified field \(\tlE_\theta^\eps\), whose smoothing scale is chosen so as to retain the fluctuation-scale roughness of \(\clG^\eps\), while still allowing a useful semimartingale representation.

The proof has four main steps. First, in equations \eqref{eq:Edef} and \eqref{eq:te-def}, we represent the mollified field as a weighted infinite sum of functions evaluated at particle locations. This representation is designed so that, after applying It\^o's formula, the singular local time terms arising from collisions cancel. The result is a martingale representation for \(\tlE_\theta^\eps\), with explicit drift and bracket terms. This cancellation is the substitute, in the present local time model, for the almost closed microscopic stochastic heat equation available in lattice models such as WASEP \cite{BertiniGiacomin}.

Second, we obtain uniform moment and continuity estimates for \(\tlE_\theta^\eps\). These estimates imply tightness of the sequence $\{\tlE_\theta^\eps\}$ as $\eps \to 0$ in the local-uniform topology and place the prelimit fields in the same  parabolic H\"older regularity class as the limiting SHE. Third, we show that the mollified field $\{\tlE_\theta^\eps\}$ and the original microscopic Hopf--Cole field \(\clG^\eps\) are uniformly close on compact sets with high probability. This last step uses the fluctuation estimates for the counting function from Section~\ref{ssec:countfluc}, together with the continuity bounds proved below, to show that the mollification error vanishes locally uniformly in probability.

Finally, using the martingale representation, we show that any limit point of $\tlE_\theta^\eps$ solves a certain martingale problem that characterizes SHE solutions, completing the proof of Theorem~\ref{thm:maincgce}.

\subsection{{Weighted infinite sums, local time cancellation and martingale representation of the mollified field}} \label{ssec:weighted}

In this subsection, we will assume throughout that $\gamma$ satisfies Assumption \ref{assump:initdiff}. Fix $(t,x) \in \half \times \RR$, $\delta > 0$, and for $(s,y) \in [0,t+\delta) \times \RR$, let 
\[
\psi^{t,x,\delta}(s,y) = \left(\mathbf{1}_{[y,\infty)}\ast p_{t-s+\delta}\right)(x) = \int_y^\infty p_{t - s + \delta}(u - x)du,
\]
and 
\begin{equation} \label{eq:Edef}
\clE^{t,x,\delta, \eps}(s) = \sum_{i \in \ZZ} e^{\sigma\eps^{1/4}i}\psi^{t,x,\delta}(s,X_i^\eps(s)).
\end{equation}
The sum is well-defined (possibly infinite) since all of the terms are non-negative. We also define 
\begin{equation} \label{eq:te-def}
\tlE^{t,x,\delta, \eps}(s) = (1 - e^{-\sigma\eps^{1/4}}) e^{-\sigma \eps^{-1/4}x + \frac{1}{2}\sigma^2\eps^{-1/2}(t+\delta)}\clE^{t,x-\sigma\eps^{-1/4}(t+\delta),\delta,\eps}(s), \text{ for } s \in [0,t+\delta).
\end{equation}
As shown in Lemma \ref{lem:Erep}, this process is connected to the process defined by \eqref{eq:tEdef0} through the equality $\tlE_\theta^\eps(t,x) = \tlE^{t,x,\eps^{1/2\theta},\eps}(t)$.

\subsubsection{{Convergence of infinite sums and a sum-integral identity}}

The following result on the convergence of infinite random sums will be used several times.


\begin{lemma} \label{lem:convergence}
Fix $\eps>0$. Let $T > 0$, and suppose $f \in C([0,T) \times \RR)$ is such that for all $c \in (0,\infty)$ and $T' \in (0,T)$, 
\begin{equation} \label{eq:convcond}
\sup_{(s,y) \in [0,T'] \times \RR} e^{cy^+} |f(s,y)| < \infty.
\end{equation}
Then for all $m \in \ohalf$, 
$r \in [1,\infty)$ and $T' \in [0,T)$,
\begin{equation} \label{eq:Lrtriangle}
\sum_{i \in \ZZ} e^{m i}\EE[\sup_{0 \leq s \leq T'}|f(s,X_i^\eps(s))|^r]^{1/r} <\infty.
\end{equation}
Consequently, for all $m \in \ohalf$,
the sum 
\[
\sum_{i \in \ZZ} e^{m i} f(s,X_i^\eps(s)), \quad s \in [0,T),
\]
converges uniformly on compact subsets of $[0,T)$, in $L^r(\PP)$, for all $r \in [1,\infty)$.
\end{lemma}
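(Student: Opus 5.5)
We split the sum in \eqref{eq:Lrtriangle} into indices $i \le 0$ and $i \ge 1$, and treat the two halves differently.

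For $i \le 0$, the weight $e^{mi}$ is summable. Condition \eqref{eq:convcond} applied with any $c>0$ gives $|f(s,y)| \le C_{T'} e^{-cy^+} \le C_{T'}$ uniformly on $[0,T']\times\RR$. Hence each expectation is at most $C_{T'}$, and the negative-index part is bounded by $C_{T'}\sum_{i\le 0} e^{mi}<\infty$.

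For $i \ge 1$, the weight $e^{mi}$ grows, so we need decay of $f$ at the particle positions. Fix $c>0$ large, to be chosen. From \eqref{eq:convcond},
\[
\sup_{s\le T'}|f(s,X_i^\eps(s))|^r \le C^r \exp\Bigl\{-rc\inf_{s\le T'} (X_i^\eps(s))^+\Bigr\}.
\]
Write $Y_i := \inf_{s\le \eps^{-1}T'} X_i(s)$ in unscaled variables, so that $\inf_{s \le T'} X_i^\eps(s) = \eps^{1/2} Y_i$. We then bound
\[
\EE\bigl[e^{-rc\eps^{1/2} Y_i^+}\bigr] \le \PP(Y_i \le i/2) + e^{-rc\eps^{1/2} i/2}.
\]
For the first term we use \eqref{eq:posprobbd} of Lemma \ref{lem:plbXj} with $u=-i/2$, time horizon $\eps^{-1}T'$ and some $\xi>0$. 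This gives
\[
\PP(Y_i \le i/2) \le C e^{\xi i/2 - c_0(\xi) i}.
\]
Since $c_0(\xi)\to\infty$ as $\xi\to\infty$, we can choose $\xi$ with $c_0(\xi) - \xi/2$ large. Here we should double-check that $c_0(\xi)$ grows faster than $\xi/2$. For $\gamma_1$ we have $c_0=\log(1+\xi)$, which grows only logarithmically, so this choice fails.

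The fix is to use a lower threshold. Use instead the event $\{Y_i \le \eta i\}$ with $\eta$ small, and take $u=-\eta i$ in \eqref{eq:posprobbd}. This yields the bound $C e^{(\xi\eta - c_0(\xi))i}$. Choose $\xi$ so that $c_0(\xi) > 2m r$ (after taking $r$-th roots, $c_0(\xi)/r > m$ with room to spare). Then choose $\eta = c_0(\xi)/(2\xi)$, so that the exponent is $-c_0(\xi)i/2$. Finally choose $c$ so that $c\,\eps^{1/2}\eta > 2m$; this is allowed since $\eps$ is fixed and $c$ is arbitrary in \eqref{eq:convcond}.

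Combining, the $L^r$ norm of the $i$-th term is $O\bigl(e^{-c_0(\xi)i/(2r)} + e^{-c\eps^{1/2}\eta i}\bigr)$. This beats $e^{mi}$ and gives \eqref{eq:Lrtriangle}. The main point to check is this bookkeeping, i.e.\ that the constant in Lemma \ref{lem:plbXj} is uniform in $i$ for the fixed horizon $\eps^{-1}T'$, which the lemma provides.

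For the consequence, \eqref{eq:Lrtriangle} and Minkowski's inequality give
\[
\Bigl\|\sup_{s\le T'}\Bigl|\sum_{|i|>n} e^{mi} f(s,X_i^\eps(s))\Bigr|\Bigr\|_r \le \sum_{|i|>n} e^{mi}\Bigl\|\sup_{s\le T'}|f(s,X_i^\eps(s))|\Bigr\|_r \to 0 .
\]
So the partial sums are Cauchy in $L^r$, uniformly on $[0,T']$, which gives the asserted uniform convergence in $L^r(\PP)$ for every $T'<T$.
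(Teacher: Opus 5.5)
Your proposal is correct and follows the paper's proof. The paper also bounds $|f|$ by $K_1 e^{-c(X_i^\eps)^+}$ and handles $i\le 0$ trivially. For $i\ge 1$ it splits on whether $\inf_{s\le T'}X_i^\eps(s)$ falls below a threshold proportional to $i/\xi$, controls that event through \eqref{eq:posprobbd} with $c_0(\xi)>3mr$, and then takes $c$ large; concretely it uses the threshold $u=\xi^{-1}\eps^{1/2}mri$ and $c=2\xi\eps^{-1/2}r^{-1}$, which is your $\eta$-choice with different bookkeeping. In a final write-up, simply delete the abandoned first attempt with threshold $i/2$.
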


{The next identity connects weighted infinite sums of a specific form to integrals involving the Hopf--Cole field. This {\em summation by parts} formula will be repeatedly used throughout the paper.}

\begin{lemma} \label{lem:SBPformula}
Fix $m >0, \eps>0$. For some $T > 0$, suppose $F$ is an element of $C^{0,1}([0,T] \times \RR)$ such that for all $c \in \ohalf$ and $T' \in (0,T)$, 
$$
\sup_{(s,y) \in [0,T'] \times \RR} e^{cy^+} |e^{-m\sigma\eps^{-1/4}y}F(s,y)| < \infty.
$$ 
For $i \in \ZZ$ and $s \in \half$, let $Y_i^\eps(s) = X_i^\eps(s) + \sigma\eps^{-1/4}s$. The following equality holds, for all $s \in [0,T)$,
\begin{equation} \label{eq:sumtoint}
\begin{split}
& (1 - e^{-m\sigma\eps^{1/4}}) \sum_{i \in \ZZ} e^{m\sigma\eps^{1/4}i - m\sigma\eps^{-1/4}Y_i^\eps(s) + \frac{1}{2}m\sigma^2\eps^{-1/2}s} F(s,Y_i^\eps(s)) \\
& \hspace{1.5in} = \int_{\RR} \clG^\eps(s,z)^m \left[ m\sigma\eps^{-1/4} F(s,z) - \partial_z F(s,z) \right]dz,
\end{split}
\end{equation}
where the sums and integrals appearing above converge uniformly on compact subsets of $[0,T)$ in $L^r(\PP)$ for all $r \in [1,\infty)$. 
\end{lemma}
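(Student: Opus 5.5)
The plan is to write $\clG^\eps(s,\cdot)^m$ as a step function in $z$ that jumps exactly at the shifted particle positions $Y_i^\eps(s)$, and then apply a discrete integration by parts. First I unwind the definition. By \eqref{eq:Gdef0},
\[
\clG^\eps(s,z)^m = \exp\{m\sigma\eps^{1/4}N_\eps(s,z-\sigma\eps^{-1/4}s) - m\sigma\eps^{-1/4}z + \tfrac12 m\sigma^2\eps^{-1/2}s\}.
\]
Moreover $N_\eps(s,z-\sigma\eps^{-1/4}s) = \max\{i : Y_i^\eps(s)\le z\}$. So for $z\in[Y_i^\eps(s),Y_{i+1}^\eps(s))$, with the convention that coinciding points are handled by the max, the counting term equals $i$. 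Set $H(z) := e^{-m\sigma\eps^{-1/4}z}F(s,z)$. Then $\partial_z H = -e^{-m\sigma\eps^{-1/4}z}[m\sigma\eps^{-1/4}F - \partial_z F]$, so the right side of \eqref{eq:sumtoint} equals
\[
-e^{\frac12 m\sigma^2\eps^{-1/2}s}\sum_{i\in\ZZ} e^{m\sigma\eps^{1/4}i}\int_{Y_i^\eps}^{Y_{i+1}^\eps}\partial_z H(z)\,dz = e^{\frac12 m\sigma^2\eps^{-1/2}s}\sum_i e^{m\sigma\eps^{1/4}i}\bigl(H(Y_i^\eps)-H(Y_{i+1}^\eps)\bigr).
\]
Empty intervals from coinciding particles contribute zero, and the identity still holds term by term. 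Reindexing the second piece gives $\sum_i (e^{m\sigma\eps^{1/4}i} - e^{m\sigma\eps^{1/4}(i-1)})H(Y_i^\eps) = (1-e^{-m\sigma\eps^{1/4}})\sum_i e^{m\sigma\eps^{1/4}i}H(Y_i^\eps)$. This is exactly the left side.

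The real content is justifying the splitting and reindexing, which requires absolute convergence of $\sum_i e^{m\sigma\eps^{1/4}i}|H(Y_i^\eps(s))|$, plus the convergence claims in $L^r$ uniformly on compacts. I will apply Lemma \ref{lem:convergence} with weight $m\sigma\eps^{1/4}$ and $f(s,y) := H(y+\sigma\eps^{-1/4}s)$, that is $f(s,y) = e^{-m\sigma\eps^{-1/4}(y+\sigma\eps^{-1/4}s)}F(s,y+\sigma\eps^{-1/4}s)$. For $s\le T'$ the time shift is bounded, so the hypothesis on $F$ gives \eqref{eq:convcond} for $f$, and \eqref{eq:Lrtriangle} then yields $L^r$ convergence of the left sum, uniformly in $s\in[0,T']$.

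For the integral, I will bound $\int|\clG^\eps(s,z)^m|\,|m\sigma\eps^{-1/4}F-\partial_zF|\,dz$ by the same interval decomposition: it is at most $e^{\frac12 m\sigma^2\eps^{-1/2}s}\sum_i e^{m\sigma\eps^{1/4}i}\int_{Y_i}^{Y_{i+1}}|\partial_z H|\,dz$. I expect this to be the main obstacle. The hypothesis controls $F$ but not $\partial_z F$, so an integrable bound on $\partial_z H$ is not directly available.

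My first attempt at handling this is to use the structure of the hypothesis: the condition holds for all $c$, and the decay $e^{-cy^+}$ of $H$ handles the region $z\to+\infty$. There, $\int_{Y_i}^{\infty}$-type tails are bounded via $\sup_{z\ge Y_i}|H|$, after assuming (or verifying in applications) a matching bound on $\partial_zF$. On $z\to-\infty$ the factor $e^{m\sigma\eps^{1/4}i}$ with $i\to-\infty$ provides summability. Local boundedness of $\partial_z F$ on compacts, together with tightness of $\sup_{s\le T'}|Y_i^\eps(s)|$ from Lemma \ref{lem:plbXj}, controls the middle. If only $F$ itself is controlled, I would instead argue on truncations $[-R,R]$, where the telescoping is exact and finite, and let $R\to\infty$ using $H(\pm R)e^{m\sigma\eps^{1/4}N}\to0$. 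The limit of the left side exists by the first step, which defines the integral as an $L^r$-limit uniformly in $s$. To get the boundary term at $-R$ to vanish, I use \eqref{eq:aloc}-type growth, namely $N_\eps(s,-R)\to-\infty$ linearly, together with \eqref{eq:posprobbd}.
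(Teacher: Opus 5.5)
Your argument is essentially the paper's proof, read in the opposite direction: split $\RR$ into the intervals $[Y_i^\eps(s),Y_{i+1}^\eps(s))$ on which the counting term equals $i$, telescope $H(Y_i^\eps)-H(Y_{i+1}^\eps)$, reindex, and control the sum by Lemma~\ref{lem:convergence} with essentially the same $f$. Your concern about $\partial_z F$ is fair, since the paper simply asserts that the integral converges, but you need neither an extra hypothesis nor the $\pm R$ truncation (whose $-R$ boundary term you only sketch; note \eqref{eq:aloc} concerns local times, not particle positions): if you truncate instead at $Y_{-M}^\eps(s)$ and $Y_{M+1}^\eps(s)$, the boundary terms are $e^{-m\sigma\eps^{1/4}M}H(Y_{-M}^\eps)$ and $e^{m\sigma\eps^{1/4}M}H(Y_{M+1}^\eps)$, which are single terms of the series controlled in \eqref{eq:Lrtriangle} and hence vanish in $L^r$ uniformly on compacts.
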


\begin{proof}[Proof of Lemma \ref{lem:convergence}]
Fix $T$ and $f$ as in the lemma. It suffices to show that for all $m \in (0,\infty)$, $r \in [1,\infty)$ and $T' \in [0,T)$, \eqref{eq:Lrtriangle} holds.
 Fix $i \in \NN$. In view of \eqref{eq:convcond}, for any $c \in \ohalf$, there exists a constant $K_1 = K_1(c,T') \in \ohalf$ such that for all $s \in [0,T']$
\begin{equation}
|f(s,X_i^\eps(s))| \leq K_1(c,T') e^{-c(X_i^\eps(s))^+}.
\end{equation}
Furthermore, by the second statement in Lemma \ref{lem:plbXj}, for any $\xi \in \ohalf$, there exists $K_2 = K_2(\xi,T',\eps) \in \ohalf$ such that, for all $u \in \RR$,
\[
\PP(\inf_{0 \leq s \leq T'} X_i^\eps(s) \leq u) \leq K_2(\xi,T',\eps) e^{\xi \eps^{-1/2} u - c_0(\xi)i}.
\]
Hence, for all $c,\xi \in \ohalf$ and $u \in \RR$,
\begin{equation} \label{eq:fXibd1}
\begin{split}
\EE[\sup_{0 \leq s \leq T'}|f(s,X_i^\eps(s))|^r] & = \EE[\sup_{0 \leq s \leq T'}|f(s,X_i^\eps(s))|^r \mathbf{1}\{\inf_{0 \leq s \leq T'} X_i^\eps(s) \leq u\}] \\
& \quad\quad + \EE[\sup_{0 \leq s \leq T'}|f(s,X_i^\eps(s))|^r \mathbf{1}\{\inf_{0 \leq s \leq T'} X_i^\eps(s) > u\}] \\
& \leq K_1(c,T')^r \PP(\inf_{0 \leq s \leq T'} X_i^\eps(s) \leq u) + K_1(c,T')^r e^{-cru} \\
& \leq K_1(c,T')^r K_2(\xi,T',\eps) e^{\xi \eps^{-1/2} u-c_0(\xi)i} + K_1(c,T')^r e^{-cru}.
\end{split}
\end{equation}
Recalling the function $c_0(\cdot)$ from Assumption \ref{assump:initdiff}, we now choose $\xi$ sufficiently large that $c_0(\xi) > 3mr$. We then 
let
\[
u = \xi^{-1}\eps^{1/2}mri, \quad\quad c = 2\xi \eps^{-1/2}r^{-1}.
\]
Then the estimate \eqref{eq:fXibd1} becomes 
\begin{equation} \label{eq:termbd}
\EE[\sup_{0 \leq s \leq T'}|f(s,X_i^\eps(s))|^r] \leq c_1(m,r,T',\eps) e^{ - 2mr i},
\end{equation}
where $c_1(m,r,T',\eps,\eta) \in \ohalf$ is a constant determined by the choices of parameters above. Hence, for $i \geq 1$, we can bound each term of the sum \eqref{eq:Lrtriangle} by $c_2(m,r,T',\eps,\eta)e^{-m i}$ and for $i \leq 0$, we can use \eqref{eq:convcond} to bound each term by $K_1(c,T')e^{mi}$. We conclude that the sum converges.
\end{proof}

\begin{proof}[Proof of Lemma \ref{lem:SBPformula}]
We may write 
\[
\begin{split}
(1 - e^{-m\sigma\eps^{1/4}})e^{m\sigma\eps^{1/4}i} & = ( e^{m\sigma\eps^{1/4}i} - e^{m\sigma\eps^{1/4}(i-1)}) \\ 
& = \left( e^{m\sigma\eps^{1/4}N_\eps(s, Y_i^\eps(s) - \sigma\eps^{-1/4}s)} - e^{m\sigma\eps^{1/4}N_\eps(s, Y_{i-1}^\eps(s) - \sigma\eps^{-1/4}s)}\right).
\end{split}
\]
Hence, substituting the above into the left-hand side of \eqref{eq:sumtoint}, denoting
$a_i:=m\sigma\eps^{1/4}N_\eps(s, Y_i^\eps(s) - \sigma\eps^{-1/4}s) + \frac{1}{2}m\sigma^2\eps^{-1/2}s$,
and performing summation by parts, we obtain
 \[
\begin{split}
& \sum_{i \in \ZZ} \left( e^{a_i} - e^{a_{i-1}} \right)e^{- m\sigma\eps^{-1/4}Y_i^\eps(s)} F(s,Y_i^\eps(s)) \\
& = -\sum_{i \in \ZZ} e^{a_i} \Big( e^{-m\sigma\eps^{-1/4}Y_{i+1}^\eps(s)} F(s,Y_{i+1}^\eps(s)) - e^{- m\sigma\eps^{-1/4}Y_i^\eps(s)} F(s,Y_i^\eps(s)) \Big) \\
& = -\sum_{i \in \ZZ} e^{a_i} \int_{[Y_i^\eps(s), Y_{i+1}^\eps(s))} \frac{\partial }{\partial z} \Big[ e^{-m\sigma\eps^{-1/4}z}F(s,z) \Big] dz \\
& =  -\sum_{i \in \ZZ} \int_{[Y_i^\eps(s), Y_{i+1}^\eps(s))} e^{m\sigma\eps^{1/4}N_\eps(s, z - \sigma\eps^{-1/4}s)+ \frac{1}{2}m\sigma^2\eps^{-1/2}s} \frac{\partial }{\partial z} \Big[ e^{-m\sigma\eps^{-1/4}z}F(s,z) \Big] dz \\
& = - \int_{\RR} e^{m\sigma\eps^{1/4}N_\eps(s, z - \sigma\eps^{-1/4}s) + \frac{1}{2}m\sigma^2\eps^{-1/2}s} \frac{\partial }{\partial z} \Big[ e^{-m\sigma\eps^{-1/4}z}F(s,z) \Big] dz \\
& = \int_{\RR} e^{m\sigma\eps^{1/4}N_\eps(s, z - \sigma\eps^{-1/4}s)+ \frac{1}{2}m\sigma^2\eps^{-1/2}s} \Big[ m\sigma\eps^{-1/4} e^{-m\sigma\eps^{-1/4}z}F(s,z) - e^{-m\sigma\eps^{-1/4}z}\partial_z F(s,z) \Big] dz. 
\end{split}
\]
Note that the sums and integrals appearing above converge uniformly on compact subsets of $[0,T)$, in $L^r(\PP)$ for any $r \in [1,\infty)$, by Lemma \ref{lem:convergence}, with $f(s,y) = e^{-m\sigma\eps^{-1/4}y + \frac{1}{2}m\sigma^2\eps^{-1/2}s}F(s,y - \sigma\eps^{-1/2}s)$. The fourth equality uses the fact that
$Y^{\eps}_i(s) \to \infty$ (resp. $\to -\infty$) as $i\to \infty$ (resp. $i\to -\infty$).
Combining the last two displays we have
\begin{align*}
&(1 - e^{-m\sigma\eps^{1/4}}) \sum_{i \in \ZZ} e^{m\sigma\eps^{1/4}i - m\sigma\eps^{-1/4}Y_i^\eps(s) + \frac{1}{2}m\sigma^2\eps^{-1/2}s} F(s,Y_i^\eps(s))\\
&=  \int_{\RR} \clG^\eps(s,z)^m \Big[ m\sigma\eps^{-1/4} F(s,z) - \partial_z F(s,z) \Big] dz.
\end{align*}
The result follows.
\end{proof}

\subsubsection{{Martingale representation of the mollified field}}

In Lemma~\ref{lem:Eprocess}, we show that the sum in \eqref{eq:Edef} is well-defined in a cetain uniform sense and identify a key local time cancellation property to obtain a martingale representation for the process defined in \eqref{eq:Edef}. In Lemma~\ref{lem:Erep}, the process \eqref{eq:te-def} is shown to arise as a mollification of the original Hopf--Cole field $\clG^\eps$.


\begin{lemma} \label{lem:Eprocess} 
Fix $(t,x) \in \half\times \RR$ and $\delta, \eps>0$.
\begin{enumerate}[label = (\roman*)]

\item The sum \eqref{eq:Edef} defining $\{\clE^{t,x,\delta, \eps}(s), s \in [0,t+\delta)\}$ converges uniformly on compacts subsets of $[0,t+\delta)$, in $L^r(\PP)$, for all $r \in [1,\infty)$. In particular, $\sup_{0 \leq s \leq T'}  \clE^{t,x,\delta,\eps}(s) \in L^r(\PP)$
for all $r \in [1,\infty)$ and $T' \in [0,t+\delta)$. 

\item For all $s \in [0,t+\delta)$, the following equality holds:
\begin{equation} \label{eq:mart_rep}
\clE^{t,x,\delta,\eps}(s) - \clE^{t,x,\delta,\eps}(0) = -\sum_{i \in \ZZ} e^{\sigma\eps^{1/4}i} \int_0^s p_{t-u+\delta}(X_i^\eps(u) - x) dB_i^\eps(u),
\end{equation}
where the sum on the right-hand side converges uniformly on compact subsets of $[0,t + \delta)$, in $L^r(\PP)$, for all $r \in [1,\infty)$. In particular, $\{\clE^{t,x,\delta,\eps}(s), s \in [0, t + \delta)\}$ is a continuous square-integrable martingale.
\end{enumerate}
\end{lemma}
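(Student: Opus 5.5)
The plan is to reduce both parts to Lemma \ref{lem:convergence}, applied with weight $m=\sigma\eps^{1/4}$, and to obtain \eqref{eq:mart_rep} from It\^o's formula on finite truncations of the sum. The drift terms will vanish because $\psi^{t,x,\delta}$ solves a backward heat equation. The interior local time terms will vanish because the weight ratio matches the splitting ratio, $e^{\sigma\eps^{1/4}}=q/p$.

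\emph{Part (i).} Set $f(s,y)=\psi^{t,x,\delta}(s,y)$ on $[0,t+\delta)\times\RR$. This function is continuous and bounded by $1$. For $s\le T'<t+\delta$ we have $t-s+\delta\ge t+\delta-T'>0$, so $f(s,y)\le \exp\{-((y-x)^+)^2/(2(t+\delta))\}$. Hence $\sup_{s\le T',y}e^{cy^+}f(s,y)<\infty$ for every $c>0$, which is \eqref{eq:convcond}. Lemma \ref{lem:convergence} with $m=\sigma\eps^{1/4}$ (and $T=t+\delta$) then gives convergence of \eqref{eq:Edef} uniformly on compacts in $L^r$, and bounds $\|\sup_{s\le T'}\clE^{t,x,\delta,\eps}(s)\|_r$ by the finite sum in \eqref{eq:Lrtriangle}. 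The rescaled system still has independent standard Brownian motions $B_i^\eps$ and still satisfies \eqref{eq:asym_eq}, so the lemmas apply at this scale as well.

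\emph{Part (ii).} Write $\tau=t-s+\delta$. Direct computation gives $\partial_y\psi=-p_\tau(y-x)$ and $\partial_{yy}\psi=-p_\tau'(y-x)$. Since $\partial_\tau p_\tau=\tfrac12 p_\tau''$, we get $\partial_s\psi=\tfrac12 p_\tau'(y-x)$, so $\partial_s\psi+\tfrac12\partial_{yy}\psi=0$. Apply It\^o's formula to $S_M(s)=\sum_{|i|\le M}e^{\sigma\eps^{1/4}i}\psi(s,X_i^\eps(s))$. The $ds$-terms cancel, leaving the stochastic integrals $-\sum_{|i|\le M} e^{\sigma\eps^{1/4}i}\int_0^s p_{t-u+\delta}(X_i^\eps-x)\,dB_i^\eps$ together with local time terms.

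For $-M\le i\le M-1$, the measure $dL^\eps_{(i,i+1)}$ receives a contribution from particle $i$ and from particle $i+1$. Using the support property (so $X_i^\eps=X_{i+1}^\eps$ on its support), the total coefficient is
\[
p_\tau(X_i^\eps-x)\,e^{\sigma\eps^{1/4}i}\bigl(q-p\,e^{\sigma\eps^{1/4}}\bigr)=0,
\]
because $p_\eps^{-1}-1=e^{\sigma\eps^{1/4}}$. This is the local time cancellation. What remains are two boundary terms:
\[
-p\,e^{-\sigma\eps^{1/4}M}\int_0^s p_\tau(X^\eps_{-M}-x)\,dL^\eps_{(-M-1,-M)} \quad\text{and}\quad q\,e^{\sigma\eps^{1/4}M}\int_0^s p_\tau(X^\eps_M-x)\,dL^\eps_{(M,M+1)}.
\]

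\begin{itemize}
\item The lower boundary term is bounded by $C(q/p)^{-M}L^\eps_{(-M-1,-M)}(T')$. This tends to $0$ almost surely by \eqref{eq:aloc}. For $L^r$ convergence I would instead use the representation of $L_{(j,j+1)}$ from the proof of Lemma \ref{lem:loctimemoments}, or simply the uniform moment bound \eqref{eq:momlocbd} together with the factor $(q/p)^{-M}\to0$.
\item The upper boundary term is, I expect, the main obstacle. It is bounded by $C\,e^{\sigma\eps^{1/4}M}\sup_{u\le T'}p_{t-u+\delta}(X_M^\eps(u)-x)\,L^\eps_{(M,M+1)}(T')$. I would apply Cauchy--Schwarz: the local time factor has moments bounded uniformly in $M$ by \eqref{eq:momlocbd}. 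The factor $e^{\sigma\eps^{1/4}M}\sup_u p_\tau(X_M^\eps-x)$ is the $M$-th summand of \eqref{eq:Lrtriangle} for $f(s,y)=p_{t-s+\delta}(y-x)$, which satisfies \eqref{eq:convcond}. That summand is summable, and in fact decays like $e^{-mM}$ by \eqref{eq:termbd}, so it tends to $0$.
\end{itemize}

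Finally, the truncated stochastic integral sums converge uniformly on $[0,T']$ in $L^r$. By the BDG inequality, each term has $L^r$ norm of its running supremum at most $C_r\,\|\sup_u p_\tau(X_i^\eps(u)-x)\|_r\sqrt{T'}$. Applying Lemma \ref{lem:convergence} to $f=p_{t-s+\delta}(\cdot-x)$ makes the series of these norms converge, so the series converges absolutely in this norm. Letting $M\to\infty$ and using part (i) for the left side yields \eqref{eq:mart_rep}. The limit is an $L^r$-uniform limit of continuous square-integrable martingales, and is therefore itself a continuous square-integrable martingale on $[0,t+\delta)$.
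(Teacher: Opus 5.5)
Your proposal is correct and follows the paper's proof: Lemma \ref{lem:convergence} for part (i), It\^o's formula on truncated sums with the backward heat equation removing the drift, interior local time cancellation from $p\,e^{\sigma\eps^{1/4}}=q$, and $L^r$ control of the two boundary terms using the uniform local time moments. The differences are only matters of economy. You handle the upper boundary term by Cauchy--Schwarz and Lemma \ref{lem:convergence} applied to $f=p_{t-s+\delta}(\cdot-x)$, whereas the paper splits explicitly on $\{\inf X_M^\eps\le x+\alpha M\}$. You also prove convergence of the stochastic-integral series directly via BDG, whereas the paper deduces it from convergence of the remaining terms.
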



\begin{lemma} \label{lem:Erep}
Fix $(t,x) \in \half\times \RR$ and $\delta \in (0,1)$.
For all $s \in [0,t + \delta)$ and $\eps>0$,
\[
\tlE^{t,x,\delta,\eps}(s) = \int_{\RR} \clG^\eps(s,u) p_{t - s + \delta}(u - x)du.
\]
\end{lemma}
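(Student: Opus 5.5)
The plan is to obtain the identity as a direct specialization of the summation by parts formula, Lemma \ref{lem:SBPformula}, with $m=1$. The identity then reduces to a Gaussian completing-the-square computation. Fix $(t,x)$, $\delta$, $\eps$, and write $a:=\sigma\eps^{-1/4}$, $\tau:=t+\delta-s>0$ for $s\in[0,t+\delta)$, and $x':=x-a(t+\delta)$.

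\textbf{Step 1: match the sum to the left side of \eqref{eq:sumtoint}.} By \eqref{eq:te-def} and \eqref{eq:Edef},
\[
\tlE^{t,x,\delta,\eps}(s)=(1-e^{-\sigma\eps^{1/4}})\sum_{i\in\ZZ}e^{\sigma\eps^{1/4}i}\,e^{-ax+\frac12\sigma^2\eps^{-1/2}(t+\delta)}\,\psi^{t,x',\delta}(s,X_i^\eps(s)).
\]
With $Y_i^\eps(s)=X_i^\eps(s)+as$ as in Lemma \ref{lem:SBPformula}, I define
\[
F(s,z):=e^{a(z-x)+\frac12 \sigma^2\eps^{-1/2}\tau}\,\psi^{t,x',\delta}(s,z-as).
\]
Then $e^{-aY_i^\eps(s)+\frac12\sigma^2\eps^{-1/2}s}F(s,Y_i^\eps(s))$ equals exactly the $i$-th summand above (without the factor $e^{\sigma\eps^{1/4}i}$). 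Hence the left side of \eqref{eq:sumtoint} with $m=1$ is $\tlE^{t,x,\delta,\eps}(s)$.

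\textbf{Step 2: check the hypotheses of Lemma \ref{lem:SBPformula}.} Since $\psi$ is a Gaussian tail integral, $F\in C^{0,1}([0,T']\times\RR)$ for every $T'<t+\delta$. Moreover
\[
e^{-az}F(s,z)=\mathrm{const}(s)\cdot\psi^{t,x',\delta}(s,z-as).
\]
This is bounded by $1$ times a constant that is uniform for $s\le T'$, and it decays like a Gaussian as $z\to+\infty$ uniformly on $[0,T']$ because $\tau\ge t+\delta-T'>0$. So $\sup_{s\le T',\,z\in\RR} e^{cz^+}|e^{-az}F(s,z)|<\infty$ for all $c>0$. This uniformity near the terminal time is the only point needing care, and it is why the identity is claimed only for $s<t+\delta$.

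\textbf{Step 3: compute the integrand.} Since $\partial_z\psi^{t,x',\delta}(s,z-as)=-p_\tau(z-as-x')$, the integrand on the right of \eqref{eq:sumtoint} is
\[
aF(s,z)-\partial_zF(s,z)=e^{a(z-x)+\frac12a^2\tau}\,p_\tau\big(z-x+a\tau\big),
\]
where I used $z-as-x'=(z-x)+a\tau$ and $\sigma^2\eps^{-1/2}=a^2$. Expanding the square gives $p_\tau(w+a\tau)=p_\tau(w)e^{-aw-\frac12a^2\tau}$. With $w=z-x$, the exponential prefactors cancel exactly, leaving $p_{t-s+\delta}(z-x)$. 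Therefore the right side of \eqref{eq:sumtoint} is $\int_\RR\clG^\eps(s,z)p_{t-s+\delta}(z-x)\,dz$, which proves the lemma. The convergence of the sum and integral is part of the conclusion of Lemma \ref{lem:SBPformula}, so nothing further is needed.
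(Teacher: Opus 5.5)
Your proposal is correct and follows the paper's proof essentially line for line. The paper also applies Lemma \ref{lem:SBPformula} with $m=1$ and the same test function $F$: after the shift $u\mapsto u+\sigma\eps^{-1/4}s$, your $\psi^{t,x',\delta}(s,z-\sigma\eps^{-1/4}s)$ is exactly the paper's $\int_z^\infty p_{t-s+\delta}(u-x+\sigma\eps^{-1/4}(t-s+\delta))\,du$. It then reduces the statement to the same completing-the-square identity. The only cosmetic difference is at the terminal time: the paper applies the lemma with $T=t+\delta'$ and takes $\delta'\in(0,\delta)$ arbitrary, which is exactly what your restriction to $[0,T']$ with $T'<t+\delta$ arbitrary amounts to.
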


\begin{proof}[Proof of Lemma \ref{lem:Eprocess}]
Consider first part (i). We apply Lemma \ref{lem:convergence} with $m=\sigma \eps^{1/4}$, $f(s,y) = \psi^{t,x,\delta}(s,y)$ and $T = t + \delta$. To see that the hypothesis is satisfied, note that for each $(s,y) \in [0,t+\delta) \times \RR$, a standard Gaussian tail bound gives
\begin{equation} \label{eq:psitailbd}
0 \leq \psi^{t,x,\delta}(s,y) \leq \exp(-\frac{((y - x)^+)^2}{2(t - s + \delta)}),
\end{equation}
from which \eqref{eq:convcond} follows, for all $T' \in [0,t+\delta)$. 

We now prove part (ii). Note that $\psi^{t,x,\delta}$ is in $C^{1,2}([0,t+\delta)\times \RR)$ and it solves the time-reversed heat equation on $[0, t+\delta)$, namely
$$\frac{\partial}{\partial s} \psi^{t,x,\delta}(s,y) + \frac{1}{2}
\frac{\partial}{\partial y^2}\psi^{t,x,\delta}(s,y)=0, \mbox{ for all } s \in [0,t + \delta).$$
Thus, It\^{o}'s formula yields, for $0\le s < t+\delta$,
\[
\begin{split}
\psi^{t,x,\delta}(s,X_i^\eps(s)) & = \psi^{t,x,\delta}(0,X_i^\eps(0)) - \int_0^s p_{t-u+\delta}(X_i^\eps(u) - x) dX_i^\eps(u) \\
& = \psi^{t,x,\delta}(0,X_i^\eps(0)) - \int_0^s p_{t-u+\delta}(X_i^\eps(u) - x) dB_i^\eps(u) \\
& \hspace{0.5in} + \frac{1}{1+e^{-\sigma\eps^{1/4}}}\int_0^s p_{t-u+\delta}(X_i^\eps(u) - x) dL_{(i,i+1)}^\eps(u) \\
& \hspace{1in} - \frac{e^{-\sigma\eps^{1/4}}}{1+e^{-\sigma\eps^{1/4}}}\int_0^s p_{t-u+\delta}(X_i^\eps(u) - x) dL_{(i-1,i)}^\eps(u).
\end{split}
\]
Consequently, for all $M_-,M_+ \in \NN$, $M_{-} < M_+$
\begin{align*} 
\clE^{t,x,\delta,\eps}_{M_-,M_+}(s) & := \sum_{i = -M_-}^{M_+} e^{\sigma\eps^{1/4}i}\psi^{t,x,\delta}(s,X_i^\eps(s)) \\
& = \sum_{i = -M_-}^{M_+} e^{\sigma\eps^{1/4}i} \psi^{t,x,\delta}(0,X_i^\eps(0)) - \sum_{i = -M_-}^{M_+} e^{\sigma\eps^{1/4}i} \int_0^s p_{t-u+\delta}(X_i^\eps(u) - x) dB_i^\eps(u) \\
& \hspace{0.5in} + \sum_{i = -M_-}^{M_+} \bigg\{ \frac{e^{\sigma\eps^{1/4}i}}{1+e^{-\sigma\eps^{1/4}}}\int_0^s p_{t-u+\delta}(X_i^\eps(u) - x) dL_{(i,i+1)}^\eps(u) \\
& \hspace{1.2in} - \frac{e^{\sigma\eps^{1/4}(i-1)}}{1+e^{-\sigma\eps^{1/4}}}\int_0^s p_{t-u+\delta}(X_i^\eps(u) - x) dL_{(i-1,i)}^\eps(u) \bigg\} .
\end{align*}
Rearranging the terms, the right-hand side is equal to
\begin{equation} \label{eq:Ito-M}
\begin{split}
& \sum_{i = -M_-}^{M_+}e^{\sigma\eps^{1/4}i}\psi^{t,x,\delta}(0,X_i^\eps(0)) - \sum_{i = -M_-}^{M_+} e^{\sigma\eps^{1/4}i}\int_0^s p_{t-u+\delta}(X_i^\eps(u) - x) dB_i^\eps(u) \\
& \hspace{0.5in} + \sum_{i = -M_-}^{M_+ - 1} \frac{e^{\sigma\eps^{1/4}i}}{1 + e^{-\sigma\eps^{1/4}}} \int_0^s \{ p_{t - u + \delta}(X_i^\eps(u) - x) - p_{t - u + \delta}(X_{i+1}^\eps(u) - x) \}dL_{(i,i+1)}^\eps(u) \\
& \hspace{0.5in} +\tau^{t,x,\delta,\eps,M_+}_+(s) -\tau^{t,x,\delta,\eps,M_-}_-(s) \\
& = \sum_{i = -M_-}^{M_+}e^{\sigma\eps^{1/4}i}\psi^{t,x,\delta}(0,X_i^\eps(0)) - \sum_{i = -M_-}^{M_+} e^{\sigma\eps^{1/4}i}\int_0^s p_{t-u+\delta}(X_i^\eps(u) - x) dB_i^\eps(u) \\
& \hspace{0.5in} +\tau^{t,x,\delta,\eps,M_+}_+(s) - \tau^{t,x,\delta,\eps,M_-}_-(s),
\end{split}
\end{equation}
where
\[
\begin{split}
& \tau^{t,x,\delta,\eps,M_+}_+(s) := \frac{e^{\sigma\eps^{1/4}M_+}}{1+e^{-\sigma\eps^{1/4}}}\int_0^s p_{t-u+\delta}(X_{M_+}^\eps(u) - x) dL_{(M_+,M_++1)}^\eps(u), \\
& \tau^{t,x,\delta,\eps,M_-}_-(s) := \frac{e^{\sigma\eps^{1/4}(-M_--1)}}{1+e^{-\sigma\eps^{1/4}}}\int_0^s p_{t-u+\delta}(X_{-M_-}^\eps(u) - x) dL_{(-M_--1,-M_-)}(u).
\end{split}
\]
Note that to obtain the last equality in \eqref{eq:Ito-M}, we have used the fact that the terms of the third sum in the previous line vanish, since $dL_{(i,i+1)}^\eps(u)$ is supported on $\{u : X_{i+1}^\eps(u) = X_i^
\eps(u)\}$. 

We will now show that, as $M \to \infty$, $\tau^{t,x,\delta,\eps,M}_\pm \to 0$  uniformly on compact subsets of  $[0,t+\delta)$, in $L^r(\PP)$, for all $r \in [1,\infty)$. Fix $T \in [0,t+\delta)$, and fix $\alpha \in \ohalf$, to be chosen later. We have
\begin{align} \label{eq:Mterm-moment}
& \EE\left[ \left| \sup_{0 \leq s\leq T}\int_0^s p_{t-u+\delta}(X_{M}^\eps(u) - x) dL_{(M,M+1)}^\eps(u) \right|^r \right]\nonumber \\
& = \EE\left[ \left| \sup_{0 \leq s\leq T}\int_0^s p_{t-u+\delta}(X_{M}^\eps(u) - x) dL_{(M,M+1)}^\eps(u) \right|^r \mathbf{1}\left\{ \inf_{0 \leq v \leq T} X_M^\eps(v) \leq x + \alpha M \right\} \right]\nonumber \\
& \quad\quad + \EE\left[ \left| \sup_{0 \leq s\leq T}\int_0^s p_{t-u+\delta}(X_{M}^\eps(u) - x) dL_{(M,M+1)}^\eps(u) \right|^r \mathbf{1}\left\{ \inf_{0 \leq v \leq T} X_M^\eps(v) > x + \alpha M \right\} \right]\nonumber \\
& \leq [2\pi(t + \delta - T)]^{-r/2}\EE\left[ |L_{(M,M+1)}^\eps(T)|^r \mathbf{1}\left\{ \inf_{0 \leq v \leq T} X_M^\eps(v) \leq x + \alpha M \right\} \right]\nonumber \\ 
& \quad\quad + [2\pi(t + \delta - T)]^{-r/2} e^{-\frac{\alpha^2r M^2}{2(t+\delta)}} \EE[|L_{(M,M+1)}^\eps(T)|^r]\nonumber \\
& \leq [2\pi(t + \delta - T)]^{-r/2}(\EE |L_{(M,M+1)}^\eps(T)|^{2r})^{1/2} \PP\left( \inf_{0 \leq v \leq T} X_M^\eps(v) \leq x + \alpha M \right)^{1/2}\nonumber  \\ 
& \quad\quad + [2\pi(t + \delta - T)]^{-r/2} e^{-\frac{\alpha^2r M^2}{2(t+\delta)}} \EE[|L_{(M,M+1)}^\eps(T)|^r].
\end{align}
Here the first inequality in \eqref{eq:Mterm-moment} follows by noting that, for all $u \in [0,T]$ 
\begin{equation}\label{eq:pt758}
\begin{split}
& p_{t - u + \delta}(y - x) \leq (2\pi(t + \delta - T))^{-1/2}, \text{ and } \\
& p_{t - u + \delta}(y - x) \leq (2\pi(t + \delta - T))^{-1/2}e^{-\frac{\alpha^2 M^2}{2(t+\delta)}} \text{ if $y \geq x + \alpha M$}.
\end{split}
\end{equation}
By the second statement in Lemma \ref{lem:plbXj} and Lemma \ref{lem:loctimemoments}, the last last line of \eqref{eq:Mterm-moment} is bounded above by
\[
c_3(r,T,\xi,\eps)[2\pi(t + \delta - T)]^{-r/2}\left\{ e^{\frac{1}{2}\xi \eps^{-1/2}(x + M\alpha) - \frac{1}{2}c_0(\xi)M} + e^{-\frac{\alpha^2r M^2}{2(t+\delta)}} \right\},
\]
where $\xi \in \ohalf$ is arbitrary. Therefore, 
\[
\begin{split}
& \EE[\sup_{0 \leq s \leq T}| \tau^{t,x,\delta,\eps,M}_+(s)|^r] \\ & \leq c_4(r,T,\xi, \eps) (t + \delta - T)^{-r/2} e^{\sigma\eps^{1/4}rM}\left\{ e^{\frac{1}{2}\xi \eps^{-1/2}(x + M\alpha) - \frac{1}{2}c_0(\xi)M} + e^{-\frac{\alpha^2r M^2}{2(t+\delta)}} \right\}.
\end{split}
\]
Using the fact that $c_0(\xi) \to \infty$ as $\xi \to \infty$, we choose $\xi$ sufficiently large so that 
\[
\eta := \frac{1}{2}c_0(\xi) - \frac{1}{2} - \sigma\eps^{1/4}r > 0,
\]
and let $\alpha = \eps^{1/2}/\xi$. Then
\[
\EE[\sup_{0 \leq s \leq T}| \tau^{t,x,\delta,\eps,M}_+(s)|^r] \leq c_4(r,T,\xi, \eps)(t + \delta - T)^{-r/2} \left\{ e^{\frac{1}{2}\xi\eps^{-1/2}x - \eta M} + e^{-\frac{\eps r M^2}{2\xi^2(t+\delta)} + \sigma\eps^{1/4}rM} \right\},
\]
and the right-hand side converges to zero, as $M \to \infty$. Furthermore, using \eqref{eq:pt758} we have 
\[
\begin{split}
 \EE[\sup_{0 \leq s \leq T}| \tau^{t,x,\delta,\eps,M}_-(s)|^r] 
& \leq e^{r\sigma\eps^{1/4}(-M-1)} \cdot \frac{1}{(2\pi(t + \delta - T))^{r/2}} \cdot \EE[|L_{(-M-1,-M)}^{\eps}(T)|^r] \\
& \leq e^{r\sigma\eps^{1/4}(-M-1)} \cdot \frac{1}{(2\pi(t + \delta - T))^{r/2}} c_5(r,T,\eps),
\end{split}
\]
using Lemma \ref{lem:loctimemoments} to obtain the last inequality. The right-hand side converges to zero as $M \to \infty$. This establishes the desired convergence of the terms $\tau^{t,x,\delta,\eps,M}_\pm$ to zero.

To conclude the proof of (ii), note that by (i), as $M_- \wedge M_+ \to \infty$, $\clE_{M_-,M_+}^{t,x,\delta,\eps} \to \clE^{t,x,\delta,\eps}$ on $[0,t+\delta)$, uniformly on compacts, in $L^r(\PP)$, for all $r \in [1,\infty)$. 
Hence, we also have
\[
\sum_{i = -M_-}^{M_+} e^{\sigma\eps^{1/4}i}\psi^{t,x,\delta}(0,X_i^\eps(0)) = \clE_{M_-,M_+}^{t,x,\delta,\eps}(0) \to \clE^{t,x,\delta,\eps}(0)
\]
in $L^r(\PP)$, as $M_- \wedge M_+ \to \infty$. Combining this with the convergence of the terms $\tau^{t,x,\delta,\eps,M_{\pm}}_\pm$ to zero, the martingale term in the last expression in \eqref{eq:Ito-M} must also converges  uniformly on compact subsets of $[0, t+\delta)$, in $L^r(\PP)$, for all $r \in [1,\infty)$, as $M_- \wedge M_+ \to \infty$. The result follows. 
\end{proof}

\begin{proof}[Proof of Lemma \ref{lem:Erep}]
Let
\[
A=\sigma\eps^{-1/4}, \quad a=\sigma\eps^{1/4}, \quad
\tau=t-s+\delta .
\]
Take any $\delta' \in (0,\delta)$. We apply Lemma \ref{lem:SBPformula} with \(m=1\), \(T=t+\delta'\), and
\[
F(s,y)
=
\exp\left\{A(y-x)+\frac{1}{2}A^2(t-s+\delta)\right\}
\int_y^\infty p_{t-s+\delta}\left(z-x+A(t-s+\delta)\right)dz .
\]
The function \(F\) belongs to the class of test functions allowed in Lemma
\ref{lem:SBPformula}. Indeed, \(F\in C^{0,1}([0,t+\delta']\times \RR)\), and the
Gaussian tail gives the required decay in \(y^+\), uniformly for \(s\) in compact
subintervals of \([0,t+\delta')\).

Take any $s \in [0, t+ \delta')$. Recall that $Y_i^\eps(s)=X_i^\eps(s)+As$.
For this choice of \(F\), the left side of \eqref{eq:sumtoint} is
\[
\begin{split}
& (1-e^{-a})
\sum_{i\in\ZZ}
e^{ai-AY_i^\eps(s)+\frac{1}{2}A^2s}
F(s,Y_i^\eps(s)) \\
& =
(1-e^{-a})e^{-Ax+\frac{1}{2}A^2(t+\delta)}
\sum_{i\in\ZZ}e^{ai}
\int_{Y_i^\eps(s)}^\infty
p_{t-s+\delta}\left(z-x+A(t-s+\delta)\right)dz .
\end{split}
\]
Changing variables \(z=u+As\), and using \(Y_i^\eps(s)=X_i^\eps(s)+As\), this becomes
\[
\begin{split}
& (1-e^{-a})e^{-Ax+\frac{1}{2}A^2(t+\delta)}
\sum_{i\in\ZZ}e^{ai}
\int_{X_i^\eps(s)}^\infty
p_{t-s+\delta}\left(u-x+A(t+\delta)\right)du  \\
& =
(1-e^{-a})e^{-Ax+\frac{1}{2}A^2(t+\delta)}
\clE^{t,x-A(t+\delta),\delta,\eps}(s)
= \tlE^{t,x,\delta,\eps}(s),
\end{split}
\]
where the last equality follows from the definition \eqref{eq:te-def}.

It remains to identify the right side of \eqref{eq:sumtoint}. Differentiating \(F\) in
the space variable gives
\[
\partial_y F(s,y)
=
AF(s,y)
-
\exp\left\{A(y-x)+\frac{1}{2}A^2(t-s+\delta)\right\}
p_{t-s+\delta}\left(y-x+A(t-s+\delta)\right).
\]
Therefore
\[
AF(s,y)-\partial_yF(s,y)
=
\exp\left\{A(y-x)+\frac{1}{2}A^2(t-s+\delta)\right\}
p_{t-s+\delta}\left(y-x+A(t-s+\delta)\right).
\]
Noting the identity,
\[
\exp\left\{A(y-x)+\frac{1}{2}A^2(t-s+\delta)\right\}
p_{t-s+\delta}\left(y-x+A(t-s+\delta)\right)
=
p_{t-s+\delta}(y-x),
\]
 Lemma \ref{lem:SBPformula} gives
\[
\tlE^{t,x,\delta,\eps}(s)
=
\int_{\RR}\clG^\eps(s,y)p_{t-s+\delta}(y-x)dy,
\]
for any $\eps>0$ and $s \in [0, t+ \delta')$. As $\delta' \in (0,\delta)$ is arbitrary, this proves the lemma.
\end{proof}

\subsection{{Moment bounds for $\clG^\eps$}} \label{ssec:Gmoments}

We will assume in this subsection that $\gamma = \gamma_1$, defined by \eqref{eq:hominit}. {In particular, Assumption \ref{assump:initdiff} holds and the results from the previous subsection apply.} Under $\gamma_1$, the coordinate sequence $\{x_i\}_{i\in \ZZ}$ on $\RR_+^{\ZZ}$ satisfies $\{x_{i+1}-x_{i}\}_{i\in \ZZ}\sim \pi_1$, the law of the bi-infinite sequence of i.i.d. Exp($1$) random variables and $x_0 = 0$.
Observe that the choice of initial data implies that 
\begin{equation} \label{eq:initPMs}
N_\eps(0,x) = \begin{cases}
\Npe[0,x] & \text{ if } x \geq 0, \\
-\Nme[0,-x) & \text{ if } x < 0,
\end{cases}
\end{equation}
where $\Npe(dx)$ and $\Nme(dx)$ are two independent  Poisson random measures on $\half$, each with intensity $\eps^{-1/2} dx$. 

We begin with preliminary results on uniform moment control at time $0$ for the process in \eqref{eq:te-def}, and boundedness of certain normalized  moments of $\clG^\eps$, that will be used in our moment estimate.


\begin{lemma}\label{lem:zerobd}
Fix $r \in [1, \infty)$. 
There exists $a \in \ohalf$ such that, for all $T<\infty$,
\begin{equation}\label{eq:zerobd}
\sup_{\delta \in (0,1)} \sup_{\eps \in (0,1]} \sup_{t \in [0,T]} \sup_{x \in \RR} e^{-a|x|}\left\|\tlE^{t,x,\delta,\eps}(0)\right\|_r < \infty.
\end{equation}
\end{lemma}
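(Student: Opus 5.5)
By Lemma \ref{lem:Erep} at $s=0$, $\tlE^{t,x,\delta,\eps}(0)=\int_\RR \clG^\eps(0,u)\,p_{t+\delta}(u-x)\,du$, where by \eqref{eq:Gdef0} and \eqref{eq:initPMs}
\[
\clG^\eps(0,u)=\exp\{\sigma\eps^{1/4}N_\eps(0,u)-\sigma\eps^{-1/4}u\}.
\]
Since $p_{t+\delta}(\cdot-x)\,du$ is a probability measure, Minkowski's inequality for integrals (or Jensen) gives
\[
\|\tlE^{t,x,\delta,\eps}(0)\|_r\le \int_\RR \|\clG^\eps(0,u)\|_r\,p_{t+\delta}(u-x)\,du .
\]
It therefore suffices to show that there is $a'$ with $\sup_{\eps\in(0,1]}\|\clG^\eps(0,u)\|_r\le Ce^{a'|u|}$. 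The estimate then follows with any $a>a'$, because
\[
\int e^{a'|u|}p_{t+\delta}(u-x)\,du\le 2e^{a'|x|+a'^2(T+1)/2},
\]
which is uniform in $t\le T$ and $\delta<1$. Note that this bound depends on $T$; the statement only requires finiteness for each $T$, with $a$ chosen independently of $T$.

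The pointwise bound is an exact Poisson computation. For $u\ge0$, $N_\eps(0,u)=\Npe[0,u]\sim\mathrm{Poi}(\eps^{-1/2}u)$, so
\[
\EE\,\clG^\eps(0,u)^r=\exp\Big\{\eps^{-1/2}u\big(e^{r\sigma\eps^{1/4}}-1\big)-r\sigma\eps^{-1/4}u\Big\}.
\]
Expanding, $e^{r\sigma\eps^{1/4}}-1=r\sigma\eps^{1/4}+\tfrac12 r^2\sigma^2\eps^{1/2}+O(\eps^{3/4})$. The first-order term cancels the drift $-r\sigma\eps^{-1/4}u$, which is exactly the role of the centering in \eqref{eq:Gdef0}. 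The exponent then becomes $u\,[\tfrac12 r^2\sigma^2+O(\eps^{1/4})]$. More precisely, using $e^y-1-y\le \tfrac12 y^2e^y$, the exponent is at most $\tfrac12 r^2\sigma^2e^{r\sigma}u$ for $\eps\le1$.

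For $u<0$, $N_\eps(0,u)=-\Nme[0,-u)$ with mean $\eps^{-1/2}|u|$, and
\[
\EE\,\clG^\eps(0,u)^r=\exp\Big\{\eps^{-1/2}|u|\big(e^{-r\sigma\eps^{1/4}}-1\big)+r\sigma\eps^{-1/4}|u|\Big\}.
\]
The same cancellation occurs, and $e^{-y}-1+y\le y^2/2$ bounds the exponent by $\tfrac12 r^2\sigma^2|u|$. Taking $r$-th roots gives $\|\clG^\eps(0,u)\|_r\le e^{a'|u|}$ with $a'=\tfrac12 r\sigma^2e^{r\sigma}$, uniformly in $\eps\in(0,1]$.

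There is no real obstacle. The only point needing care is that the first-order Poisson cumulant exactly cancels the linear drift $\sigma\eps^{-1/4}u$, and that the remainder is bounded uniformly in $\eps\le1$; both are handled by the elementary exponential inequalities above.
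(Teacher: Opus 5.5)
Your proposal is correct and follows essentially the same route as the paper. Both use the representation of Lemma~\ref{lem:Erep} at $s=0$ with Minkowski's inequality, then the exact Poisson moment generating function computation in which the first cumulant cancels the drift $\sigma\eps^{-1/4}u$, and finally a Gaussian integral that is uniform over $t\le T$, $\delta<1$. The only difference is cosmetic: your inequality $e^y-1-y\le \frac12 y^2e^y$ gives the slightly larger constant $\frac12 r\sigma^2e^{r\sigma}$, still independent of $T$ and $\eps$, in place of the paper's $\frac12 r\sigma^2+c_1\eps^{1/4}$.
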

\begin{lemma}\label{lem:zerobdN}
Fix $r \in [1, \infty)$. 
There exists $a \in \ohalf$ such that, for all $t<\infty$,
and any $\eps \in (0,1]$, 
\begin{equation}\label{fin}
    \sup_{x \in \RR} e^{-a|x|}\|\clG^\eps(t,x)\|_r < \infty.
\end{equation}
\end{lemma}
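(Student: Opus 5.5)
The plan is to control the exponential moments of $N_\eps(t,y)$ by sandwiching $X$ between the two totally asymmetric systems. By \eqref{eq:dombyta} we have $X^0_j(t)\le X_j(t)\le X^1_j(t)$ for all $j$ and $t$, and the counting function is monotone in the configuration. Writing $N^0_\eps$ and $N^1_\eps$ for the rescaled counting functions of $X^0$ and $X^1$, this gives
\[
N^1_\eps(t,y)\le N_\eps(t,y)\le N^0_\eps(t,y) \quad\text{for all } y.
\]
Set $y=x-\sigma\eps^{-1/4}t$ and $\lambda=r\sigma\eps^{1/4}$. Then
\[
\EE\,\clG^\eps(t,x)^r \le e^{-r\sigma\eps^{-1/4}x+\frac r2\sigma^2\eps^{-1/2}t}\Big(\EE\big[e^{\lambda N^0_\eps(t,y)}\mathbf 1\{N_\eps\ge0\}\big]+\EE\big[e^{\lambda N^1_\eps(t,y)}\mathbf 1\{N_\eps<0\}\big]\Big).
\]
Each of the two expectations on the right is therefore a problem about one totally asymmetric system.

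For the totally asymmetric systems started from $\gamma_1$, I would use the Burke-type stationarity from \cite[Proposition 2.7]{WFSbook}, already invoked in the proof of Lemma \ref{lem:countfluc}. It says that $X^0_0(t)+t$ is a standard Brownian motion, and that at each time $t$ the gaps of $X^0$ are again i.i.d.\ Exp$(1)$. The symmetric statement holds for $X^1$, with drift $+t$. Consequently, in rescaled variables, $N^0_\eps(t,y)$ is a Poisson counting function of intensity $\eps^{-1/2}$ on each side of the shifted origin $X^{0,\eps}_0(t)=\eps^{1/2}B(\eps^{-1}t)-\eps^{-1/2}t$.

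The tagged particle $X^{0,\eps}_0(t)$ need not be independent of the time-$t$ gaps. I would decouple them by Cauchy--Schwarz: split on the events $\{|X^{0,\eps}_0(t)+\eps^{-1/2}t|\le b\}$, bound the gap-counting part uniformly over shifts in $[-b,b]$, and use the Gaussian tail $2e^{-b^2/2t}$ for the complement, summing over integer $b$. On each such event, $N^0_\eps(t,y)$ is dominated by $1+\mathrm{Poi}\big(\eps^{-1/2}(y+\eps^{-1/2}t+b)^+\big)$. This gives
\[
\EE e^{\lambda N^0_\eps(t,y)}\le C\exp\Big\{\eps^{-1/2}(e^{\lambda}-1)\big(y+\eps^{-1/2}t+b\big)^+\Big\}.
\]
Expanding $\eps^{-1/2}(e^{\lambda}-1)=r\sigma\eps^{-1/4}+\tfrac12 r^2\sigma^2+O(\eps^{1/4})$, the leading term $r\sigma\eps^{-1/4}y$ cancels the deterministic prefactor $e^{-r\sigma\eps^{-1/4}x}$ up to terms depending only on $t$ and $\eps$. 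What remains grows at most like $e^{C r\sigma^2|x|}$.

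The negative side is treated the same way with $X^1$. Here $N^1_\eps(t,y)$ is minus a Poisson count of the interval between $y$ and $X^{1,\eps}_0(t)$, and the identity $\EE e^{-\lambda\,\mathrm{Poi}(\mu)}=e^{-\mu(1-e^{-\lambda})}$ provides the same cancellation against $e^{-r\sigma\eps^{-1/4}x}$ as $x\to-\infty$. Combining the two sides gives \eqref{fin} with some $a$ proportional to $r\sigma^2$ plus a margin.

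The main obstacle is the step of justifying the time-$t$ Poisson description of $X^0$ and $X^1$ together with the correlation with the tagged particle. If the Burke property from \cite{WFSbook} is not available in the needed form, I would fall back on a cruder argument. Since $\eps$ and $t$ are fixed here, it would suffice to bound $\PP(N_\eps(t,y)\ge k)\le\PP(\inf_{s\le t}X^\eps_k(s)\le y)$ via \eqref{eq:posprobbd}, choosing $\xi$ with $c_0(\xi)>2\lambda$ and summing in $k$. That argument yields a finite bound with $a$ allowed to depend on $\eps$, which matches the ``any $\eps\in(0,1]$'' form of the statement, and the positive side is handled identically using $X^1$.
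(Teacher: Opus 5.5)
The second term of your first display goes the wrong way. Since $N^1_\eps\le N_\eps$ and $\lambda>0$, you have $e^{\lambda N^1_\eps(t,y)}\le e^{\lambda N_\eps(t,y)}$. So $\EE[e^{\lambda N^1_\eps(t,y)}\mathbf 1\{N_\eps<0\}]$ is a \emph{lower} bound for the corresponding piece of $\EE e^{\lambda N_\eps(t,y)}$, and $X^1$ can never help bound $\EE\,\clG^\eps(t,x)^r$ from above. That term carries the regime $x\to-\infty$, where $\EE e^{\lambda N_\eps(t,y)}$ must be as small as $e^{-r\sigma\eps^{-1/4}|x|+O(|x|)}$ to absorb the prefactor $e^{r\sigma\eps^{-1/4}|x|}$. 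As written, the negative side is therefore not proved.

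The repair is to use $N_\eps\le N^0_\eps$ for both signs of $x$. You should also keep the exact gap count rather than dominating it by $1+\mathrm{Poi}(\eps^{-1/2}(\cdot)^+)$, since that domination discards exactly what is needed when $x<0$. Set $S_i:=X^{0,\eps}_i(t)-X^{0,\eps}_0(t)$. On $\{X^{0,\eps}_0(t)\ge-\eps^{-1/2}t-b\}$ you have $N^0_\eps(t,y)\le\tilde N_b:=\max\{i:S_i\le y+\eps^{-1/2}t+b\}$. This is a functional of the time-$t$ gaps alone, with the law of $N_\eps(0,y+\eps^{-1/2}t+b)$ under $\gamma_1$.
\begin{itemize}
\item For a negative argument, $\tilde N_b$ is at most minus a Poisson variable of mean $\eps^{-1/2}|y+\eps^{-1/2}t+b|$. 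After Cauchy--Schwarz its Laplace transform gives the rate $\tfrac12\eps^{-1/2}(1-e^{-2\lambda})=r\sigma\eps^{-1/4}-O(1)$, which cancels the prefactor.
\item On the positive side the rate is $\tfrac12\eps^{-1/2}(e^{2\lambda}-1)=r\sigma\eps^{-1/4}+O(1)$ rather than $\eps^{-1/2}(e^{\lambda}-1)$, with the same leading term.
\item Both $O(1)$ terms are bounded by constants depending only on $r,\sigma$ for $\eps\in(0,1]$.
\item The tagged particle of $X^0$ sits near $-\eps^{-1/2}t$ instead of $+\eps^{-1/2}t$, which only changes $(t,\eps)$-dependent constants.
\end{itemize}
With this change your main route is a valid proof, with $a$ of order $r\sigma^2$ and independent of $\eps$.

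Your fallback rests on a misreading of the statement. The constant $a$ is quantified before $t$ and $\eps$, and this uniformity is what Lemma \ref{lem:Gmoments} needs: the Gaussian constant in its Gr\"onwall step depends on $a$, and its conclusion is uniform in $\eps$. Any $\xi$ with $\log(1+\xi)>2\lambda$ has $\xi\eps^{-1/2}-r\sigma\eps^{-1/4}\ge r\sigma\eps^{-1/4}$. So the resulting bound on $\EE\,\clG^\eps(t,x)^r$ grows in $x>0$ at a rate of order at least $\eps^{-1/4}$, and again $X^1$ cannot serve for the other side.

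The paper's proof is exactly this tail argument made uniform. With $J_x=\lambda N_\eps(t,y)-r\sigma\eps^{-1/4}x$, it bounds $\PP(J_x>u)\le\PP(X^\eps_m(t)\le y)$ for $m=\lfloor(u+r\sigma\eps^{-1/4}x)/\lambda\rfloor$. It then applies \eqref{eq:posprobbd} with $\xi=\beta\lambda e^{\beta\lambda}$, so that $\eps^{-1/2}(\xi-\log(1+\xi))=O(1)$ while $\log(1+\xi)\ge\beta\lambda$. Integrating $e^u\PP(J_x>u)$ gives $a=\tfrac12\beta^2r\sigma^2e^{2\beta r\sigma}$.

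Your corrected route differs: it replaces the last-passage tail estimate of Lemma \ref{lem:plbXj} by the exact Poisson structure of the $p=0$ system at time $t$, plus the Gaussian law of its tagged particle. It costs time-$t$ stationarity of $X^0$ under $\gamma_1$, but it treats $x<0$ directly. That is a genuine advantage. For $x<0$ and $0<u<r\sigma\eps^{-1/4}|x|$ the paper's index $m$ is negative, yet \eqref{eq:posprobbd} is derived from \eqref{eq:posbd}, which is only assumed for $j\in\NN$. Under $\gamma_1$ it cannot hold at $j=-n$, $u=n$: the weight $e^{n(\xi-\log(1+\xi))}$ blows up while $\PP(x_{-n}\le -n)\to\tfrac12$.
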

\begin{remark} \label{rem:cond-momentbd} \normalfont
The assumption that $\gamma = \gamma_1$ is used to prove Lemmas \ref{lem:zerobd} and \ref{lem:zerobdN}. The remaining results of this subsection are consequences of these two lemmas and do not make a direct use of this assumption on the initial condition. Thus a step in extending the main result of this work to more general initial conditions will be to prove these lemmas for such initial conditions.
\end{remark}

We now present a key moment bound.

\begin{lemma} \label{lem:Gmoments}
Fix  $r \in [1, \infty)$. Then, there exist  $c_1 \in \ohalf$, and for each $T<\infty$,  $c_2=c_2(r,T) \in \ohalf$, such that for all $\eps \in (0,1]$, and $(t, x) \in [0,T] \times \RR$, 
\[
    \EE[\clG^\eps(t,x)^r] \leq c_2 e^{c_1|x|}.
\]
\end{lemma}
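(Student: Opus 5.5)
The plan is to derive a closed integral inequality for $\|\clG^\eps(t,x)\|_r$ from the martingale representation of the mollified field, and then close it with a singular Gronwall argument whose constants do not depend on $\eps$. We may assume $r\ge 2$.

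\textbf{Step 1: reduction to the mollified field.} Fix $(t,x)$ and $\delta\in(0,1)$. By Lemma \ref{lem:Erep} with $s=t$,
\[
\tlE^{t,x,\delta,\eps}(t)=\int_\RR \clG^\eps(t,u)p_\delta(u-x)\,du .
\]
Almost surely no particle sits at the point $x-\sigma\eps^{-1/4}t$. Hence $\clG^\eps(t,\cdot)$ is continuous at $x$, and $\tlE^{t,x,\delta,\eps}(t)\to\clG^\eps(t,x)$ as $\delta\to0$. By Fatou's lemma it therefore suffices to bound $\|\tlE^{t,x,\delta,\eps}(t)\|_r$ uniformly in $\delta\in(0,1)$ and $\eps\in(0,1]$, with the required $e^{c_1|x|}$ growth. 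Lemma \ref{lem:Eprocess}(ii) (after rescaling by the deterministic prefactor in \eqref{eq:te-def}) shows that $s\mapsto\tlE^{t,x,\delta,\eps}(s)$ is a continuous martingale on $[0,t+\delta)$. Its initial value is controlled uniformly in $\delta,\eps$ by Lemma \ref{lem:zerobd}. By BDG it remains to bound the $r/2$-th moment of its bracket at $s=t$.

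\textbf{Step 2: the bracket in terms of $\clG^\eps$.} Write $x'=x-\sigma\eps^{-1/4}(t+\delta)$. From \eqref{eq:mart_rep} the bracket equals
\[
(1-e^{-a})^2e^{-2Ax+A^2(t+\delta)}\int_0^t\sum_i e^{2ai}\,p_{t-u+\delta}(X_i^\eps(u)-x')^2\,du,
\qquad a=\sigma\eps^{1/4},\ A=\sigma\eps^{-1/4}.
\]
Here $B_i^\eps$ is a standard Brownian motion, so it contributes $du$. I would convert the inner sum with Lemma \ref{lem:SBPformula} at $m=2$, choosing $F$ so that $2AF-\partial_yF$ reproduces $p^2$ after the Gaussian shift identity used in the proof of Lemma \ref{lem:Erep}. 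Since $(1-e^{-a})^2/(1-e^{-2a})\sim a/2$, the prefactor $a$ cancels the $\eps^{-1/4}$ produced by $2A$. The leading term is then $\frac{\sigma^2}{2}\int_0^t\int_\RR \clG^\eps(u,y)^2p_{t-u+\delta}(y-x)^2\,dy\,du$, plus an error carrying an $\eps^{1/4}$ factor and involving $\partial_y$ of the kernel.

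\textbf{Step 3: closing the inequality.} Set $f(u)=\sup_y e^{-2c|y|}\|\clG^\eps(u,y)\|_r^2$, which is finite for each fixed $\eps$ by Lemma \ref{lem:zerobdN}. Minkowski's inequality in $L^{r/2}$ and the bound $p_\tau(z)^2\le C\tau^{-1/2}p_{\tau/2}(z)$ give
\[
e^{-2c|x|}\|\tlE^{t,x,\delta,\eps}(t)\|_r^2\le C+C\int_0^t (t-u)^{-1/2}f(u)\,du ,
\]
with $C$ independent of $\eps,\delta$. The exponential weight passes through the Gaussian convolution at the cost of a constant depending on $T$. Letting $\delta\to0$ and using Step 1 yields $f(t)\le C+C\int_0^t(t-u)^{-1/2}f(u)\,du$. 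Iterating this once turns the kernel into a bounded one, and the standard Gronwall lemma then gives $\sup_{t\le T}f(t)\le c_2$ uniformly in $\eps$. This is the claim, with $c_1=2c$.

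\textbf{Main obstacle.} The main obstacle is the derivative error term in Step 2. Naively, $\eps^{1/4}|\partial_y p^2|$ produces a kernel of order $(t-u)^{-1}$, which is not integrable. I would first try to bound the sum directly, avoiding the derivative altogether. On $[Y_i,Y_{i+1})$ one has $\clG^\eps(u,\cdot)^2=e^{2ai+\dots-2Ay}$, so $e^{2ai}p^2(X_i)$ is comparable to $A\int_{Y_i}^{Y_{i+1}}\clG^2 p^2\,dy$. The discrepancy is controlled by the oscillation of $p^2$ over one gap. A Gaussian kernel bound $\sup_{|h|\le g}p_\tau(z+h)^2\le Cp_{2\tau}(z)^2$ would handle this, provided gaps $g$ of size $\eps^{1/2}$ are small relative to $\sqrt{t-u}$. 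If that fails, I would fall back on an interpolation in which the bad region $t-u\lesssim\eps^{1/2}$ is handled separately by the crude bound of Lemma \ref{lem:zerobdN}, tracking its $\eps$-dependence.
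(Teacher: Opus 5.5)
Your Steps 2--3 follow the paper's route: martingale representation, BDG, Lemma \ref{lem:SBPformula} with $m=2$, and singular Gr\"onwall. The argument fails at Step 1, where you let $\delta\to0$ and invoke Fatou. The obstacle you flag there is real, and neither fallback resolves it.

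After summation by parts, the derivative part of the kernel is $\frac{\eps^{1/4}}{\sigma}\frac{z-x}{\tau}p_\tau^2(z-x)$ with $\tau=t-s+\delta$. Since $\int_\RR e^{c|z|}\frac{|z-x|}{\tau}p_\tau^2(z-x)\,dz\asymp\tau^{-1}$, this term contributes about $\eps^{1/4}\int_0^t f(s)(t-s+\delta)^{-1}ds$. Even for bounded $f$ this is of size $\eps^{1/4}\log(1/\delta)$, so there is no bound uniform in $\delta$. Consequently the limit $\delta\to0$ cannot produce $f(t)\le C+C\int_0^t(t-u)^{-1/2}f(u)\,du$.

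\emph{Fallback (b).} Splitting off $\{t-u\lesssim\eps^{1/2}\}$ does not help. The divergence comes from the kernel on that region, not from the size of $f$ there. In addition, the constant in Lemma \ref{lem:zerobdN} comes from the factor $e^{\frac r2\sigma^2\eps^{-1/2}t}$ in $(\clG^\eps)^r$, so it is not uniform in $\eps$.

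\emph{Fallback (a).} The per-gap comparison is false. Use your $a=\sigma\eps^{1/4}$, $A=\sigma\eps^{-1/4}$ and set $g_i=Y_{i+1}-Y_i$. On $[Y_i,Y_{i+1})$ one has $\clG^\eps(s,\cdot)^2=e^{2ai-2Ay+A^2s}$. So, up to the variation of $p^2$, the $i$-th term of the sum equals $\frac{2A}{1-e^{-2Ag_i}}\int_{Y_i}^{Y_{i+1}}\clG^2p^2\,dy$, and this factor is unbounded as $g_i\to0$. Only the telescoping in Lemma \ref{lem:SBPformula} makes the sum comparable to the integral, and that telescoping necessarily produces the $\partial_z$ term.

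\emph{The missing idea: you never need $\delta\to0$.} Since $\int_z^\infty p_\delta(u-y)\,du\ge\frac12$ whenever $z\le y$, keeping only the particles at or to the left of the center gives, for every $\delta$,
\[
\tlE^{t,x,\delta,\eps}(t)\ \ge\ \tfrac12\,e^{-\frac12\sigma^2\eps^{-1/2}\delta}\,\clG^\eps\bigl(t,\,x-\sigma\eps^{-1/4}\delta\bigr).
\]
The paper then fixes $\delta=\eps^{1/2}$, which has three effects:
\begin{itemize}
\item the prefactor becomes $\frac12e^{-\sigma^2/2}$;
\item the shift is only $\sigma\eps^{1/4}$, which the exponential weight absorbs;
\item $\eps^{1/4}/(t-s+\delta)=\delta^{1/2}/(t-s+\delta)\le(t-s+\delta)^{-1/2}$.
\end{itemize}
By the third point, the derivative term becomes the kernel $\frac{|z-x|}{\sqrt{t-s+\delta}}\,p^2_{t-s+\delta}(z-x)$, which integrates in $z$ (with exponential weights) to $O((t-s)^{-1/2})$. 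This gives $F^\eps(t)\le C_1+C_4\int_0^t F^\eps(s)(t-s)^{-1/2}\,ds$ with constants independent of $\eps$, and singular Gr\"onwall finishes. In short, the mollification scale must be tied to $\eps$ rather than sent to zero, and this monotonicity lower bound replaces your Fatou step.
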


We will first prove Lemma \ref{lem:Gmoments} assuming Lemmas \ref{lem:zerobd} and \ref{lem:zerobdN}. {Before giving the proof, we briefly describe the main idea. The argument exploits the martingale representation \eqref{eq:mart_rep}, the Burkholder--Davis--Gundy inequality, and the summation by parts identity in Lemma \ref{lem:SBPformula} to bound the $r$-th moment of the mollified field $\tlE^{t,x,\delta,\eps}$ in terms of its initial time moment and an integral involving the $r$-th moment of the Hopf--Cole field $\clG^\eps$, the Gaussian heat kernel and its derivative. This, in turn, gives an integral inequality for the squared $r$-th moment of $\clG^\eps$ (see \eqref{mom4}), from which the result follows by an application of the singular generalized Gr\"onwall Lemma. We highlight here that to obtain \eqref{mom4} we have to tune $\delta = \eps^{1/2}$. 
The additional parameter $\delta$ in \eqref{eq:Edef} and \eqref{eq:te-def} is needed not just for well-definedness of associated sums and integrals,
but also to carefully control the mollification as required by our results.
}

\begin{proof}[Proof of Lemma \ref{lem:Gmoments}]
We will prove the bound for $r \ge 2$. The general bound will then follow from Jensen's inequality. In the proof, $C,C'$ will denote generic positive constants depending only on $r$.
Fix $(t,x) \in \half \times \RR$ and $\delta>0$.
From \eqref{eq:te-def} and  \eqref{eq:mart_rep},
\begin{align*}
&\tlE^{t,x,\delta,\eps}(t)
=
\tlE^{t,x,\delta,\eps}(0)\\
&\quad -
(1 - e^{-\sigma\eps^{1/4}}) \sum_{i\in\ZZ}
e^{\sigma\eps^{1/4} i -\sigma \eps^{-1/4}x + \frac{1}{2}\sigma^2\eps^{-1/2}(t+\delta)}
\int_0^t p_{t-s+\delta}(X_i^\eps(s)-x+\sigma\eps^{-1/4}(t+\delta))
\, dB_i^\eps(s).
\end{align*}
Using the Burkholder-Davis-Gundy inequality and the martingale property in Lemma \ref{lem:Eprocess}(ii), we have
\begin{align}
&\left\|\tlE^{t,x,\delta,\eps}(t)\right\|_r
\le
\left\|\tlE^{t,x,\delta,\eps}(0)\right\|_r
\notag\\
&\quad
+
C\,\left\|\int_0^t
(1 - e^{-\sigma\eps^{1/4}})^2\sum_{i\in\ZZ}
e^{2\sigma\eps^{1/4} i -2\sigma \eps^{-1/4}x + \sigma^2\eps^{-1/2}(t+\delta)}
p_{t-s+\delta}^2
(X_i^\eps(s)-x+\sigma\eps^{-1/4}(t+\delta))
\, ds\right\|_{r/2}^{1/2}.
\label{mom1}
\end{align}
Let
$
Y_i^\eps(s)
=
X_i^\eps(s) + \sigma \eps^{-1/4}s.
$
Then, using the identity
\begin{multline}\label{eq:magic}
e^{-\sigma \eps^{-1/4}\left(y+\sigma\eps^{-1/4}s\right)} p_{t-s+\delta}(y-x+\sigma\eps^{-1/4}s)\\
 = e^{ -\sigma \eps^{-1/4}x + \frac{1}{2}\sigma^2\eps^{-1/2}(t-s+\delta)} p_{t-s+\delta}(y-x+\sigma\eps^{-1/4}(t+\delta))
\end{multline}
in \eqref{mom1}, we obtain
\begin{align}
&\left\|\tlE^{t,x,\delta,\eps}(t)\right\|_r
\le
\left\|\tlE^{t,x,\delta,\eps}(0)\right\|_r
\notag\\
&\quad
+
C\,\left\|\int_0^t(1 - e^{-\sigma\eps^{1/4}})^2
\sum_{i\in\ZZ}
e^{2\sigma\eps^{1/4} i -2\sigma \eps^{-1/4}Y_i^\eps(s)  + \sigma^2\eps^{-1/2}s}
p_{t-s+\delta}^2
(Y_i^\eps(s)-x)
\, ds\right\|_{r/2}^{1/2}.
\label{mom2}
\end{align}
Appealing to the summation by parts identity in Lemma \ref{lem:SBPformula} with $m=2$ and $F(s,z) = p_{t-s+\delta}^2(z-x)$, we get
\begin{align*}
   &(1 - e^{-2\sigma\eps^{1/4}})
\sum_{i\in\ZZ}
e^{2\sigma\eps^{1/4} i -2\sigma \eps^{-1/4}Y_i^\eps(s)  + \sigma^2\eps^{-1/2}s}
p_{t-s+\delta}^2
(Y_i^\eps(s)-x)
\, ds \\
&= 2\sigma \eps^{-1/4} \int_{\RR}\left(\clG^\eps(s,z)\right)^2\left(1 + \frac{\eps^{1/4}}{\sigma}\frac{z-x}{t-s+\delta}\right)p_{t-s+\delta}^2(x-z)\,dz.
\end{align*}
Using this in \eqref{mom2}, and applying Minkowski's inequality and the bound
\begin{equation}\label{eq:159mm}\sup_{\eps \in (0,1)} \Lambda(\eps) <\infty, \quad\quad \text{ where } \Lambda(\eps) := \frac{2\sigma(1- e^{-\sigma\eps^{1/4}})^2}{\eps^{1/4}(1-e^{-2\sigma \eps^{1/4}})},
\end{equation}
we have
\begin{align}
\left\|\tlE^{t,x,\delta,\eps}(t)\right\|_r
&\le
\left\|\tlE^{t,x,\delta,\eps}(0)\right\|_r
+
C'\,\left\|\int_0^t\int_{\RR}\left(\clG^\eps(s,z)\right)^2\left(1 + \frac{\eps^{1/4}}{\sigma}\frac{z-x}{t-s+\delta}\right)p_{t-s+\delta}^2(x-z)\,dz
\, ds\right\|_{r/2}^{1/2}
\notag\\
&\le
\left\|\tlE^{t,x,\delta,\eps}(0)\right\|_r
+
C'\,\left(\int_0^t\int_{\RR}\left\|\clG^\eps(s,z)\right\|_{r}^2\left(1 + \frac{\eps^{1/4}}{\sigma}\frac{|z-x|}{t-s+\delta}\right)p_{t-s+\delta}^2(x-z)\,dz
\, ds\right)^{1/2},
\end{align}
and hence,
\begin{align}
    \left\|\tlE^{t,x,\delta,\eps}(t)\right\|_r^2 \le 2 \left\|\tlE^{t,x,\delta,\eps}(0)\right\|_r^2 + 2(C')^2\,\int_0^t\int_{\RR}\left\|\clG^\eps(s,z)\right\|_{r}^2\left(1 + \frac{\eps^{1/4}}{\sigma}\frac{|z-x|}{t-s+\delta}\right)p_{t-s+\delta}^2(x-z)\,dz
\, ds.
\label{mom3}
\end{align}
To convert the above bound into an integral inequality for $\left\|\clG^\eps(s,z)\right\|_{r}^2$, amenable to Gr\"onwall's Lemma, we observe that for any $t \in \half, x \in \RR,$ and $ \delta>0$,
\begin{align*}
\clE^{t,x,\delta, \eps}(t) = \sum_{i \in \ZZ} e^{\sigma\eps^{1/4}i}\int_{X_i^\eps(t)}^\infty p_{\delta}(u - x)du \ge \sum_{i : X_i^\eps(t) \le x} e^{\sigma\eps^{1/4}i} \frac{1}{2} = \frac{1}{2}(1 - e^{-\sigma\eps^{1/4}})^{-1}e^{\sigma\eps^{1/4}N_\eps(t,x)},
\end{align*}
which gives
\begin{align*}
    \tlE^{t,x,\delta,\eps}(t) \ge \frac{1}{2}e^{-\sigma \eps^{-1/4}x + \frac{1}{2}\sigma^2\eps^{-1/2}(t+\delta)+\sigma\eps^{1/4} N_\eps(t,x - \sigma\eps^{-1/4}(t+ \delta))} = \frac{1}{2}e^{-\frac{1}{2}\sigma^2\eps^{-1/2}\delta}\clG^\eps(s,x - \sigma \eps^{-1/4}\delta).
\end{align*}
We now let $\delta = \eps^{1/2}$. Then, using the above in \eqref{mom3}, we obtain
\begin{align*}
    \left\|\clG^\eps(s,x - \sigma \eps^{1/4})\right\|_r^2 &
    \le 4e^{\sigma^2} \left\|\tlE^{t,x,\eps^{1/2},\eps}(t)\right\|_r^2\\
    &\le 8e^{\sigma^2} \left\|\tlE^{t,x,\eps^{1/2},\eps}(0)\right\|_r^2\notag\\
    & \quad + 8(C')^2e^{\sigma^2}\,\int_0^t\int_{\RR}\left\|\clG^\eps(s,z)\right\|_{r}^2\left(1 + \frac{1}{\sigma}\frac{|z-x|}{\sqrt{t-s+\delta}}\right)p_{t-s+\delta}^2(x-z)\,dz
\, ds,
\end{align*}
which gives
\begin{align}
\left\|\clG^\eps(s,x)\right\|_r^2 &\le 8e^{\sigma^2} \left\|\tlE^{t,x+\sigma\eps^{1/4},\delta,\eps}(0)\right\|_r^2\notag\\
    & \quad + 8C'^2e^{\sigma^2}\,\int_0^t\int_{\RR}\left\|\clG^\eps(s,z)\right\|_{r}^2\left(1 + \frac{1}{\sigma}\frac{|z-x- \sigma \eps^{1/4}|}{\sqrt{t-s+\delta}}\right)p_{t-s+\delta}^2(x+ \sigma \eps^{1/4}-z)\,dz
\, ds.
 \label{mom4}   
\end{align}
Recall the constant $a$ from Lemmas \ref{lem:zerobd} and \ref{lem:zerobdN}. Consider the minimum of these two values of $a$ and abusing notation, denote it once more as $a$. Define
\begin{equation*}
  F^\eps(t,x) := e^{-2a|x|} \left\|\clG^\eps(t,x)\right\|_r^2, \qquad (t,x) \in [0,T] \times \RR.
\end{equation*}
Then from \eqref{eq:zerobd} and \eqref{mom4}, we obtain constants $C_1$ depending on $r,T$ and $C_2$ depending only on $r$ such that
\begin{align*}
    F^\eps(t,x) \le C_1 + C_2\,\int_0^t\int_{\RR}F^\eps(s,z)e^{-2a(|x| - |z|)}\left(1 + \frac{1}{\sigma}\frac{|z-x- \sigma \eps^{1/4}|}{\sqrt{t-s+\delta}}\right)p_{t-s+\delta}^2(x+ \sigma \eps^{1/4}-z)\,dz
\, ds.
\end{align*}
From standard Gaussian calculations,
$$
\sup_{z \in \RR} e^{-2a(|x| - |z|)}\left(1 + \frac{1}{\sigma}\frac{|z-x- \sigma \eps^{1/4}|}{\sqrt{t-s+\delta}}\right)p_{t-s+\delta}(x+ \sigma \eps^{1/4}-z) \le \frac{C_3}{\sqrt{t-s + \delta}},
$$
where $C_3$ is a positive constant depending only on $T$. Thus, writing $C_4:= C_2C_3$,
\begin{equation*}
    F^\eps(t,x) \le C_1 + C_4\,\int_0^t \int_{\RR} \frac{F^\eps(s,z)}{\sqrt{t-s}}p_{t-s+\delta}(x+ \sigma \eps^{1/4}-z)\,dz\,ds.
\end{equation*}
Define
$$
F^{\eps}(t) := \sup_{x \in \RR} F^\eps(t,x), \qquad t \in [0,T],
$$
which is finite by Lemma \ref{lem:zerobdN}. Then, we have from the above integral inequality,
\begin{equation*}
    F^{\eps}(t) \le C_1 + C_4\,\int_0^t \frac{F^\eps(s)}{\sqrt{t-s}}\,ds.
\end{equation*}
The result now follows from the singular generalized Gr\"onwall's lemma.
\end{proof}

\begin{proof}[Proof of Lemma \ref{lem:zerobd}]
In view of Lemma \ref{lem:Erep} and Minkowski's Inequality, 
\begin{equation} \label{eq:tE0-Lrbd1}
\| \tlE^{t,x,\delta,\eps}(0) \|_r \leq \int_{\RR} \|\clG^\eps(0,u)\|_r p_{t + \delta}(u - x)du.
\end{equation}
Using the moment generating function formula for  a Poisson distribution and \eqref{eq:initPMs}, we have for all $u \geq 0$,
\[
\EE[\clG^\eps(0,u)^r] = e^{-r\sigma\eps^{-1/4}u} \EE[e^{r\sigma\eps^{1/4}\clN_\eps^+[0,u]}] = \exp(-r\sigma\eps^{-1/4}u + \eps^{-1/2}u(e^{r\sigma\eps^{1/4}} - 1)).
\]
Applying the bound $e^z - 1 \leq z + \frac{1}{2}z^2 + \frac{1}{6}z^3 e^z$, for all $z \geq 0$, we obtain for some $c_1 = c_1(r)$, 
\[
\EE[\clG^\eps(0,u)^r]^{1/r} \leq e^{\frac{1}{2}r\sigma^2 u + c_1\eps^{1/4} u}.
\]
If on the other hand, $u < 0$, then  
\[
\EE[\clG^\eps(0,u)^r] = e^{-r\sigma\eps^{-1/4}u} \EE[e^{-r\sigma\eps^{1/4}\clN_\eps^-[0,-u)}] = \exp(-r\sigma\eps^{-1/4}u - \eps^{-1/2}u(e^{-r\sigma\eps^{1/4}} - 1)),
\]
and we similarly obtain $\EE[\clG^\eps(0,u)^r]^{1/r} \leq e^{-\frac{1}{2}r\sigma^2 u - c_1\eps^{1/4} u}$. We conclude that for all $u \in \RR$, 
\begin{equation} \label{eq:G0bd}
\EE[\clG^\eps(0,u)^r]^{1/r} \leq e^{(\frac{1}{2}r\sigma^2 + c_1\eps^{1/4})|u|} \leq  e^{(\frac{1}{2}r\sigma^2 + c_1\eps^{1/4})u} +  e^{-(\frac{1}{2}r\sigma^2 + c_1\eps^{1/4})u}.
\end{equation}
Using this bound in \eqref{eq:tE0-Lrbd1} and computing the resulting Gaussian integrals, we obtain 
\[
\| \tlE^{t,x,\delta,\eps}(0) \|_r \leq c_2 e^{(2^{-1}r\sigma^2 + c_1 \eps^{1/4})|x| + \frac{1}{2}(2^{-1}r\sigma^2 + c_1 \eps^{1/4})^2(t + \delta) }.
\]
This bound in turn implies \eqref{eq:zerobd}.
\end{proof}

\begin{proof}[Proof of Lemma \ref{lem:zerobdN}]


{ Fix $r \in [1,\infty)$, $\eps \in (0,1]$, and $t \in (0,T]$. Observe that for some $c_1 = c_1(\eps,T,r) \in \ohalf$, 
\begin{equation} \label{eq:Grbd}
\EE |\clG^\eps(t,x)|^r \leq c_1 \EE e^{r\sigma\eps^{1/4}N_\eps(t,x - \sigma \eps^{-1/4}t) -r\sigma\eps^{-1/4}x}.
\end{equation}
To bound the expectation on the right-hand side, fix $u \in \ohalf$, set 
$$
J_x = r\sigma\eps^{1/4}N_\eps(t,x - \sigma \eps^{-1/4}t) -r\sigma\eps^{-1/4}x, \quad\quad m = \left\lfloor \frac{u + r\sigma\eps^{-1/4}x}{r\sigma\eps^{1/4}} \right\rfloor,
$$
and observe that, by Lemma \ref{lem:plbXj} and comments below Assumption \ref{assump:initdiff}, with $c_0(\xi) = \log(1+\xi)$, for all $\xi \in \ohalf$, and for some $c_2 = c_2(\eps,T,\xi) \in \ohalf$,
\[
\begin{split}
\PP(J_x > u) & \leq \PP(X_m^\eps(t) \leq x - \sigma \eps^{-1/4}t) = \PP(X_m(\eps^{-1}t) \leq \eps^{-1/2}(x - \sigma \eps^{-1/4}t)) \\
& \leq c_2 e^{\eps^{-1/2}(x - \sigma\eps^{-1/4}t)\xi - c_0(\xi)m}.
\end{split}
\]
Hence, we obtain the bound, for all $\xi \in \ohalf$, for some $c_3 = c_3(\eps,T,\xi) \in \ohalf$,
\begin{equation} \label{eq:mainJxbd}
\begin{split}
\PP(J_x > u) & \leq c_2 \exp( \eps^{-1/2}(x - \sigma\eps^{-1/4}t)\xi - \log(1 + \xi)(\frac{u}{r\sigma\eps^{1/4}} + \eps^{-1/2}x - 1) ) \\
& \leq c_3 \exp( \eps^{-1/2}(\xi - \log(1 + \xi))x^+ - \frac{\log(1+\xi) u}{r\sigma\eps^{1/4}}),
\end{split}
\end{equation}
where the last inequality uses the fact that  $\xi \geq \log(1+\xi)$ for $\xi >0$.
Let $\beta > 1$. We choose $\xi = \beta r\sigma\eps^{1/4} e^{\beta r\sigma\eps^{1/4}}$. Then 
\[
\begin{split}
& \xi - \log(1 + \xi) \leq \frac{1}{2}\xi^2 \leq \frac{1}{2}\beta^2 r^2 \sigma^2 \eps^{1/2} e^{2\beta r \sigma}, \hspace{0.1in} \text{ and } \\
& \log(1 + \xi) = \log(1 + \beta r\sigma\eps^{1/4} e^{\beta r\sigma\eps^{1/4}}) \geq \beta r \sigma \eps^{1/4}.
\end{split}
\]
Substituting these two bounds into \eqref{eq:mainJxbd} gives us 
\begin{equation} \label{eq:posxcase}
\PP(J_x > u) \leq c_3 \exp( \frac{1}{2}\beta^2 r^2 \sigma^2 e^{2\beta r \sigma} x^+ - \beta u ).
\end{equation}
Therefore, for some $c_4 = c_4(\eps,T,r) \in \ohalf$, recalling that $\beta > 1$,
\[
\EE e^{r\sigma\eps^{1/4}N_\eps(t,x - \sigma \eps^{-1/4}t) - r\sigma\eps^{-1/4}x} 
= \int_{\RR} e^u \PP(J_x > u) du
\leq c_4 \exp(\frac{1}{2}\beta^2 r^2 \sigma^2 e^{2\beta r\sigma} x^+).
\]
This bound, combined with \eqref{eq:Grbd}, completes the proof of the lemma, with $a = 2^{-1}\beta^2 r \sigma^2 e^{2\beta r\sigma}$.
}
\end{proof}

\subsection{{Continuity estimates for the mollified field}{ and tightness of $\{\tilde\clE_\theta^\eps\}$}} \label{ssec:conttight}

{We next prove continuity estimates which will be used to show that the sequence $\{\tlE_\theta^\eps\}_{\eps > 0}$ is tight, also demonstrating its SHE-type regularity properties. We will again assume that the initial distribution is given by $\gamma = \gamma_1$, defined in \eqref{eq:hominit}. As in the proof of Lemma \ref{lem:Gmoments}, the starting point in obtaining these estimates is the martingale representation~\eqref{eq:mart_rep}. This representation, in conjunction with the summation by parts formula in Lemma \ref{lem:SBPformula}, the moment bounds in Lemma \ref{lem:Gmoments}, and the initial time continuity bounds for the process $\tlE^{t,x,\delta,\eps}(\cdot)$ obtained below in Lemma~\ref{lem:continuity0-bd}, allows us to transfer the continuity properties of the heat kernel to the mollified field.

The initial time continuity bounds below again reflect the heat kernel regularity via the integral representation in Lemma \ref{lem:Erep}.} 


\begin{lemma} \label{lem:continuity0-bd} 
Let $\theta \in (0,1)$, and for $\eps \in (0,1]$, choose $\delta = \eps^{1/2\theta}$, so that $\tlE_\theta^\eps(t,x) = \tlE^{t,x,\delta,\eps}(t)$. Fix $T > 0$ and $r \in [1,\infty)$. 
\begin{enumerate}[label = (\roman*)]
\item There exist constants $b_1 = b_1(r,T)$ and $b_2 = b_2(\theta,r,T)$ in $\ohalf$ such that, for all $\eps \in (0,1]$, $t \in [0,T]$ and $x,x' \in \RR$, 
\begin{equation} \label{eq:E0spacediffbd}
\| \tlE^{t,x,\delta,\eps}(0) - \tlE^{t,x',\delta,\eps}(0) \|_r \leq b_1 e^{b_2(|x| + |x'|)}|x' - x|^{\theta/2}.
\end{equation}
\item There exist constants $b_3 = b_3(r,T)$ and $b_4 = b_4(\theta, r,T)$ in $\ohalf$ such that, for all $\eps \in (0,1]$, $t,t' \in [0,T]$ and $x \in \RR$, 
\begin{equation} \label{eq:E0timediffbd}
\| \tlE^{t,x,\delta,\eps}(0) - \tlE^{t',x,\delta,\eps}(0) \|_r \leq b_3 e^{b_4|x|}|t' - t|^{\theta/4}.
\end{equation}
\end{enumerate}
\end{lemma}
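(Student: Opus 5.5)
By Lemma \ref{lem:Erep} at $s=0$, $\tlE^{t,x,\delta,\eps}(0) = \int_\RR \clG^\eps(0,u)\,p_{t+\delta}(u-x)\,du$. Both differences in the statement are therefore integrals of the initial Hopf--Cole field against differences of heat kernels. I will bound them with Minkowski's inequality together with the moment bound \eqref{eq:G0bd}, namely $\|\clG^\eps(0,u)\|_r \le e^{\kappa|u|}$ with $\kappa = \frac12 r\sigma^2 + c_1$, uniformly in $\eps\in(0,1]$. This gives
\[
\| \tlE^{t,x,\delta,\eps}(0) - \tlE^{t',x',\delta,\eps}(0) \|_r \le \int_\RR e^{\kappa|u|}\,\bigl|p_{t+\delta}(u-x) - p_{t'+\delta}(u-x')\bigr|\,du ,
\]
which reduces the lemma to deterministic heat-kernel estimates with an exponential weight. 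The key point is that $t+\delta \ge \delta = \eps^{1/2\theta}$ is bounded below, so the kernels are smooth, but uniformity in $\eps$ requires more than just using $\delta>0$.

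For (i), set $s=t+\delta\in(0,T+1]$ and $h=x'-x$. I will interpolate between two bounds. The first comes from the triangle inequality: the integral is at most $2C e^{\kappa(|x|+|x'|)+\kappa^2 s}$. The second comes from the gradient: $|p_s(v)-p_s(v-h)| \le |h|\int_0^1 |p_s'(v-\lambda h)|\,d\lambda$, and $\int e^{\kappa|v|}|p_s'(v)|\,dv \le C s^{-1/2}e^{\kappa^2 s}$, which gives $C|h|s^{-1/2}e^{\kappa(|x|+|x'|)}$. Taking the minimum, $\min(1,|h|s^{-1/2}) \le |h|^{\theta/2}s^{-\theta/4}$, which is bounded by $|h|^{\theta/2}\delta^{-\theta/4}$. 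This is not uniform in $\eps$ as stated, so I need to be more careful.

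The better route is to use the explicit Gaussian structure. Since $\int e^{\kappa u}p_s(u-x)\,du = e^{\kappa x+\kappa^2 s/2}$, one can alternatively work with $|p_s(\cdot)-p_s(\cdot-h)|$ integrated against $e^{\kappa|u|}$; for $|h|\le\sqrt s$ this is of order $|h|/\sqrt s$, and otherwise it is $O(1)$. So the bound is $\min(1,|h|/\sqrt{s})$, which for $s\ge t$ is uniform only when $t$ is bounded below. I expect this to be the main obstacle: for $t$ near $0$, $s\approx\delta$ is small, and $\min(1,|h|/\sqrt\delta)\le |h|^{\theta/2}\delta^{-\theta/4}=|h|^{\theta/2}\eps^{-1/8}$, which is not uniform. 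To overcome this, I would exploit the roughness of $\clG^\eps(0,\cdot)$ itself instead of only its moments. Write the difference as $\int (\clG^\eps(0,u+h)-\clG^\eps(0,u))\,p_s(u-x)\,du$, shifting the variable, and estimate $\|\clG^\eps(0,u+h)-\clG^\eps(0,u)\|_r$ using the Poisson structure \eqref{eq:initPMs}. By independent increments and the Poisson moment generating function, this is $\|\clG^\eps(0,u)\|_{2r}\cdot\|e^{\sigma\eps^{1/4}(\Npe(u,u+h]-\eps^{-1/2}h)}\cdots-1\|_{2r}$, which is bounded by $C e^{\kappa'|u|}(|h|^{1/2}+\eps^{1/4}\cdot\text{stuff})$. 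More precisely, the bound is $C(|h|^{1/2}\wedge 1)$ when $|h|\ge\eps^{1/2}$, and $C(\eps^{1/4}\eps^{-1/4}|h|^{1/2}\ldots)$ in general, with the Poisson jumps making it of order $\min(1,(\eps^{-1/2}|h|)^{1/(2r)})\eps^{1/4}$ for tiny $|h|$. Combining this with the smoothing interpolation above for $|h|\le \eps^{1/2}$ in the regime of small $|h|$, using $|h|/\sqrt\delta$ with $\delta=\eps^{1/2\theta}\ge \eps^{1/2}$... the two regimes should glue to $|h|^{\theta/2}$, since for $|h|\le\delta$ the smoothing gives $|h|\delta^{-1/2}\le|h|^{\theta/2}$ as long as $|h|^{1-\theta/2}\le\delta^{1/2}$, i.e.\ $|h|\le \delta^{1/(2-\theta)}$, and the Poisson-increment bound handles larger $|h|$. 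The exponential weights are absorbed into $b_1e^{b_2(|x|+|x'|)}$ via Gaussian integrals over $u$.

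For (ii) I will use the semigroup property. With $t<t'$, $p_{t'+\delta} = p_{t'-t}*p_{t+\delta}$, so $\tlE^{t',x,\delta,\eps}(0)-\tlE^{t,x,\delta,\eps}(0) = \int p_{t'-t}(y)\bigl(\tlE^{t,x+y,\delta,\eps}(0)-\tlE^{t,x,\delta,\eps}(0)\bigr)\,dy$. By Minkowski's inequality and part (i), this is bounded by $b_1\int p_{t'-t}(y)e^{b_2(2|x|+|y|)}|y|^{\theta/2}\,dy \le b_3e^{b_4|x|}|t'-t|^{\theta/4}$, which finishes the proof once (i) is established.
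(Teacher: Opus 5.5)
Your argument is correct. For (i), once you drop the pure moment bound, your argument is the paper's. Below the threshold $|x'-x|\le \delta^{1/(2-\theta)}=\eps^{1/(2\theta(2-\theta))}$ you use the smoothing estimate $C e^{\kappa(|x|+|x'|)}\delta^{-1/2}|x'-x|$, which is \eqref{eq:diffest1}. Above it you use the Poisson-increment estimate \eqref{eq:G0diffbd}.

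Two small corrections to the write-up:
\begin{itemize}
\item $\delta=\eps^{1/(2\theta)}\le \eps^{1/2}$, not $\ge$.
\item The claim that the Poisson bound ``handles larger $|h|$'' should be checked term by term. The $O(\eps^{1/4})$ term (coming from the particle at the origin when $u<0<u+h$), the small-mean Poisson term, and $|h|^{1/2}$ are each at most $C|h|^{\theta/2}$ for $\eps^{1/(2\theta(2-\theta))}<|h|\le 1$. This is a short exponent check using $\theta\in(0,1)$. For $|h|\ge 1$ your crude triangle-inequality bound suffices.
\end{itemize}

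For (ii) you take a genuinely different route. The paper repeats a two-regime interpolation in the time variable, glued at $|t'-t|=\eps^{2/(\theta(4-\theta))}$. One regime uses the time-derivative bound $c\,e^{c|x|}\delta^{-1}|t'-t|$ from \eqref{eq:E0diffbd1}. The other, \eqref{eq:E0diffbd2}, writes $p_{t'+\delta}=p_{t'-t}*p_{t+\delta}$ and applies \eqref{eq:G0diffbd} directly to increments of $\clG^\eps(0,\cdot)$.

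You use the same convolution identity one level up. You express $\tlE^{t',x,\delta,\eps}(0)-\tlE^{t,x,\delta,\eps}(0)$ as the $p_{t'-t}$-average of the spatial increments $\tlE^{t,x+y,\delta,\eps}(0)-\tlE^{t,x,\delta,\eps}(0)$; Tonelli applies since $\clG^\eps\ge 0$. Then Minkowski's inequality, the already $\eps$-uniform estimate (i), and $\int p_h(y)|y|^{\theta/2}e^{b_2|y|}\,dy\le C h^{\theta/4}$ for $h\le T$ finish the proof with no second gluing.

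Your route is shorter, and it makes the parabolic exponent $\theta/4$ an automatic consequence of the spatial exponent $\theta/2$. The paper's version is self-contained and does not rely on (i). Yours expresses $b_3,b_4$ through $b_1,b_2$, which is harmless for the later use in Lemma \ref{lem:continuity}.
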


{The following gives the main continuity estimate for $\tlE_\theta^\eps(t,x)$. 
}

\begin{lemma} \label{lem:continuity}
Fix $r \in [1,\infty)$, $\theta \in (0,1)$, and $T \in \ohalf$.
\begin{enumerate}[label = (\roman*)]
\item There exist constants $C_1 = C_1(\theta,r,T)$ and $C_2 = C_2(\theta,r,T)$ in $\ohalf$ such that for all $\eps \in (0,1]$, $t \in [0,T]$ and $x,x' \in \RR$, 
\begin{equation} \label{eq:spaceest}
\|\tlE_\theta^\eps(t,x') - \tlE_\theta^\eps(t,x)\|_r \leq C_1e^{C_2(|x|+|x'|)}|x' - x|^{\theta/2}.
\end{equation}
\item 
There exist constants $C_3 = C_3(\theta,r,T)$ and $C_4 = C_4(\theta,r,T)$ in $\ohalf$ such that for all $\eps \in (0,1]$, $x \in \RR$ and $t,t' \in [0,T]$,
\begin{equation} \label{eq:timeest}
\| \tlE_\theta^\eps(t',x) - \tlE_\theta^\eps(t,x) \|_r \leq C_3 e^{C_4|x|}|t' - t|^{\theta/4}.
\end{equation}
\end{enumerate}

\end{lemma}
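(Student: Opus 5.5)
We will prove both parts from the martingale representation \eqref{eq:mart_rep}, specialized via \eqref{eq:te-def} at the terminal time $s=t$ with $\delta=\eps^{1/2\theta}$. For $(t,x)$ and $(t',x')$ we write
\[
\tlE_\theta^\eps(t,x) = \tlE^{t,x,\delta,\eps}(0) + \clM^{t,x}(t),
\]
where $\clM^{t,x}$ is the stochastic integral term (a sum over $i$ of $dB_i^\eps$ integrals with integrand involving $p_{t-s+\delta}(X_i^\eps(s)-x+\sigma\eps^{-1/4}(t+\delta))$). Lemma \ref{lem:continuity0-bd} already controls the initial-time differences at exactly the required Hölder exponents $\theta/2$ and $\theta/4$. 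It therefore remains to bound the differences of the martingale parts in $L^r$. As in the proof of Lemma \ref{lem:Gmoments}, we take $r\ge2$ and apply BDG. The identity \eqref{eq:magic} rewrites the quadratic variation as a weighted sum over $i$ with weights $e^{2\sigma\eps^{1/4}i-2\sigma\eps^{-1/4}Y_i^\eps(s)+\sigma^2\eps^{-1/2}s}$. Lemma \ref{lem:SBPformula} with $m=2$ then converts it into an integral of $\clG^\eps(s,z)^2$ against a kernel. Finally, Minkowski's inequality and Lemma \ref{lem:Gmoments} reduce everything to deterministic heat-kernel integrals weighted by $e^{c|z|}$.

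\textbf{Spatial increments.} For part (i) the martingale difference has integrand $D(s,y):=p_{t-s+\delta}(y-x)-p_{t-s+\delta}(y-x')$ after the shift by $Y_i^\eps$. The SBP formula with $F=D^2$ yields
\[
\int\clG^\eps(s,z)^2\left[2\sigma\eps^{-1/4}D^2-\partial_z D^2\right]dz .
\]
The prefactor $(1-e^{-\sigma\eps^{1/4}})^2/(1-e^{-2\sigma\eps^{1/4}})\sim\eps^{1/4}$ cancels the $\eps^{-1/4}$, by \eqref{eq:159mm}. The derivative term carries an extra $\eps^{1/4}$ times $|\partial_zD^2|\lesssim |D|\,(|z-x|+|z-x'|)\,\tau^{-1}p_\tau$ with $\tau=t-s+\delta$. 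We will bound it as in Lemma \ref{lem:Gmoments}, by $\eps^{1/4}\tau^{-1/2}$ times Gaussian factors. Since $\tau\ge\delta=\eps^{1/2\theta}$, we have $\eps^{1/4}\tau^{-1/2}\le \eps^{1/4-1/(4\theta)}$, which is not small. Instead we use $\eps^{1/4}\le \tau^{\theta/2}$, so $\eps^{1/4}\tau^{-1/2}\le\tau^{(\theta-1)/2}$, which is integrable near $\tau=\delta$ uniformly in $\eps$. It then remains to apply the standard estimate
\[
\int_0^t\int |p_\tau(z-x)-p_\tau(z-x')|^2 \tau^{-a}\,e^{c|z|}\,dz\,ds\lesssim e^{c'(|x|+|x'|)}|x-x'|^{\theta}
\]
for $a\le (1-\theta)/2$. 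This follows by interpolating $|D|\le \min(p_\tau(\cdot -x)+p_\tau(\cdot -x'),\,|x-x'|\tau^{-1/2}\max p)$ with exponent $\theta$. It gives $\int\tau^{-1/2-\theta/2-a}ds$, which is finite precisely when $a<(1-\theta)/2$; if the borderline causes trouble, we will shave a little of the exponent, which is allowed since the claimed constants may depend on $\theta$. Taking the square root gives $|x-x'|^{\theta/2}$.

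\textbf{Time increments.} For part (ii), with $t<t'$, we split the martingale difference into two pieces: the integral over $[t,t']$ of the $t'$-integrand, and the integral over $[0,t]$ of the difference of integrands, where both the kernel time $t'-s+\delta$ versus $t-s+\delta$ and the shift $\sigma\eps^{-1/4}(t+\delta)$ change. The factor $e^{\frac12\sigma^2\eps^{-1/2}(t+\delta)}$ changes as well, and \eqref{eq:magic} is exactly what absorbs the shift and the exponential simultaneously, so after the $Y_i^\eps$ change of variables the integrand difference is just $p_{t'-s+\delta}(y-x)-p_{t-s+\delta}(y-x)$. The first piece is bounded by $\int_t^{t'}\tau^{-1/2}\,ds\lesssim|t'-t|^{1/2}$. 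The second piece uses $|p_{\tau+h}-p_\tau|\le\min(\cdot,h\,\partial_\tau p)$ interpolated to give $|t'-t|^{\theta/2}$ for the squared quantity, and we treat the SBP derivative term as above.

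\textbf{Main obstacle.} The delicate step is the SBP correction term $\eps^{1/4}\partial_zF$. Its $\tau^{-1/2}$ singularity is only tamed by $\delta=\eps^{1/2\theta}$, and this is what forces the Hölder exponents to be $\theta/2$ and $\theta/4$ rather than $1/2$ and $1/4$. We will balance this against the interpolation exponents carefully, using the $\eps^{1/4}\le\tau^{\theta/2}$ trick as the first attempt.
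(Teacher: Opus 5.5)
Your skeleton is the paper's: the martingale representation, BDG, \eqref{eq:magic}, Lemma~\ref{lem:SBPformula} with $m=2$, Minkowski plus Lemma~\ref{lem:Gmoments}, Lemma~\ref{lem:continuity0-bd} for the time-zero terms, and the $[t,t']$ versus $[0,t]$ split for time increments. Applying Lemma~\ref{lem:SBPformula} directly to $F=D^2$ is a tidier form of the paper's $T+T-2\tilde T$ bookkeeping.

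The gap is in the step you yourself flag, the $\eps^{1/4}\partial_zF$ term, and as written it fails. That term is linear in $|D|$, so your $|D|^2\tau^{-a}$ estimate is not the one that applies. What you actually have is a bound $\lesssim \eps^{1/4}\tau^{-1/2}\sup_z|D|$, times $e^{c(|x|+|x'|)}$. If you combine the pointwise bound $\eps^{1/4}\le\tau^{\theta/2}$ with the fixed-exponent interpolation $\sup_z|D|\le|x-x'|^\theta\tau^{-(1+\theta)/2}$, you get $|x-x'|^\theta\int_\delta^{t+\delta}u^{-1}\,du=|x-x'|^\theta\log(1+t/\delta)\sim\log\eps^{-1}$. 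This is exactly your borderline $a=(1-\theta)/2$, and the bound is not uniform in $\eps$.

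Shaving the exponent only gives $|x'-x|^{\theta'/2}$ with $\theta'<\theta$, which is not \eqref{eq:spaceest}. That constants may depend on $\theta$ does not let you change the exponent. The same borderline $h^{\theta/2}\int_\delta u^{-1}\,du$ appears in the $[0,t]$ piece of part (ii) if you interpolate $p_{u+h}-p_u$ with a fixed exponent. Also, in the $[t,t']$ piece you dropped the derivative contribution $\eps^{1/4}\int_t^{t'}\tau^{-1}\,ds$. That term is what limits the exponent there to $\theta/4$; your trick does handle it, since $\int_t^{t'}\tau^{\theta/2-1}\,ds\le\frac{2}{\theta}h^{\theta/2}$.

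There are two ways to close the gap.

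\textbf{The paper's route.} Do not bound $\eps^{1/4}$ pointwise. Bound the derivative term by $\eps^{1/4}\tau^{-1-\theta/2}|x-x'|^\theta$ and use $\eps^{1/4}\int_\delta^\infty u^{-1-\theta/2}\,du=\frac{2}{\theta}\eps^{1/4}\delta^{-\theta/2}=\frac{2}{\theta}$. This is an exact cancellation coming from $\delta=\eps^{1/2\theta}$. For the $[0,t]$ time piece, the paper uses Cauchy--Schwarz with the $L^2$ bounds $m^{-5/4}$ and $m^{-7/4}$, followed by a case analysis $\delta\le h$ versus $h<\delta$.

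\textbf{Keeping your trick.} Replace the fixed-exponent interpolation by the minimum: $\sup_z|D|\lesssim\min(\tau^{-1/2},k\tau^{-1})$ with $k=|x-x'|$, and split the $\tau$-integral at $\tau=k^2$. This gives $\int_0^{k^2}\tau^{\theta/2-1}\,d\tau+\int_{k^2}^\infty k\,\tau^{\theta/2-3/2}\,d\tau=\bigl(\frac{2}{\theta}+\frac{2}{1-\theta}\bigr)k^\theta$. In time, use $\sup_y|p_{u+h}(y)-p_u(y)|\lesssim\min(u^{-1/2},hu^{-3/2})$ and split at $u=h$, which gives $\lesssim h^{\theta/2}$ directly. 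This is shorter than the paper's $I_{\delta,h}$ case analysis.
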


\begin{remark}\label{rem:Ereg} \normalfont
The exponents in Lemma~\ref{lem:continuity} are the microscopic analogue of
the regularity of the limiting stochastic heat equation.  The SHE driven by
space-time white noise, and with our Brownian-type initial condition, has spatial H\"older regularity strictly below
\cite[Exercise 6.9]{Khoshnevisan2009}.  Since $\theta\in(0,1)$ is arbitrary, the estimates
\eqref{eq:spaceest} and \eqref{eq:timeest} place
$\tlE^\eps_\theta$ uniformly in this same H\"{o}lder regularity class.  Thus
Lemma~\ref{lem:continuity} is not merely a tightness estimate but shows that
the mollified microscopic Hopf--Cole fields have exactly the a priori
regularity expected of their SHE limit. 
\end{remark}

{The estimates obtained {above and in the previous section}
culminate in the following tightness result.} 
\begin{corollary} \label{cor:tightcont}
Fix $\theta \in (0,1)$. The sequence $\{\tlE_\theta^\eps(t,x) : (t,x) \in \half \times \RR \}_{\eps > 0}$ is tight in the space $C(\half \times \RR : \RR)$.
\end{corollary}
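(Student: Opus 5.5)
The tightness will follow from a Kolmogorov–Chentsov type criterion for random fields indexed by $\half\times\RR$, applied on each compact rectangle $[0,T]\times[-K,K]$. Tightness in $C(\half\times\RR:\RR)$ with the local uniform topology reduces, through the usual diagonal argument, to tightness of the restrictions to $[0,T]\times[-K,K]$ for every $T,K\in\NN$. For each fixed rectangle we need two things: tightness of the value at one point, and a uniform moment bound on space-time increments.

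\textbf{Tightness at a point.} By Lemma \ref{lem:Erep} and Minkowski's inequality,
\[
\|\tlE_\theta^\eps(0,0)\|_r \le \int_\RR \|\clG^\eps(0,u)\|_r\,p_{\delta}(u)\,du .
\]
The bound \eqref{eq:G0bd} (or Lemma \ref{lem:zerobd} with $t=0$) shows this is bounded uniformly in $\eps\in(0,1]$. Hence $\{\tlE_\theta^\eps(0,0)\}_\eps$ is tight.

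\textbf{Increment bound.} Combining \eqref{eq:spaceest} and \eqref{eq:timeest} by the triangle inequality through the intermediate point $(t,x')$ gives
\[
\|\tlE_\theta^\eps(t',x') - \tlE_\theta^\eps(t,x)\|_r \le C(\theta,r,T,K)\left(|x'-x|^{\theta/2} + |t'-t|^{\theta/4}\right)
\]
for all $(t,x),(t',x')\in[0,T]\times[-K,K]$ and all $\eps\in(0,1]$. The factors $e^{C_2(|x|+|x'|)}$ and $e^{C_4|x|}$ are absorbed into the constant on the compact set. Raising to the $r$-th power gives
\[
\EE|\tlE_\theta^\eps(t',x') - \tlE_\theta^\eps(t,x)|^r \le C\,\|(t,x)-(t',x')\|^{r\theta/4},
\]
using the Euclidean norm on the two-dimensional parameter set.

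\textbf{Choice of $r$ and conclusion.} The only point needing care is that the Kolmogorov criterion in dimension $d=2$ requires the exponent to exceed the dimension, i.e. $r\theta/4>2$. Since Lemma \ref{lem:continuity} holds for every $r\in[1,\infty)$ with $\theta$ fixed, we choose $r>8/\theta$. The Kolmogorov–Chentsov tightness criterion then yields tightness in $C([0,T]\times[-K,K])$. In fact it gives a uniform bound on the expected Hölder seminorm of any exponent below $\theta/4-2/r$, which confirms the regularity remark. Since $\theta\in(0,1)$ is fixed and all constants are uniform in $\eps\in(0,1]$, and the members with $\eps>1$ are not an issue (tightness concerns $\eps\to0$), tightness on each compact follows. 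The diagonal argument then gives tightness in $C(\half\times\RR:\RR)$.
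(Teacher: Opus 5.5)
Your proposal is correct and follows the paper's proof essentially step for step. Both use one-point tightness from Lemma \ref{lem:zerobd} (or the bound \eqref{eq:G0bd}), the combined space-time increment bound from Lemma \ref{lem:continuity} on a compact rectangle, and a choice of $r$ with $r\theta/4>2$ so that the Kolmogorov--Chentsov criterion applies in dimension two, followed by exhaustion of $\half\times\RR$ by rectangles.
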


\begin{remark}\label{rem:gencont} \normalfont
{Tightness of the collection $\{\tlE_\theta^\eps\}$ will continue to hold if, instead of taking $\gamma = \gamma_1$, we assume that the initial distribution $\gamma$ is such that

\begin{enumerate}[label= D\arabic*]
\item The  bound \eqref{eq:ainit2} and Assumption \ref{assump:initdiff} hold; \label{it:weaker1}
\item  \label{it:weaker2}  Lemmas \ref{lem:zerobd} and \ref{lem:zerobdN} hold;
\item \label{it:weaker3} Lemma \ref{lem:continuity0-bd} holds.
\end{enumerate}

\noindent This observation follows from the proof of Lemma \ref{lem:continuity}, which does not use the initial distribution $\gamma_1$, except through the estimates from Lemmas \ref{lem:zerobd}, \ref{lem:zerobdN}, and \ref{lem:continuity0-bd}, and noting that the proof of Corollary \ref{cor:tightcont} is a direct consequence of Lemma \ref{lem:continuity} and the Kolmogorov-Chentsov criterion.}
\end{remark}

\subsubsection{{Proofs of continuity estimates}}

\begin{proof}[Proof of Lemma \ref{lem:continuity0-bd}] 
\textit{Proof of \eqref{eq:E0spacediffbd}.} Fix $t \in [0,T]$, and $x,x' \in \RR$. We may assume without loss of generality $x' > x$. Let $\delta = \eps^{1/2\theta}$. By Lemma \ref{lem:Erep}, 
\begin{equation} \label{eq:Erep-ctbd}
\tlE^{t,x',\delta,\eps}(0) - \tlE^{t,x,\delta,\eps}(0) = \int_{\RR} \clG^\eps(0,u)[p_{t + \delta}(u - x') - p_{t + \delta}(u - x)]du.
\end{equation}
We will obtain two different estimates for the above quantity in $L^r(\PP)$. By Lemma \ref{lem:Gmoments},
\[
\begin{split}
& \| \tlE^{t,x',\delta,\eps}(0) - \tlE^{t,x,\delta,\eps}(0) \|_r \leq \int_{\RR} \| \clG^\eps(0,u) \|_r |p_{t + \delta}(u - x') - p_{t + \delta}(u - x)|du \\
& \leq \int_{\RR} c_1 e^{c_2|u|} |p_{t + \delta}(u - x') - p_{t + \delta}(u - x)|du.
\end{split}
\]
Note that 
\[
|p_{t + \delta}(u - x') - p_{t + \delta}(u - x)|  = \left| \int_x^{x'} \partial_v p_{t+\delta}(u - v)dv \right| \leq \int_x^{x'} \frac{|u - v|}{t+\delta}p_{t+\delta}(u - v) dv.
\]
Hence,  
\begin{align*}
& \int_{\RR} c_1 e^{c_2|u|} |p_{t + \delta}(u - x') - p_{t + \delta}(u - x)|du \leq \int_x^{x'} \int_{\RR} c_1e^{c_2|u|} \frac{|u - v|}{t+\delta}p_{t+\delta}(u - v) du dv \\
& \leq \frac{c_1}{t + \delta}\int_x^{x'} e^{c_2|v|} \int_{\RR} (e^{c_2|w|}) |w|p_{t+\delta}(w) dw dv \\ 
& \leq \frac{c_1}{t + \delta}\int_x^{x'} e^{c_2|v|} \int_{\RR} (e^{c_2w} + e^{-c_2w}) |w|p_{t+\delta}(w) dw dv \\
& = \frac{c_1}{t + \delta}\int_x^{x'} e^{c_2|v|} \int_{\RR} e^{c_2^2(t+\delta)/2} |w| [ p_{t +\delta}(w + c_2(t + \delta)) + p_{t +\delta}(w - c_2(t + \delta))]dw dv \\
& \leq \frac{c_3}{t+\delta} \int_x^{x'} e^{c_2|v|} ((t+\delta)^{1/2} + (t+\delta))dv \\
& \leq c_4 e^{c_2(|x'| + |x|)} \delta^{-1/2} |x' - x|.
\end{align*}
We obtain the estimate
\begin{equation} \label{eq:diffest1}
\| \tlE^{t,x',\delta,\eps}(0) - \tlE^{t,x,\delta,\eps}(0) \|_r \leq c_4 e^{c_2(|x'| + |x|)} \eps^{-\frac{1}{4\theta}} |x' - x|.
\end{equation}

On the other hand, by changing variables in \eqref{eq:Erep-ctbd},  
\[
\tlE^{t,x',\delta,\eps}(0) - \tlE^{t,x,\delta,\eps}(0) = \int_{\RR} [\clG^\eps(0,u + x') - \clG^\eps(0,u+x)]p_{t + \delta}(u) du.
\]
Hence
\begin{equation} \label{eq:E0xdiff}
\| \tlE^{t,x',\delta,\eps}(0) - \tlE^{t,x,\delta,\eps}(0) \|_r \leq \int_{\RR} \| \clG^\eps(0,u + x') - \clG^\eps(0,u+x) \|_r p_{t + \delta}(u) du.
\end{equation}
Fix $u,v \in \RR$ with $v > u$, and observe that 
\begin{equation} \label{eq:therbpoiss}
\clG^\eps(0,v) - \clG^\eps(0,u) = \clG^\eps(0,u) \left\{ e^{ \sigma\eps^{1/4} (N_\eps(0,v) - N_\eps(0,u)) - \sigma\eps^{-1/4}(v - u)} - 1 \right\}.
\end{equation}

\begin{claim}{1} 
For every $q\ge 1$, there is a $C(q)<\infty$, not depending on $u$ and $v$, such that
\begin{equation}\label{eq:517nn+}
\|e^{\sigma \eps^{1/4} (N^\varepsilon(v)- N^\varepsilon(u)) - \sigma \eps^{-1/4}(v-u)}-1\|_q
\le C(q)e^{C(q)(|u|+|v|)}( \eps^{\frac{1}{4}} + \eps^{\frac{1}{4} - \frac{1}{4q}}|v - u|^{\frac{1}{2q}} + |v - u|^{\frac{1}{2}} ).
\end{equation}
\end{claim}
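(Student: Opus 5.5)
The plan is to reduce the claim to an explicit computation for a single centred Poisson variable. Here $N^\varepsilon(v)-N^\varepsilon(u)$ denotes $N_\eps(0,v)-N_\eps(0,u)$ with $v>u$.

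\emph{Reduction to a Poisson variable.} By \eqref{eq:initPMs}, the count $N_\eps(0,v)-N_\eps(0,u)$ is the number of points of a Poisson random measure with intensity $\eps^{-1/2}dx$ lying in $(u,v]$, up to a null set. This holds whether $u,v$ have the same sign or $u<0\le v$. In the straddling case the count equals $\Npe[0,v]+\Nme[0,-u)$, which is Poisson by independence of the two measures. So, writing $h=v-u$, $\lambda=\eps^{-1/2}h$ and $a=\sigma\eps^{1/4}$, the count is $K\sim\mbox{Poi}(\lambda)$. Since $a\lambda=\sigma\eps^{-1/4}h$, the random variable in \eqref{eq:517nn+} is exactly $Y:=e^{a(K-\lambda)}-1$. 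It then suffices to prove $\|Y\|_q\le C(q)e^{C(q)h}\bigl(\eps^{1/4}+\eps^{\frac14-\frac1{4q}}h^{\frac1{2q}}+h^{1/2}\bigr)$ and to use $h\le|u|+|v|$.

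\emph{Splitting $Y$.} I will use $|e^x-1|\le|x|e^{|x|}$ together with H\"older's inequality:
\[
\|Y\|_q\le a\,\|K-\lambda\|_{2q}\,\bigl\|e^{a|K-\lambda|}\bigr\|_{2q}.
\]
For the exponential factor, bound $e^{a|K-\lambda|}\le e^{a(K-\lambda)}+e^{-a(K-\lambda)}$. The Poisson moment generating function gives $\EE e^{\pm 2qa(K-\lambda)}=\exp\{\lambda(e^{\pm2qa}-1\mp2qa)\}\le\exp\{2q^2a^2e^{2qa}\lambda\}$. Since $a^2\lambda=\sigma^2h$ and $a\le\sigma$ for $\eps\le1$, this is at most $e^{C(q)h}$.

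\emph{Central moment bound.} For the polynomial factor I need a uniform central-moment bound for Poisson variables, $\EE|K-\lambda|^{2q}\le C(q)(\lambda^q+\lambda)$ for all $\lambda>0$. This follows from Rosenthal's inequality, viewing $K$ as a sum of $n$ i.i.d.\ $\mbox{Poi}(\lambda/n)$ variables and letting $n\to\infty$. Alternatively, for integer $2q$ it can be read off from the cumulants (all equal to $\lambda$), and monotonicity of $L^p$ norms handles non-integer $q$. This gives $a\|K-\lambda\|_{2q}\le C(q)\bigl(a\lambda^{1/2}+a\lambda^{1/(2q)}\bigr)=C(q)\bigl(\sigma h^{1/2}+\sigma\eps^{\frac14-\frac1{4q}}h^{\frac1{2q}}\bigr)$. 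Combined with the exponential factor, this yields \eqref{eq:517nn+}; the $\eps^{1/4}$ term there is just slack, so the bound obtained is slightly stronger.

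The only step requiring care is this uniform-in-$\lambda$ Poisson central moment bound. In particular, the small-$\lambda$ regime must produce the term $\lambda^{1/(2q)}$ rather than $\lambda^{1/2}$; that is what generates the $\eps^{\frac14-\frac1{4q}}|v-u|^{\frac1{2q}}$ term. I would check this directly for small $\lambda$ via $\EE|K-\lambda|^{2q}\le \lambda^{2q}+\sum_{k\ge1}k^{2q}e^{-\lambda}\lambda^k/k!\le C(q)\lambda$ when $\lambda\le1$.
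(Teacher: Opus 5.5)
Your core estimate is the same as the paper's, and those steps are fine:
\begin{itemize}
\item reduce to a centred Poisson variable;
\item bound $|e^x-1|$ by $|x|$ times an exponential and split by Cauchy--Schwarz;
\item use the Poisson moment generating function together with $\EE|K-\lambda|^{2q}\le C(q)(\lambda+\lambda^q)$.
\end{itemize}
The gap is in the reduction when $u<0\le v$. Under $\gamma_1$ one has $x_0=0$, so $X_0^\eps(0)=0$ is a deterministic particle. Hence $N_\eps(0,v)-N_\eps(0,u)=\#\{i:u<X_i^\eps(0)\le v\}$ equals $1+K$ with $K\sim\mbox{Poi}(\eps^{-1/2}(v-u))$, not $K$. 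Read literally, \eqref{eq:initPMs} drops this atom; for $x<0$ the correct formula is $N_\eps(0,x)=-1-\Nme[0,-x)$. That is what led you astray. The paper's proof of the claim accounts for the atom through the extra term $\mathbf{1}[uv<0]$ in $N_\eps(0,v)-N_\eps(0,u)=N^{\eps}_{u,v}+\mathbf{1}[uv<0]$.

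Consequently your remark that the $\eps^{1/4}$ term is slack, so that you prove a stronger bound, is false. Take $u=-h/2$, $v=h/2$, fix $\eps$, and let $h\to0$. On $\{K=0\}$ the variable equals $e^{a(1-\lambda)}-1$ with $a=\sigma\eps^{1/4}$. So its $L^q$ norm is at least $(e^{a(1-\lambda)}-1)e^{-\lambda/q}$, which tends to $e^{\sigma\eps^{1/4}}-1>0$. Your bound $C(q)e^{C(q)h}\bigl(\eps^{\frac14-\frac1{4q}}h^{\frac1{2q}}+h^{1/2}\bigr)$ instead tends to $0$. The $\eps^{1/4}$ term is exactly the price of the tagged particle at the origin.

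The repair is one line. Write $e^{a(K+1-\lambda)}-1=e^{a}\bigl(e^{a(K-\lambda)}-1\bigr)+(e^{a}-1)$. Then, for $\eps\le1$, the $L^q$ norm is at most $e^{\sigma}\|e^{a(K-\lambda)}-1\|_q+\sigma e^{\sigma}\eps^{1/4}$, and your Poisson estimate applies to the first term. The cases where $u$ and $v$ lie on the same side of the origin (with $u\ge 0$ or $v<0$) are unaffected.
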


\textit{Proof of claim.} To see this, note that $N_\eps(0,v) - N_\eps(0,u) = N^{\eps}_{u,v}+\mathbf{1}[uv <0]$, where
$N^{\eps}_{u,v} \sim \mbox{Poi}(\eps^{-1/2}(v-u))$ and consequently, for some constant $C>0$,
\begin{equation}\label{eq:515n+}
|e^{\sigma \eps^{1/4} (N^\varepsilon(v)- N^\varepsilon(u)) - \sigma \eps^{-1/4}(v-u)}-1| \le
C\left(|e^{\sigma \eps^{1/4} N_{u,v}^\varepsilon - \sigma \eps^{-1/4}(v-u)}-1| + \eps^{1/4}\right).
\end{equation} 
Denote by $C_i(q), i \ge 1,$ positive constants depending only on $q$. If $R\sim \mbox{Poi}(\lambda)$ and $a>0$, then, since $|e^x-1| \le |x|(e^x+1)$ for $x\in \RR$,
\begin{align*}
\EE|e^{aR-a\lambda}-1|^q &\le a^q \EE[|R-\lambda|^q(e^{aR-a\lambda}+1)^q]\\
&\le C_1(q) a^q\left(\EE|R-\lambda|^{2q}\right)^{1/2} \left(\EE(e^{2qa(R-\lambda)}+1)\right)^{1/2}\\
&\le C_2(q) a^q\left(\lambda + \lambda^q\right)^{1/2}\left( e^{\lambda(e^{2qa}-1-2qa)} +1)\right)^{1/2}.
\end{align*}
Taking $a=\sigma \eps^{1/4}$ and $\lambda = \eps^{-1/2}(v-u)$ in the above,
$$
\EE\left|e^{\sigma \eps^{1/4} N_{u,v}^\varepsilon - \sigma \eps^{-1/4}(v-u)}-1\right|^q
\le C_3(q) e^{C_4(q)(|u|+|v|)}\left( \eps^{(q - 1)/4}|v - u|^{1/2} + |v - u|^{q/2} \right).
$$
Using this bound in \eqref{eq:515n+} proves the claim in \eqref{eq:517nn+}. 

In view of equation \eqref{eq:therbpoiss}, Lemma \ref{lem:Gmoments}, and \eqref{eq:517nn+} with $q = 2r$ we obtain 
\begin{equation} \label{eq:G0diffbd}
\begin{split}
\| \clG^\eps(0,v) - \clG^\eps(0,u) \|_r & \leq \|\clG^\eps(0,u)\|_{2r} \left\| e^{ \sigma\eps^{1/4} (N_\eps(0,v) - N_\eps(0,u)) - \sigma\eps^{-1/4}(v - u)} - 1 \right\|_{2r} \\
& \leq c_5 e^{c_6 (|u| + |v|)}\left( \eps^{\frac{1}{4}} + \eps^{\frac{1}{4} - \frac{1}{8r}}|v - u|^{\frac{1}{4r}} + |v - u|^{\frac{1}{2}} \right).
\end{split}
\end{equation}
Applying this bound to \eqref{eq:E0xdiff}, we obtain the estimate
\begin{equation} \label{eq:diffest2}
\begin{split}
& \| \tlE^{t,x',\delta,\eps}(0) - \tlE^{t,x,\delta,\eps}(0) \|_r \\
& \leq c_5  e^{c_6(|x| + |x'|)} \left( \int_{\RR} e^{c_6|u|} p_{t + \delta}(u)du \right) \left( \eps^{\frac{1}{4}} + \eps^{\frac{1}{4} - \frac{1}{8r}}|x' - x|^{\frac{1}{4r}} + |x' - x|^{\frac{1}{2}} \right) \\
& \leq c_7 e^{c_6(|x| + |x'|)} \left( \eps^{\frac{1}{4}} + \eps^{\frac{1}{4} - \frac{1}{8r}}|x' - x|^{\frac{1}{4r}} + |x' - x|^{\frac{1}{2}} \right).
\end{split}
\end{equation}
Combining the estimates \eqref{eq:diffest1} and \eqref{eq:diffest2} and setting $k = |x' - x|$, we arrive at the bound 
\begin{equation*}
\begin{split}
\| \tlE^{t,x',\delta,\eps}(0) - \tlE^{t,x,\delta,\eps}(0) \|_r \leq c_8 e^{c_9(|x| + |x'|)} \min\left\{ \eps^{-\frac{1}{4\theta}} k, \eps^{\frac{1}{4}} + \eps^{\frac{1}{4} - \frac{1}{8r}}k^{\frac{1}{4r}} + k^{\frac{1}{2}} \right\} .
\end{split}
\end{equation*}
Define
$$
m_1(k) := \min\left\{ \eps^{-\frac{1}{4\theta}} k, \eps^{\frac{1}{4}} +  \eps^{\frac{1}{4} - \frac{1}{8r}}k^{\frac{1}{4r}} + k^{\frac{1}{2}} \right\}.
$$
We will be done with proving \eqref{eq:E0spacediffbd} if we can show that for some constant $\tilde c = \tilde c(\theta)$,
\[
 m_1(k) \leq e^{\tilde c(|x|+|x'|)}k^{\theta/2}, \quad x,x' \in \RR.
\]
Consider two cases. First, if $k \leq \eps^{\frac{1}{2\theta(2 - \theta)}}$, then 
\[
m_1(k) \leq \eps^{-\frac{1}{4\theta}} k \leq k^{-\frac{2-\theta}{2}}k = k^{\frac{\theta}{2}}.
\]
Second, if $k > \eps^{\frac{1}{2\theta(2 - \theta)}}$, then 
\begin{equation} \label{eq:minbd}
m_1(k) \leq \eps^{\frac{1}{4}} + \eps^{\frac{1}{4} - \frac{1}{8r}}k^{\frac{1}{4r}} + k^{\frac{1}{2}} \leq k^{\frac{\theta(2 - \theta)}{2}} + k^{\frac{\theta(2 - \theta)}{2}(1 - \frac{1}{2r}) + \frac{1}{4r}} + k^{\frac{1}{2}}.
\end{equation}
Notice that, as $\theta \in (0,1)$, $\frac{\theta(2 - \theta)}{2} > \frac{\theta}{2}$ and
$
\frac{\theta(2 - \theta)}{2}(1 - \frac{1}{2r}) + \frac{1}{4r} \geq \min\left\{ \frac{\theta(2 - \theta)}{2}, \frac{1}{2} \right\} > \frac{\theta}{2}.
$
Hence, if $k \leq 1$, then $m_1(k)$ is bounded above by $3k^{\theta/2}$. Furthermore, if $k > 1$, then $m_1(k) \leq e^{\tilde c(|x| + |x'|)} k^{\theta/2}$ for suitably large $\tilde c = \tilde c(\theta)$, noting the polynomial growth of $m_1(k)$ in $k = |x' - x|$. This completes the proof of \eqref{eq:E0spacediffbd}.

\textit{Proof of \eqref{eq:E0timediffbd}.} Fix $t,t' \in [0,T]$ and $x \in \RR$, and assume without loss of generality that $t' \ge t$. We will estimate $\tlE^{t',x,\delta,\eps}(0) - \tlE^{t,x,\delta,\eps}(0)$ in two different ways. Using Lemma \ref{lem:Erep}, 
\[
\begin{split}
\tlE^{t',x,\delta,\eps}(0) - \tlE^{t,x,\delta,\eps}(0) & = \int_{\RR} \clG^\eps(0,u)[p_{t'+\delta}(u - x) - p_{t + \delta}(u - x)]du \\
& = \int_t^{t'}\int_{\RR} \clG^\eps(0,u)\partial_s p_{s + \delta}(u - x)du ds.
\end{split}
\]
Hence, by Minkowski's inequality and Lemma \ref{lem:Gmoments},
\begin{equation} \label{eq:difftoderiv}
\| \tlE^{t',x,\delta,\eps}(0) - \tlE^{t,x,\delta,\eps}(0) \|_r \leq \int_t^{t'} \int_{\RR} c_{10}e^{c_{11}|u|}|\partial_s p_{s + \delta}(u - x)|du ds.
\end{equation}
Note that 
\[
|\partial_s p_{s + \delta}(u - x)| \leq \left( \frac{|u - x|^2}{2(s + \delta)^2} + \frac{1}{2(s + \delta)} \right)p_{s + \delta}(u - x).
\]
Furthermore
\[
\begin{split}
e^{c_{11}|u|}p_{s + \delta}(u - x) & \leq e^{c_{11}|x|}\left(e^{c_{11}(u - x)} + e^{-c_{11}(u - x)} \right)p_{s + \delta}(u - x) \\
& \leq  e^{c_{11}|x| + \frac{c_{11}^2(s+\delta)}{2}}\left[p_{s + \delta}(u - x + c_{11}(s + \delta)) + p_{s + \delta}(u - x - c_{11}(s + \delta)) \right].
\end{split}
\]
Applying these to bounds to \eqref{eq:difftoderiv} yields the estimate
\begin{equation} \label{eq:E0diffbd1}
\begin{split}
& \| \tlE^{t',x,\delta,\eps}(0) - \tlE^{t,x,\delta,\eps}(0) \|_r \\
& \leq c_{12}e^{c_{11}|x|} \int_t^{t'} \int_{\RR} \left( \frac{|u - x|^2}{2(s + \delta)^2} + \frac{1}{2(s + \delta)} \right) \\
& \hspace{1.3in} \times \left[p_{s + \delta}(u - x + c_{11}(s + \delta)) + p_{s + \delta}(u - x - c_{11}(s + \delta)) \right] du ds \\
& \leq c_{12}e^{c_{11}|x|} \int_t^{t'} \left\{ \frac{2}{(s + \delta)^2}\int_{\RR} \left(|w|^2 + c_{11}^2(s + \delta)^2\right) p_{s + \delta}(w)dw + \frac{1}{s+\delta} \right\}ds \\
& \leq c_{13}e^{c_{11}|x|} \int_t^{t'} \left\{ 1 + \frac{1}{s + \delta} \right\}ds \leq c_{13}e^{c_{11}|x|} \delta^{-1}|t' - t|.
\end{split}
\end{equation}

On the other hand, using Lemma \ref{lem:Erep} again, with $h = t' - t$, we may write 
\[
\tlE^{t',x,\delta,\eps}(0) - \tlE^{t,x,\delta,\eps}(0) = \int_{\RR} p_{t + \delta}(u - x) \int_{\RR} p_h(z)[ \clG^\eps(0,u+z) - \clG^\eps(0,u) ]dz du.
\]
Then by Minkowski's inequality and the bound \eqref{eq:G0diffbd}, for $r \in [1,\infty)$,
\begin{equation}\label{eq:555n+}
\begin{aligned}
& \| \tlE^{t',x,\delta,\eps}(0) - \tlE^{t,x,\delta,\eps}(0) \|_r
\le \int_{\RR} p_{t+\delta}(u-x) \int_{\RR} p_h(z)\|\clG^{\eps}(0,u+z)- \clG^{\eps}(0,u)\|_r dz \, du\\
&\le c_5 \left(\int_{\RR} p_{t+\delta}(u-x) du\right) e^{c_{14}|u|} \int_{\RR} p_h(z) e^{c_{14}|z|}(\eps^{\frac{1}{4}} + \eps^{\frac{1}{4} - \frac{1}{8r}}|z|^{\frac{1}{4r}} + |z|^{\frac{1}{2}}) dz.
\end{aligned}
\end{equation}
Noting that 
$$p_h(z)e^{c_{14}|z|} \leq (e^{c_{14}z} + e^{-c_{14}z})p_h(z) \leq c_{15}[p_h(z + c_{14}h) + p_h(z - c_{14}h)],
$$
the last line in \eqref{eq:555n+} is bounded by 
\begin{equation} \label{eq:E0diffbd2}
\begin{split}
& c_{16} e^{c_{14}|x|} \int_{\RR} p_h(z)\left(\eps^{\frac{1}{4}} + \eps^{\frac{1}{4} - \frac{1}{8r}}(|z|^{\frac{1}{4r}} + h^{\frac{1}{4r}}) + (|z|^{\frac{1}{2}} + h^{\frac{1}{2}})\right)dz \\
& \leq c_{17} e^{c_{14}|x|} \left(\eps^{\frac{1}{4}} + \eps^{\frac{1}{4} - \frac{1}{8r}}(h^{\frac{1}{8r}} + h^{\frac{1}{4r}}) + (h^{\frac{1}{4}} + h^{\frac{1}{2}})\right) \leq c_{18}e^{c_{14}|x|} \left(\eps^{\frac{1}{4}} + \eps^{\frac{1}{4} - \frac{1}{8r}}h^{\frac{1}{8r}} + h^{\frac{1}{4}}\right).
\end{split}
\end{equation}

Combining the bounds \eqref{eq:E0diffbd1} and \eqref{eq:E0diffbd2}, we obtain 
\[
\| \tlE^{t',x,\delta,\eps}(0) - \tlE^{t,x,\delta,\eps}(0) \|_r \leq c_{19}e^{c_{20}|x|} \min\left\{ \eps^{-\frac{1}{2\theta}}h, (\eps^{\frac{1}{4}} + \eps^{\frac{1}{4} - \frac{1}{8r}}h^{\frac{1}{8r}} + h^{\frac{1}{4}}) \right\}.
\]
Denote the minimum on the right-hand side by $m_2(h)$. We now consider two cases: If $h \le \eps^{\frac{2}{\theta(4-\theta)}}$, then
$$
m_2(h) = \min\left\{ \eps^{-\frac{1}{2\theta}}h, (\eps^{\frac{1}{4}} + \eps^{\frac{1}{4} - \frac{1}{8r}}h^{\frac{1}{8r}} + h^{\frac{1}{4}} ) \right\} \le \eps^{-\frac{1}{2\theta}}h \le
h^{-1 + \frac{\theta}{4}} h = h^{\frac{\theta}{4}}.
$$
On the other hand, if $h > \eps^{\frac{2}{\theta(4-\theta)}}$, then
\[
\begin{split}
& m_2(h) \le \eps^{\frac{1}{4}} + \eps^{\frac{1}{4} - \frac{1}{8r}}h^{\frac{1}{8r}} + h^{\frac{1}{4}} \le h^{\frac{\theta(4-\theta)}{8}} +  h^{(\frac{\theta(4 - \theta)}{8})(1 - \frac{1}{2r}) + \frac{1}{8r}} + h^{\frac{1}{4}} \leq c_{21}h^{\frac{\theta}{4}},
\end{split}
\]
where the last line follows by noting that 
$$
\frac{\theta(4-\theta)}{8} > \frac{\theta}{4},
\quad \text{ and } \quad 
(\frac{\theta(4 - \theta)}{8})(1 - \frac{1}{2r}) + \frac{1}{8r} \geq \min\left\{ \frac{\theta(4 - \theta)}{8}, \frac{1}{4} \right\} > \frac{\theta}{4}.
$$ 
Combining the two cases, we conclude that, for all $x \in \RR$ and $t',t \in [0,T]$,
$$
\|\tlE^{t',x,\delta,\eps}(0) - \tlE^{t,x,\delta,\eps}(0)\|_r \le c_{22}e^{c_{20}|x|} |t'-t|^{\frac{\theta}{4}}.
$$
This completes the proof of \eqref{eq:E0timediffbd}.
\end{proof}

\begin{proof}[Proof of Lemma \ref{lem:continuity}.i]
Fix $r,T, \theta$ as in the statement of the lemma. For $\delta \in (0,1)$, $t \in [0,T]$ and $x, x' \in \RR$, by  Lemma \ref{lem:Eprocess}(ii), \eqref{eq:te-def}, triangle inequality and Burkholder-Gundy inequality, for some $C_1<\infty$ (depending only on $r$), with $\tau(s)=t-s+\delta$ for $s \in [0,t]$,
\begin{multline}
\|\tlE^{t,x,\delta, \eps}(t) - \tlE^{t,x',\delta, \eps}(t)\|_r 
\le \|\tlE^{t,x,\delta, \eps}(0) - \tlE^{t,x',\delta, \eps}(0)\|_r \\
 + C_1 \Big\| \int_0^t e^{\sigma^2 \eps^{-1/2}(t+\delta)}
(1 - e^{-\sigma\eps^{1/4}})^2\\
\times \sum_{i\in \ZZ}\left(e^{\sigma \eps^{1/4}i - \sigma\eps^{-1/4}x}
p_{\tau(s)}(X_i^{\eps,x,t}(s)) - e^{\sigma \eps^{1/4}i - \sigma\eps^{-1/4}x'}
p_{\tau(s)}(X_i^{\eps,x',t}(s))\right)^2 ds\Big\|^{1/2}_{r/2}, \label{eq:136mm}
\end{multline}
where, for $y \in \RR$,
$$X_i^{\eps,y,t}(s) = X_i^{\eps}(s) - y + \sigma\eps^{-1/4}(t+\delta).$$ 
Write
\begin{multline*}
\left(e^{\sigma \eps^{1/4}i - \sigma\eps^{-1/4}x}
p_{\tau(s)}(X_i^{\eps,x,t}(s)) - e^{\sigma \eps^{1/4}i - \sigma\eps^{-1/4}x'}
p_{\tau(s)}(X_i^{\eps,x',t}(s))\right)^2\\
= e^{2\sigma \eps^{1/4}i - 2\sigma\eps^{-1/4}x}
p^2_{\tau(s)}(X_i^{\eps,x,t}(s)) + e^{2\sigma \eps^{1/4}i - 2\sigma\eps^{-1/4}x'}
p^2_{\tau(s)}(X_i^{\eps,x',t}(s))\\
- 2  e^{2\sigma \eps^{1/4}i - \sigma\eps^{-1/4}(x+x')}p_{\tau(s)}(X_i^{\eps,x,t}(s))p_{\tau(s)}(X_i^{\eps,x',t}(s))\\
=: T(i,s,x) + T(i,s,x') -2 \tilde T(i,s, x, x').
\end{multline*}
From similar calculations to those leading to \eqref{mom3}, and recalling \eqref{eq:159mm}, we see that, 
\begin{multline*}
 \int_0^t e^{\sigma^2 \eps^{-1/2}(t+\delta)}
(1 - e^{-\sigma\eps^{1/4}})^2 \sum_{i\in \ZZ} T(i,s,x) ds \\
= \Lambda(\eps)\int_0^t\int_{\RR} \clG^\eps(s,z)^2\left(1 + \frac{\eps^{1/4}}{\sigma}\frac{(z-x)}{\tau(s)}\right)p_{\tau(s)}^2(x-z)\,dz ds,
\end{multline*}
and the same equality holds with $x$ replaced with $x'$.
For $\tilde T$ we will apply Lemma \ref{lem:SBPformula} with $m=2$ and
$$F(s,y) = p_{\tau(s)}(x-y)p_{\tau(s)}(x'-y).$$
Note that
$\partial_yF(s,y) = \frac{(x+x'-2y)}{\tau(s)} p_{\tau(s)}(x-y)p_{\tau(s)}(x'-y)$ and so
$$2\sigma\eps^{-1/4}F(s,z) - \partial_zF(s,z) = 2\sigma\eps^{-1/4} \left(1 + \frac{\eps^{1/4}}{2\sigma}\frac{2z-x-x'}{\tau(s)}\right)p_{\tau(s)}(x-z)p_{\tau(s)}(x'-z).$$
Thus using \eqref{eq:magic}, applying Lemma \ref{lem:SBPformula}, and recalling \eqref{eq:159mm},
\begin{multline*}
 \int_0^t e^{\sigma^2 \eps^{-1/2}(t+\delta)}
(1 - e^{-\sigma\eps^{1/4}})^2 \sum_{i\in \ZZ} \tilde T(i,s,x, x')\\
= \Lambda(\eps)\int_0^t\int_{\RR} \clG^\eps(s,z)^2   \left(1 + \frac{\eps^{1/4}}{2\sigma}\frac{2z-x-x'}{\tau(s)}\right)p_{\tau(s)}(x-z)p_{\tau(s)}(x'-z) dz ds.
\end{multline*}
Using the above identities in \eqref{eq:136mm} we obtain, for some constant $C_2$, depending only on $r$,
\begin{align*}
&\|\tlE^{t,x,\delta, \eps}(t) - \tlE^{t,x',\delta, \eps}(t)\|_r\\ 
&\le \|\tlE^{t,x,\delta, \eps}(0) - \tlE^{t,x',\delta, \eps}(0)\|_r
 + C_2 \bigg\| \int_0^t \int_{\RR} \clG^\eps(s,z)^2 \bigg[
\left(1 + \frac{\eps^{1/4}}{\sigma}\frac{(z-x)}{\tau(s)}\right)p_{\tau(s)}^2(x-z)\\
&\quad+ \left(1 + \frac{\eps^{1/4}}{\sigma}\frac{(z-x')}{\tau(s)}\right)p_{\tau(s)}^2(x'-z)
- 2 \left(1 + \frac{\eps^{1/4}}{2\sigma}\frac{2z-x-x'}{\tau(s)}\right)p_{\tau(s)}(x-z)p_{\tau(s)}(x'-z) \bigg] dz ds
\bigg\|^{1/2}_{r/2}.
\end{align*}
Thus by Minkowski's inequality,
\begin{multline*}
\|\tlE^{t,x,\delta, \eps}(t) - \tlE^{t,x',\delta, \eps}(t)\|_r 
\le \|\tlE^{t,x,\delta, \eps}(0) - \tlE^{t,x',\delta, \eps}(0)\|_r \\
+ C_2 \bigg(\int_0^t \int_{\RR} \| \clG^\eps(s,z)\|_r^2 \bigg|
\left(1 + \frac{\eps^{1/4}}{\sigma}\frac{(z-x)}{\tau(s)}\right)p_{\tau(s)}^2(x-z) \\
+ \left(1 + \frac{\eps^{1/4}}{\sigma}\frac{(z-x')}{\tau(s)}\right)p_{\tau(s)}^2(x'-z)\\
- 2 \left(1 + \frac{\eps^{1/4}}{2\sigma}\frac{2z-x-x'}{\tau(s)}\right)p_{\tau(s)}(x-z)p_{\tau(s)}(x'-z) \bigg| dz ds
\bigg)^{1/2}.
\end{multline*}
Consequently, using Lemma  \ref{lem:Gmoments}, there is a $C_3$, depending on $r,T$, and $c$, depending only on $r$, such that for all $t \le T$, 
\begin{equation}\label{eq:433mm}
\|\tlE^{t,x,\delta, \eps}(t) - \tlE^{t,x',\delta, \eps}(t)\|_r^2 \le
2 \|\tlE^{t,x,\delta, \eps}(0) - \tlE^{t,x',\delta, \eps}(0)\|_r^2
+ C_3 \int_0^t  T_4(s,x,x') ds,
\end{equation}
where
\begin{multline*}
T_4(s,x,x') = \int_{\RR} e^{c|z|} \bigg|
\left(1 + \frac{\eps^{1/4}}{\sigma}\frac{(z-x)}{\tau(s)}\right)p_{\tau(s)}^2(x-z) 
+ \left(1 + \frac{\eps^{1/4}}{\sigma}\frac{(z-x')}{\tau(s)}\right)p_{\tau(s)}^2(x'-z)\\
- 2 \left(1 + \frac{\eps^{1/4}}{2\sigma}\frac{2z-x-x'}{\tau(s)}\right)p_{\tau(s)}(x-z)p_{\tau(s)}(x'-z) \bigg| dz.
\end{multline*}
We estimate
$$T_4(s,x,x') \le T_5(s,x,x') + T_6(s,x,x'),
$$
where
$$
T_5(s,x,x') = \int_{\RR} e^{c|z|} (p_{\tau(s)}(x-z)-p_{\tau(s)}(x'-z))^2 dz,
$$
and
$$
T_6(s,x,x') = \frac{\eps^{1/4}}{\sigma{\tau(s)}} \int_{\RR} e^{c|z|}\left(|z-x|p_{\tau(s)}(z-x)
+ |z-x'|p_{\tau(s)}(z-x')\right) |p_{\tau(s)}(z-x)- p_{\tau(s)}(z-x')| dz.
$$
Observe that
$$ |p_t(x) - p_t(x')| \le \frac{|x-x'|}{t} \mbox{ for all } x, x' \in \RR, t >0.$$
Moreover, as $\sqrt{t}|p_t(x)- p_t(x')|$ is uniformly bounded by $1$, $\sqrt{t}|p_t(x)- p_t(x')| \le (\sqrt{t}|p_t(x)- p_t(x')|)^{\theta}$ for all $\theta \in (0,1)$, $t\in \ohalf$ and $x, x' \in \RR$. Consequently, for any $\theta \in (0,1)$,
$$
|p_t(x) - p_t(x')| \le \frac{|x-x'|^{\theta}}{t^{(1+\theta)/2}} \mbox{ for all } x, x'\in \RR, \; t>0.
$$
Thus, for some $\kappa_3$ depending only on $T$,
\begin{align*}
T_5(s,x,x') &\le \frac{|x-x'|^{\theta}}{\tau(s)^{(1+\theta)/2}} \int e^{c|z|} (p_{\tau(s)}(x-z)+p_{\tau(s)}(x'-z)) dz\\
&\le \kappa_3 \frac{|x-x'|^{\theta}}{\tau(s)^{(1+\theta)/2}} (e^{c|x|} + e^{c|x'|}).
\end{align*}
Also, for some $\kappa_4$ depending only on $T$,
\begin{multline*}
T_6(s,x,x') \le \frac{\eps^{1/4}}{\sigma{\tau(s)}}\frac{|x-x'|^{\theta}}{\tau(s)^{(1+\theta)/2}} \int_{\RR} e^{c|z|}\left(|z-x|p_{\tau(s)}(z-x)
+ |z-x'|p_{\tau(s)}(z-x')\right)  dz\\
\le  \kappa_4\frac{\eps^{1/4}}{\sigma{\tau(s)^{1+\frac{\theta}{2}}}}|x-x'|^{\theta} (e^{c|x|} + e^{c|x'|}).
\end{multline*}
Using the above bounds on $T_5$ and $T_6$ in \eqref{eq:433mm}, and applying Lemma \ref{lem:continuity0-bd}  we obtain
\begin{align*}
&\|\tlE^{t,x,\delta, \eps}(t) - \tlE^{t,x',\delta, \eps}(t)\|_r^2 \le
2b_1^2 e^{2b_2(|x| + x'|)} |x'-x|^\theta\\
&\quad\qquad+ C_3 (\kappa_3+\kappa_4)(e^{c|x|} + e^{c|x'|}) |x-x'|^{\theta} \int_0^t  \left[ \frac{1}{\tau(s)^{(1+\theta)/2}}  
+\frac{\eps^{1/4}}{\sigma{\tau(s)^{1+\frac{\theta}{2}}}}\right] ds\\
&= 2b_1^2 e^{2b_2(|x| + x'|)} |x'-x|^\theta\\
&\quad\qquad+ C_3 (\kappa_3+\kappa_4)(e^{c|x|} + e^{c|x'|}) |x-x'|^{\theta} \left[\frac{2}{1-\theta} T^{(1-\theta)/2} +\frac{2\eps^{1/4}}{\sigma\theta \delta^{\theta/2}}\right].
\end{align*}
Finally, taking $\delta = \eps^{1/(2\theta)}$ in the above display we complete the proof of part (i) of the lemma.
\end{proof}

\begin{proof}[Proof of Lemma \ref{lem:continuity}.ii]
Throughout this proof, the constants $c, c'$ are allowed to change value from line to line and are allowed to depend on $r$. If the constants depend on additional parameters such as $T$ or $\theta$, then we will denote them by $c(T)$, $c(T,\theta)$, etc.

Fix $t,t' \in [0,T]$ with $t' > t$,  and $\eps \in \ohalf$, and let $\delta = \eps^{\frac{1}{2\theta}}$. Recall $\tlE_\theta^\eps(t,x) = \tlE^{t,x,\delta,\eps}(t)$. We  write    
\begin{equation} \label{eq:tEsplit}
\begin{split}
& \tlE_\theta^\eps(t',x) - \tlE_\theta^\eps(t,x)
= D_1(t,t',x,\eps) + D_2(t,t',x,\eps),
\end{split}
\end{equation}
where 
\[
\begin{split}
& D_1(t,t',x,\eps) = \tlE^{t',x,\delta,\eps}(t') - \tlE^{t',x,\delta,\eps}(t) \\
& \quad\quad = (1 - e^{-\sigma\eps^{1/4}})e^{-\sigma\eps^{-1/4}x + \frac{1}{2}\eps^{-1/2}\sigma^2(t' + \delta)}\left( \clE^{t',x - \sigma\eps^{-1/4}(t' + \delta),\delta,\eps}(t') - \clE^{t',x - \sigma\eps^{-1/4}(t' + \delta),\delta,\eps}(t) \right), \\
& D_2(t,t',x,\eps) = \tlE^{t',x,\delta,\eps}(t) - \tlE^{t,x,\delta,\eps}(t).
\end{split}
\]
By Lemma \ref{lem:Eprocess}(ii), 
\begin{equation} \label{eq:tEfirsthalf}
\begin{split}
& D_1(t,t',x,\eps) \\
& = -(1 - e^{-\sigma\eps^{1/4}}) \sum_{i \in \ZZ} e^{\sigma\eps^{1/4}i} \int_t^{t'} e^{-\sigma\eps^{-1/4}x + \frac{1}{2}\eps^{-1/2}\sigma^2(t' + \delta)}p_{t' - s + \delta}(X_i^\eps(s) - x + \sigma\eps^{-1/4}(t' + \delta))dB_i^\eps(s).
\end{split}
\end{equation}
With $Y_i^\eps(s)$
as in Lemma \ref{lem:SBPformula}, using \eqref{eq:magic}, 
the quantity \eqref{eq:tEfirsthalf} is equal to 
\[
-(1 - e^{-\sigma\eps^{1/4}}) \sum_{i \in \ZZ} e^{\sigma\eps^{1/4}i} \int_t^{t'} e^{-\sigma\eps^{-1/4}Y_i^\eps(s)  + \frac{1}{2}\sigma^2\eps^{-1/2}s} p_{t' - s + \delta}(Y_i^\eps(s) - x) dB_i^\eps(s).
\]
Therefore, by the Burkholder-Davis-Gundy Inequality, 
\begin{equation} \label{eq:tE1est}
\begin{split}
& \left\| D_1(t,t',x,\eps) \right\|_r \\
& \leq c( 1 - e^{-\sigma\eps^{1/4}}) \left\| \int_t^{t'} \sum_{i \in \ZZ} e^{2\sigma\eps^{1/4}i} e^{-2\sigma\eps^{-1/4}Y_i^\eps(s)  + \sigma^2\eps^{-1/2}s} p_{t' - s + \delta}(Y_i^\eps(s) - x)^2 ds \right\|_{r/2}^{1/2}.
\end{split}
\end{equation}
By the summation by parts formula in Lemma \ref{lem:SBPformula}
applied to $F(s,z) = (p_{t'-s+\delta}(z-x))^2$ with $m=2$,
 the sum appearing in \eqref{eq:tE1est} is equal to 
\begin{align*}
& (1 - e^{-2\sigma\eps^{1/4}})^{-1} \int_{\RR} \clG^\eps(s,z)^2 \bigg[ 2\sigma\eps^{-1/4} p_{t' - s + \delta}(z - x)^2 - \frac{\partial}{\partial z} [p_{t' - s + \delta}(z - x)^2] \bigg] dz \\
& \quad=\frac{2\sigma\eps^{-1/4}}{1 - e^{-2\sigma\eps^{1/4}}} \int_{\RR} \clG^\eps(s,z)^2 \left( 1 + \frac{\eps^{1/4}}{\sigma}\frac{(z - x)}{(t' - s + \delta)} \right) p_{t' - s + \delta}(z - x)^2 dz.
\end{align*}
Replacing the sum with this expression, and recalling \eqref{eq:159mm}, yields
\begin{align} \label{eq:tE1est2}
& \left\| D_1(t,t',x,\eps) \right\|_r \nonumber\\
& \leq \Lambda(\eps)^{1/2} \left\| \int_t^{t'} \int_{\RR} \clG^\eps(s,z)^2 \left( 1 + \frac{\eps^{1/4}}{\sigma}\frac{(z - x)}{(t' - s + \delta)} \right) p_{t' - s + \delta}(z - x)^2 dz ds \right\|_{r/2}^{1/2} \nonumber\\
& \leq c' \left(\int_t^{t'} \int_{\RR} \|\clG^\eps(s,z)\|_r^{2} \left|  1 + \frac{\eps^{1/4}}{\sigma}\frac{(z - x)}{(t' - s + \delta)} \right| p_{t' - s + \delta}(z - x)^2 dz ds \right)^{1/2},\nonumber\\
& \leq c(T) \left(\int_t^{t'} \int_{\RR} e^{c|z|} \left|  1 + \frac{\eps^{1/4}}{\sigma}\frac{(z - x)}{(t' - s + \delta)} \right| p_{t' - s + \delta}(z - x)^2 dz ds \right)^{1/2} \nonumber\\
& \leq c(T) e^{c|x|} \left(\int_t^{t'} \int_{\RR} \left( e^{c|z|} p_{t' - s + \delta}(z)^2 + \frac{\eps^{1/4}|z| e^{c|z|}p_{t' - s + \delta}(z)^2}{\sigma(t' - s + \delta)} \right) dz ds \right)^{1/2} \nonumber\\
& \leq c(T)e^{c|x|} \left(\int_t^{t'} \left( \frac{1}{\sqrt{t' - s + \delta}} +   \frac{\eps^{1/4}}{t' - s + \delta}  \right) ds \right)^{1/2}. 
\end{align}
Here the third inequality follows from Lemma \ref{lem:Gmoments}. Recalling that $\delta = \eps^{\frac{1}{2\theta}}$, the last line above is bounded by 
\begin{equation} \label{eq:tE1est3}
\begin{split}
& c(T)e^{c|x|} \left(\int_t^{t'} \left( \frac{1}{\sqrt{t' - s + \delta}} +    \frac{1}{(t' - s + \delta)^{1 - \frac{\theta}{2}}}\right)   ds \right)^{1/2} \\
& \leq c(T)e^{c|x|} \left( 2|t'-t|^{1/2} + \frac{2}{\theta}|t'-t|^{\theta/2}\right)^{1/2} \le  c(T,\theta)e^{c|x|} |t'-t|^{\theta/4}.
\end{split}
\end{equation}
Now we turn to estimating $D_2(t,t',x,\eps)$. By \eqref{eq:mart_rep},
 we have
\begin{align}\label{eq:619nn}
& D_2(t,t',x,\eps) \nonumber\\
&= \tlE^{t',x,\delta,\eps}(0) - \tlE^{t,x,\delta,\eps}(0)\nonumber\\
& \quad - (1 - e^{-\sigma\eps^{1/4}}) \sum_{i \in \ZZ} e^{\sigma\eps^{1/4}i} \int_0^t \bigg\{ e^{-\sigma\eps^{-1/4}x + \frac{1}{2}\sigma^2\eps^{-1/2}(t' + \delta)}p_{t' - s + \delta}\left(X_i^\eps(s) - x + \sigma\eps^{-1/4}(t' + \delta)\right) \nonumber\\
& \hspace{2in} - e^{-\sigma\eps^{-1/4}x + \frac{1}{2}\sigma^2\eps^{-1/2}(t + \delta)}p_{t - s + \delta}\left(X_i^\eps(s) - x + \sigma\eps^{-1/4}(t + \delta)\right) \bigg\}dB_i^\eps(s) \nonumber\\
&= \tlE^{t',x,\delta,\eps}(0) - \tlE^{t,x,\delta,\eps}(0) + \tilde D_2(t,t',x,\eps),
\end{align}
where  using
\eqref{eq:magic} again, we see that
\[
\begin{split}
& \tilde D_2(t,t',x,\eps) \\
& = (1 - e^{-\sigma\eps^{1/4}}) \sum_{i \in \ZZ} e^{\sigma\eps^{1/4}i} \int_0^t e^{-\sigma\eps^{-1/4}Y_i^\eps(s) + \frac{1}{2}\sigma^2\eps^{-1/2}s} \Big\{ p_{t' - s + \delta}(Y_i^\eps(s) - x) - p_{t - s + \delta}(Y_i^\eps(s) - x) \Big\}dB_i^\eps(s).
\end{split}
\]
By the Burkholder-Davis-Gundy inequality, 
\begin{equation} \label{eq:D2est1}
\begin{split}
& \| \tilde D_2(t,t',x,\eps) \|_r \\
& \leq  c(1 - e^{-\sigma\eps^{1/4}}) \Bigg\| \int_0^t \sum_{i \in \ZZ} e^{2\sigma\eps^{1/4}i -2\sigma\eps^{-1/4}Y_i^\eps(s) + \sigma^2\eps^{-1/2}s} \bigg\{ p_{t' - s + \delta}(Y_i^\eps(s) - x) - p_{t - s + \delta}(Y_i^\eps(s) - x) \bigg\}^2 ds \Bigg\|_{r/2}^{1/2}.
\end{split}
\end{equation}
Applying Lemma \ref{lem:SBPformula} with $m=2$ and
\[
F(s,z) = \Big\{ p_{t' - s + \delta}(z - x) - p_{t - s + \delta}(z - x) \Big\}^2, 
\]
we see that the sum appearing in \eqref{eq:D2est1} is equal to 
\[
\frac{2\sigma\eps^{-1/4}}{1 - e^{-2\sigma\eps^{1/4}}}\int_{\RR} \clG^\eps(s,z)^2 \left[ F(s,z) -\frac{\eps^{1/4}}{2\sigma}\frac{\partial}{\partial z} F(s,z) \right]dz.
\]
Hence, as in \eqref{eq:tE1est2},
\begin{equation} \label{eq:D2est2}
\begin{split}
 \| \tilde D_2(t,t',x,\eps) \|_r  \leq c(T) \left( \int_0^t \int_{\RR} e^{c|z|} \left| F(z) -\frac{\eps^{1/4}}{2\sigma}\frac{\partial}{\partial z} F(z)\right| dz ds  \right)^{1/2}.
\end{split}
\end{equation}
Let
$h:=t'-t$, $u:=t-s+\delta$.
Write
$
\Delta_u(y):=p_{u+h}(y)-p_u(y)$.
Then
$
F(z)=\Delta_u(z-x)^2$
and therefore
\[
\partial_z F(z)
=
2\Delta_u(z-x)\partial_z\Delta_u(z-x).
\]
Hence, using $
e^{c|z|}\le e^{c|x|}e^{c|z-x|}$, and making the substitution  $y=z-x$,
\[
\begin{aligned}
e^{-c|x|}\| \tilde D_2(t,t',x,\eps) \|_r^2
&\le
c(T)\int_0^t\int_{\mathbb R}
e^{c|y|}\Delta_u(y)^2\,dy\,ds \\
&\quad
+
c(T)\frac{\varepsilon^{1/4}}{\sigma}
\int_0^t\int_{\mathbb R}
e^{c|y|}
|\Delta_u(y)|\,|\partial_z\Delta_u(y)|\,dy\,ds .
\end{aligned}
\]
Using
\[
\Delta_u(y)=\int_u^{u+h}\partial_m p_m(y)\,dm
\]
and the standard Gaussian estimates
\[
\left(\int_{\mathbb R}e^{c|y|}
|\partial_m p_m(y)|^2\,dy\right)^{1/2}
\le c(T)m^{-5/4}, \qquad 
\left(\int_{\mathbb R}e^{c|y|}
|\partial_y\partial_m p_m(y)|^2\,dy\right)^{1/2}
\le c(T)m^{-7/4},
\]
we get
\[
\left(
\int_{\mathbb R}e^{c|y|}\Delta_u(y)^2\,dy
\right)^{1/2}
\le
c(T)\int_u^{u+h}m^{-5/4}\,dm
\]
and
\[
\left(
\int_{\mathbb R}e^{c|y|}
|\partial_y\Delta_u(y)|^2\,dy
\right)^{1/2}
\le
c(T)\int_u^{u+h}m^{-7/4}\,dm .
\]
Thus by Cauchy-Schwarz inequality,
\[
\int_{\mathbb R}e^{c|y|}
|\Delta_u(y)|\,|\partial_y\Delta_u(y)|\,dy
\le
c(T)^2
\left(\int_u^{u+h}m^{-5/4}\,dm\right)
\left(\int_u^{u+h}m^{-7/4}\,dm\right).
\]
Thus, recalling the substitution $u=t-s+\delta$,
\begin{equation}\label{eq:913nn}
\begin{aligned}
e^{-c|x|}\| \tilde D_2(t,t',x,\eps) \|_r^2
&\le
c'(T)
\int_\delta^{t+\delta}
\left(\int_u^{u+h}m^{-5/4}\,dm\right)^2du \\
&\quad
+
c'(T)\varepsilon^{1/4}
\int_\delta^{t+\delta}
\left(\int_u^{u+h}m^{-5/4}\,dm\right)
\left(\int_u^{u+h}m^{-7/4}\,dm\right)du .
\end{aligned}
\end{equation}
Making the substitutions
$u=hr$, $m=hw$, we get
\begin{equation}\label{eq:925nn}
\begin{aligned}
\int_\delta^{t+\delta}
\left(\int_u^{u+h}m^{-5/4}\,dm\right)^2du &\le 
\int_0^\infty
\left(\int_u^{u+h}m^{-5/4}\,dm\right)^2du\\
&=
h^{3-2(5/4)}
\int_0^\infty
\left(\int_r^{r+1}w^{-5/4}\,dw\right)^2dr \\
&\le c(T) h^{1/2} \le c(T,\theta)h^{\theta/2},
\end{aligned}
\end{equation}
where the last inequality uses  $0<\theta<1$ and $h\le T$.

For the second term on the right side of \eqref{eq:913nn}, define
\[
I_{\delta,h}:=
\int_\delta^\infty
\left(\int_u^{u+h}m^{-5/4}\,dm\right)
\left(\int_u^{u+h}m^{-7/4}\,dm\right)du .
\]
Again using $u=hr$, $m=hw$,
\[
I_{\delta,h}
=
\int_{\delta/h}^\infty
\left(\int_r^{r+1}w^{-5/4}\,dw\right)
\left(\int_r^{r+1}w^{-7/4}\,dw\right)dr .
\]

We split into two cases.

First suppose \(\delta\le h\). Then \(\delta/h\le1\). For
\(0<r\le1\),
\[
\left(\int_r^{r+1}w^{-5/4}\,dw\right)
\left(\int_r^{r+1}w^{-7/4}\,dw\right)
\le c r^{-1}.
\]
For \(r\ge1\),
\[
\left(\int_r^{r+1}w^{-5/4}\,dw\right)
\left(\int_r^{r+1}w^{-7/4}\,dw\right)\le c r^{-3}.
\]
 Therefore
\[
I_{\delta,h}
\le
c\int_{\delta/h}^1\frac{dr}{r}
+
c\int_1^\infty r^{-3}\,dr
\le
c\left(1+\log\frac{h}{\delta}\right).
\]
Consequently,
\[
\varepsilon^{1/4}I_{\delta,h}
=
\delta^{\theta/2}I_{\delta,h}
\le
C\delta^{\theta/2}
\left(1+\log\frac{h}{\delta}\right).
\]
Since \(\delta\le h\), write \(a=\delta/h\in(0,1]\). Then, since
$
\sup_{0<a\le1}a^{\theta/2}\left(1+\log\frac1a\right)<\infty,
$
we have 
\[
\delta^{\theta/2}
\left(1+\log\frac{h}{\delta}\right)
=
h^{\theta/2}
a^{\theta/2}\left(1+\log\frac1a\right)
\le
c(\theta) h^{\theta/2}.
\]
Thus, in the case \(\delta\le h\),
\[
\varepsilon^{1/4}I_{\delta,h}
\le c(\theta) h^{\theta/2}.
\]

Now suppose \(h<\delta\).  For $r\ge \delta/h> 1$,
\[
\int_r^{r+1}w^{-5/4}\,dw\le c r^{-5/4},
\qquad
\int_r^{r+1}w^{-7/4}\,dw\le c r^{-7/4}.
\]
Therefore
\[
\varepsilon^{1/4}I_{\delta,h}
\le
c\varepsilon^{1/4}\int_{\delta/h}^\infty r^{-3}\,dr
\le
c\varepsilon^{1/4}\left(\frac{h}{\delta}\right)^2 =c\delta^{\theta/2}\left(\frac{h}{\delta}\right)^2
=
c h^{\theta/2}
\left(\frac{h}{\delta}\right)^{2-\theta/2} \le c h^{\theta/2}.
\]
Thus we have shown that for all $\delta,h$
\[
\varepsilon^{1/4}I_{\delta,h} \le c(\theta) h^{\theta/2}.\]
Using this fact along with \eqref{eq:925nn} in \eqref{eq:913nn}, we obtain
\begin{equation}\label{eq:617nn}
\left\| \tilde D_2(t,t',x,\eps) \right\|_r \le c(T,\theta)e^{c|x|} |t'-t|^{\theta/4}.
\end{equation}
Moreover, by the second bound in Lemma \ref{lem:continuity0-bd}, we conclude that for all $x\in \RR$, $t, t' \in [0,T]$,
$$
\|\tlE^{t',x,\delta,\eps}(0) - \tlE^{t,x,\delta,\eps}(0)\|_r \le b_3e^{b_4|x|} |t'-t|^{\theta/4}.
$$
Combining this estimate with \eqref{eq:617nn}, \eqref{eq:619nn}, \eqref{eq:tE1est3} and \eqref{eq:tEsplit}
we conclude the proof of part (ii) of the lemma.
\end{proof}

\subsubsection{{Proof of tightness of $\{\tlE_\theta^\eps\}$}}

\begin{proof}[Proof of Corollary \ref{cor:tightcont}]
Fix $T, K \in (0,\infty)$ and consider the rectangle
$[0,T]\times[-K,K]$. First, by Lemma~\ref{lem:zerobd},
\[
\sup_{\eps\in(0,1]}
\|\tlE_\theta^\eps(0,0)\|_r
=
\sup_{\eps\in(0,1]}
\|\tlE^{0,0,\eps^{1/(2\theta)},\eps}(0)\|_r
<\infty .
\]
Together with the {continuity} estimates in Lemma~\ref{lem:continuity}, this gives
tightness of the one-point marginals on each compact rectangle.

We next prove a uniform modulus estimate. Fix \(r\in[1,\infty)\). 
By Lemma \ref{lem:continuity}, for \((t,x),(s,y)\in[0,T]\times[-K,K]\),
\[
\begin{split}
\|\tlE_\theta^\eps(t,x)-\tlE_\theta^\eps(s,y)\|_r
&\leq
\|\tlE_\theta^\eps(t,x)-\tlE_\theta^\eps(t,y)\|_r
+
\|\tlE_\theta^\eps(t,y)-\tlE_\theta^\eps(s,y)\|_r  \\
&\leq
C'\left(|x-y|^{\theta/2}+|t-s|^{\theta/4}\right) \le C\left(|x-y|+|t-s|\right)^{\theta/4},
\end{split}
\]
for finite constants $C,C'$ depending on $\theta,r,T,K$.
Consequently,
\[
\EE\left[
|\tlE_\theta^\eps(t,x)-\tlE_\theta^\eps(s,y)|^r
\right]
\leq
C^r\left(|x-y|+|t-s|\right)^{r\theta/4}.
\]
Choose \(r\) large enough so that \(r\theta/4>2\). Since the parameter
set \([0,T]\times[-K,K]\) is two-dimensional, the Kolmogorov--Chentsov
tightness criterion \cite[Theorem 1.4.1]{kunita1990stochastic} implies that
$
\{\tlE_\theta^\eps:\eps\in(0,1]\}
$
is tight in \(C([0,T]\times[-K,K]:\RR)\).

Finally, since \(T,K\in\ohalf\) were arbitrary, tightness holds in
\(C(\half \times \RR:\RR)\), equipped with the topology of uniform convergence
on compact subsets.
\end{proof}

\subsection{Uniform closeness of $\tlE_\theta^\eps$ and $\clG^\eps$} \label{ssec:unifclose}


The following result will allow us to prove convergence of $\clG^\eps$ via that of $\tilde \clE^{\eps}_{\theta}$.
Once again we assume throughout the section that the initial distribution is $\gamma= \gamma_1$.

\begin{proposition} \label{lem:unifclose}
Fix  $T \in \ohalf$, $\theta \in (0,1)$ and a compact set $K \subset \RR$.
Then, for any $r \ge 1$,
\[
\left\|\sup_{t \in [0,T], x \in K} \left|\clG^\eps(t,x) - \tilde \clE_{\theta}^\eps(t,x) \right| \right\|_r \to 0 \text{ as } \eps \to 0.
\]
\end{proposition}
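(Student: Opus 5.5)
The natural starting point is Lemma~\ref{lem:Erep}. With $\delta=\eps^{1/(2\theta)}$ it gives
\[
\tlE_\theta^\eps(t,x)-\clG^\eps(t,x)=\int_\RR \big(\clG^\eps(t,x-w)-\clG^\eps(t,x)\big)p_\delta(w)\,dw .
\]
We split the $w$-integral into the near region $|w|\le \eps^{1/(4\theta)}$ (the window width in $\Theta$ of Lemma~\ref{lem:countfluc}) and the far region $|w|>\eps^{1/(4\theta)}$. Since $\delta^{1/2}=\eps^{1/(4\theta)}$, the near region carries all but a Gaussian-tail fraction of the mass.

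In the far region we use H\"older's inequality together with the moment bound Lemma~\ref{lem:Gmoments}, uniformly over $t\in[0,T]$ and $x\in K$. To put the supremum inside the expectation, we take a supremum over a grid combined with a crude bound; alternatively, we insert $\sup_{t,x}$ directly, write $e^{\sigma\eps^{1/4}N}$, and use monotonicity in $x$ of $N_\eps$. This controls the far contribution by a quantity of order $e^{-c\,(\text{const})}$ times a polynomial factor. That is not yet small, so we refine the split to $|w|\le \eps^{1/(4\theta)}\log(1/\eps)$. The far part is then bounded by $e^{-c\log^2(1/\eps)}$ times $\|\sup_{t,x,|w|\le R}\clG^\eps\|_{2r}$ times $\int e^{c|w|}p_\delta(w)\,dw$, which is $o(1)$. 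The only requirement is a polynomial-in-$\eps^{-1}$ bound on $\|\sup\clG^\eps\|$, which we obtain below by a union bound over rectangles.

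For the near region, write
\[
\clG^\eps(t,x-w)-\clG^\eps(t,x)=\clG^\eps(t,x)\Big(e^{\sigma\eps^{1/4}[N_\eps(t,x-w-\sigma\eps^{-1/4}t)-N_\eps(t,x-\sigma\eps^{-1/4}t)]+\sigma\eps^{-1/4}w}-1\Big).
\]
The deterministic factor satisfies $e^{\sigma\eps^{-1/4}|w|}-1\lesssim \eps^{1/(4\theta)-1/4}\log(1/\eps)\to0$ because $\theta<1$. For the random factor, cover $[0,T]\times K$ by the rectangles $\Theta(w_k,t_j)$ of Lemma~\ref{lem:countfluc} (with enlarged width $\eps^{1/(4\theta)}\log(1/\eps)$, which only changes the estimates by logarithms). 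The number of rectangles is $O(\eps^{-1-1/(4\theta)}\log)$. We bound $\|\sup_k\sup_{\Theta_k}(\cdots)\|_r$ by $\big(\sum_k\|\sup_{\Theta_k}\|_{r'}^{r'}\big)^{1/r'}$ with $r'$ large. By \eqref{eq:907nn}, each term is $O(\eps^{\frac14[(1/\theta-1)\wedge 1/2]})$, so choosing $r'$ large enough that $\eps^{-(1+1/(4\theta))/r'}$ does not destroy this power gives a vanishing contribution. The prefactor $\sup_{t,x}\clG^\eps(t,x)$ is handled similarly: we bound it by $\max_{j,k}\clG^\eps(t_j,w_k)$ times $\sup_{\Theta}e^{\sigma\eps^{1/4}|\Delta N|}$ times $e^{\sigma\eps^{-1/4}\cdot\text{width}}$. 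The maximum over the grid is bounded in $L^{p}$ by (number of points)$^{1/p}\sup\|\clG^\eps\|_p$ via Lemma~\ref{lem:Gmoments}, which is polynomially small loss for large $p$. Combining with H\"older's inequality finishes the argument.

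The main obstacle is this exchange of supremum and expectation. Every bound available is pointwise or per-rectangle, so the proof rests on the per-rectangle decay $\eps^{\frac14[(1/\theta-1)\wedge1/2]}$ in \eqref{eq:907nn}, uniform in $t_0$ and polynomial in $|w|$, beating the polynomial count of rectangles after raising to a high moment. We also have to check that the enlarged window $\eps^{1/(4\theta)}\log(1/\eps)$, needed to make the far tail negligible, is still covered by Lemma~\ref{lem:countfluc}. If it is not, the fallback is to use windows of exact width $\eps^{1/(4\theta)}$ and chain $O(\log(1/\eps))$ of them.
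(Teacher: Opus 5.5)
Your argument is sound in outline and uses the paper's ingredients: Lemma \ref{lem:Erep}, the bound \eqref{eq:907nn}, Lemma \ref{lem:Gmoments}, and a high-moment union bound over polynomially many space--time rectangles. The organization is different, though.

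The paper localizes first. In Lemma \ref{lem:unifclose-assist} it compares both $\clG^\eps(t,x)$ and $\tlE^\eps_\theta(t,x)$ to the single anchor value $\clG^\eps(t_0,a_0)$ of a rectangle. So only point moments of $\clG^\eps$ ever enter, via Cauchy--Schwarz in Lemma \ref{lem:G}. The far region is handled through the ratio $\clG^\eps(t,w+x-a_0)/\clG^\eps(t_0,w)$ and only has to produce the power $\delta^{1/8}$ (Lemma \ref{lem:RR}). The union bound is applied once, at the end.

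You split first and factor out $\clG^\eps(t,x)$. You therefore need a crude bound $\|\sup_{t\le T,\,x\in K-w}\clG^\eps(t,x)\|_p\lesssim \eps^{-\eta}e^{c|w|}$ with $\eta$ small, which comes from the same grid argument. You pay for this polynomial loss by widening the near window to $\eps^{1/(4\theta)}\log(1/\eps)$, which makes the far tail $e^{-c\log^2(1/\eps)}$ super-polynomial. That trade is legitimate and simpler than the paper's far-region analysis, at the cost of a global supremum bound the paper never needs. Two cautions:
\begin{itemize}
\item Use the grid rather than spatial monotonicity of $N_\eps$; monotonicity gives no control in time.
\item Make the covering rectangles narrower than their anchors' windows, so that $(t,x)$ and $(t,x-w)$ lie in the same window and $|\Delta N|\le 2\Psi_k$.
\end{itemize}

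The step you left open is real as stated: Lemma \ref{lem:countfluc} does not cover windows of width $\eps^{1/(4\theta)}\log(1/\eps)$ for the same $\theta$. It closes in one line, exactly as the paper does, by applying the lemma with some $\theta_0\in(\theta,1)$.
\begin{itemize}
\item Since $\eps^{1/(4\theta_0)}/(\eps^{1/(4\theta)}\log(1/\eps))\to\infty$, your enlarged window lies inside a $\theta_0$-window for small $\eps$.
\item \eqref{eq:907nn} then gives the rate $\eps^{\beta(\theta_0)}$ with $\beta(\theta_0)=\frac14[(1/\theta_0-1)\wedge\frac12]>0$.
\item The deterministic factor $e^{\sigma\eps^{-1/4}\eps^{1/(4\theta_0)}}$ stays bounded because $\theta_0<1$.
\item The rectangle count $O(\eps^{-1-1/(4\theta_0)})$ is still polynomial, so your moment-order bookkeeping goes through unchanged.
\end{itemize}
Prefer this to the chaining fallback. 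Chaining $\log(1/\eps)$ windows leads to $\EE\, e^{c\log(1/\eps)\eps^{1/4}\Psi}$, i.e.\ \eqref{eq:907nn} with moment order growing like $\log(1/\eps)$. The constants and $\eps_0(r)$ in that bound are not tracked in $r$, so chaining would force you back into the tail estimate in the first part of Lemma \ref{lem:countfluc}.
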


For $(a,t) \in \RR \times \RR_+$, and $\theta \in (0,1)$ let
$$B(a,t, \theta) := \left[a- \frac{1}{2}\eps^{1/(4\theta)}, a+ \frac{1}{2}\eps^{1/(4\theta)}\right] \times [t, t+\eps].
$$
For some $C= C(K, T)$, let
$\{B(a_i, t_i,\theta), 1 \le i \le C \lfloor\eps^{-\frac{1}{4\theta}-1}\rfloor =: M_{\eps}(\theta)\}$ be a collection of such rectangles such that
$K\times [0,T] \subset \bigcup_{i} B(a_i, t_i, \theta)$. Let $K_1\supset K$ be a compact set such that $[a_i-1, a_i+1] \subset K_1$ for all $1 \le i \le M_{\eps}$.

The following lemma will be key in the proof of Proposition \ref{lem:unifclose}.
\begin{lemma} \label{lem:unifclose-assist}
Let 
$T,  K, K_1$, be as above. Fix $\theta \in (0,1)$ and let $\theta_0 \in (\theta,1)$. Then there exists a $\gamma= \gamma(\theta_0) \in (0,\infty)$ such that for any $r\ge 1$,
$$\sup_{(a_0, t_0) \in K_1\times [0,T]} \sup_{\eps \in (0,1)}
\eps^{-\gamma} \left\|\sup_{(x,t) \in B(a_0, t_0, \theta_0)} |\clG^\eps(t,x) - \tilde \clE_{\theta}^\eps(t,x)| \right\|_r <\infty.
$$
\end{lemma}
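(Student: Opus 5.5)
Fix $(a_0,t_0)\in K_1\times[0,T]$ and write $\delta=\eps^{1/(2\theta)}$. By Lemma \ref{lem:Erep}, $\tlE_\theta^\eps(t,x)=\int_\RR \clG^\eps(t,u)p_\delta(u-x)\,du$. The plan is to compare every point of the box with the single reference value $\clG^\eps(t_0,a_0)$:
\[
|\clG^\eps(t,x)-\tlE_\theta^\eps(t,x)|\le |\clG^\eps(t,x)-\clG^\eps(t_0,a_0)|+\int_\RR |\clG^\eps(t,u)-\clG^\eps(t_0,a_0)|\,p_\delta(u-x)\,du .
\]
The first term is controlled by the oscillation over the small box. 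The second term will be split into a near region $|u-x|\le \eps^{1/(4\theta_0)}$ and a far region $|u-x|>\eps^{1/(4\theta_0)}$.

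\textbf{Local oscillation.} For $(t,x)$ in a box of spatial width $\eps^{1/(4\theta_0)}$ and duration $\eps$ (so $|u-a_0|\lesssim\eps^{1/(4\theta_0)}$ in the near region), I factor
\[
\clG^\eps(t,u)=\clG^\eps(t_0,a_0)\exp\{\sigma\eps^{1/4}\Delta N-\sigma\eps^{-1/4}(u-a_0)+\tfrac12\sigma^2\eps^{-1/2}(t-t_0)\}.
\]
Here $\Delta N$ is the increment of $N_\eps(\cdot,\cdot-\sigma\eps^{-1/4}\cdot)$. The deterministic part of the exponent is of order $\eps^{1/(4\theta_0)-1/4}+\eps^{1/2}$, which is a positive power of $\eps$. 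The random part is handled by \eqref{eq:907nn} of Lemma \ref{lem:countfluc}, applied with $\theta_0$ in place of $\theta$. After covering the slightly enlarged box by $O(1)$ boxes $\Theta(w,t_0,\theta_0)$, that estimate gives
\[
\Big\|\sup e^{\sigma\eps^{1/4}|\Delta N|}-1\Big\|_{2r}\lesssim \eps^{\frac14[(1/\theta_0-1)\wedge\frac12]}(|a_0|+1)^d ,
\]
uniformly in $a_0\in K_1$. I then apply Hölder's inequality together with Lemma \ref{lem:Gmoments} (which gives $\|\clG^\eps(t_0,a_0)\|_{2r}\le c$ on $K_1$). Using $|e^{y}-1|\le |y|e^{|y|}$ to absorb the deterministic shift, this bounds the first term and the near-region part of the second term (since $p_\delta$ integrates to at most one) by $C\eps^{\gamma_1}$ with $\gamma_1>0$.

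\textbf{Far region.} Since $\theta<\theta_0$, we have $\delta^{1/2}=\eps^{1/(4\theta)}\ll\eps^{1/(4\theta_0)}$. So the Gaussian mass of $\{|u-x|>\eps^{1/(4\theta_0)}\}$ is at most $\exp\{-c\,\eps^{-(1/(2\theta_0)-1/(2\theta))}\}$, which is super-polynomially small. I bound $\sup_{(t,x)}\int_{|u-x|>\eps^{1/(4\theta_0)}}|\clG^\eps(t,u)|p_\delta(u-x)\,du$ as follows. I dominate $\sup_{t\in[t_0,t_0+\eps]}\clG^\eps(t,u)$ by $\clG^\eps$ evaluated at grid points times the local oscillation factor, using Lemma \ref{lem:countfluc} on unit-width pieces. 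Then I use the moment bound $\|\clG^\eps(t,u)\|_{2r}\le c e^{c|u|}$ from Lemma \ref{lem:Gmoments}, together with Minkowski's inequality in $u$. The Gaussian tail beats the factor $e^{c|u|}$, so this part is also $O(\eps^{\gamma_2})$ for every $\gamma_2$. Taking $\gamma=\min(\gamma_1,\gamma_2)$ then gives the claimed bound, uniformly over $(a_0,t_0)$.

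\textbf{Main obstacle.} The delicate step is controlling the supremum over $t$ inside the far-region integral. $\clG^\eps(t,u)$ is not continuous in $t$, and the mollifier integrates over all $u$, so I cannot directly combine a pointwise moment bound with a supremum. My first attempt will be to cover $[t_0,t_0+\eps]\times[x-R,x+R]$ by $\eps^{-O(1)}$ boxes $\Theta$ and use the polynomial prefactor in Lemma \ref{lem:countfluc}; the super-polynomial Gaussian decay should absorb it. For $|u|$ large I would instead use the crude bound $\clG^\eps(t,u)\le \clG^\eps(t_0,u)\,e^{\sigma\eps^{1/4}\sup|\Delta N|}\,e^{C}$, where the tail bounds of Lemma \ref{lem:countfluc} control $\sup|\Delta N|$.
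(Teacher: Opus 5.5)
Your proposal is correct and follows essentially the paper's route. The paper also compares everything to $\clG^\eps(t_0,a_0)$, handles the box oscillation and the near region by \eqref{eq:907nn} (with $\theta_0$) together with Lemma \ref{lem:Gmoments}, which is Lemma \ref{lem:G}, and the far region by Gaussian tails, which is Lemma \ref{lem:RR}; there the cutoff is $(\delta\log\delta^{-1})^{1/2}$, giving a $\delta^{1/8}$ rate rather than your super-polynomial one. For the obstacle you flag, the paper substitutes $u=w+x-a_0$, so the kernel becomes $p_\delta(w-a_0)$ and the supremum over the box falls on $\clG^\eps(t,w+x-a_0)/\clG^\eps(t_0,w)$, which is controlled pointwise in $w$ by \eqref{eq:907nn} with growth $(|w|+1)^d$; your crude bound $\sup_{t}\clG^\eps(t,u)\le e^{C}\clG^\eps(t_0,u)\,e^{\sigma\eps^{1/4}\sup_{\Theta(u,t_0)}|\Delta N|}$ works just as well for every $u$, with no grids or coverings needed, provided you also replace $p_\delta(u-x)\mathbf{1}\{|u-x|>\eps^{1/(4\theta_0)}\}$ by an $x$-uniform Gaussian envelope centred at $a_0$ before applying Minkowski's inequality.
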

We first provide the proof of Proposition \ref{lem:unifclose}, assuming Lemma \ref{lem:unifclose-assist}.
Proof of the lemma will be given in the next subsection.\\

\begin{proof}[Proof of Proposition \ref{lem:unifclose}]
Fix $T,\theta,K$ as in the statement of the proposition, and fix
$r\ge1$. Choose $\theta_0\in(\theta,1)$. Let
$
\{B(a_i,t_i,\theta_0):1\le i\le M_\eps(\theta_0)\}
$
be a cover of $K\times[0,T]$ as above. By Lemma~\ref{lem:unifclose-assist},
there exist $\gamma>0$ and, for every $q\ge1$, a constant
$c_q=c_q(T,K_1,\theta,\theta_0)<\infty$ such that
\[
\sup_{1\le i\le M_\eps(\theta_0)}
\left\|
\sup_{(x,t)\in B(a_i,t_i,\theta_0)}
\left|\clG^\eps(t,x)-\tlE^\eps_\theta(t,x)\right|
\right\|_q
\le c_q\eps^\gamma \text{ for all } \eps \in (0,1).
\]
Choose $q>r$ so large that
$
\gamma-\frac{1+1/(4\theta_0)}{q}>0 .
$
Since $q>r$, monotonicity of $L^p$-norms and the inequality
$\max_i a_i\le(\sum_i a_i^q)^{1/q}$ for nonnegative numbers give
\begin{align*}
&\left\|
\sup_{0\le t\le T}\sup_{x\in K}
\left|\clG^\eps(t,x)-\tlE^\eps_\theta(t,x)\right|
\right\|_r \le \left\|
\sup_{0\le t\le T}\sup_{x\in K}
\left|\clG^\eps(t,x)-\tlE^\eps_\theta(t,x)\right|
\right\|_q\\
&\qquad\le
\left\|
\max_{1\le i\le M_\eps(\theta_0)}
\sup_{(x,t)\in B(a_i,t_i,\theta_0)}
\left|\clG^\eps(t,x)-\tlE^\eps_\theta(t,x)\right|
\right\|_q \\
&\qquad\le
\left\|
\left(
\sum_{i=1}^{M_\eps(\theta_0)}
\left[
\sup_{(x,t)\in B(a_i,t_i,\theta_0)}
\left|\clG^\eps(t,x)-\tlE^\eps_\theta(t,x)\right|
\right]^q
\right)^{1/q}
\right\|_q \\
&\qquad= \left(
\sum_{i=1}^{M_\eps(\theta_0)}
\left\|
\sup_{(x,t)\in B(a_i,t_i,\theta_0)}
\left|\clG^\eps(t,x)-\tlE^\eps_\theta(t,x)\right|
\right\|_q^q
\right)^{1/q}.
\end{align*}
Therefore,
\[
\begin{split}
\left\|
\sup_{0\le t\le T}\sup_{x\in K}
\left|\clG^\eps(t,x)-\tlE^\eps_\theta(t,x)\right|
\right\|_r
&\le
c_q M_\eps(\theta_0)^{1/q}\eps^\gamma
\le
c_q \eps^{\gamma-\{1+1/(4\theta_0)\}/q}.
\end{split}
\]
By our choice of $q$, the exponent on the right-hand side is positive.
Hence the right-hand side converges to zero as $\eps\to0$, proving the
proposition.
\end{proof}

\subsubsection{Proof of Lemma \ref{lem:unifclose-assist}.}
We begin with the following lemma. Recall the definition of $\Theta(a,t)$, for $(a,t) \in \RR \times \RR_+$,
from \eqref{eq:thetadefn}.
\begin{lemma}\label{lem:G}
Fix $\theta \in (0,1)$. Then, with $\beta = \beta(\theta) = \frac{1}{4}\left[\left(\frac{1}{\theta}-1\right) \wedge \frac{1}{2}\right]$, for any $r\ge 1$,
$$\sup_{(a_0, t_0) \in K_1\times [0,T]}\sup_{\eps \in (0,1)}
\eps^{-\beta} \left\| \sup_{(x,t) \in \Theta(a_0, t_0)}|\clG^{\eps}(t,x) - \clG^{\eps}(t_0,a_0)| \right\|_r <\infty.
$$
\end{lemma}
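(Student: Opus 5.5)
We would factor the increment multiplicatively around the base point $(t_0,a_0)$. Write $\Psi_\eps := \sup_{(x,t)\in\Theta(a_0,t_0)}|N_\eps(t,x-\sigma\eps^{-1/4}t)-N_\eps(t_0,a_0-\sigma\eps^{-1/4}t_0)|$, as in the proof of Lemma \ref{lem:countfluc}. For $(x,t)\in\Theta(a_0,t_0)$, the definition \eqref{eq:Gdef0} gives
\[
\frac{\clG^\eps(t,x)}{\clG^\eps(t_0,a_0)} = \exp\Big\{\sigma\eps^{1/4}\big(N_\eps(t,x-\sigma\eps^{-1/4}t)-N_\eps(t_0,a_0-\sigma\eps^{-1/4}t_0)\big) - \sigma\eps^{-1/4}(x-a_0) + \tfrac12\sigma^2\eps^{-1/2}(t-t_0)\Big\}.
\]
Since $|x-a_0|\le \eps^{1/(4\theta)}$ and $|t-t_0|\le\eps$, the deterministic part of the exponent is bounded by $D_\eps:=\sigma\eps^{1/(4\theta)-1/4}+\tfrac12\sigma^2\eps^{1/2}$. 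This is $O(\eps^{\beta'})$ with $\beta' = \min\{\tfrac14(\tfrac1\theta-1),\tfrac12\}\ge\beta$, so it is deterministic and uniformly small. Using $|e^{y+z}-1|\le |e^{y}-1|e^{|z|}+|e^{z}-1|$, we will get
\[
\sup_{\Theta(a_0,t_0)}|\clG^\eps(t,x)-\clG^\eps(t_0,a_0)| \le \clG^\eps(t_0,a_0)\Big[e^{D_\eps}\big(e^{\sigma\eps^{1/4}\Psi_\eps}-1\big) + (e^{D_\eps}-1)\,e^{\sigma\eps^{1/4}\Psi_\eps}\Big].
\]

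Next we apply H\"older with exponents $2r,2r$. By Lemma \ref{lem:Gmoments}, $\|\clG^\eps(t_0,a_0)\|_{2r}\le c\,e^{c|a_0|}$, and this is uniformly bounded on $K_1\times[0,T]$. The second bracket term is then bounded by $c\,\eps^{\beta'}\|e^{\sigma\eps^{1/4}\Psi_\eps}\|_{2r}$. Here $\|e^{\sigma\eps^{1/4}\Psi_\eps}\|_{2r}\le 1+\|e^{\sigma\eps^{1/4}\Psi_\eps}-1\|_{2r}$, which is bounded by the second statement \eqref{eq:907nn} of Lemma \ref{lem:countfluc} (with $w=a_0$ ranging over the compact $K_1$, so the factor $(|w|+1)^d$ is harmless). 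For the first term, \eqref{eq:907nn} applied with exponent $2r$ gives directly $\|e^{\sigma\eps^{1/4}\Psi_\eps}-1\|_{2r}\le C\eps^{\beta}$, uniformly in $t_0\in[0,T]$ and $a_0\in K_1$, for $\eps\le\eps_0(2r)$.

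Finally, for $\eps\in(\eps_0,1)$ we only need a uniform bound on the $L^r$ norm of the supremum. It suffices to bound $\clG^\eps(t_0,a_0)\,e^{D_\eps}e^{\sigma\eps^{1/4}\Psi_\eps}$ in $L^r$. This follows from the same H\"older argument, provided \eqref{eq:907nn} (or the tail bound in the first part of Lemma \ref{lem:countfluc}) is available on that range. The main obstacle is exactly this bookkeeping: the constants in Lemma \ref{lem:countfluc} are stated only for $\eps\le\eps_0(r)$. If needed, we would redo the Case 1/Case 2 integration of the tail bound \eqref{eq:843nn} with fixed $\eps$ to get finiteness for each $\eps$, and use monotonicity in $\eps$ of the rectangle sizes to make it uniform on $[\eps_0,1)$. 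Alternatively, we would simply note that $\eps^{-\beta}$ is bounded there and that \eqref{eq:843nn} yields uniform exponential moments of $\eps^{1/4}\Psi_\eps$ for $\eps$ bounded away from $0$. Combining the two ranges gives the claim with exponent $\beta(\theta)$.
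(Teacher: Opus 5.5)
Your proposal is correct and follows the paper's proof: the same multiplicative factorization around $\clG^\eps(t_0,a_0)$, the deterministic part of the exponent bounded by $O(\eps^{\frac14(\frac1\theta-1)\wedge\frac12})$, and Cauchy--Schwarz combining Lemma \ref{lem:Gmoments} with \eqref{eq:907nn} of Lemma \ref{lem:countfluc}. Your additional handling of the range $\eps\in(\eps_0,1)$, where the tail bound \eqref{eq:843nn} gives exponential moments of $\Psi_\eps$ uniformly in $\eps$, fills in a point that the paper passes over without comment.
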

\begin{proof}
Note that for $(x,t) \in \Theta(a_0, t_0)$
\begin{align*}
\clG^{\eps}(t,x) - \clG^{\eps}(t_0,a_0) &= \exp\left( \sigma\eps^{1/4}N_\eps(t,x-\sigma\eps^{-1/4}t) - \sigma\eps^{-1/4}x + \frac{1}{2}\sigma^2\eps^{-1/2}t \right)\\
&\quad - \exp\left( \sigma\eps^{1/4}N_\eps(t_0,a_0-\sigma\eps^{-1/4}t_0) - \sigma\eps^{-1/4}a_0 + \frac{1}{2}\sigma^2\eps^{-1/2}t_0 \right)\\
&= \clG^{\eps}(t_0,a_0)
\Big[\exp\left( \sigma\eps^{1/4}(N_\eps(t,x-\sigma\eps^{-1/4}t)- N_\eps(t_0,a_0-\sigma\eps^{-1/4}t_0))\right)\\
&\quad\quad \times 
e^{\frac{1}{2}\sigma^2\eps^{-1/2}(t-t_0)}e^{\sigma\eps^{-1/4}(a_0-x)}-1\Big].
\end{align*}
Using the inequality $|e^z-1| \le e^{|z|}-1$ for $z \in \RR$,  and letting $\lambda = \left[\frac{1}{4}\left(\frac{1}{\theta}-1\right)\right] \wedge \frac{1}{2}$, we get that
\begin{multline*}
\sup_{(x,t) \in \Theta(a_0, t_0)}|\clG^{\eps}(t,x) - \clG^{\eps}(t_0,a_0)|\\
\le \clG^{\eps}(t_0,a_0) e^{c\eps^{\lambda}}
\left[\exp\left( \sigma\eps^{1/4}\sup_{(x,t) \in \Theta(a_0, t_0)}|N_\eps(t,x-\sigma\eps^{-1/4}t)- N_\eps(t_0,a_0-\sigma\eps^{-1/4}t_0)|\right) - 1\right]\\
 + \clG^{\eps}(t_0,a_0)\sup_{(x,t) \in \Theta(a_0, t_0)}\left|
 e^{\frac{1}{2}\sigma^2\eps^{-1/2}(t-t_0)}e^{\sigma\eps^{-1/4}(a_0-x)}-1\right|.
 \end{multline*}
 Using the second statement in Lemma \ref{lem:countfluc}, and Lemma \ref{lem:Gmoments}, together with 
 Cauchy-Schwarz inequality gives that
\begin{align*}
&\sup_{(a_0, t_0) \in K_1\times [0,T]}\sup_{\eps \in (0,1)}
\eps^{-\beta} \Bigg \|\clG^{\eps}(t_0,a_0) \\
&\quad \times \left[\exp\left( \sigma\eps^{1/4}\sup_{(x,t) \in \Theta(a_0, t_0)}|N_\eps(t,x-\sigma\eps^{-1/4}t)- N_\eps(t_0,a_0-\sigma\eps^{-1/4}t_0)|\right) - 1\right]\Bigg\|_r <\infty.
\end{align*}
Also, from the definition of $\beta$,
\begin{align*}
\sup_{(a_0, t_0) \in K_1\times [0,T]}\sup_{\eps \in (0,1)}\eps^{-\beta}\sup_{(x,t) \in \Theta(a_0, t_0)}\left|
 e^{\frac{1}{2}\sigma^2\eps^{-1/2}(t-t_0)}e^{\sigma\eps^{-1/4}(a_0-x)}-1\right|<\infty.
\end{align*}
Thus applying Lemma \ref{lem:Gmoments} once more we see that
\begin{align*}
&\sup_{(a_0, t_0) \in K_1\times [0,T]}\sup_{\eps \in (0,1)}
\eps^{-\beta}
 \|\clG^{\eps}(t_0,a_0)\|_r \left|
 e^{\frac{1}{2}\sigma^2\eps^{-1/2}(t-t_0)}e^{\sigma\eps^{-1/4}(a_0-x)}-1\right| <\infty.
\end{align*}
Combining the last two estimates, we have the result.
\end{proof}

From Lemma \ref{lem:Gmoments}, for each $r\ge 1$, there is a $c^*=c^*(r)<\infty$, such that for all $T<\infty$, 
\begin{equation}\label{eq:1150n}
\sup_{\eps \in (0,1)} \sup_{s \in [0,T+1]} \sup_{y \in \RR} e^{-c^*|y|/2} \|\clG^{\eps}(s,y)\|_r <\infty.
\end{equation}
The following lemma will be the second ingredient in the proof of Lemma \ref{lem:unifclose-assist}.
\begin{lemma}\label{lem:RR}
Fix $\theta \in (0,1)$ and let $T, K_1$ be as above.
For $a_0\in \RR,  t_0 \ge 0$, $(x,t) \in B(a_0, t_0)$ and $\eps, \delta \in (0,1)$, define $H(x,\eps,\delta):= [x-\sigma\eps^{-1/4}\delta-(\delta \log \delta^{-1})^{1/2}, x-\sigma\eps^{-1/4}\delta+(\delta \log \delta^{-1})^{1/2}]^c$, and
$$
R(x,t, t_0, a_0, \eps, \delta):= \int_{H(x,\eps,\delta)} (\clG^{\eps}(t,u)- \clG^{\eps}(t_0,a_0)) p_{\delta}(u-x) du.
$$
Then, for each $r\ge 1$, there exists $C \in (0,\infty)$ such that, for all  $\eps, \delta \in (0,1)$ satisfying $\sigma \eps^{-1/4}\delta + c^*\delta \le \frac{1}{4}(\delta \log \delta^{-1})^{1/2}$,
$$\sup_{(a_0, t_0) \in K_1\times [0,T]} \left\|\sup_{(x,t) \in B(a_0, t_0, \theta)} |R(x,t, t_0, a_0, \eps, \delta)|\right\|_r \le C\delta^{1/8}.
$$
\end{lemma}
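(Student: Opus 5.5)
The plan is to bound $|R|$ pathwise by a deterministic Gaussian tail weight integrated against two random quantities. These are the value $\clG^\eps(t_0,a_0)$ and a local supremum in time of $\clG^\eps(\cdot,u)$. The moment bounds (Lemma \ref{lem:Gmoments}, \eqref{eq:1150n}) and the counting-fluctuation estimate (Lemma \ref{lem:countfluc}) then control these quantities.

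\textbf{Step 1 (geometry of $H$).} Write $w:=(\delta\log\delta^{-1})^{1/2}$. By hypothesis $\sigma\eps^{-1/4}\delta\le w/4$. Hence for every $u\in H(x,\eps,\delta)$ we have $|u-x|\ge \tfrac34 w$, since the excluded window is centred at distance at most $w/4$ from $x$. Therefore
\[
|R|\le \int_{|u-x|\ge 3w/4}\big(\clG^\eps(t,u)+\clG^\eps(t_0,a_0)\big)p_\delta(u-x)\,du .
\]

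\textbf{Step 2 (removing the suprema).} The second term contains $\clG^\eps(t_0,a_0)$, which does not depend on $(x,t)$. Its coefficient is a Gaussian tail mass of order $\PP(|N(0,1)|\ge \tfrac34\sqrt{\log\delta^{-1}})\le c\,\delta^{9/32}$, so Lemma \ref{lem:Gmoments} gives a bound $C\delta^{9/32}\le C\delta^{1/8}$.

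For the first term, I would dominate $\sup_{t\in[t_0,t_0+\eps]}\clG^\eps(t,u)$ by $\clG^\eps(t_0,u)\,e^{c\eps^{\lambda}}\exp\{\sigma\eps^{1/4}\Psi_\eps(u,t_0)\}$, exactly as in the proof of Lemma \ref{lem:G}. Here $\Psi_\eps$ is the local oscillation of the counting function. The second statement of Lemma \ref{lem:countfluc}, combined with Cauchy--Schwarz and \eqref{eq:1150n}, then gives $\|\sup_t\clG^\eps(t,u)\|_r\le C e^{c^*|u|}(1+|u|)^d$.

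The supremum over $x\in[a_0-\tfrac12\eps^{1/(4\theta)},a_0+\tfrac12\eps^{1/(4\theta)}]$ sits only in the deterministic kernel. When $\eps^{1/(4\theta)}\le w/4$, I would replace $\sup_x p_\delta(u-x)\mathbf 1\{|u-x|\ge 3w/4\}$ by $p_\delta\big(((|u-a0|-w/4)\vee w/2)\big)$, which is again a Gaussian-type kernel of mass at most $C\delta^{1/8}$ after the Step 3 tilting. Otherwise, I would cover the $x$-interval by $O(\eps^{1/(4\theta)}/w)$ sub-intervals of length $w/4$ and apply the same bound on each piece. The resulting polynomial factor in $\delta^{-1}$ would be absorbed by passing to a $q$-th moment with $q$ large, as in the proof of Proposition \ref{lem:unifclose}, at the cost of slightly lowering the exponent. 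The margin $9/32>1/8$ leaves room for this.

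\textbf{Step 3 (Gaussian tilting).} Minkowski's inequality reduces everything to integrals of the form $\int_{|u-x|\ge 3w/4}e^{c^*|u|}p_\delta(u-x)\,du$. I would use $e^{c^*|u-x|}p_\delta(u-x)\le e^{(c^*)^2\delta/2}\,[p_\delta(u-x-c^*\delta)+p_\delta(u-x+c^*\delta)]$. The hypothesis $c^*\delta\le w/4$ guarantees that the shifted region still satisfies $|v|\ge w/2$. The standard Gaussian tail $\PP(|N(0,\delta)|\ge w/2)\le 2\delta^{1/8}$ then gives the bound $Ce^{c^*|a_0|}\delta^{1/8}$, uniformly over $a_0\in K_1$ and $t_0\in[0,T]$.

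The main obstacle is Step 2: making the supremum over $(x,t)\in B(a_0,t_0,\theta)$ pass inside the expectation without losing the $\delta^{1/8}$ gain. The time supremum is the delicate part, because $\clG^\eps$ jumps and has no continuity of its own. I would rely on the oscillation estimate \eqref{eq:907nn} applied at each $u$, with its polynomial weight $(|u|+1)^d$ absorbed into the exponential $e^{c^*|u|}$.
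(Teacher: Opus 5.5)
Your route is valid in outline but handles the supremum over $x$ differently from the paper. The paper substitutes $u=y+x-a_0$ in each tail integral. The domain ($y>a_0+\alpha$, resp.\ $y<a_0-\alpha'$, where $\alpha,\alpha'=w\mp\sigma\eps^{-1/4}\delta$) and the kernel $p_\delta(y-a_0)$ then no longer depend on $(x,t)$. The entire supremum over $B(a_0,t_0,\theta)$ therefore falls on the ratio $\clG^\eps(t,y+x-a_0)/\clG^\eps(t_0,y)$. Since $(y+x-a_0,t)\in B(y,t_0,\theta)\subset\Theta(y,t_0,\theta)$, the space--time oscillation bound \eqref{eq:907nn} controls this ratio in one step by $c(|y|+1)^d$. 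After H\"older and \eqref{eq:1150n}, a single tilted Gaussian tail remains. The window $\eps^{1/(4\theta)}$ is never compared with $w=(\delta\log\delta^{-1})^{1/2}$, and no extra moments are needed.

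You use only the temporal part of the oscillation estimate and put $\sup_x$ on the deterministic kernel, which forces your case split and covering. Your ``otherwise'' case is the one that matters. In Lemma \ref{lem:unifclose-assist} the box has width $\eps^{1/(4\theta_0)}$ with $\theta_0>\theta$, while $\delta=\eps^{1/(2\theta)}$, so $w\ll\eps^{1/(4\theta_0)}$.

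For the covering to close you must actually keep the Gaussian margin, and two steps as written lose it.
\begin{itemize}
\item Step~3 shifts the region to $|v|\ge w/2$ and so yields only $\delta^{1/8}$ per piece, which the factor $N^{1/q}$ would then destroy. Use instead that $c^*\delta=o(w)$ as $\delta\to0$. The tilt then costs only a constant, and each piece is $O(\delta^{9/32}\sqrt{\log\delta^{-1}})$, leaving room for $N^{1/q}$ with $N\lesssim\delta^{-1}$ and $q$ large.
\item Your replacement kernel $p_\delta((|u-a_0|-w/4)\vee w/2)$, taken without the indicator, has mass of order $\delta^{1/8}\sqrt{\log\delta^{-1}}$, not $\delta^{1/8}$. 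Keep the indicator, which forces $|u-a_0|\ge 5w/8$, together with the lower bound $|u-x|\ge 3w/4$.
\end{itemize}

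With these repairs your argument proves the lemma. The paper's change of variables avoids the case split, the covering and the higher moments altogether by exploiting the spatial part of Lemma \ref{lem:countfluc}, which your version leaves unused.
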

\begin{proof}
Fix $\eps, \delta \in (0,1)$ as in the statement of the lemma and let
\begin{equation}\label{eq:636n}
\alpha:= (\delta \log \delta^{-1})^{1/2} - \sigma\eps^{-1/4}\delta, \; \alpha':= (\delta \log \delta^{-1})^{1/2} + \sigma\eps^{-1/4}\delta.\end{equation}
Then, with $x,t, t_0, a_0$ as in the statement of the lemma,
\begin{equation}\label{eq:1139}
\begin{aligned}
|R(x,t, t_0, a_0, \eps, \delta)| &\le \int_{-\infty}^{x-\alpha'} (\clG^{\eps}(t,u)+ \clG^{\eps}(t_0,a_0)) p_{\delta}(u-x) du\\
&\quad 
+ \int_{x+\alpha}^{\infty} (\clG^{\eps}(t,u)+ \clG^{\eps}(t_0,a_0)) p_{\delta}(u-x) du.
\end{aligned}
\end{equation}
Note that
\begin{align*}
\int_{x+\alpha}^{\infty} \clG^{\eps}(t,u) p_{\delta}(u-x) du &= \int_{\alpha+a_0}^{\infty} \clG^{\eps}(t,w+x-a_0) p_{\delta}(w-a_0) dw\\
&= \int_{\alpha+a_0}^{\infty} \frac{\clG^{\eps}(t,w+x-a_0)}{\clG^{\eps}(t_0,w)} \clG^{\eps}(t_0,w) p_{\delta}(w-a_0) dw.
\end{align*}
Thus, with $B= B(a_0, t_0, \theta)$,
\begin{multline}\label{eq:R1}
\sup_{(x,t) \in B} \int_{x+\alpha}^{\infty} \clG^{\eps}(t,u) p_{\delta}(u-x) du\\
\le \int_{\alpha+a_0}^{\infty} \left(\sup_{(x,t) \in B}\frac{\clG^{\eps}(t,w+x-a_0)}{\clG^{\eps}(t_0,w)}\right) \clG^{\eps}(t_0,w) p_{\delta}(w-a_0) dw.
\end{multline}
Next note that,
\begin{multline*}
\frac{\clG^{\eps}(t,w+x-a_0)}{\clG^{\eps}(t_0,w)} = 
\exp\left( \sigma\eps^{1/4} (N_\eps(t,w+x-a_0-\sigma\eps^{-1/4}t)- N_\eps(t_0,w-\sigma\eps^{-1/4}t_0)\right)\\
\times e^{\frac{1}{2}\sigma^2\eps^{-1/2}(t-t_0)}e^{-\sigma\eps^{-1/4}(x-a_0)}\\
\le c_1 \exp\left( \sigma\eps^{1/4} (N_\eps(t,w+x-a_0-\sigma\eps^{-1/4}t)- N_\eps(t_0,w-\sigma\eps^{-1/4}t_0)\right).
\end{multline*}
Note that, for $(x,t)\in B$, and $w \in \RR$, $(w+x-a_0, t) \in B(w, t_0, \theta) \subset \Theta(w, t_0, \theta)$.
Thus, for any $w \in \RR$, with $\Theta = \Theta(w, t_0, \theta)$,
\begin{align*}
&\left\|\sup_{(x,t) \in B}\frac{\clG^{\eps}(t,w+x-a_0)}{\clG^{\eps}(t_0,w)}\right\|_r\\
&\quad \le c_1 \left\|\sup_{(x,t) \in \Theta} \exp\left( \sigma\eps^{1/4} (N_\eps(t,w+x-a_0-\sigma\eps^{-1/4}t)- N_\eps(t_0,w-\sigma\eps^{-1/4}t_0)\right)\right\|_r.
\end{align*}
Thus, from the second statement in Lemma \ref{lem:countfluc}, for each $r\ge1$, there exists a $c_2(r) <\infty$ such that
$$
\sup_{\eps \in (0,1)} \left\|\sup_{(x,t) \in B}\frac{\clG^{\eps}(t,w+x-a_0)}{\clG^{\eps}(t_0,w)}\right\|_r = c_2(r)(|w|+1)^d <\infty,
$$
where $d= d(r)$ is as in Lemma \ref{lem:countfluc}.
Using this estimate in \eqref{eq:R1}, for some $c_3(r)<\infty$,
$$
\left\|\sup_{(x,t) \in B} \int_{x+\alpha}^{\infty} \clG^{\eps}(t,u) p_{\delta}(u-x) du\right\|_r
\le c_3(r) \int_{\alpha+a_0}^{\infty} \|\clG^{\eps}(t_0,w)\|_{2r} (|w|+1)^d p_{\delta}(w-a_0) dw.
$$
Similarly, for some $c_4(r)<\infty$,
$$
\left\|\sup_{(x,t) \in B} \int_{-\infty}^{x-\alpha'} \clG^{\eps}(t,u) p_{\delta}(u-x) du\right\|_r
\le c_4(r) \int_{-\infty}^{a_0-\alpha'}\|\clG^{\eps}(t_0,w)\|_{2r} (|w|+1)^d p_{\delta}(w-a_0) dw.
$$
Using the last two estimates in \eqref{eq:1139} we obtain, for some $c_5(r)<\infty$,
\begin{multline*}
\left\|\sup_{(x,t) \in B} |R(x,t, t_0, a_0, \eps, \delta)|\right\|_r
\le
c_5(r) \int_{\alpha+a_0}^{\infty} \|\clG^{\eps}(t_0,w)\|_{2r} (|w|+1)^d p_{\delta}(w-a_0) dw\\
+ c_5(r)  \int_{-\infty}^{a_0-\alpha'}\|\clG^{\eps}(t_0,w)\|_{2r} (|w|+1)^d p_{\delta}(w-a_0) dw\\
+ c_5(r) \|\clG^{\eps}(t_0,a_0)\|_{r} \left(\int_{\alpha+a_0}^{\infty} p_{\delta}(w-a_0) dw + 
\int_{-\infty}^{a_0-\alpha'} p_{\delta}(w-a_0) dw\right).
\end{multline*}
Recalling the bound in \eqref{eq:1150n}, we now have, for some $c_6(T,r)<\infty$,
\begin{equation}\label{eq:R2}
\left\|\sup_{(x,t) \in B} |R(x,t, t_0, a_0, \eps, \delta)|\right\|_r
\le c_6(T,r) \left(\int_{\alpha+a_0}^{\infty} e^{c^*|w|} p_{\delta}(w-a_0) dw
+  \int_{-\infty}^{a_0-\alpha'} e^{c^*|w|} p_{\delta}(w-a_0) dw\right).
\end{equation}
For any $x\in \RR$,
$$
\int_{\alpha+x}^{\infty} e^{c^*|w|} p_{\delta}(w-x) dw \le
\int_{\alpha+x}^{|\alpha+x|} e^{c^*|w|} p_{\delta}(w-x) dw + \int_{|\alpha+x|}^{\infty} e^{c^*|w|} p_{\delta}(w-x) dw.
$$
If $x+\alpha\ge 0$, we have, with $\bar \Phi (z)= 1- \Phi(z)$, where $\Phi$ is the cumulative distribution function of a standard normal random variable, for some constant $c' \in (0,\infty)$,
\begin{multline*}
\int_{\alpha+x}^{\infty} e^{c^*|w|} p_{\delta}(w-x) dw = \int_{\alpha+x}^{\infty} e^{c^*x+ (c^*)^2\delta/2} p_{\delta}(x-w+c^*\delta) dw\\
= e^{c^*x+ (c^*)^2\delta/2} \bar \Phi\left(\frac{\alpha-c^*\delta}{\sqrt{\delta}}\right) \le 
c' e^{c^*x+ (c^*)^2\delta/2} e^{-(\alpha-c^*\delta)^2/(2\delta)} = c' e^{c^*x - \alpha^2/(2\delta) +\alpha c^*}.
\end{multline*}
Next note that, using the assumption that $\sigma \eps^{-1/4}\delta + c^*\delta \le \frac{1}{4}(\delta \log \delta^{-1})^{1/2}$, 
\begin{multline*}
- \alpha^2/(2\delta) +\alpha c^* = \frac{\alpha}{\delta}\left(c^*\delta + \frac{1}{2} \sigma \eps^{-1/4}\delta - 
\frac{1}{2} (\delta \log \delta^{-1})^{1/2}\right)\\
\le \frac{1}{\delta} \left(\frac{3}{4}(\delta \log \delta^{-1})^{1/2} + c^*\delta\right) \left( - \frac{1}{4} (\delta \log \delta^{-1})^{1/2}\right) \le - \frac{3}{16\delta}(\delta \log \delta^{-1})\le - \frac{1}{8}\log \delta^{-1}.
\end{multline*}
Thus combining the last two estimates,
$$
\int_{\alpha+x}^{\infty} e^{c^*|w|} p_{\delta}(w-x) dw \le e^{c^*x} e^{- \frac{1}{8}\log \delta^{-1}}
\le c' e^{c^*x}\delta^{1/8}.
$$
Now consider the case $x+\alpha <0$.
Then
\begin{multline*}
\int_{\alpha+x}^{\infty} e^{c^*|w|} p_{\delta}(w-x) dw \le e^{-c^*(x+\alpha)}\bar\Phi(\alpha/\sqrt{\delta})+
\int_{-x-\alpha}^{\infty} e^{c^*w}p_{\delta}(w-x) dw\\
= e^{-c^*(x+\alpha)}\bar\Phi(\alpha/\sqrt{\delta})
+ \int_{-x-\alpha}^{\infty} e^{c^*x+ (c^*)^2\delta/2} p_{\delta}(x-w+c^*\delta) dw\\
= e^{-c^*(x+\alpha)}\bar\Phi(\alpha/\sqrt{\delta}) + e^{c^*x+ (c^*)^2\delta/2}\bar \Phi\left(\frac{-2x-\alpha-c^*\delta}{\sqrt{\delta}}\right)\\
\le e^{-c^*(x+\alpha)}\bar\Phi(\alpha/\sqrt{\delta}) + e^{c^*x+ (c^*)^2\delta/2}\bar \Phi\left(\frac{\alpha-c^*\delta}{\sqrt{\delta}}\right)
\le 2c'  e^{c^*|x|} \delta^{1/8}.
\end{multline*}
Thus we have shown that for all $x \in \RR$
$$
\int_{\alpha+x}^{\infty} e^{c^*|w|} p_{\delta}(w-x) dw \le 2c'  e^{c^*|x|} \delta^{1/8}.
$$
Similarly, we see using $\alpha' > \alpha$ that
$$
\int_{-\infty}^{x-\alpha'} e^{c^*|w|} p_{\delta}(w-x) dw = \int_{-\infty}^{-x+\alpha'} e^{c^*|w|} p_{\delta}(w-(-x)) dw \le 2c'  e^{c^*|x|} \delta^{1/8}.
$$
The result follows on using the last two estimates in \eqref{eq:R2}.
\end{proof}

We now complete the proof of Lemma \ref{lem:unifclose-assist}.

\begin{proof}[Proof of Lemma \ref{lem:unifclose-assist}] Let $T, K, K_1, \theta, \theta_0$ be as in the statement of the lemma. Fix $r\ge 1$.
Let $\delta = \eps^{1/(2\theta)}$. Note that, with this choice of $\delta$, for $\eps$ sufficiently small
$\sigma \eps^{-1/4}\delta + c^*\delta \le \frac{1}{4}(\delta \log \delta^{-1})^{1/2}$. Thus we assume without loss of generality that this inequality is satisfied. Write $R(x,t, t_0, a_0, \eps, \eps^{1/(2\theta)})= \tilde R(x,t, t_0, a_0, \eps)$. 
Consider for $a_0\in \RR,  t_0 \ge 0$,  the rectangle $B(a_0, t_0, \theta_0)$ and consider $(x,t) \in B(a_0, t_0, \theta_0)$.
Using Lemma \ref{lem:Erep},
\begin{align*}
\tilde \clE_{\theta}^\eps(t,x)-\clG^\eps(t_0,a_0) &=
\int_{-\infty}^{\infty} (\clG^\eps(t,u) - \clG^\eps(t_0,a_0))p_{\delta}(u-x) du\\
&= \int_{x-\alpha'}^{x+\alpha} (\clG^\eps(t,u) - \clG^\eps(t_0,a_0))p_{\delta}(u-x) du +  \tilde R(x,t, t_0, a_0, \eps),
\end{align*}
where $\alpha', \alpha$ are as in \eqref{eq:636n}.
Thus, with $B= B(a_0, t_0, \theta_0)$, 
\begin{align*}
\sup_{(x,t) \in B} |\tilde \clE_{\theta}^\eps(t,x)-\clG^\eps(t_0,a_0)|
&\le \sup_{(x,t) \in B} \int_{x-\alpha'}^{x+\alpha} |\clG^\eps(t,u) - \clG^\eps(t_0,a_0)| p_{\delta}(u-x) du\\
&\quad + \sup_{(x,t) \in B} |\tilde R(x,t, t_0, a_0, \eps)|\\
&=  \int_{a_0-\alpha'}^{a_0+\alpha} \sup_{(x,t) \in B}|\clG^\eps(t,w+x-a_0) - \clG^\eps(t_0,a_0)| 
p_{\delta}(w-a_0) dw\\
&\quad + \sup_{(x,t) \in B} |\tilde R(x,t, t_0, a_0, \eps)|.
\end{align*}
Using Lemma \ref{lem:RR} with $\theta = \theta_0$ we obtain, for some $C = C(\theta, T, K_1,r)$,
\begin{align*}
&\|\sup_{(x,t) \in B} |\tilde \clE_{\theta}^\eps(t,x)-\clG^\eps(t_0,a_0)|\|_r\\
& \le \int_{a_0-\alpha'}^{a_0+\alpha} 
\|\sup_{(x,t) \in B}|\clG^\eps(t,w+x-a_0) - \clG^\eps(t_0,a_0)| \|_r p_{\delta}(w-a_0) dw +
C \eps^{1/(16\theta_0)}.
\end{align*}
Note that for $w \in [a_0-\alpha', a_0+\alpha]$, and $\eps$ sufficiently small,
\begin{align*}
|(w+x-a_0) -a_0| &\le |x-a_0| + |w-a_0| \le |x-a_0| + \alpha \vee \alpha'\\
&\le \frac{1}{2}\eps^{1/(4\theta_0)} + \eps^{1/(4\theta)}\frac{1}{\sqrt{2\theta}} \sqrt{\log \eps^{-1}}
+ \sigma \eps^{\frac{1}{2\theta}-\frac{1}{4}} \le \eps^{1/(4\theta_0)},
\end{align*}
where the last inequality follows on noting that  $\frac{1}{4\theta} < \frac{1}{4\theta_0} \le \frac{1}{2\theta_0} - \frac{1}{4}$.
Thus using Lemma \ref{lem:G} with $\theta$ there replaced by $\theta_0$, we have that with
$\beta = \beta(\theta_0)$ as in that lemma, for some $c_1= c_1(T, K_1, \theta_0, r)$, for all $(a_0, t_0) \in K_1 \times [0,T]$ and $\eps$ sufficiently small,
$$
\|\sup_{(x,t) \in B} |\tilde \clE_{\theta}^\eps(t,x)-\clG^\eps(t_0,a_0)|\|_r
\le c_1 (\eps^{\beta} + \eps^{1/(16\theta_0)}).
$$
Using Lemma \ref{lem:G} once more, we also have that for all $(a_0, t_0) \in K_1 \times [0,T]$ and $\eps$ sufficiently small,
$$
\|\sup_{(x,t) \in B} |\clG^\eps(t,x)-\clG^\eps(t_0,a_0)|\|_r \le c_1 \eps^{\beta}.
$$
Combining the last two estimates we have the lemma.
\end{proof}

\subsection{Characterization and Convergence}\label{ssec:martprob}

In this section we complete the proof of Theorem \ref{thm:maincgce}. As a consequence of the tightness established in 
Corollary \ref{cor:tightcont} and the uniform closeness shown in Proposition \ref{lem:unifclose}, it remains to 
characterize the distributional limit points of $\tlE_\theta^\eps$ as $\eps \to 0$.

Recall the random field given as a solution of the stochastic heat equation \eqref{eq:mshe}, whose distribution on the path space $C([0, \infty); \clc(\RR))$ is denoted as $\clQ$.
From \cite{BertiniGiacomin}, $\clQ$ can be characterized through a suitable martingale problem. To describe this, we introduce the following notation.

We denote the coordinate process on $C([0, \infty);\clc(\RR))$ by $\{w_t\}_{t\ge 0}$. Let
$\clD(\RR)$ be the collection of infinitely differentiable functions on $\RR$ with compact support,  equipped with the inductive limit topology, constructed from the
Fr\'{e}chet spaces $C_K^{\infty}$ of functions in $\clD(\RR)$ with support in $K$, equipped with the seminorms $p_{m,K}(\varphi) = \sup_{x \in K}|\partial^m\varphi(x)/\partial x^m|$, $m \in \NN_0$, where $K$ ranges over all compact sets in $\RR$.
 We denote its strong topological dual by $\clD'(\RR)$. If $f \in \clD'(\RR)$ and $\vp \in \clD(\RR)$, we let $\langle f, \vp \rangle$ denote $f$ applied to $\vp$. In particular, if $f$ is a locally integrable function on $\RR$ (and therefore in $\clD'(\RR)$),  $\langle f, \vp \rangle = \int_{\RR} f(x)\vp(x)dx$.

Then, from \cite[Proposition 4.11]{BertiniGiacomin}, the probability law $\clQ$ can be characterized as the unique probability measure on $C([0, \infty);\clc(\RR))$, such that
\begin{enumerate}[label = (\alph*)]
\item $\clQ(w_0 \in A) = \PP(e^{\sigma B} \in A)$ for every $A \in \clb(\clc(\RR))$, where $\{B(x), x \in \RR\}$ is a Brownian motion indexed by $\RR$. \label{it:mpa}
\item For every $T>0$, there is an $a>0$ such that
$$\sup_{t \in [0,T]} \sup_{r \in \RR} e^{-a|r|} \int (w_t(r))^2 \clQ(dw) <\infty.$$ \label{it:mpb}
\item Let $\clf_t := \sigma\{w_s: 0 \le s \le t\}$. For all $\varphi \in \clD(\RR)$, under $\clQ$,
\begin{equation}\label{eq:mtphi}
M_{t,\varphi}(w):= \langle w_t, \varphi\rangle - \langle w_0, \varphi\rangle
-\frac{1}{2} \int_0^t \langle w_s, \varphi''\rangle ds,
\end{equation}
\begin{equation}\label{eq:ltphi}
\Lambda_{t,\varphi}(w):= (M_{t,\varphi}(w))^2
-\sigma^2 \int_{(0,t)\times\RR} (w_s(x))^2 \varphi^2(x) ds\, dx,
\end{equation}
are continuous $\clf_t$-local martingales. \label{it:mpc}
\end{enumerate}

Recall the notation $\tlE_\theta^\eps(t,x)$ introduced in \eqref{eq:tEdef0}  and the identity below \eqref{eq:te-def}.  Below we use the notation 
\[
\tlE_t^{\theta,\eps}(x) = \tlE_\theta^\eps(t,x) = \tlE^{t,x,\eps^{1/2\theta},\eps}(t), \quad t \in \half, \quad x \in \RR.
\]
The first result describes the initial data of the limiting field when the initial data of the prelimiting particle system has i.i.d. exponential gaps.

\begin{lemma} \label{lem:initlimit}
Assume that $\{X_i(0)\}_{i \in \ZZ} \sim \gamma_1$. Then $\tlE_\theta^\eps(0,\cdot)$ converges uniformly on compacts, in distribution to $e^{\sigma \tilde B(\cdot)}$, where $\tilde B$ is a standard two-sided Brownian motion.
\end{lemma}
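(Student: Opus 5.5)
The plan is to first prove the statement for the unmollified initial field $\clG^\eps(0,\cdot)$, and then transfer it to $\tlE_\theta^\eps(0,\cdot)$ using the smallness of the mollification.

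\textbf{Step 1: the initial Hopf--Cole field.} By \eqref{eq:initPMs}, $\clG^\eps(0,x)=\exp\{\sigma\eps^{1/4}N_\eps(0,x)-\sigma\eps^{-1/4}x\}$. Here $N_\eps(0,\cdot)$ is built from two independent Poisson processes of intensity $\eps^{-1/2}$ on the two half-lines. Write
\[
\eps^{1/4}N_\eps(0,x)-\eps^{-1/4}x=\eps^{1/4}\bigl(N_\eps(0,x)-\eps^{-1/2}x\bigr).
\]
This is a compensated Poisson process with jumps of size $\eps^{1/4}$ and intensity $\eps^{-1/2}$. Donsker's invariance principle for compensated Poisson processes, applied separately on $[0,\infty)$ and $(-\infty,0]$, shows it converges in $D$ to a two-sided Brownian motion $\tilde B$. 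Since the jumps are $O(\eps^{1/4})$ and the limit is continuous, the convergence holds locally uniformly.

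A local-uniform (not merely pointwise) statement is needed because the next step integrates against $p_\delta$. By Skorokhod representation, on a common space $\sup_{|x|\le K}|\eps^{1/4}(N_\eps(0,x)-\eps^{-1/2}x)-\tilde B(x)|\to0$ a.s. for every $K$. Applying the continuous map $f\mapsto e^{\sigma f}$ then gives $\clG^\eps(0,\cdot)\to e^{\sigma\tilde B}$ locally uniformly.

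\textbf{Step 2: removing the mollification.} By Lemma \ref{lem:Erep} at $t=s=0$, with $\delta=\eps^{1/(2\theta)}$,
\[
\tlE_\theta^\eps(0,x)=\int\clG^\eps(0,u)p_\delta(u-x)\,du .
\]
It remains to show $\sup_{|x|\le K}|\tlE_\theta^\eps(0,x)-\clG^\eps(0,x)|\to0$ in probability, for which I split the integral.

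\emph{Near part, $|u-x|\le\eta$.} The difference is bounded by the oscillation of $\clG^\eps(0,\cdot)$ on $[-K-\eta,K+\eta]$ at scale $\eta$. In the Skorokhod coupling this is at most the modulus of continuity of $e^{\sigma\tilde B}$ plus $2\sup|\clG^\eps(0,\cdot)-e^{\sigma\tilde B}|$, and both tend to zero.

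\emph{Far part, $|u-x|>\eta$.} Use the moment bound \eqref{eq:G0bd}, namely $\|\clG^\eps(0,u)\|_1\le e^{c|u|}$. This gives an $L^1$ bound of order $e^{cK}\int_{|v|>\eta}e^{c|v|}p_\delta(v)\,dv\to0$, uniformly in $|x|\le K$, because the supremum of the integral over $x$ is dominated by $\int\clG^\eps(0,u)\sup_{|x|\le K}\mathbf 1_{|u-x|>\eta}p_\delta(u-x)\,du$.

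Alternatively, one could invoke Lemma \ref{lem:unifclose-assist} / Proposition \ref{lem:unifclose} at $t=0$, but the direct argument is elementary.

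\textbf{Main obstacle.} The only delicate point is in Step 1: obtaining genuinely local-uniform convergence of the Poisson-driven exponent. This is handled by the $J_1$-to-uniform upgrade for continuous limits, since the maximal jump is $\sigma\eps^{1/4}\to0$. Combining Steps 1 and 2 gives convergence in distribution in $\clc(\RR)$ to $e^{\sigma\tilde B}$.
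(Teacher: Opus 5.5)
Your proof is correct. Step 1 is the same as the paper's first step: the functional CLT for the compensated Poisson process gives $\clG^\eps(0,\cdot)\to e^{\sigma\tilde B}$ uniformly on compacts in distribution.

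The difference is Step 2. The paper invokes Proposition \ref{lem:unifclose} at $t=0$, which is the alternative you mention, whereas you remove the mollification by hand.
\begin{itemize}
\item The paper's route costs one line, since that proposition is needed anyway for \eqref{eq:uniapprox} and Lemma \ref{lem:discToCont}. However, it rests on the time-uniform machinery of Lemmas \ref{lem:countfluc}, \ref{lem:G} and \ref{lem:RR}: comparison with the totally asymmetric systems, last-passage estimates, and stationarity of the gaps. None of this is needed at time zero, where no dynamics enter.
\item Your route uses only two inputs: the locally uniform convergence to a continuous limit from Step 1, and the uniform first-moment bound \eqref{eq:G0bd}. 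It is therefore self-contained.
\item It also applies verbatim to any initial law for which $\clG^\eps(0,\cdot)$ converges locally uniformly in distribution to a continuous field and $\sup_\eps \EE\,\clG^\eps(0,u)\le Ce^{c|u|}$. The latter bound is supplied by Lemma \ref{lem:Gmoments} under D1--D2.
\item In particular, your argument reduces condition D4 of Remark \ref{rem:genconvE} to a statement about $\clG^\eps(0,\cdot)$ alone. It does not need the uniform closeness that Remark \ref{rem:gentightness} describes as beyond the paper's methods for general data.
\end{itemize}

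Two small points in Step 2 should be tidied.
\begin{itemize}
\item Write $\tlE_\theta^\eps(0,x)-\clG^\eps(0,x)=\int(\clG^\eps(0,u)-\clG^\eps(0,x))p_\delta(u-x)\,du$. Splitting at $|u-x|=\eta$ then also produces the term $-\clG^\eps(0,x)\int_{|v|>\eta}p_\delta(v)\,dv$. Bound it by $\sup_{|x|\le K}\clG^\eps(0,x)$, which is tight by Step 1, times a vanishing Gaussian tail mass.
\item With the supremum over $|x|\le K$ inside the integral, the far part is bounded in $L^1$ by roughly $(K+\eta)e^{c(K+\eta)}p_\delta(\eta)+e^{cK}\int_{|v|>\eta}e^{c|v|}p_\delta(v)\,dv$, not only the second term. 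Both terms vanish for fixed $\eta$ as $\delta=\eps^{1/(2\theta)}\to0$, after which you let $\eta\to0$.
\end{itemize}
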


The following lemma shows that prelimit versions of the processes $M_{t,\varphi}$ and $\Lambda_{t,\varphi}$ in \eqref{eq:mtphi} and \eqref{eq:ltphi}, respectively, with {$w_t = \tlE_t^{\theta,\eps}$}, $\theta \in (0,1)$, are martingales. The square integrability of the quadratic variations that ensure that the local martingales are in fact martingales follow from the compact support property of $\varphi$ and arguments analogous to those in the proof of Lemma \ref{lem:discToCont} below. We omit the details.
Recall $Y_i^\eps(s) := X_i^\eps(s) + \sigma\eps^{-1/4}s$, introduced in Lemma \ref{lem:SBPformula}.

\begin{lemma} \label{lem:martprelim} Suppose Assumption \ref{assump:initdiff} holds.
For all $\eps > 0$, $t \in \half$, and $ \vp \in \clD(\RR)$, the following equalities hold:
\begin{equation} \label{eq:prelim1}
\begin{split}
\clM_t^\eps(\vp) & := \langle \tlE_t^{\theta,\eps}, \vp \rangle - \langle \tlE_0^{\theta,\eps}, \vp \rangle - \frac{1}{2} \int_0^t \langle \tlE_s^{\theta,\eps},\vp'' \rangle ds \\
& = -(1-e^{-\sigma\eps^{1/4}})\sum_{i \in \ZZ} \int_0^t \clG^\eps(s,Y_i^\eps(s))(p_{\eps^{1/2\theta}} * \vp)(Y_i^\eps(s))dB_i^\eps(s),
\end{split}
\end{equation}
and 
\begin{equation} \label{eq:prelim2}
\begin{split}
& \clW_t^\eps(\vp) := \clM_t^\eps(\vp)^2 - (1-e^{-\sigma\eps^{1/4}})^2\int_0^t \left( \sum_{i \in \ZZ} \clG^\eps(s,Y_i^\eps(s))^2(p_{\eps^{1/2\theta}} * \vp)(Y_i^\eps(s))^2  \right) ds \\
& = 2(1-e^{-\sigma\eps^{1/4}})^2\sum_{i \in \ZZ} \int_0^t \Bigg( \sum_{j \in \ZZ} \int_0^s \clG^\eps(u,Y_i^\eps(u)) \clG^\eps(s,\tilde{X}_j^\eps(s)) \\
& \hspace{2in}(p_{\eps^{1/2\theta}} * \vp)(Y_i^\eps(u))(p_{\eps^{1/2\theta}} * \vp)(Y_j^\eps(s)) dB_j^\eps(u) \Bigg) dB_i^\eps(s).
\end{split}
\end{equation}
\end{lemma}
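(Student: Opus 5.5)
We derive \eqref{eq:prelim1} from the martingale representation of Lemma~\ref{lem:Eprocess}(ii) combined with the heat-kernel identity behind Lemma~\ref{lem:Erep}. The mollified field is a functional of the particles at time $t$ only: by Lemma~\ref{lem:Erep} with $s=t$, $\tlE_t^{\theta,\eps}(x) = \int_\RR \clG^\eps(t,u)p_\delta(u-x)\,du$ with $\delta=\eps^{1/2\theta}$. Pairing with $\vp$ and using Fubini gives $\langle \tlE_t^{\theta,\eps},\vp\rangle = \int_\RR \clG^\eps(t,u)\phi(u)\,du$, where $\phi := p_\delta*\vp$ is smooth and rapidly decaying.

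Next we convert this integral back into a weighted particle sum via Lemma~\ref{lem:SBPformula} with $m=1$. We choose the time-independent $F$ solving $\sigma\eps^{-1/4}F-\partial_zF=\phi$ that decays at $+\infty$, namely $F(z)=\int_z^\infty e^{\sigma\eps^{-1/4}(z-w)}\phi(w)\,dw$. This yields
\[
\langle \tlE_t^{\theta,\eps},\vp\rangle=(1-e^{-\sigma\eps^{1/4}})\sum_i e^{\sigma\eps^{1/4}i-\sigma\eps^{-1/4}Y_i^\eps(t)+\frac12\sigma^2\eps^{-1/2}t}F(Y_i^\eps(t)).
\]
We apply It\^o's formula to each summand $g(t,Y_i^\eps(t))$, where $g(s,y)=e^{-\sigma\eps^{-1/4}y+\frac12\sigma^2\eps^{-1/2}s}F(y)$. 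A direct computation shows $\partial_s g+\sigma\eps^{-1/4}\partial_y g+\frac12\partial_{yy}g = e^{(\cdot)}\frac12 F''$; this is the point of the shift $Y_i=X_i+\sigma\eps^{-1/4}s$ and the exponential weight. Applying Lemma~\ref{lem:SBPformula} again, and noting that $F''$ solves the same ODE with $\phi''=p_\delta*\vp''$ on the right, turns the drift sum into $\frac12\int_0^t\langle\tlE_s^{\theta,\eps},\vp''\rangle ds$.

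The local time terms should cancel exactly as in the proof of Lemma~\ref{lem:Eprocess}. The weights $e^{\sigma\eps^{1/4}i}$, the split $p_\eps,q_\eps$ with $q_\eps/p_\eps=e^{\sigma\eps^{1/4}}$, and the fact that $Y_i=Y_{i+1}$ on the support of $dL_{(i,i+1)}$ together make the coefficient of each $dL_{(i,i+1)}$ vanish. The boundary terms $\tau_\pm$ from truncation to $[-M_-,M_+]$ go to zero by the same moment estimates, namely Lemmas~\ref{lem:plbXj}, \ref{lem:loctimemoments}, and \ref{lem:convergence}.

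The martingale term is $-\sum_i e^{\sigma\eps^{1/4}i}\int_0^t \partial_y g\,dB_i^\eps$ up to the prefactor, with $\partial_yg = -e^{(\cdot)}(\sigma\eps^{-1/4}F-F')=-e^{(\cdot)}\phi$. At $y=Y_i^\eps(s)$ we have $e^{\sigma\eps^{1/4}i-\sigma\eps^{-1/4}Y_i+\frac12\sigma^2\eps^{-1/2}s}=\clG^\eps(s,Y_i^\eps(s))$ at the particle location, with the right-continuous convention $N_\eps=i$ there. This produces the integrand $\clG^\eps(s,Y_i^\eps(s))\,(p_\delta*\vp)(Y_i^\eps(s))$, giving \eqref{eq:prelim1}, with the sign fixed by the convention of \eqref{eq:mart_rep}.

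For \eqref{eq:prelim2} we apply It\^o's product rule to $\clM_t^\eps(\vp)^2$. Since the $B_i^\eps$ are independent, the bracket is the diagonal sum, which gives the subtracted term, and $2\int\clM\,d\clM$ gives the double stochastic integral. We interchange the infinite sums with stochastic integrals using the $L^r$ convergence of Lemma~\ref{lem:convergence} and BDG.

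The main obstacle is justifying the uniform-in-time $L^2$ convergence of the truncated sums and the vanishing of the boundary local-time terms for this $F$. The function $F$ does not decay as $y\to-\infty$; it equals $e^{\sigma\eps^{-1/4}y}$ times a constant there. However, $g$ then tends to a constant at $-\infty$, and the weights $e^{\sigma\eps^{1/4}i}$ with $i\to-\infty$ supply summability. Verifying the hypothesis of Lemma~\ref{lem:SBPformula} for this $F$ is routine.
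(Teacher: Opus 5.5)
Your proposal is correct and is essentially the paper's proof. Your $g(t,Y_i^\eps(t))$ coincides with the paper's $\clL^\eps\vp(t,X_i^\eps(t))$, and the It\^o expansion, the local-time cancellation via $e^{\sigma\eps^{1/4}}p_\eps=q_\eps$, the identification of the martingale integrand and the squaring step for \eqref{eq:prelim2} are all the same. The differences are minor: you reach the sum representation through Lemmas~\ref{lem:Erep} and~\ref{lem:SBPformula}, you verify the drift via the ODE $\sigma\eps^{-1/4}F-F'=p_{\eps^{1/2\theta}}*\vp$ rather than $\partial_tF+\frac12\partial_{yy}F=\frac12\partial_{xx}F$, and you spell out the truncation argument the paper leaves implicit.

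There is one small slip in your write-up. It\^o's formula gives $+\sum_i e^{\sigma\eps^{1/4}i}\int_0^t\partial_y g\,dB_i^\eps$, and the minus sign in \eqref{eq:prelim1} comes from $\partial_y g=-e^{(\cdot)}(p_{\eps^{1/2\theta}}*\vp)$, not from a sign convention. Also, it is $g$, not $F$, that fails to decay at $-\infty$.
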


In an appropriate sense, the  sum appearing on left side of  \eqref{eq:prelim2} can be replaced by an integral in the limit as $\eps \to 0$. More precisely, we have

\begin{lemma} \label{lem:discToCont}
Assume $\{X_i(0)\}_{i \in \ZZ} \sim \gamma_1$. Fix
$\theta\in(0,1)$, $T<\infty$, and $\vp\in\clD(\RR)$. Then
\begin{equation*}
\EE\Bigg[
\int_0^T
\Bigg|
(1 - e^{-\sigma\eps^{1/4}})^2
\sum_{i \in \ZZ}
\clG^\eps(s,Y_i^\eps(s))^2
(p_{\eps^{1/2\theta}} * \vp)(Y_i^\eps(s))^2
-
\sigma^2\int_{\RR}
\tlE_\theta^\eps(s,z)^2\vp(z)^2\,dz
\Bigg|\,ds
\Bigg]\to 0
\end{equation*}
as $\eps\to0$. 
\end{lemma}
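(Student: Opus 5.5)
The plan is to convert the discrete sum into an integral against $\clG^\eps(s,\cdot)^2$ using the summation by parts identity in Lemma \ref{lem:SBPformula}, and then compare $\clG^\eps(s,z)^2$ with $\tlE_\theta^\eps(s,z)^2$ using Proposition \ref{lem:unifclose}-type estimates.

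\textbf{Step 1: summation by parts.} Since $\clG^\eps(s,Y_i^\eps(s)) = e^{\sigma\eps^{1/4}i - \sigma\eps^{-1/4}Y_i^\eps(s)+\frac12\sigma^2\eps^{-1/2}s}$, I apply Lemma \ref{lem:SBPformula} with $m=2$ and $F(s,z)=\phi_\eps(z)^2$, where $\phi_\eps:=p_{\eps^{1/2\theta}}*\vp$. The hypotheses of the lemma hold because $\phi_\eps$ is Schwartz, uniformly in $\eps$. This rewrites the sum as
\[
\frac{(1-e^{-\sigma\eps^{1/4}})^2}{1-e^{-2\sigma\eps^{1/4}}}\int_\RR \clG^\eps(s,z)^2\left[2\sigma\eps^{-1/4}\phi_\eps(z)^2-2\phi_\eps(z)\phi_\eps'(z)\right]dz .
\]
The prefactor times $2\sigma\eps^{-1/4}$ equals $\sigma^2+O(\eps^{1/4})$, as in \eqref{eq:159mm}. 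The derivative term carries an extra factor $\eps^{1/4}$ and $\phi_\eps,\phi_\eps'$ are bounded uniformly with Gaussian tails. Lemma \ref{lem:Gmoments} then shows that the error from both corrections is $O(\eps^{1/4})$ in $L^1(\PP\otimes ds)$ on $[0,T]$.

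\textbf{Step 2: remove the mollifier on the test function.} I would bound $\|\phi_\eps^2-\vp^2\|_\infty\le C\eps^{1/2\theta}$. This holds because $\phi_\eps-\vp=O(\eps^{1/2\theta}\|\vp''\|_\infty)$ and the difference has Gaussian decay off the support of $\vp$. Combined with Lemma \ref{lem:Gmoments} (exponential weights against Gaussian tails), this reduces the claim to showing
\[
\EE\int_0^T\int_\RR|\clG^\eps(s,z)^2-\tlE_\theta^\eps(s,z)^2|\vp(z)^2\,dz\,ds\to0 .
\]

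\textbf{Step 3: compare $\clG^\eps$ with $\tlE_\theta^\eps$.} This is where I expect the main difficulty. I write $|a^2-b^2|\le|a-b|(a+b)$ and use Cauchy--Schwarz. The factor $\|\clG^\eps+\tlE_\theta^\eps\|_2$ is bounded on $\mathrm{supp}\,\vp\times[0,T]$ by Lemma \ref{lem:Gmoments} and Lemma \ref{lem:Erep}. For the other factor, Proposition \ref{lem:unifclose} with $K=\mathrm{supp}\,\vp$ and $r=2$ gives directly $\|\sup_{s\le T,z\in K}|\clG^\eps-\tlE_\theta^\eps|\|_2\to0$. The only care needed is that Proposition \ref{lem:unifclose} covers $t\in[0,T]$ including near $0$, which it does. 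If one preferred to avoid the full proposition, pointwise bounds from Lemma \ref{lem:unifclose-assist} (the $L^r$ closeness on each box) suffice after integrating. The main obstacle is thus already handled by the uniform closeness result; the remaining work is bookkeeping of uniform-in-$\eps$ integrability via Lemma \ref{lem:Gmoments}. Combining Steps 1–3 gives the claimed convergence.
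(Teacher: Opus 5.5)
Your proposal is correct and follows essentially the same route as the paper. The paper splits the error into the same three pieces: $D_1^\eps$, handled by summation by parts with $m=2$ and $F=(p_{\eps^{1/2\theta}}*\vp)^2$ together with Lemma \ref{lem:Gmoments}; $D_2^\eps$, which removes the mollifier from the test function; and $D_3^\eps$, handled by Cauchy--Schwarz and Proposition \ref{lem:unifclose} with $r=2$. One small point to tighten: the hypothesis of Lemma \ref{lem:SBPformula} needs the Gaussian (not merely Schwartz) decay of $p_{\eps^{1/2\theta}}*\vp$ to absorb the weight $e^{2\sigma\eps^{-1/4}|y|}$ as $y\to-\infty$, which you do supply in your next sentence.
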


\begin{remark} \label{rem:genconvE} \normalfont
{We briefly comment on the possibility of 
proving convergence of $\tlE_\theta^\eps$ to SHE  for more general initial distributions than $\gamma_1$. As noted in Remark \ref{rem:gencont}, for the collection $\{\tlE_\theta^\eps, \eps > 0\}$ to be tight, it suffices for conditions \ref{it:weaker1}-\ref{it:weaker3} to hold. In addition, to verify the conditions of the martingale characterization for the limit point, the more general initial distribution $\gamma$ would need to satisfy the conclusion analogous to Lemma \ref{lem:initlimit}, namely, D4: $\tlE_\theta^\eps(0,\cdot)$ converges to a suitable continuous random field uniformly on compacts in distribution. It turns out that the proof of Lemma \ref{lem:martprelim} does not require any assumptions on the initial data other than the condition \ref{it:weaker1} already noted (which ensures that the field $\tlE_\theta^\eps$ is well-defined). Thus, the only other non-trivial extension of our arguments needed would be to prove Lemma \ref{lem:discToCont} for more general $\gamma$. This amounts to showing that the quantity $D_3^\eps$, defined by \eqref{eq:D3def}, satisfies D5: $\int_{[0,T]} \EE[|D^{\eps}_3(s)|] ds \to 0$, as $\eps \to 0$, since
the rest of the proof of Lemma \ref{lem:discToCont} only requires  assumptions \ref{it:weaker1}-\ref{it:weaker3}. Note that the above convergence of $D_3^\eps$ to zero says that $\tlE_\theta^\eps$ approximates $\clG^\eps$ as $\eps \to 0$, but in a substantially weaker sense than in Proposition \ref{lem:unifclose}. We leave the topic of providing general conditions on $\gamma$, where the key properties D1-D5 hold, as an open problem.}
\end{remark}

We now give the proofs of the three lemmas presented above.

\begin{proof}[Proof of Lemma \ref{lem:initlimit}]
Let
\begin{align*}
 \clH^{\eps}(x) := \sigma^{-1} \log \clG^{\eps}(0,x)
 = \eps^{1/4}[N_\eps(0,x) - \eps^{-1/2}x], \; x \in \RR.
\end{align*}
Then, recalling the definition of $\gamma_1$, by the functional central limit theorem for scaled Poisson processes one has that
$\clH^{\eps} \to \tilde B$ in distribution, uniformly on compacts, where $\{\tilde B(x): x\in \RR\}$ is a Brownian motion indexed by $\RR$. Thus we have that $\clG^{\eps}(0,\cdot) \to e^{\sigma \tilde B(\cdot)}$, in distribution, uniformly on compacts.
The result now follows from Proposition \ref{lem:unifclose}.
\end{proof}

\begin{proof}[Proof of Lemma \ref{lem:martprelim}]
Fix $\eps > 0$, $\theta \in (0,1)$. Then, with $\delta := \eps^{1/2\theta}$ and $t>0$, from \eqref{eq:Edef}
and \eqref{eq:te-def},  we may write 
\[
\begin{split}
\tlE_\theta^\eps(t,x) & = (1-e^{-\sigma\eps^{1/4}})\sum_{i \in \ZZ} e^{\sigma\eps^{1/4}i - \sigma\eps^{-1/4}x + \frac{1}{2}\sigma^2\eps^{-1/2}(t + \delta)} \psi^{x-\sigma\eps^{-1/4}(t + \delta), \delta}(X_i^\eps(t)),
\end{split}
\]
where 
\[
\psi^{x,\delta}(y) = \psi^{t,x,\delta}(t,y) = \int_{y}^\infty p_{\delta}(z - x)dz.
\]
Fix $\vp \in \clD(\RR)$. By the equality above, 
\begin{equation} \label{eq:Gweak}
\begin{split}
\langle \tlE^{\theta,\eps}_t, \vp \rangle & = (1-e^{-\sigma\eps^{1/4}})\int_{\RR} \left( \sum_{i \in \ZZ} e^{\sigma\eps^{1/4}i-\sigma\eps^{-1/4}x + \frac{1}{2}\sigma^2\eps^{-1/2}(t+\delta)} \psi^{x-\sigma\eps^{-1/4}(t + \delta), \delta}(X_i^\eps(t)) \right)\vp(x)dx \\
& = (1-e^{-\sigma\eps^{1/4}})\sum_{i \in \ZZ} e^{\sigma\eps^{1/4}i} \clL^\eps\vp(t,X_i^\eps(t)),
\end{split}
\end{equation}
where 
\[
\clL^\eps\vp(t,y) := \int_{\RR} \vp(x) e^{-\sigma\eps^{-1/4}x + \frac{1}{2}\sigma^2\eps^{-1/2}(t + \delta)} \psi^{x-\sigma\eps^{-1/4}(t + \delta), \delta}(y) dx.
\]
By It\^{o}'s formula applied to \eqref{eq:Gweak}, 
\begin{equation} \label{eq:GIto1}
\begin{split}
\langle \tlE_t^{\theta,\eps}, \vp \rangle & = \langle \tlE_0^{\theta,\eps}, \vp \rangle + (1-e^{-\sigma\eps^{1/4}})\sum_{i \in \ZZ} e^{\sigma\eps^{1/4}i} \bigg\{ \int_0^t \partial_y\clL^\eps\vp(s,X_i^\eps(s)) dX_i^\eps(s) \\
& \hspace{1.6in} + \int_0^t \left(\partial_t\clL^\eps\vp(s,X_i^\eps(s)) + \frac{1}{2}\partial_{yy}\clL^\eps\vp(s,X_i^\eps(s))\right)ds \bigg\}.
\end{split}
\end{equation}
We claim that
\begin{equation} \label{eq:heat_equals_opdd}
\partial_t \clL^\eps\vp(t,y) + \frac{1}{2}\partial_{yy}\clL^\eps\vp(t,y) = \frac{1}{2}\clL^\eps\vp''(t,y).
\end{equation} 
To prove the claim, write
$$
\clL^\eps\vp(t,y) := \int_{\RR} \vp(x) F(t,x,y) dx.
$$
where
$$F(t,x,y)=e^{-\sigma\eps^{-1/4}x + \frac{1}{2}\sigma^2\eps^{-1/2}(t + \delta)} \psi^{x-\sigma\eps^{-1/4}(t + \delta), \delta}(y).$$
Then the claim will follow using integration by parts and the identity
\begin{equation}\label{eq:1153n}\partial_t F(t,x,y) + \frac{1}{2}\partial_{yy} F(t,x,y) = \frac{1}{2}\partial_{xx} F(t,x,y).\end{equation}
To establish the identity, by a direct calculation
$$
\partial_t F(t,x,y) = e^{-\sigma\eps^{-1/4}x + \frac{1}{2}\sigma^2\eps^{-1/2}(t + \delta)}\left[
\frac{1}{2}\sigma^2\eps^{-1/2}\psi^{x-\sigma\eps^{-1/4}(t + \delta), \delta}(y) - \sigma\eps^{-1/4}
p_{\delta}(y-x+\sigma\eps^{-1/4}(t+\delta))\right
],$$
$$
\frac{1}{2}\partial_{yy} F(t,x,y) = \frac{1}{2}e^{-\sigma\eps^{-1/4}x + \frac{1}{2}\sigma^2\eps^{-1/2}(t + \delta)}
\left(\frac{y-x+\sigma\eps^{-1/4}(t+\delta)}{\delta}\right) p_{\delta}(y-x+\sigma\eps^{-1/4}(t+\delta)),$$
and
\begin{multline*}
\frac{1}{2}\partial_{xx} F(t,x,y) = 
e^{-\sigma\eps^{-1/4}x + \frac{1}{2}\sigma^2\eps^{-1/2}(t + \delta)}\bigg[
\frac{1}{2}\sigma^2\eps^{-1/2}\psi^{x-\sigma\eps^{-1/4}(t + \delta), \delta}(y) - 
\sigma\eps^{-1/4}
p_{\delta}(y-x+\sigma\eps^{-1/4}(t+\delta))\\
+ \frac{(y-x+\sigma\eps^{-1/4}(t+\delta))}{\delta} p_{\delta}(y-x+\sigma\eps^{-1/4}(t+\delta))
\bigg].
\end{multline*}
Then \eqref{eq:1153n} is immediate from the last three formulae.

Using \eqref{eq:heat_equals_opdd},
\begin{equation} \label{eq:pred_term}
\begin{split}
& (1-e^{-\sigma\eps^{1/4}})\sum_{i \in \ZZ} e^{\sigma\eps^{1/4}i} \int_0^t \left(\partial_t\clL^\eps\vp(s,X_i^\eps(s)) + \frac{1}{2}\partial_{yy}\clL^\eps\vp(s,X_i^\eps(s))\right)ds \\
& \quad\quad = (1-e^{-\sigma\eps^{1/4}})\sum_{i \in \ZZ} e^{\sigma\eps^{1/4}i} \int_0^t \frac{1}{2}\clL^\eps\vp''(s,X_i^\eps(s)) ds \\
& \quad\quad = \frac{1}{2} \int_0^t \langle \tlE_s^{\theta,\eps}, \vp'' \rangle ds,
\end{split}
\end{equation}
where the second equality is from \eqref{eq:Gweak}. Furthermore, using the SDE \eqref{eq:asym_eq}, and recalling that $e^{\sigma\eps^{1/4}} = q/p$, 
\begin{align} \label{eq:mart_term}
& \sum_{i \in \ZZ} e^{\sigma\eps^{1/4}i} \int_0^t \partial_y\clL^\eps\vp(s,X_i^\eps(s)) dX_i^\eps(s)\nonumber \\
& \quad = \sum_{i \in \ZZ} e^{\sigma\eps^{1/4}i} \int_0^t \partial_y\clL^\eps\vp(s,X_i^\eps(s)) dB_i^\eps(s)\nonumber \\
& \quad\quad+ \sum_{i \in \ZZ} \int_0^t \partial_y\clL^\eps\vp(s,X_i^\eps(s)) e^{\sigma\eps^{1/4}i} (p dL_{(i-1,i)}^\eps(s) - q dL_{(i,i+1)}^\eps(s))\nonumber \\
& \quad = \sum_{i \in \ZZ} e^{\sigma\eps^{1/4}i} \int_0^t \partial_y\clL^\eps\vp(s,X_i^\eps(s)) dB_i^\eps(s)\nonumber \\
& \quad\quad + \sum_{i \in \ZZ} \int_0^t \Big\{\partial_y\clL^\eps\vp(s,X_{i+1}^\eps(s)) - \partial_y\clL^\eps\vp(s,X_i^\eps(s)) \Big\} e^{\sigma\eps^{1/4}i} q dL_{(i,i+1)}^\eps(s)\nonumber \\ 
& \quad = \sum_{i \in \ZZ} e^{\sigma\eps^{1/4}i} \int_0^t \partial_y\clL^\eps\vp(s,X_i^\eps(s)) dB_i^\eps(s).
\end{align}
Here the second equality follows by summation by parts, noting that $e^{\sigma\eps^{1/4}i}p = e^{\sigma\eps^{1/4}(i-1)}q$. The last line follows by observing the second sum in the preceding expression vanishes, since $L^\eps_{(i,i+1)}(s)$ can only increase at times $s$ when $X_{i+1}^\eps(s) = X_i^\eps(s)$. Directly computing $\partial_y \clL^\eps\vp$, we obtain that, with $Y_i^\eps$ as introduced in the statement of the lemma,  the last line in \eqref{eq:mart_term} is equal to
\begin{align} \label{eq:mart_term2}
& - \sum_{i \in \ZZ} e^{\sigma\eps^{1/4}i} \int_0^t \int_{\RR} \vp(x) e^{-\sigma\eps^{-1/4}x + \frac{1}{2}\sigma^2\eps^{-1/2}(s + \delta)} p_\delta(X_i^\eps(s) - x + \sigma\eps^{-1/4}(s + \delta)) dx dB_i^\eps(s)\nonumber \\
& = - \sum_{i \in \ZZ} e^{\sigma\eps^{1/4}i} \int_0^t e^{-\sigma\eps^{-1/4}X_i^\eps(s) - \frac{1}{2}\sigma^2\eps^{-1/2}s} (p_\delta * \vp)(X_i^\eps(s) + \sigma\eps^{-1/4}s) dB_i^\eps(s)\nonumber \\
& = - \sum_{i \in \ZZ} \int_0^t e^{\sigma\eps^{1/4}N_\eps(s,Y_i^\eps(s) - \sigma\eps^{-1/4}s) - \sigma\eps^{-1/4}Y_i^\eps(s) + \frac{1}{2}\sigma^2\eps^{-1/2}s} (p_\delta * \vp)(Y_i^\eps(s)) dB_i^\eps(s)\nonumber \\
& = - \sum_{i \in \ZZ} \int_0^t \clG^\eps(s,Y_i^\eps(s)) (p_\delta * \vp)(Y_i^\eps(s)) dB_i^\eps(s),
\end{align}
where in the second line we have used \eqref{eq:magic} and in the third line, the fact that $N_\eps(s,Y_i^\eps(s) - \sigma\eps^{-1/4}s)=i$.
In view of \eqref{eq:GIto1}, \eqref{eq:pred_term}, \eqref{eq:mart_term}, and \eqref{eq:mart_term2}, we conclude that
\begin{equation} \label{eq:GIto2}
\begin{split}
\langle \tlE_t^{\theta,\eps}, \vp \rangle & = \langle \tlE_0^{\theta,\eps}, \vp \rangle - (1-e^{-\sigma\eps^{1/4}})\sum_{i \in \ZZ} \int_0^t \clG^\eps(s,Y_i^\eps(s)) (p_\delta * \vp)(Y_i^\eps(s)) dB_i^\eps(s) \\
& \hspace{1in} + \frac{1}{2} \int_0^t \langle \tlE_s^{\theta,\eps}, \vp'' \rangle ds.
\end{split}
\end{equation}
This proves the identity in \eqref{eq:prelim1}.

Finally,  \eqref{eq:prelim2} is a straightforward consequence of \eqref{eq:prelim1}, by applying It\^{o}'s formula to  $\clM_t^\eps(\vp)^2$.

\end{proof}

\begin{proof}[Proof of Lemma \ref{lem:discToCont}]
Set
$
\psi_\eps:=p_{\eps^{1/(2\theta)}}*\vp .
$
For $s\in[0,T]$, define
\[
A_\eps(s)
:=
(1-e^{-\sigma\eps^{1/4}})^2
\sum_{i\in\ZZ}
\clG^\eps(s,Y_i^\eps)^2
\psi_\eps(Y_i^\eps)^2
-
\sigma^2
\int_{\RR}\tlE_\theta^\eps(s,z)^2\vp(z)^2\,dz .
\]
We write
\[
A_\eps(s)=D_1^\eps(s)+D_2^\eps(s)+D_3^\eps(s),
\]
where
\begin{align}
D_1^\eps(s)
&:=
(1-e^{-\sigma\eps^{1/4}})^2
\sum_{i\in\ZZ}
\clG^\eps(s,Y_i^\eps)^2
\psi_\eps(Y_i^\eps)^2
-
\sigma^2\int_{\RR}\clG^\eps(s,z)^2\psi_\eps(z)^2\,dz,\\
D_2^\eps(s)
&:=
\sigma^2\int_{\RR}
\clG^\eps(s,z)^2\big(\psi_\eps(z)^2-\vp(z)^2\big)\,dz,\\
D_3^\eps(s)
&:=
\sigma^2\int_{\RR}
\big(\clG^\eps(s,z)^2-\tlE_\theta^\eps(s,z)^2\big)\vp(z)^2\,dz . \label{eq:D3def}
\end{align}
It suffices to prove that, for $j=1,2,3$,
\begin{equation}\label{eq:Djgoal}
\int_0^T \EE\left[|D_j^\eps(s)|\right]\,ds\to0 .
\end{equation}
In the following, the constants $c_i, i \in \NN,$ are independent of $\eps \in (0,1]$ and $s \in [0,T]$.

We first consider $D_1^\eps$. 
Observe that
\[
\begin{split}
& \sum_{i \in \ZZ} \clG^\eps(s,Y_i^\eps(s))^2 \psi_\eps(Y_i^\eps(s))^2 \\
& = \sum_{i \in \ZZ} e^{2\sigma\eps^{1/4}N_\eps(s,Y_i^\eps(s) - \sigma\eps^{-1/4}s) - 2\sigma\eps^{-1/4}Y_i^\eps(s) + \sigma^2 \eps^{-1/2} s} \psi_\eps(Y_i^\eps(s))^2 \\
& = \sum_{i \in \ZZ} e^{2\sigma\eps^{1/4}i - 2\sigma\eps^{-1/4}Y_i^\eps(s) + \sigma^2 \eps^{-1/2} s} \psi_\eps(Y_i^\eps(s))^2,
\end{split}
\]
here noting that $N_\eps(s,Y_i^\eps(s) - \sigma\eps^{-1/4}s) = N_\eps(s,X_i^\eps(s)) = i$. Applying Lemma \ref{lem:SBPformula} with $F(s,z) = \psi_\eps(z)^2$, we obtain 
\[
(1 - e^{-2\sigma\eps^{1/4}}) \sum_{i \in \ZZ} \clG^\eps(s,Y_i^\eps(s))^2 \psi_\eps(Y_i^\eps(s))^2 = \int_{\RR} \clG^\eps(s,z)^2(2\sigma\eps^{-1/4} \psi_\eps(z)^2 - \partial_z [\psi_\eps(z)^2])dz.
\]
Hence,
\[
\begin{split}
& (1 - e^{-\sigma\eps^{1/4}})^2\sum_{i \in \ZZ} \clG^\eps(s,Y_i^\eps(s))^2 \psi_\eps(Y_i^\eps(s))^2 \\
& = \frac{(1 - e^{-\sigma\eps^{1/4}})^2 2\sigma\eps^{-1/4}}{(1 - e^{-2\sigma\eps^{1/4}})} \int_{\RR} \clG^\eps(s,z)^2 \psi_\eps(z)^2 dz - \frac{(1 - e^{-\sigma\eps^{1/4}})^2}{(1 - e^{-2\sigma\eps^{1/4}})} \int_{\RR} \clG^\eps(s,z)^2 \partial_z [\psi_\eps(z)^2] dz.
\end{split}
\]
Therefore, 
\begin{equation} \label{eq:D1bd}
|D_1^\eps(s)| \leq \alpha_1(\eps) \int_{\RR} \clG^\eps(s, z)^2 \psi_\eps(z)^2 dz + \alpha_2(\eps) \int_{\RR} \clG^\eps(s,z)^2 \partial_z [\psi_\eps(z)^2] dz,
\end{equation}
where
\[
\alpha_1(\eps) = \left| \frac{(1 - e^{-\sigma\eps^{1/4}})^2 2\sigma\eps^{-1/4}}{(1 - e^{-2\sigma\eps^{1/4}})} - \sigma^2 \right|, \quad\quad \alpha_2(\eps) = \frac{(1 - e^{-\sigma\eps^{1/4}})^2}{(1 - e^{-2\sigma\eps^{1/4}})}.
\]
Note that $\alpha_1(\eps)$ and $\alpha_2(\eps)$ both converge to zero as $\eps \to 0$. By Lemma \ref{lem:Gmoments}, followed by Jensen's inequality, for any $s \in [0,T]$,
\begin{multline}
\EE\left[\int_{\RR} \clG^\eps(s, z)^2 \psi_\eps(z)^2 dz\right]  \leq \int_{\RR} c_1 e^{c_2|z|} \psi_\eps(z)^2 dz \\
 = \int_{\RR} c_1 e^{c_2|z|} \left( \int_{\RR} p_{\eps^{1/2\theta}}(u - z)\vp(u)du \right)^2 dz \\ 
 \leq \int_{\RR} \vp(u)^2 \left( \int_{\RR} c_1 e^{c_2|z|}p_{\eps^{1/2\theta}}(u - z)dz \right) du \leq \int_{\RR} \vp(u)^2 c_3 e^{c_4|u|} du.\label{eq:1136q}
\end{multline}
The last bound is clearly finite since $\vp$ is smooth and compactly supported. Additionally, note that 
$$
\partial_z[\psi_\eps(z)^2] = 2\psi_\eps(z) \partial_z\psi_\eps(z) = 2(p_{\eps^{1/2\theta}} * \vp(z)) (p_{\eps^{1/2\theta}} * \partial_z\vp(z)).
$$
Since $\partial_z \vp$ is bounded and compactly supported, a similar estimate as \eqref{eq:1136q} holds for the second integral in \eqref{eq:D1bd}. From these observations, we conclude
\[
\int_0^T\EE\left[|D_1^\eps(s)|\right]\,ds\to0 .
\]

Next consider $D_2^\eps$. By Lemma \ref{lem:Gmoments} again, noting that $\psi_\eps$ and $\vp$ are uniformly bounded in $\eps$, we have,
\[
\int_0^T\EE\left[|D_2^\eps(s)|\right]\,ds \leq T\,\int_{\RR} c_5 e^{c_6|z|} |\psi_\eps(z) - \vp(z)|\,dz.
\]
The last line converges to zero, by standard computations on approximate identities obtained from the heat kernel.

It remains to treat $D_3^\eps$. Choose a compact set $K\subset\RR$ such that
$\vp(z)=0$ for $z\notin K$. Since
\[
\clG^\eps(s,z)^2-\tlE_\theta^\eps(s,z)^2
=
\big(\clG^\eps(s,z)-\tlE_\theta^\eps(s,z)\big)
\big(\clG^\eps(s,z)+\tlE_\theta^\eps(s,z)\big),
\]
the Cauchy-Schwarz inequality gives
\[
\begin{split}
\EE\left[|D_3^\eps(s)|\right]
&\le
\sigma^2\|\vp\|_\infty^2
\int_K
\left\|\clG^\eps(s,z)-\tlE_\theta^\eps(s,z)\right\|_2
\left\|\clG^\eps(s,z)+\tlE_\theta^\eps(s,z)\right\|_2\,dz .
\end{split}
\]
By Lemma~\ref{lem:Gmoments}, and Jensen's inequality applied to the following identity (from Lemma \ref{lem:Erep})
\[
\tlE_\theta^\eps(s,z)
=
\int_{\RR}\clG^\eps(s,u)p_{\eps^{1/(2\theta)}}(z-u)\,du,
\]
there is a constant $c<\infty$ such that
\[
\sup_{\eps\in(0,1]}\sup_{0\le s\le T}\sup_{z\in K}
\left\|\clG^\eps(s,z)+\tlE_\theta^\eps(s,z)\right\|_2
\le c .
\]
Consequently,
\[
\EE\left[|D_3^\eps(s)|\right]
\le
c\int_K
\left\|\clG^\eps(s,z)-\tlE_\theta^\eps(s,z)\right\|_2\,dz .
\]
By Proposition~\ref{lem:unifclose}, with $r=2$,
\[
\left\|
\sup_{0\le s\le T}\sup_{z\in K}
|\clG^\eps(s,z)-\tlE_\theta^\eps(s,z)|
\right\|_2\to0 .
\]
Therefore
\[
\begin{split}
\int_0^T\EE\left[|D_3^\eps(s)|\right]\,ds
&\le
cT|K|
\left\|
\sup_{0\le s\le T}\sup_{z\in K}
|\clG^\eps(s,z)-\tlE_\theta^\eps(s,z)|
\right\|_2
\to0 .
\end{split}
\]

Combining the estimates for $D_1^\eps,D_2^\eps,D_3^\eps$ gives \eqref{eq:Djgoal} for $j=1,2,3$, 
which proves the lemma.

\end{proof}

We now complete the proof of Theorem \ref{thm:maincgce}.

\begin{proof}[Proof of Theorem \ref{thm:maincgce}.]
The uniform closeness assertion in \eqref{eq:uniapprox} follows from Proposition \ref{lem:unifclose}.

In view of Corollary \ref{cor:tightcont}, for each $\theta \in (0,1)$, there exists a $C([0,\infty) \times \RR)$ valued random field
$\clG$ such that, along some subsequence, 
\begin{equation} \label{eq:jointcind}
\tlE_\theta^\eps \to \clG \text{ as } \eps \to 0,
\end{equation}
in distribution, in $C([0,\infty)\times \RR)$.

In order to prove the theorem, it suffices to show that, denoting 
the probability distribution of the subsequential limit $\clG$ 
on $C([0,\infty)\times \RR)$ as $\clQ_0$, the measure $\clQ_0$ solves the martingale problem introduced at the beginning of the section, namely  \ref{it:mpa}-\ref{it:mpc} are satisfied (with $\clQ$ replaced by $\clQ_0$).

Part (a) is immediate from Lemma \ref{lem:initlimit}. Part (b) follows from Lemma \ref{lem:Gmoments} and Fatou's lemma.
Finally we establish part (c).
Let, for $t \ge 0$, $\hat\clf_t := \sigma \{\clG(s, \cdot):0\le s \le t\}$.
Fix $\vp \in \clD(\RR)$.

To establish (c), we need to show that $M_{t,\vp}(\clG)$ and $\Lambda_{t,\vp}(\clG)$ (see \eqref{eq:mtphi}-\eqref{eq:ltphi}) are martingales with respect to the filtration $\{\hat\clf_t\}_{t\ge 0}$. 
Since $\varphi$ has compact support, we have that  $\text{supp } \vp \subset [-K,K]$ for some $K \in \ohalf$. 
This in particular implies that the functionals 
$$
M_{\cdot,\vp}, \Lambda_{\cdot,\vp} : C(\half \times \RR) \to C(\half; \RR) 
$$
are continuous. Consequently, by the distributional limit \eqref{eq:jointcind},
\begin{equation} \label{eq:indist}
(M_{\cdot,\vp}(\tlE_\theta^\eps), \Lambda_{\cdot,\vp}(\tlE_\theta^\eps), \tlE_\theta^\eps) \to (M_{\cdot,\vp}(\clG), \Lambda_{\cdot,\vp}(\clG), \clG) \text{ as } \eps \to 0,
\end{equation}
 in distribution, in $C([0,\infty); \RR)\times C([0,\infty); \RR) \times C([0,\infty)\times \RR)$. 

Next, observe that, with $\clM_t^\eps(\vp)$ and $\clW_t^\eps(\vp)$ defined as in \eqref{eq:prelim1} and \eqref{eq:prelim2}, respectively, 
\begin{equation} \label{eq:equiv}
M_{t,\vp}(\tlE_\theta^\eps) = \clM_t^\eps(\vp),
\end{equation}
and, by Lemma \ref{lem:discToCont}, 
\begin{equation} \label{eq:close}
\begin{split}
& \sup_{t \in [0,T]}|\Lambda_{t,\vp}(\tlE_\theta^\eps) - \clW_t^\eps(\vp)| \\
& \quad\quad \leq \int_0^T \left| (1 - e^{\sigma\eps^{1/4}})^2\sum_{i \in \ZZ} \clG^\eps(s,Y_i^\eps(s))^2(p_{\eps^{1/2\theta}} * \vp)(Y_i^\eps(s))^2 - \sigma^2\int_{\RR} \tlE_\theta^\eps(u)^2 \vp(u)^2 du \right| ds \\
& \quad\quad \to 0 \text{ in } L^1(\PP) \text{ as } \eps \to 0.
\end{split}
\end{equation}
Combining \eqref{eq:indist}, \eqref{eq:equiv}, and \eqref{eq:close}, we conclude that \[
(\clM_\cdot^\eps(\vp), \clW_\cdot^\eps(\vp), \tlE_\theta^\eps) \to (M_{\cdot,\vp}(\clG),  \Lambda_{\cdot,\vp}(\clG), \clG) \text{ as } \eps \to 0,
\]
 in distribution, in $C([0,\infty); \RR)\times C([0,\infty); \RR) \times C([0,\infty)\times \RR)$.
 Let $$\hat \clf^{\eps}_t := \sigma\{X^{\eps}_i(0), i \in \ZZ\}
 \vee \sigma\{B^{\eps}_i(s): 0 \le s \le t\}, \;\; t \ge 0.$$
 By strong existence and pathwise uniqueness (Theorems \ref{thm:exist} and \ref{thm:unique}), 
 $$\hat \clf^{X,\eps}_t:= \sigma\{X^{\eps}_i(s),\; 0 \le s \le t,\; i \in \ZZ\} \subset
 \hat \clf^{\eps}_t,$$
 and by definition of $N_{\eps}$, $\clG^{\eps}$, $\tilde \cle^{\eps}_{\theta}$ in \eqref{eq:rescaled_count}, \eqref{eq:Gdef0}, and \eqref{eq:tEdef0}, we see that
 $$\hat \clf^{\clE,\eps}_t:= \sigma\{\tilde \cle^{\eps}_{\theta}(s,x): 0 \le s \le t, \; x \in \RR\}
 \subset \hat \clf^{X,\eps}_t \subset \hat \clf^{\eps}_t, \; t\ge 0.$$

By \eqref{eq:equiv}, $\clM_t^\eps(\vp)=M_{t,\vp}(\tlE_\theta^\eps)$ is
$\hat\clf^{\clE,\eps}_t$-measurable. Lemma \ref{lem:martprelim} shows that $\clM_\cdot^\eps(\vp)$ is a continuous  martingale with respect to the filtration $\{\hat \clf^{\eps}_t: t\ge 0\}$. Hence, by the tower property, we conclude that $\clM_\cdot^\eps(\vp)
=M_{\cdot,\vp}(\tlE_\theta^\eps)$ is a continuous  martingale with
respect to $\{\hat\clf^{\clE,\eps}_t:t\ge 0\}$. Passing to the limit as $\eps \to 0$, a standard argument shows that 
$M_{\cdot,\vp}(\clG)$ is a continuous  martingale with respect to the filtration $\{\hat \clf_t : t \ge 0\}$.

For the quadratic variation martingale, the process
$\clW_\cdot^\eps(\vp)$ is a continuous  martingale with respect to
$\{\hat\clf^\eps_t:t\ge 0\}$ by Lemma~\ref{lem:martprelim}. The process
$\Lambda_{\cdot,\vp}(\tlE_\theta^\eps)$ is adapted to
$\{\hat\clf^{\clE,\eps}_t:t\ge 0\}$, and \eqref{eq:close} shows that it is
asymptotically equivalent to $\clW_\cdot^\eps(\vp)$ in $C([0,T];\RR)$, for
each $T<\infty$. Using these two observations, we conclude by taking a limit as $\eps \to 0$ that $\Lambda_{\cdot,\vp}(\clG)$ is also a continuous  martingale with respect to
$\{\hat\clf_t:t\ge 0\}$. This proves part \ref{it:mpc}, and hence $\clQ_0$
solves the martingale problem. The result follows.
\end{proof}

\appendix

\section{Proof of Proposition \ref{prop:fin1210}.}
The proof adapts arguments from the proof of 
\cite[Theorem 2.3]{banbudrud2025wp}.
Specifically, to show that $\hKs(i,[0,T])<\infty$  almost surely, one proceeds as in the proof of \cite[Theorem 2.3(i)]{banbudrud2025wp}, and to show that $\cKs(i,[0,T])<\infty$  almost surely, one  proceeds as in the proof of \cite[Theorem 2.3(iii)]{banbudrud2025wp}. The main difference is that, at the various steps in the proof, the Brownian motion $B_1$ associated with the lowest particle needs to be replaced by the sum of $B_1$ and an adapted real-valued continuous process $F$ (given either as $pL_{(0,1)}$ or $qL_{(-1,0)}$). {However, it is easy to accommodate this change for two reasons. First, much of the work in \cite{banbudrud2025wp} goes into establishing certain pathwise estimates, namely Lemma 3.4, the lemmas from Section 4, and Lemma 6.4. But as noted in the beginning of \cite[Section 4]{banbudrud2025wp}), these estimates do not depend on the distributions of the Brownian motions $\{B_i\}$; they hold even if we replace $\{B_i\}$ with an arbitrary collection of continuous, real-valued functions on $\half$. Moreover, in key calculations to prove \cite[Theorem 2.3(i) and (iii)]{banbudrud2025wp}, replacing $B_1$ with $B_1 + F$ only introduces an error of order $F/\sqrt{M}$, where we are ultimately interested in proving some almost sure property in the limit as $M \to \infty$.}

We now explain in detail how these changes are implemented.

Recall the driving process $\hV$ defined in \eqref{eq:Vhatch-def}. Following \cite[Section 4]{banbudrud2025wp}, define for $i,M \in \mathbb{N}$ and $T \in [0,\infty)$, 
\begin{equation} \label{eq:VMdef}
\widehat{\clv}_M^-(i,T) = \inf_{0 \leq s_{i + M - 2} \leq \cdots \leq s_i \leq T} \sum_{j = i}^{i+M-1} (\hV_j(s_{j-1}) - \hV_j(s_j)),
\end{equation}
where by convention $s_{i + M - 1} = 0$ and $s_{i-1}  = T$ in the sum above. 
Also, for $i,M \in \mathbb{N}$ and $T \in [0,\infty)$, define 
\begin{equation} \label{eq:defIstar}
\widehat I_M^*(i,T) := \inf_{0 \leq s \leq T} \widehat X_{i + M}(s).
\end{equation}
Then we have the following lemma. 
\begin{lemma} \label{lem:finite-hatI}
For $i \in \NN$ and $T\in [0,\infty)$,
$\liminf_{M\to \infty}\frac{\widehat I_M^*(i,T)}{\sqrt{M}} = \infty$ a.s.
\end{lemma}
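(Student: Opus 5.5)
We will bound $\widehat I_M^*(i,T)$ from below by a quantity that depends only on the initial data and the driving processes, and then use assumption \eqref{eq:ainit}. The comparison follows Lemma \ref{lem:fincomp} and \eqref{eq:WFSsolution}. The system $\hX$ solves \eqref{eq:upsystem} with driving functions $\hx_j+\hV_j$. Its lowest particle carries no reflection from below, since $\hL_{(0,1)}\equiv 0$. The upward pushes $p\hL_{(j-1,j)}$ are nonnegative, so by the downward-induction Skorokhod argument of Lemma \ref{lem:fincomp}, $\hX_j$ dominates the totally asymmetric ($p=0$) system driven by $\hx_j+\hV_j$. That system has the explicit form
\[
\hX_{i+M}(s)\ \ge\ \min_{k\ge i+M}\bigl\{\hx_k+\widehat{\clv}^-_{k-i-M+1}(i+M,s)\bigr\}.
\]
This is the analogue of \eqref{eq:WFSsolution} with $B$ replaced by $\hV$, and can be justified through the finite approximations in the proof of Theorem \ref{thm:exist+}. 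For $i+M\ge 2$, the process $\hV_j=\hB_j=B_j$ for every $j\ge i+M$, so the extra term $pL_{(0,1)}$ never enters. The quantity $\widehat{\clv}^-$ is then just $\clB^-$ from \eqref{eq:mLPP}, and it is monotone in $s$ in the sense used in \eqref{eq:Xjlb1}. This gives
\[
\widehat I_M^*(i,T)\ \ge\ \min_{k\ge i+M}\Bigl\{x_k+\inf_{s\le T}\clB^-_{k-i-M+1}(i+M,s)\Bigr\}.
\]

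Next we show that the right side divided by $\sqrt M$ tends to $+\infty$ almost surely. The natural hypothesis for this is growth of $x_k$ as $k\to+\infty$. But \eqref{eq:ainit} concerns $-x_{-M}$, that is, the lower side, and the upper system $\hX$ needs growth of $x_k$ upward. I would therefore invoke the standing assumption \eqref{eq:ainit2}. Applied to the reflected configuration, or more directly through Assumption \ref{assump:initdiff} where it is available, it gives $\PP(x_k\le k^\chi)$ summable with $\chi>1/2$. Borel--Cantelli then yields $x_k\ge k^\chi$ for all large $k$. If only \eqref{eq:ainit} is available, the appropriate statement is the $\cX$ version, and the $\hX$ case would instead use the $\liminf$ growth assumed for the upper configuration. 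I expect this bookkeeping about which side carries the growth hypothesis to need care. The lemma should be read with the growth condition on the relevant side.

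For the Brownian part, $-\inf_{s\le T}\clB^-_n(j,s)$ is stochastically dominated by $-\clB^-_{n+1}(j-1,T)-B^*_{j-1}(T)$, as in \eqref{eq:Xjlb1}. By \eqref{eq:topeigconc} its exponential moments are at most $e^{c\sqrt{nT}}$. A Chernoff bound and a union bound over $k$ give
\[
\PP\Bigl(\min_{k\ge i+M}\{\cdots\}\le \sqrt M\,A\Bigr)\ \le\ \sum_{k\ge i+M}e^{-\xi(k^\chi-A\sqrt M)+2\xi\sqrt{(k-i-M+2)T}+C\xi^2T},
\]
restricted to the event $\{x_k\ge k^\chi\ \forall k\ge i+M\}$. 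Because $\chi>1/2$, choosing $\xi=1$ makes this summable in $M$ for each fixed $A$. Borel--Cantelli then gives $\liminf_M \widehat I^*_M/\sqrt M\ge A$ almost surely, and letting $A\uparrow\infty$ along the integers finishes the argument. The main obstacle is getting the comparison inequality for the infinite system $\hX$ rigorously, since Lemma \ref{lem:fincomp} is stated for finite systems. I would pass through the finite approximations and the uniqueness in Theorem \ref{thm:unique}, exactly as \eqref{eq:plb-pointwise} was derived.
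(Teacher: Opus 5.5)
There is a genuine gap, in two places.

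\textbf{The comparison with the $p=0$ system is circular.} Your lower bound on $\widehat I_M^*(i,T)$ rests on $\hX_j$ dominating the totally asymmetric system for the \emph{infinite} system $\hX$, and that is not available here.

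\begin{itemize}
\item The Skorokhod induction of Lemma~\ref{lem:fincomp} runs downward from the top particle of a finite system. For $\hX$ there is no top particle.
\item Iterating the one-step bound $\hX_j(t)\ge\min\{\hx_j+\hB_j(t),\,\inf_{s\le t}(\hX_{j+1}(s)+\hB_j(t)-\hB_j(s))\}$ (valid for $j\ge2$) $n$ times leaves a remainder $\inf_{s\le t}\{\hX_{j+n}(s)+\clB^-_n(j,[s,t])\}$.
\item You can only discard that remainder if you already know $\inf_{s\le T}\hX_{j+n}(s)\gg\sqrt n$, which is a form of the statement itself.
\end{itemize}

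Your proposed fix is to go through the finite approximations and Theorem~\ref{thm:unique}, as was done for \eqref{eq:plb-pointwise}. This is circular, because this lemma is precisely the input to Proposition~\ref{prop:fin1210} and hence to Theorem~\ref{thm:unique}. It also proves too little. The lemma must hold for \emph{every} solution satisfying \eqref{eq:ainit} and \eqref{eq:aloc}, since Lemma~\ref{lem:finsys}(ii) needs finite chains for two a priori different solutions. Your route covers only the one obtained as a limit of finite systems.

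\textbf{The growth hypothesis you need is not in force.} Even granting the comparison, your argument requires $x_k\ge k^\chi$ eventually as $k\to+\infty$, and no available hypothesis supplies it.

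\begin{itemize}
\item Both \eqref{eq:ainit} and \eqref{eq:ainit2} control $-x_{-k}$, the lower side. Applying them to the reflected configuration gives growth of $\cx_k$, not of $\hx_k=x_k$.
\item Assumption~\ref{assump:initdiff} is introduced only later, for the fluctuation analysis, and is not a hypothesis of Proposition~\ref{prop:fin1210}.
\end{itemize}

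So reading the lemma with a growth condition on the upper side proves a different statement.

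\textbf{How the paper avoids both issues.} The paper uses neither ingredient. It transplants the proof of Lemma 6.2 of \cite{banbudrud2025wp}. There, the relevant events $C_k,D_k$ are $\limsup_{M\to\infty}$-type events built from the driver of the \emph{lowest} particle. The only modification is replacing $\hB_1$ by $\hV_1=\hB_1+pL_{(0,1)}$. This does not change those events, because $L_{(0,1)}(T)/\sqrt M\to0$.

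In your route $\hV_1$ never enters at all. That is a sign the argument is structurally different from what is needed. The paper works through the bottom of the hat system and invokes no growth of $x_k$ for $k\to+\infty$, whereas yours rests on a comparison from above that is unavailable here, plus a hypothesis that is not in force.
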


\begin{proof}[Sketch of proof]
The proof is the same as that of \cite[Lemma 6.2]{banbudrud2025wp} except for the definitions of the events
$C_k$ and $D_k$ which instead of using a Brownian motion for the lowest particle, use $\widehat B_1 + p L_{(0,1)}$.  However, due to the $\limsup_{M \to \infty}$ in the definition of $C_K$ and $D_K$ the probabilities of these events remain unchanged on replacing $\widehat B_1 + p L_{(0,1)}$ with $\widehat B_1$, and as a result, the rest of the argument goes through as before.
\end{proof}

\begin{proof}[Sketch of proof of finiteness of $\hKs$] {We proceed as in the proof of \cite[Theorem 2.3(i)]{banbudrud2025wp}, using Lemma \ref{lem:finite-hatI} in place of \cite[Lemma 6.2]{banbudrud2025wp}. Recall also that the pointwise estimates from Lemma 3.4 and Section 4 of  \cite{banbudrud2025wp} do not change when we replace $B_1$ with $B_1 + pL_{(0,1)}$.}
By the same argument establishing  \cite[Equation (6.23)]{banbudrud2025wp}, one sees that, on the event 
$\hKs(i, [0,T]) = \infty$,
$$\limsup_{M\to \infty} \frac{\widehat{\clv}_M^{-}(i,T)}{\sqrt{M}} \le (2+T)\sqrt{T}.
$$
{(Note that we can replace the random variable $\Delta$ from the proof in \cite{banbudrud2025wp} with an arbitrary $T > 0$, since we are only assuming the hypotheses of Theorem 2.3(i) and not Theorem 2.3(ii) in \cite{banbudrud2025wp}.)
Using properties of Brownian last-passage percolation, as in the calculation following \cite[Equation (6.23)]{banbudrud2025wp},} one sees that
$\limsup_{M\to \infty} \frac{\widehat{\clv}_M^{-}(i,T)}{\sqrt{M}} > (2+T)\sqrt{T}$
a.s. Observe that, since we are replacing $\widehat B_1+ pL_{(0,1)}$ with $\widehat B_1$ in the proofs, both these statements depend on the fact that 
$L_{(0,1)}(T)/\sqrt{M} \to 0$, as $M \to \infty$.
Combining the two statements we have that $\widehat K^*(i, [0,T]) < \infty$ a.s.
\end{proof}

\begin{proof}[Sketch of proof of finiteness of $\cKs$] 
{We proceed as in \cite[Section 6.2]{banbudrud2025wp}. First, we comment on how quantities defined there translate to our setting. Observe that the roles of $p$ and $q$ are reversed from \cite[Section 6.2]{banbudrud2025wp}. Let $\sigma = p/q$ and note that $\sigma \in (0,1)$. Define
$\widecheck{\clv}_M^{-}(i,T)$ as in \eqref{eq:VMdef} by replacing $\hV_j$ with $\cV_j$ on the right side. This will play the role of $\clv_M^-(i,T)$ in \cite{banbudrud2025wp}; the quantities $\widecheck{\clv}_M^{-}(i,T)$ and ${\clv}_M^{-}(i,T)$ differ by an error of at most $L_{(-1,0)}(T)$. Also, for $T>0$, $M, i, j \in \NN$, let
\begin{equation} \label{eq:udef}
\widecheck\clu(j,M,T) := \sup_{\{t_\ell^{(k)}\} \in D_{M,j}(i,T)} \sum_{k = i}^{i+M} \ \sum_{\ell = k}^{j+k} \ \cV_{\ell}(t_\ell^{(k)}) - \cV_{\ell}(t_{\ell - 1}^{(k)}).
\end{equation}
Here $D_{M,j}(i,T)$ denotes the set of all doubly indexed sequences 
$$
\mathbf{t} = \{t_\ell^{(k)} : i \leq k \leq i+M, k \leq \ell \leq k + j\} \in \mathbb{R}^{(j+1)(M+1)},
$$
satisfying
$$
t_\ell^{(k)} \leq t_{\ell'}^{(k')} \text{ whenever $k \geq k'$, or $k = k'$ and $\ell \leq \ell'$}, \quad t^{(i+M)}_{i+M} = 0, \quad t^{(i)}_{i+j} = T.
$$
The quantity $\widecheck\clu(j,M,T)$ plays the role of the quantity $\clu(j,M,T)$ defined by \cite[Equation 6.28]{banbudrud2025wp}. The two quantities differ by an error of at most $L_{(-1,0)}(T)$.

Lemma 6.4 from \cite{banbudrud2025wp} gives various pathwise estimates, whose proofs do not depend on the underlying distribution of the Brownian motions. Thus, the lemma still holds in our setting (with the substitutions noted above). 


The proof of finiteness of $\cKs(i,[0,T])$ now proceeds as in the proof of \cite[Theorem 2.3(iii)]{banbudrud2025wp}. The estimates (6.46)-(6.49) from \cite{banbudrud2025wp} can be proved in our setting by the same arguments. In particular, the estimate (6.48) becomes 
\begin{equation} \label{eq:init_ubd}
\widecheck x_{i+M-1} \leq \widecheck X_i(T) - \widecheck{\clv}^-_{M+1}(i,T) + \sum_{j = 1}^\infty \sigma^j \left\{ \widecheck{\clu}(j,M,T) - \widecheck{\clv}_{M+1}^-(j+i,T) \right\}.
\end{equation}
Proceeding as in the argument establishing \cite[Equation (6.52)]{banbudrud2025wp}, we can also show that, almost surely, 
\begin{equation} \label{eq:gaussianstuff}
\limsup_{M \to \infty} \left\{ -\frac{\widecheck{\clv}_{M+1}^-(i,T)}{\sqrt{M+1}} + \sum_{j = 1}^\infty \frac{\sigma^j}{\sqrt{M+1}}\left\{ \widecheck{\clu}(j,M,T) - \widecheck{\clv}_{M+1}^-(j + i,T)\right\}  \right\} < \infty.
\end{equation}
{This argument makes use Lemmas 5.1(i) and 6.5 in their original form (i.e. without replacing $\clu(j,M,T)$ with $\widecheck\clu(j,M,T)$ and $\clv(i,T)$ with $\widecheck\clv(i,T)$) from the same paper, while also noting that in our setting the quantity in the braces differs with the corresponding quantity in \cite{banbudrud2025wp} by an error of order $L_{(-1,0)}(T)/\sqrt{M+1}$, which vanishes almost surely in the limit as $M \to \infty$.} Combining \eqref{eq:init_ubd}, \eqref{eq:gaussianstuff}, and the assumption \eqref{eq:ainit} on the initial data, we conclude that 
\[
\begin{split}
\mathbb{P}\left( \cKs(i,[0,T]) = \infty \right) & \leq \mathbb{P}\left( \text{$\forall M \in \mathbb{N}$}, \frac{\widecheck{x}_{i+M-1}}{\sqrt{M}} \leq \frac{1}{\sqrt{M}} \cdot \text{RHS of \eqref{eq:init_ubd}} \right) \\
& \leq \mathbb{P}\left( \limsup_{M \to \infty} \frac{\widecheck{x}_{i+M - 1}}{\sqrt{M}} < \infty \right) = 0,
\end{split}
\]
and the result follows.}
\end{proof}

Proposition \ref{prop:fin1210} then follows from these results.\\

\noindent {\bf Acknowledgements.}
Banerjee was partially supported by NSF-CAREER award DMS-2141621.  Budhiraja was partially supported by NSF DMS-2152577 and DMS-2506010. Banerjee, Budhiraja and Rudzis were partially funded by NSF RTG grant DMS-2134107. Part of this work was carried out while Rudzis was a postdoctoral fellow at the University of North Carolina at Chapel Hill. Part of the work was done when Budhiraja was in
residence at the Simons Laufer Mathematical Sciences Institute  in
Berkeley, California, during the Fall 2025 semester and support of National Science
Foundation  Grant No. DMS-2424139 is acknowledged.

\bibliographystyle{abbrv}
\bibliography{SHE_ref}

\end{document}